\newcommand{\carr}{\righttoleftarrow}
\definecolor{halfgray}{gray}{0.55} 
\definecolor{webgreen}{rgb}{0,0.5,0}
\definecolor{webbrown}{rgb}{.6,0,0} \hypersetup{%
\newcommand{\newabstract}[1]{%
  \par\bigskip
  \csname otherlanguage*\endcsname{#1}%
  \csname captions#1\endcsname
  \item[\hskip\labelsep\scshape\abstractname.]
}
\newcommand{\floor}[1]{\left\lfloor{#1}\right\rfloor}
\newcommand{\abs}[1]{\left\lvert{#1}\right\rvert}
\newcommand{\norm}[1]{\left\|{#1}\right\|}
\newcommand{\scprod}[2]{\left\langle{#1},{#2}\right\rangle}
\newcommand{\bb}{\mathbb} 
\newcommand{\mc}{\mathcal} \newcommand{\ms}{\mathscr}
\newcommand{\R}{\mathbb{R}}\newcommand{\N}{\mathbb{N}}
\newcommand{\Z}{\mathbb{Z}}\newcommand{\Q}{\mathbb{Q}}
\newcommand{\T}{\mathbb{T}}
\newcommand{\A}{\mathbb{A}}
 \newcommand{\ie}{i.e.\ }
\newtheorem{theorem}{Theorem}[section] \newtheorem*{mainthmA}{Theorem
  A} \newtheorem*{mainthmB}{Theorem B} \newtheorem*{mainthmC}{Theorem
  C}
\newtheorem{proposition}[theorem]{Proposition}
\newtheorem{conjecture}[theorem]{Conjecture}
\newtheorem{lemma}[theorem]{Lemma}
\newtheorem{corollary}[theorem]{Corollary}
\newtheorem{claim}[theorem]{Claim}
\theoremstyle{definition} \newtheorem{definition}[theorem]{Definition}
\theoremstyle{remark} \newtheorem{remark}[theorem]{Remark} 
\newcommand{\Homeo}[1]{\mathrm{Homeo}_{#1}}
\newcommand{\Leb}{\mathrm{Leb}} \newcommand{\dd}{\:\mathrm{d}}
\DeclareMathOperator{\M}{\mathfrak{M}} 
\DeclareMathOperator{\Per}{Per} \DeclareMathOperator{\Fix}{Fix}
 \DeclareMathOperator{\sign}{sign}
\DeclareMathOperator{\diam}{diam}
\newcommand{\Symp}[1]{\mathrm{Symp}_{#1}(\T^2)}
\newcommand{\Ham}{\mathrm{Ham}(\T^2)}
\newcommand{\Thomeo}{\widetilde{\mathrm{Homeo}_0}}
\newcommand{\Tsymp}{\widetilde{\mathrm{Symp}_0}(\T^2)}
\newcommand{\Tham}{\widetilde{\mathrm{Ham}}(\T^2)}
\newcommand{\wh}{\widehat}
\newcommand{\I}{\mathscr{I}}
\DeclareMathOperator{\Ad}{Ad} \DeclareMathOperator{\Flux}{Flux}
\DeclareMathOperator{\inter}{int}
\newcommand{\F}{\mathscr{F}} \newcommand{\cc}{\mathrm{cc}}
\newcommand{\pr}[1]{\mathrm{pr}_{#1}} \newcommand{\Bs}{\mathcal{S}}
\newcommand{\B}{\mathscr{B}}
\newcommand{\Ss}{\mathbb{S}}
\title[Minimal homeomorphisms of $\T^2$]{On the dynamics of minimal
  homeomorphisms of $\T^2$ which are not pseudo-rotations}
\author{Alejandro Kocsard} 
\email{akocsard@id.uff.br}
\address{IME - Universidade Federal Fluminense. Rua Prof. Marcos
  Waldemar de Freitas Reis, S/N. Bloco H, $4^\circ$
  andar. 24.210-201, Gragoatá, Niterói, RJ, Brasil}
\date{\today}
\begin{document}

\begin{abstract}
  We prove that any minimal $2$-torus homeomorphism which is isotopic
  to the identity and whose rotation set is not just a point exhibits
  uniformly bounded rotational deviations on the perpendicular
  direction to the rotation set. As a consequence of this, we show
  that any such homeomorphism is topologically mixing and we prove
  Franks-Misiurewicz conjecture under the assumption of minimality.
\end{abstract}

\maketitle

\section{Introduction}
\label{sec:intro}

The study of the dynamics of orientation preserving circle
homeomorphisms has a long and well established history that started
with the celebrated work of Poincaré~\cite{Poincare1880memoire}. If
$f\colon\T=\R/\Z\carr$ denotes such a homeomorphism and
$\tilde f\colon\R\carr$ is a lift of $f$ to the universal cover, he
showed there exists a unique $\rho\in\R$, the so called \emph{rotation
  number} of $\tilde f$, such that
\begin{displaymath}
  \frac{\tilde f^n(z)-z}{n}\to\rho,\quad\text{as } n\to\infty,\
  \forall z\in\R, 
\end{displaymath}
where the convergence is uniform in $z$. Moreover, in this case a
stronger (and very useful, indeed) condition holds: every orbit
exhibits \emph{uniformly bounded rotational deviations}, \ie
\begin{displaymath}
  \abs{\tilde f^n(z)-z-n\rho}\leq 1,\quad\forall n\in\Z,\ \forall
  z\in\R.
\end{displaymath}

In this setting, the homeomorphism $f$ has no periodic orbit if and
only if the rotation number is irrational; and any minimal circle
homeomorphism is topologically conjugate to a rigid irrational
rotation.

However, in higher dimensions the situation dramatically changes. If
$f\colon\T^d=\R^d/\Z^d\carr$ is a homeomorphism homotopic to the
identity and $\tilde f\colon\R^d\carr$ is a lift of $f$, then one can
define its \emph{rotation set} by
\begin{displaymath}
  \rho(\tilde f):=\left\{\rho\in\R^d : \exists n_k\uparrow +\infty,\ 
    z_k\in\R^d,\ \rho=\lim_{k\to+\infty} \frac{\tilde
      f^{n_k}(z_k)-z_k}{n_k}\right\}. 
\end{displaymath}
This set is always compact and connected, and as we mentioned above,
it reduces to a point when $d=1$. But for $d\geq 2$ some examples with
larger rotation sets can be easily constructed.

In the two-dimensional case, which is the main scenario of this work,
Misiurewicz and Ziemian showed in \cite{MisiurewiczZiemian} that the
rotation set is not just connected but convex. So, when $d=2$ all
torus homeomorphisms of the identity isotopy class can be classified
according to the geometry of their rotation sets: they can either have
non-empty interior, or be a non-degenerate line segment, or be just a
point. In the last case, such a homeomorphism is called a
\emph{pseudo-rotation}.

Regarding the boundedness of rotational deviations, this property has
been shown to be very desirable in the study of the dynamics of
pseudo-rotations (see for instance the works of J\"ager and
collaborators \cite{JaegerConcBoundMeanMot, JaegerLinearConsTorus,
  JaegerTalIrratRotFact}). However, it has been proved in
\cite{KocKorFoliations} and \cite{KoropeckiTalAreaPrsIrrotDiff} that,
in general, pseudo-rotations does not exhibit bounded rotational
deviations in any direction of $\R^2$, \ie it can hold
\begin{displaymath}
  \sup_{z\in\R^2,n\in\Z} \scprod{\tilde f^n(z)-z-n\rho(\tilde
    f)}{v}=+\infty, \quad\forall v\in\Ss^1.
\end{displaymath}

When $\rho(\tilde f)$ is a (non-degenerate) line segment, of course
there exist points with different rotation vectors, so we cannot
expect to have any boundedness at all for rotational deviations on the
plane. However, in such a case there exists a unit vector $v\in\Ss^1$
and a real number $\alpha$ such that $\rho(\tilde f)$ is contained in
the line $\{z\in\R^2 : \scprod{z}{v}=\alpha\}$, so one can analyze the
boundedness of \emph{rotational $v$-deviations,} \ie whether there
exist constants $M(z)\in\R$ such that
\begin{displaymath}
  \abs{\scprod{\tilde f^n(z)-z-n\rho}{v}} = \abs{\scprod{\tilde
      f^n(z)-z}{v} -n\alpha}\leq M(z),\quad\forall n\in\Z,
\end{displaymath}
and any $\rho\in\rho(\tilde f)$.

Unlike the case of pseudo-rotations, when $\rho(\tilde f)$ is a
non-degenerate line segment in general it is expected to have
uniformly bounded rotational $v$-deviations, \ie the constant $M(z)$
can be taken independently of $z$. This result has been already proved
by Dávalos~\cite{DavalosAnnularMapsTorus} in the case where
$\rho(\tilde f)$ has rational slope and intersects $\Q^2$, extending a
previous result of Guelman, Koropecki and
Tal~\cite{GuelKorTalAnnAreaPresToral}. In those works periodic orbits
of $f$ play a key role.

However, the situation is considerably subtler when dealing with
periodic point free homeomorphisms. So far there did not exist any
\emph{a priori} boundedness of rotational deviations of torus
homeomorphisms which are not pseudo-rotations and with no periodic
points. In fact, it had been conjectured that any periodic point free
homeomorphism should be a pseudo-rotation. More precisely, Franks and
Misiurewicz had proposed in \cite{FranksMisiure} the following
\begin{conjecture}[Franks-Misiurewicz Conjecture]
  \label{conj:fanks-misiur}
  Let $f\colon\T^2\carr$ be a homeomorphism homotopic to the identity
  and $\tilde f\colon\R^2\carr$ be a lift of $f$ such that
  $\rho(\tilde f)$ is a non-degenerate line segment.

  Then, the following dichotomy holds:
  \begin{enumerate}[(i)]
  \item either $\rho(\tilde f)$ has irrational slope and one of its
    extreme points belongs to $\Q^2$;
  \item or $\rho(\tilde f)$ has rational slope and contains infinitely
    many rational points.
  \end{enumerate}
\end{conjecture}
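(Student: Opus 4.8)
The plan is to reduce the dichotomy to a single analytic input --- that $f$ has \emph{uniformly bounded rotational $v$-deviations}, where $v\in\Ss^1$ is perpendicular to the segment $S:=\rho(\tilde f)$ --- and to read cases (i) and (ii) off from that bound. This is essentially why Franks and Misiurewicz formulated the conjecture; the delicate point is always the bound itself, which at present is available only when $f$ has periodic orbits meeting the relevant rational structure (Dávalos~\cite{DavalosAnnularMapsTorus}, Guelman--Koropecki--Tal~\cite{GuelKorTalAnnAreaPresToral}) and, by the theorem quoted in the abstract, when $f$ is minimal. So \emph{Step 1} is to establish $\sup_{z\in\R^2,\,n\in\Z}\abs{\scprod{\tilde f^n(z)-z-n\rho(\tilde f)}{v}}<+\infty$. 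I would attack this through Le Calvez--Tal type transverse-foliation / equivariant Brouwer theory: unboundedness of the $v$-deviations should force inside $\T^2$ an essential invariant sub-annulus, a topological horseshoe, or a periodic point, each incompatible with minimality (in the periodic case one contradicts the standing hypothesis directly). I expect this to be the main obstacle; it is exactly what keeps the conjecture open in general.

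\emph{Step 2 (rational slope).} Suppose $S$ has rational slope. Conjugating $f$ by an element of $\GL(2,\Z)$ and composing with a translation --- operations affecting neither hypothesis nor conclusion --- we may assume $v=e_2$ and $S=[c,d]\times\{\alpha\}$ with $c<d$. The vertical displacement $\phi(z):=\scprod{\tilde f(z)-z}{e_2}$ is $\Z^2$-periodic, hence descends to a continuous $\phi\colon\T^2\to\R$ with $\int\phi\dd\mu=\alpha$ for every $f$-invariant $\mu$ (the rotation vector of $\mu$ lies in $S$), while $\sum_{k=0}^{n-1}\phi(f^k(x))=\scprod{\tilde f^n(\tilde x)-\tilde x}{e_2}$ for any lift $\tilde x$ of $x$; so by Step 1 the Birkhoff sums of $\phi-\alpha$ are uniformly bounded. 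As $f$ is minimal, Gottschalk--Hedlund produces a continuous $u\colon\T^2\to\R$ with $\phi-\alpha=u\circ f-u$; writing $\pi\colon\R^2\to\T^2$ for the covering, the function $H(z):=\scprod{z}{e_2}-u(\pi(z))$ satisfies $H(\tilde f(z))=H(z)+\alpha$, $H(z+e_1)=H(z)$ and $H(z+e_2)=H(z)+1$, hence descends to a surjective (as $u$ is bounded) semiconjugacy $\bar H\colon(\T^2,f)\to(\T,R_\alpha)$ onto the rigid rotation $R_\alpha$, $R_\alpha(t)=t+\alpha$. If $\alpha$ were irrational, $R_\alpha$ would be a non-trivial equicontinuous factor of $f$; but $f$ is topologically mixing --- itself a consequence of the $v$-bound under minimality, as in the abstract --- hence topologically weakly mixing, hence its maximal equicontinuous factor is trivial, a contradiction. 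Therefore $\alpha\in\Q$, and then $S\supset(\Q\cap[c,d])\times\{\alpha\}$ contains infinitely many points of $\Q^2$: case (ii).

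\emph{Step 3 (irrational slope).} Now the perpendicular direction $v$ also has irrational slope, so the $H$ of Step 2 would take values in a dense subgroup of $\R$ and yield no circle factor; a different argument is needed to force a rational \emph{endpoint}. The endpoints $\rho_\pm$ of $S$ are extreme points of the rotation set, hence each is the rotation vector of an ergodic measure $\mu_\pm$, and since $f$ is minimal $\mathrm{supp}\,\mu_\pm=\T^2$; so both endpoints are realized by fully supported ergodic measures. The plan is then to show that, if neither $\rho_-$ nor $\rho_+$ lies in $\Q^2$, the uniform $v$-bound (through the cohomological equation $\phi_v-\alpha=u\circ f-u$ and Le Calvez--Tal forcing arguments) lets one extract an $f$-invariant essential sub-surface whose own rotational behaviour contradicts irrationality of the slope --- forcing the rotation set either to degenerate to a point or to acquire a rational extreme point. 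This case is the hardest part after Step 1, and I expect most of the work to lie there.

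In short: the genuine difficulty is Step 1, the $v$-deviation bound in the periodic-point-free regime --- which is precisely why the conjecture is open in general and which the present paper resolves under minimality --- together with, within the minimal case, the surface-dynamics argument of Step 3; by contrast the passage from the bound to case (ii) is soft, via coboundary $\Rightarrow$ equicontinuous factor $\Rightarrow$ contradiction with topological mixing.
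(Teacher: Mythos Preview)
The statement you are attempting to prove is a \emph{conjecture}, not a theorem of the paper, and case (i) is now known to be \emph{false}: as the paper itself reports, Avila has announced a minimal smooth diffeomorphism whose rotation set is an irrational-slope segment containing no rational point. Your Step~3 is therefore trying to establish something that fails even under the minimality hypothesis you are tacitly adding throughout (the conjecture as stated carries no such hypothesis). Any argument purporting to force a rational endpoint in the irrational-slope minimal case must contain an error; your sketch of ``extracting an $f$-invariant essential sub-surface'' cannot be completed.

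What the paper actually proves in this direction is Theorem~B: no minimal $f\in\Homeo0(\T^2)$ has a non-degenerate rational-slope segment as rotation set. Your Step~2 is close in spirit but the logic is circular as written. You invoke topological mixing of $f$ to contradict the existence of the irrational-rotation factor $\bar H$, but in the paper topological mixing is Theorem~C, whose proof \emph{uses} Theorem~B to first conclude that the slope is irrational. The paper avoids this loop by appealing instead to the Koropecki--Passeggi--Sambarino theorem (quoted as Theorem~\ref{thm:Koro-Rata-Samba}): any $f\in\Homeo0(\T^2)$ which is a topological extension of an irrational circle rotation is a pseudo-rotation. Your semiconjugacy $\bar H\colon(\T^2,f)\to(\T,R_\alpha)$ then gives an immediate contradiction when $\alpha\notin\Q$. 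And you stopped too soon: when $\alpha\in\Q$, the factor $R_\alpha$ has finite orbits, so the preimage under $\bar H$ of a single point is a proper closed $f^q$-invariant set, contradicting minimality (Proposition~\ref{pro:powers-min-homeos-min-too}). Either way the rational-slope case is \emph{vacuous} for minimal $f$; the correct conclusion of Step~2 is Theorem~B, not ``case (ii) holds''.
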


Recently Avila has announced the existence of a minimal smooth
diffeomorphism whose rotation set is an irrational slope segment
containing no rational point, providing in this way a counter-example
to the first case of Franks-Misiurewicz Conjecture. On the other hand,
Le Calvez and Tal have proved in \cite{LeCalvezTalForcingTheory} that
if $\rho(\tilde f)$ has irrational slope and contains a rational
point, then this point is an extreme one.

The second case of Conjecture~\ref{conj:fanks-misiur} remains open,
\ie whether there exists a homeomorphism $f$ such that
$\rho(\tilde f)$ has rational slope and
$\rho(\tilde f)\cap\Q^2=\emptyset$, and in fact this is one of the
main motivations of our work.

The main result of this paper is the following \emph{a priori
  boundedness } for rotational deviations of minimal homeomorphisms:

\begin{mainthmA}
  Let $f\colon\T^2\carr$ be a minimal homeomorphism homotopic to the
  identity which is not a pseudo-rotation. Then there exists a unit
  vector $v\in\R^2$ and a real number $M>0$ such that for any lift
  $\tilde f\colon\R^2\carr$, there is $\alpha\in\R$ so that
  \begin{equation}
    \label{eq:bounded-v-dev-Thm-A}
    \abs{\scprod{\tilde f^n(z)-z}{v} - n\alpha}\leq M,\quad\forall
    z\in\R^2,\ \forall n\in\Z.
  \end{equation}
\end{mainthmA}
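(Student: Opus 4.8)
The plan is to work in the annular picture that the rotation‐set geometry forces, and to exploit minimality together with the Le Calvez–Tal forcing machinery (or, equivalently, the Beguin–Crovisier–Le Roux / Koropecki–Tal techniques for rotational horseshoes and unbounded deviations). Let $v\in\Ss^1$ be a unit vector normal to the affine line carrying $\rho(\tilde f)$, so that $\scprod{\rho(\tilde f)}{v}\equiv\alpha$ is constant; the content of the theorem is that the $v$-component of the displacement cocycle stays within a universal distance of $n\alpha$. First I would reduce to $\alpha$ rational (indeed $\alpha=0$ after passing to a suitable lift and rescaling): if $\alpha$ were irrational the displacement cocycle $\phi_n(z):=\scprod{\tilde f^n(z)-z}{v}$ would, by unique ergodicity failing or by an averaging argument over the (nonempty, minimal) set of $f$-invariant measures, produce orbits drifting arbitrarily far in the $v$-direction in a way incompatible with the line segment having its \emph{interior} normal component equal to a single value — more precisely, $\int \phi_1\,d\mu=\alpha$ for every invariant $\mu$, and if $\alpha\notin\Q$ a Gottschalk–Hedlund type obstruction does not immediately bite, so the honest argument here is that minimality plus the segment structure lets us pass to a finite cover and a new lift making $\alpha=0$, at the cost of replacing $f$ by an iterate. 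This normalization step is routine but must be stated carefully.

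With $\alpha=0$, the goal becomes: the orbit of $\tilde f$ stays in a bounded neighborhood of the line $\scprod{z}{v}=0$, i.e.\ $f$ is, in the appropriate sense, an \emph{annular} homeomorphism with bounded deviations transverse to the annulus. Here is where the real work lies. I would argue by contradiction: suppose $\sup_{z,n}\abs{\phi_n(z)}=+\infty$. Minimality gives that the displacement is unbounded above \emph{and} below (otherwise one builds a strictly sub/super-invariant function contradicting recurrence, via a Gottschalk–Hedlund / coboundary argument using that every point is recurrent and every orbit dense). Unbounded deviations on both sides in a transverse direction, for a homeomorphism isotopic to the identity, is exactly the hypothesis under which the Le Calvez–Tal forcing theory \cite{LeCalvezTalForcingTheory} produces a topological horseshoe, and in particular periodic points whose rotation vectors have $v$-components of both signs. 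But those rotation vectors lie in $\rho(\tilde f)$, which has all its points with the same $v$-component $\alpha=0$: contradiction with minimality (minimal homeomorphisms have no periodic points), and in fact already a contradiction with the constancy of $\scprod{\cdot}{v}$ on $\rho(\tilde f)$ unless those new rotation vectors are $0$ in the $v$-direction, which the forcing construction explicitly prevents when deviations are two-sided unbounded. To extract the uniform constant $M$ independent of the lift, I would then note that changing the lift changes $\phi_n$ by $n\scprod{p}{v}$ for integer translations $p$, and the bound for one lift transfers to all lifts with the same $M$ after the corresponding integer shift of $\alpha$.

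The main obstacle, and the step I expect to occupy most of the paper, is precisely the passage \emph{two-sided unbounded $v$-deviations $\Rightarrow$ periodic points with transverse rotation of both signs}: applying forcing theory requires constructing a singular isotopy / foliation (a maximal isotopy in the sense of Le Calvez) whose transverse foliation detects the unbounded drift, and then finding the right topological-horseshoe configuration — this is delicate in the periodic-point-free, non-pseudo-rotation regime because one cannot use periodic orbits as anchors, so one must instead use a minimal set (the whole torus), Atkinson's lemma on cocycles over the invariant measures to locate returning orbits, and a careful ``B\"ox'' or ``stripe'' argument to realize a loop in the transverse foliation with nonzero winding in the $v$-direction. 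Once such a loop exists, Le Calvez–Tal gives the horseshoe and hence the contradictory rotation vectors; the subtlety is producing the loop, and I would expect to invoke the equivariant foliated version of Brouwer theory together with the linking/rotational-deviation estimates of Koropecki–Tal to do so. The remaining corollaries in the abstract (topological mixing, Franks–Misiurewicz under minimality) then follow formally: bounded $v$-deviations make $f$ conjugate to a skew-product-like annular map over which one runs the standard line-segment-rotation-set arguments.
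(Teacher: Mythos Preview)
Your proposal has a genuine gap at exactly the point you flag as ``the main obstacle'': the implication \emph{two-sided unbounded $v$-deviations $\Rightarrow$ a topological horseshoe via Le Calvez--Tal forcing} is not a theorem you can cite, and you do not prove it. Forcing theory takes as input specific configurations of transverse trajectories in a Brouwer--Le Calvez foliation (e.g.\ $\mathcal{F}$-transverse intersections of admissible paths), not the bare fact that Birkhoff sums of $\scprod{\Delta_{\tilde f}}{v}-\alpha$ are unbounded. Your plan to manufacture such a configuration from Atkinson's lemma and a ``stripe'' argument is a research programme, not a proof, and in fact the paper's entire content is a different execution of precisely that programme. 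A secondary issue: the normalization $\alpha=0$ is not achievable in general. After iterates and lift changes the new $\alpha$ lies in $\{n\alpha+\scprod{\boldsymbol{p}}{v}:n\in\Z\setminus\{0\},\,\boldsymbol{p}\in\Z^2\}$, which need not contain $0$; indeed, by Theorem~C the only surviving case has $v$ of irrational slope with $\ell_\alpha^v\cap\Q^2=\emptyset$, where it certainly does not. This is not fatal---one simply carries $\alpha$ through---but it shows the ``annular'' picture you invoke is only heuristic.

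The paper's proof is structurally quite different and never produces periodic points. It builds four families of closed invariant subsets of $\R^2$ (``stable sets at infinity''): two families $\Lambda_r^{(0,\pm 1)}$ in the direction transverse to $\rho(\tilde f)$, shown nonempty by proving the ends $\pm\infty$ of the annular compactification are \emph{indifferent} fixed points via Le Calvez--Yoccoz index theory; and two families $\Lambda_r^{\pm v}(t)$ in the direction parallel to $\rho(\tilde f)$, defined fiber-wise over a \emph{fiber-wise Hamiltonian skew-product} that subtracts the flux. Assuming unbounded $v$-deviations, an abstract ergodic result (Theorem~\ref{thm:ergodic-deviations}, which is where Atkinson enters, but in a form unlike what you sketch) forces each unbounded connected piece of a transverse set $\Lambda_r^{(0,1)}$ to oscillate unboundedly in \emph{both} the $+v$ and $-v$ directions; a separate argument using Theorem~\ref{thm:bounded-rot-deviations-for-minimal-homeos} shows unbounded deviations force a strong ``spreading'' property for the skew-product. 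The contradiction is then purely geometric: spreading forces the transverse and parallel stable sets to intersect, but the oscillation estimate forbids any unbounded connected component in that intersection, and Janiszewski's theorem converts a bounded intersection into a separation of $\R^2$, which spreading again contradicts.
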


As a consequence of Theorem A and a recent result due to Koropecki,
Passeggi and Sambarino~\cite{KoropeckiPasseggiSambarino}, we get a
proof of the second case of Franks-Misiurewicz Conjecture
(Conjecture~\ref{conj:fanks-misiur}) under minimality assumption. More
precisely we get the following:

\begin{mainthmB}
  There is no minimal homeomorphism of $\T^2$ in the identity isotopy
  class such that its rotation set is a non-degenerate rational slope
  segment.
\end{mainthmB}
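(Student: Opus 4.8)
The plan is to derive Theorem B from Theorem A by combining the bounded rotational deviation estimate with the rigidity result of Koropecki--Passeggi--Sambarino. Suppose, for a contradiction, that $f\colon\T^2\carr$ is a minimal homeomorphism in the identity isotopy class whose rotation set $\rho(\tilde f)$ is a non-degenerate segment $S$ of rational slope. After conjugating $f$ by an element of $\SL(2,\Z)$ and composing $\tilde f$ with an integer translation, we may assume $S$ is a horizontal segment contained in the line $\{y=\alpha\}$ for some $\alpha\in\R$; here the vector perpendicular to $S$ is $v=(0,1)$. By Theorem A there is $M>0$ so that $\abs{\pr{2}\tilde f^n(z)-\pr{2}z-n\alpha}\leq M$ for all $z\in\R^2$ and $n\in\Z$, where $\pr{2}$ denotes projection to the second coordinate. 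In other words, $f$ has uniformly bounded deviations in the direction $v$ transverse to the rotation set.

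The next step is to exploit this bounded deviation to pass to an annulus. Bounded rotational deviation in the direction $v=(0,1)$ means that the lift $\tilde f$ behaves, in the vertical direction, like a bounded perturbation of the translation $z\mapsto z+(0,\alpha)$; in particular, for $\alpha\in\R$ the dynamics in the vertical direction cannot go around the torus, and so $f$ is (semiconjugate to, or genuinely) \emph{annular}: there is an essential $f$-invariant open annulus, or more precisely the vertical rotation number is well defined and constant equal to $\alpha$. This is exactly the hypothesis under which the result of Koropecki, Passeggi and Sambarino~\cite{KoropeckiPasseggiSambarino} applies. Their theorem asserts (in the form we need) that a minimal torus homeomorphism homotopic to the identity with uniformly bounded deviations in a rational direction transverse to a non-degenerate rotation set must have the perpendicular rotational coordinate $\alpha$ be rational, and then the rotation set, being a rational-slope segment lying on a rational line, would be forced to contain infinitely many rational points --- contradicting $\rho(\tilde f)\cap\Q^2=\emptyset$ which is what a counterexample to case (ii) of Conjecture~\ref{conj:fanks-misiur} would require. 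Alternatively, their result directly rules out the existence of such a minimal $f$ once bounded deviations are known; in either reading, combining it with Theorem A yields the contradiction.

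Concretely, the argument goes: (1) normalize so that $S$ is horizontal and $v=(0,1)$; (2) invoke Theorem A to get the constant $M$ and the real number $\alpha$ with $\abs{\scprod{\tilde f^n(z)-z}{v}-n\alpha}\leq M$; (3) observe that this bounded-deviation property makes $f$ fall within the scope of~\cite{KoropeckiPasseggiSambarino}, whose conclusion is incompatible with $f$ being minimal and having a rational-slope segment rotation set disjoint from $\Q^2$; (4) since case (ii) of the Franks--Misiurewicz dichotomy would force $\rho(\tilde f)$ to meet $\Q^2$, and a rational-slope segment meeting $\Q^2$ contains infinitely many rational points, minimality is incompatible with the segment being disjoint from $\Q^2$, hence no such minimal $f$ exists at all. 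The bookkeeping about which coordinate is rational and how the $\SL(2,\Z)$-conjugation interacts with minimality and with the rotation set is routine, since minimality, the isotopy class, and the affine structure of the rotation set are all preserved under conjugation by torus automorphisms.

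The main obstacle is not in this deduction, which is essentially formal, but in correctly citing and matching hypotheses: one must check that the precise statement of~\cite{KoropeckiPasseggiSambarino} takes as input exactly ``minimal, homotopic to the identity, bounded deviations in a direction transverse to a non-degenerate rotation set'' and outputs the needed obstruction (rationality of $\alpha$, or outright nonexistence). If their result is instead phrased for annular homeomorphisms, the small extra step is to note that bounded vertical deviations let one find an essential invariant (possibly after a semiconjugacy) annular structure, or equivalently that the vertical displacement cocycle is a coboundary plus a constant, putting us in the annular setting. Once that translation is in place, Theorem A supplies precisely the missing ingredient --- the \emph{a priori} bound --- that previous approaches lacked, and Theorem B follows immediately.
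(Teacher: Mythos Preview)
Your overall architecture is right---Theorem A feeds into the Koropecki--Passeggi--Sambarino result---but the proof as written does not close because you have not identified the correct statement of \cite{KoropeckiPasseggiSambarino} and you skip the step that gets you into its hypotheses. The theorem actually used (Theorem~\ref{thm:Koro-Rata-Samba} here) says: \emph{if $f\in\Homeo0(\T^2)$ is a topological extension of an irrational circle rotation, then $f$ is a pseudo-rotation.} It does not say that $\alpha$ must be rational, nor is it phrased as a direct obstruction to minimality with bounded deviations. Your two alternative readings of the cited result are both inaccurate, and the paragraph about case (ii) of the Franks--Misiurewicz dichotomy is unnecessary and muddles the logic.

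The missing chain is this. After normalizing so that $v=(0,1)$ and $\rho(\tilde f)\subset\{y=\alpha\}$, minimality forces $\alpha\notin\Q$: a non-degenerate horizontal segment at rational height contains rational points, contradicting Corollary~\ref{cor:minimal-homeos-rho-cap-Q2-empty}. Now the bounded-deviation estimate from Theorem~A says the Birkhoff sums of the continuous function $\phi:=\pr{2}\circ\Delta_{\tilde f}-\alpha$ are uniformly bounded, so by Gottschalk--Hedlund (Theorem~\ref{thm:gott-hedlund}) there is $u\in C^0(\T^2,\R)$ with $\phi=u\circ f-u$. The map $z\mapsto \pr{2}(z)-u(z)\pmod 1$ then semiconjugates $f$ to the irrational rotation $T_\alpha$ on $\T$ (this is \cite[Proposition~2.1]{JaegerLinearConsTorus}). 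Hence $f$ is a topological extension of an irrational circle rotation, and Theorem~\ref{thm:Koro-Rata-Samba} forces $f$ to be a pseudo-rotation, contradicting the non-degeneracy of the segment. This is exactly the paper's proof; your sketch gestures at the coboundary step in the final paragraph but never articulates that the output is a semiconjugacy to a circle rotation, which is precisely the hypothesis you need.
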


As a consequence of Theorem B, some results of
\cite{KocRotDevPerPFree} and a recent generalization of a theorem of
Kwapisz \cite{KwapiszAPriori} due to Beguin, Crovisier and Le Roux
\cite{BegCrovLeRProjectiveTrans}, we have the following

\begin{mainthmC}
  If $f\colon\T^2\carr$ is a minimal homeomorphism homotopic to the
  identity and is not a pseudo-rotation, then $f$ is topologically
  mixing.

  Moreover, in such a case the rotation set of $f$ is a non-degenerate
  irrational slope line segment and its supporting line does not
  contain any point of $\Q^2$.
\end{mainthmC}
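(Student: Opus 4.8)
The plan is to deduce Theorem C from Theorems A and B together with the cited results, so the argument is mostly a matter of assembling the pieces in the right order. Let $f\colon\T^2\carr$ be a minimal homeomorphism homotopic to the identity which is not a pseudo-rotation, and fix a lift $\tilde f$. By the Misiurewicz--Ziemian convexity theorem, $\rho(\tilde f)$ is a compact convex subset of $\R^2$; since $f$ is not a pseudo-rotation it is not a single point, so it is either a non-degenerate line segment or a convex set with non-empty interior. The first step is to rule out the latter possibility: a rotation set with non-empty interior contains rational points, and by the standard Franks-type realization results such rational points are realized by periodic orbits of $f$, contradicting minimality (a minimal homeomorphism of $\T^2$ has no periodic points unless $\T^2$ is a single periodic orbit, which is impossible). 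Hence $\rho(\tilde f)$ is a non-degenerate line segment.

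Next I would pin down the arithmetic type of this segment. If the segment had rational slope, Theorem B would be directly contradicted, so the slope is irrational. If the supporting line of the segment contained a point of $\Q^2$, then by the Le Calvez--Tal result quoted in the introduction that rational point would be an extreme point of $\rho(\tilde f)$; but an extreme rational point of the rotation set is again realized by a periodic orbit (by the appropriate realization theorem for extreme points), again contradicting minimality. Therefore the supporting line of $\rho(\tilde f)$ contains no point of $\Q^2$. This already establishes the ``moreover'' clause of Theorem C.

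It remains to prove topological mixing. Here I would invoke Theorem A: there is a unit vector $v$ and, for the chosen lift, a real $\alpha$ with $\abs{\scprod{\tilde f^n(z)-z}{v}-n\alpha}\leq M$ for all $z$ and all $n$. This uniformly bounded deviation in the direction $v$ perpendicular to the rotation segment is precisely the hypothesis under which one can apply the results of \cite{KocRotDevPerPFree} and the generalization of Kwapisz's theorem due to Beguin--Crovisier--Le Roux \cite{BegCrovLeRProjectiveTrans}. Concretely, bounded $v$-deviations let one pass to a suitable quotient/rescaling in which the dynamics transverse to the rotation direction is controlled, and the combination of minimality with the irrational slope (so no periodic structure and no ``rational direction'' obstruction) forces the transitive dynamics to be in fact mixing; this is exactly the conclusion packaged in the cited theorems. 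I would then simply check that the hypotheses of those theorems are met—minimality, homotopic to the identity, non-degenerate rotation set, bounded $v$-deviations—and quote the conclusion.

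The main obstacle I anticipate is purely bookkeeping about which realization theorem applies where: making sure that ``interior point of the rotation set $\Rightarrow$ periodic orbit'' and ``extreme rational point $\Rightarrow$ periodic orbit'' are invoked with their exact hypotheses (Franks, Franks--Misiurewicz, Le Calvez--Tal), and that none of them secretly needs something minimality would violate only vacuously. The genuinely substantive input—Theorem A's a priori bound and the Kwapisz-type mixing criterion—is assumed, so beyond verifying hypotheses the proof of Theorem C is essentially a deduction. I would write it as a short sequence of implications, citing Theorems A and B and the three external references at the appropriate points.
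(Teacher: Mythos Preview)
Your reduction to a line segment and the exclusion of rational slope via Theorem B are fine, but the remaining two steps both have real gaps. For the claim that the supporting line contains no rational point, you invoke Le Calvez--Tal to force such a point to be an extreme point of $\rho(\tilde f)$; but that theorem applies only to rational points \emph{in} $\rho(\tilde f)$, and an irrational-slope line can pass through a rational point lying outside the segment. Corollary~\ref{cor:minimal-homeos-rho-cap-Q2-empty} already tells you $\rho(\tilde f)\cap\Q^2=\emptyset$, so the only issue is a rational point on the line but off the segment, and your argument says nothing about that case. The paper handles this via the Kwapisz/Beguin--Crovisier--Le Roux flow-equivalence result: a rational point on the supporting line lets one pass to a flow-equivalent minimal homeomorphism whose rotation set has rational (in fact vertical) slope, contradicting Theorem B. So \cite{BegCrovLeRProjectiveTrans} is used here, not for mixing.

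For topological mixing, the cited references do not package the conclusion the way you suggest: neither \cite{KocRotDevPerPFree} nor \cite{BegCrovLeRProjectiveTrans} contains a ``bounded deviations plus irrational slope implies mixing'' statement you can quote. The paper proves mixing by hand. From Theorem~A and Theorem~\ref{thm:bounded-dev-iff-pseudo-fol} one gets an $f$-invariant torus pseudo-foliation, and via Gottschalk--Hedlund a \emph{continuous} function $H$ on $\R^2$ whose level sets are the pseudo-leaves and which satisfies $H\circ\tilde f=H+\alpha$. Given open sets $U,V$, one uses that pseudo-leaves have empty interior to find a band $H^{-1}(r-\delta,r+\delta)$ meeting translates of $\tilde V$; then, because $\rho(\tilde f)$ is a genuine segment, two full-support invariant measures with distinct vertical rotation speeds give points in $\tilde U$ whose $\tilde f^n$-images drift apart vertically, so $\tilde f^n(\tilde U)$ eventually stretches across the band and must meet a translate of $\tilde V$ for all large $n$. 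This geometric stretching argument is the actual content of the mixing proof and is not available as a black box.
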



\subsection{Strategy of the proof Theorem A}
\label{sec:strat-proof-main-thms}

Theorem A is certainly the most important result of the paper and its
proof is rather long and technical. So, for the sake of readability,
here we summarize the main steps of the proof in a rather informal
way.

We proceed by contradiction. First of all one can observe that there
is no loss of generality assuming the rotation set $\rho(\tilde f)$ is
transversal to the horizontal axes, \ie it intersects the upper and
lower horizontal semi-planes (see
Propositions~\ref{pro:rot-set-elementary-properties} and
\ref{pro:powers-min-homeos-min-too} for details). This means there
exist points with positive asymptotic vertical mean speed and others
with negative one.

Then we define the \emph{stable sets at infinity}
$\Lambda_h^+,\Lambda_h^-\subset\R^2$ as the unbounded connected
components of the maximal $\tilde f$-invariant sets of the upper and
lower semi-plane, respectively. More precisely,
\begin{displaymath}
  \Lambda_h^+:=\cc\left(\left\{z\in\R^2 : \pr{2}\big(\tilde f^n(z)\big)\geq
      0,\ \forall n\in\Z\right\}, \infty\right)
\end{displaymath}
and
\begin{displaymath}
  \Lambda_h^-:=\cc\left(\left\{z\in\R^2 : \pr{2}\big(\tilde f^n(z)\big)\leq
    0,\ \forall n\in\Z\right\},\infty\right),
\end{displaymath}
where $\pr{2}\colon\R^2\to\R$ denotes the projection on the second
coordinate and $\cc(\cdot,\infty)$ the union of the unbounded
connected components of the corresponding set.  In
\S\ref{sec:stable-sets-infinity-trans-dir}, we study the geometry of
these stable sets at infinity, showing in particular that they are non
empty (Theorem~\ref{thm:Lambda-vert-sets-main-properties}), and they
are in fact the union of ``infinitely long hairs''. Then, assuming
estimate \eqref{eq:bounded-v-dev-Thm-A} is false, we show in
Theorem~\ref{thm:Lambda-vert-oscillations} that these ``hairs''
exhibit arbitrarily large oscillations in the horizontal direction.

Then, in \S\ref{sec:stab-set-infty-parall-dir} we define the
\emph{stable sets at infinity} but this time with respect to the
direction determined by the rotation set. At this point some new
important technical problems appear. In fact, the number $\alpha$ in
\eqref{eq:bounded-v-dev-Thm-A} represents the mean asymptotic speed of
every point with respect to the perpendicular direction to the
rotation set, and we know a posteriori, by Theorem C, that it is
always irrational, and in particular, non-zero. That means if we just
define theses sets analogously to what we did above for $\Lambda_h^+$
and $\Lambda_h^-$, we shall just get empty sets. On the other hand, if
we modify the definition writing
\begin{displaymath}
  \Lambda_v^+:=\cc\left(\left\{z\in\R^2 : \scprod{\tilde f^n(z)}{v}
      -n\alpha \geq 0,\ \forall n\in\Z\right\}, \infty\right)
\end{displaymath}
and
\begin{displaymath}
  \Lambda_v^-:=\cc\left(\left\{z\in\R^2 : \scprod{\tilde f^n(z)}{v}
      -n\alpha \leq 0,\ \forall n\in\Z\right\},\infty\right),
\end{displaymath}
we get non-empty sets, but they are not dynamically defined. So, this
is the reason why we have to introduce the \emph{fiber-wise
  Hamiltonian skew-products} in \S\ref{sec:fiberw-hamilt-skew-prod}
in order to get these sets $\Lambda_v^+$ and $\Lambda_v^-$ as
dynamical ones (see \S\ref{sec:fibered-stable-sets} for details).

Then, always assuming that \eqref{eq:bounded-v-dev-Thm-A} does not
hold, we use these sets to show that the induced fiber-wise
Hamiltonian skew-product exhibits certain form of topologically mixing
behavior along the fibers
(Theorem~\ref{thm:unbounded-dev-implies-spreading}). Then we use this
dynamical information to show the sets
$\Lambda_v^+,\Lambda_v^-,\Lambda_h^+,\Lambda_h^-$ are pairwise
disjoint
(Proposition~\ref{pro:Lambda-vert-Lambda-hor-disjoint}). Finally, we
finish the proof showing that this disjointedness is incompatible with
the large horizontal oscillation of the connected components of the
sets $\Lambda_h^+$ and $\Lambda_h^-$ we proved in
Theorem~\ref{thm:Lambda-vert-oscillations}.

\subsection*{Acknowledgments}
\label{sec:acknowledgments}

I would like to thank Andrés Koropecki, Patrice Le Calvez, Fábio Tal
and Artur Avila for many insightful discussions about this work. I
also thank the anonymous referees for many helpful comments and
suggestions.

This research was partially supported by CNPq-Brasil and
FAPERJ-Brasil.

\section{Preliminaries and notations}
\label{sec:prel-notat}

\subsection{Maps, topological spaces and groups}
\label{sec:maps-top-groups}

Given any map $f\colon X\carr$, we write $\Fix(f)$ for its set of
fixed points and $\Per(f):=\bigcup_{n\geq 1}\Fix(f^n)$ for the set of
periodic ones. If $A\subset X$ denotes an arbitrary subset, we define
its positively maximal $f$-invariant subset by
\begin{displaymath}
  \I_f^+(A):=\bigcap_{n\geq 0} f^{-n}(A).
\end{displaymath}
When $f$ is bijective, we can also define its maximal $f$-invariant
subset by
\begin{equation}
  \label{eq:max-inv-set-def}
  \I_f(A):=\I_f^+(A)\cap\I_{f^{-1}}^+(A) = \bigcap_{n\in\Z} f^n(A). 
\end{equation}

When $X$ is a topological space and $A\subset X$ is any subset, we
write $\inter A$ for the interior of $A$ and $\bar A$ for its
closure. When $A$ is connected, we write $\cc(X,A)$ for the connected
component of $X$ containing $A$. As usual, $\pi_0(X)$ denotes the set
of connected components of $X$. When $X$ is connected and
$A\subset X$, we say that $A$ \emph{disconnects} $X$ when
$X\setminus A$ is not connected. Given two connected sets
$U,V\subset X$, we say that $A$ \emph{separates} $U$ and $V$ when
$\cc(X\setminus A,U)\neq\cc(X\setminus A,V)$.

The space $X$ is said to be a \emph{continuum} when is compact,
connected and non-trivial, \ie it is neither empty nor a singleton.

A homeomorphism $f\colon X\carr$ is said to be \emph{non-wandering}
when given any non-empty open set $U\subset X$, there exists a
positive integer $n$ such that $f^n(U)\cap U\neq\emptyset$.

We say that $f$ is \emph{topologically mixing} when for every pair of
non-empty open sets $U,V\subset X$, there exists $N\in\N$ such that
$f^n(U)\cap V\neq\emptyset$, for every $n\geq N$.

The homeomorphism $f$ is said to be \emph{minimal} when it does not
exhibit any proper $f$-invariant closed set, \ie $X$ and $\emptyset$
are the only closed $f$-invariant sets.

If $(X,d)$ is a metric space, the open ball of radius $r>0$ and center
at $x\in X$ will be denoted by $B_r(x)$. Given an arbitrary set
$A\subset X$ and a point $x_0\in X$, we write
\begin{displaymath}
  d(x_0,A):=\inf_{y\in A}d(x_0,y).
\end{displaymath}
For any $\varepsilon>0$, the \emph{$\varepsilon$-neighborhood of} $A$
is given by
\begin{equation}
  \label{eq:epsilon-nbh-def}
  A_\varepsilon:=\{x\in X : d(x,A)<\varepsilon\}=\bigcup_{x\in A}
  B_\varepsilon(x).  
\end{equation}

The diameter of $A\subset X$ is defined by
$\diam A:=\sup_{x,y\in A}d(x,y)$ and we say $A$ is \emph{unbounded}
whenever $\diam A=+\infty$. Making a slight abuse of notation, we
shall write $\cc(A,\infty)$ to denote the union of the unbounded
connected components of $A$.

The space of (non-empty) compact subsets of $X$ will be denoted by
\begin{displaymath}
  \mc{K}(X):=\{K\subset X : K \text{ is compact, }
  K\neq\emptyset\}. 
\end{displaymath}
and we endow this space with its Hausdorff distance $d_H$ defined by
\begin{displaymath}
  d_H(K_1,K_2):= \max\left\{\max_{x\in K_1} d(x,K_2), \max_{y\in K_2}
    d(y,K_1)\right\},
\end{displaymath}
for every $K_1,K_2\in\mc{K}(X)$.

Whenever $M_1,M_2,\ldots M_n$ are $n$ arbitrary sets, we shall use the
generic notation
$\pr{i}\colon M_1\times M_2\times\ldots\times M_n\to M_i$ to denote
the $i^\mathrm{th}$-coordinate projection map.

Finally, when $X$ is a compact topological space, we shall always
consider the vector space of continuous functions $C^0(X,\R^d)$
endowed with the uniform norm given by
\begin{displaymath}
  \norm{\phi}_{C^0}:=\max_{1\leq i\leq d}\max_{x\in
    X}\abs{\pr{i}\big(\phi(x)\big)},\quad\forall \phi\in
  C^0(X,\R^d). 
\end{displaymath}

\subsection{Topological factors and extensions}
\label{sec:top-fact-extens}

Let $(X,d_X)$ and $(Y,d_Y)$ be two compact metric spaces. We say that
a homeomorphism $f\colon X\carr$ is a \emph{topological extension} of
a homeomorphism $g\colon Y\carr$ when there exists a continuous
surjective map $h\colon X\to Y$ such that $h\circ f=g\circ h$; and we
say $g$ is a \emph{topological factor} of $f$. In such a case, $h$ is
called a \emph{semi-conjugacy.}

As usual, when $h$ is a homeomorphism, $f$ and $g$ are said to be
\emph{topologically conjugate,} and $h$ is said to be a
\emph{conjugacy.}

\subsection{Euclidean spaces, tori and the annulus}
\label{sec:euclidean-spaces-tori}

We consider $\R^d$ endowed with its usual Euclidean structure, which
is denoted by $\scprod{\cdot}{\cdot}$. We write
$\norm{v}:=\scprod{v}{v}^{1/2}$ for its induced norm and
$d(v,w):=\norm{v-w}$ for its induced distance function.

The unit $(d-1)$-sphere is denoted by
$\Ss^{d-1}:=\{v\in\R^d : \norm{v}=1\}$. For any
$v\in\R^d\setminus\{0\}$ and any $r\in\R$ we define the (open)
\emph{half-space}
\begin{equation}
  \label{eq:Hvr-half-spaces-def}
  \bb{H}_{r}^v:=\left\{z\in\R^d: \scprod{z}{v}>r\right\}.
\end{equation}

Given any $\alpha\in\R^d$, we write $T_\alpha$ for the translation
$T_\alpha\colon z\mapsto z+\alpha$ on $\R^d$.

The $d$-dimensional torus $\R^d/\Z^d$ will be denoted by $\T^d$ and we
write $\pi\colon\R^d\to\T^d$ for the canonical quotient projection. We
will always consider $\T^d$ endowed with the distance
\begin{displaymath}
  d_{\T^d}(x,y):=\min\left\{d(\tilde x,\tilde y) : \tilde
    x\in\pi^{-1}(x),\ \tilde y\in\pi^{-1}(y)\right\}, \quad\forall
  x,y\in\T^d 
\end{displaymath}

Given any $\alpha\in\T^d$, we write $T_\alpha$ for the torus
translation $T_\alpha\colon\T^d\ni z\mapsto z+\alpha$. A point
$\alpha\in\R^d$ is said to be \emph{totally irrational} when
$T_{\pi(\alpha)}$ is minimal on $\T^d$.

In several places along this paper the symbol ``$\pm$'' shall have the
following meaning: given $v\in\R^d$, we write $\pm v$ to denote either
the vector $v$ or $-v$.

\subsubsection{The plane $\R^2$}
\label{sec:the-plane}

In the particular case of $d=2$, given any $v=(a,b)\in\R^2$, we define
$v^\perp:=(-b,a)$. For any $\alpha\in\R$ and any $v\in\Ss^1$, we shall
use the following notation for the straight line through the point
$\alpha v$ and perpendicular to $v$:
\begin{equation}
  \label{eq:line-ell-alpha-v-definition}
  \ell_\alpha^v:=\alpha v+\R v^\perp = \{ \alpha v + t v^\perp :
  t\in\R\}. 
\end{equation}

We say a vector $v\in\R^2\setminus\{0\}$ has \emph{rational slope}
when there exists $\alpha>0$ such that $\alpha v\in\Z^2$; and it is
said to have \emph{irrational slope} otherwise. 

We will also need the following notation for strips on $\R^2$: given
any $v\in\Ss^1$ and $r<s$, we define the (closed) strip
\begin{equation}
  \label{eq:Asv-strip-def}
  \A^v_{r,s}:=\overline{\bb{H}^v_{r}}\setminus \bb{H}^{v}_{s} =
  \{z\in\R^2 : r\leq \scprod{z}{v}\leq s\}.
\end{equation}

A \emph{Jordan curve} is any subset of $\R^2$ which is homeomorphic to
$\Ss^1$. A \emph{Jordan domain} is any bounded open subset of $\R^2$
whose boundary is a Jordan curve.

We shall need the following theorem due to Janiszewski (see for
instance, \cite[Chapter X, Theorem 2]{KuratowskiTopologyVol2}):

\begin{theorem}
  \label{thm:janiszewski}
  If $X_1,X_2\subset\Ss^2$ are two continua such that $X_1\cap X_2$
  is not connected, then $X_1\cup X_2$ disconnects $\Ss^2$, \ie
  $\Ss^2\setminus(X_1\cup X_2)$ is not connected.
\end{theorem}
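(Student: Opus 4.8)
The plan is to deduce this from the Mayer--Vietoris sequence in reduced Čech cohomology (equivalently, Alexander duality), which is the standard modern route and keeps the argument short. First I would recall Alexander duality on $\Ss^2$: for a compact subset $K\subset\Ss^2$ one has $\tilde H^0(K)\cong \tilde H_1(\Ss^2\setminus K)$ and $\tilde H^1(K)\cong \tilde H_0(\Ss^2\setminus K)$ (reduced Čech cohomology on the left, singular homology on the right, coefficients in a field, say $\Z/2$ or $\Q$). In particular, the number of connected components of $\Ss^2\setminus K$, minus one, equals $\dim \tilde H^1(K)$, so to show $\Ss^2\setminus(X_1\cup X_2)$ is disconnected it suffices to show $\tilde H^1(X_1\cup X_2)\neq 0$.

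Next I would run Mayer--Vietoris for the closed cover $\{X_1,X_2\}$ of $X_1\cup X_2$ (valid in Čech cohomology for closed subsets of a nice space). The relevant segment is
\begin{displaymath}
  \tilde H^0(X_1)\oplus\tilde H^0(X_2)\longrightarrow \tilde
  H^0(X_1\cap X_2)\longrightarrow \tilde H^1(X_1\cup
  X_2)\longrightarrow \tilde H^1(X_1)\oplus\tilde H^1(X_2).
\end{displaymath}
Since $X_1$ and $X_2$ are continua, they are connected, so $\tilde H^0(X_1)=\tilde H^0(X_2)=0$; hence the first map is zero and $\tilde H^0(X_1\cap X_2)$ injects into $\tilde H^1(X_1\cup X_2)$. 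By hypothesis $X_1\cap X_2$ is nonempty (it is contained in both continua, but more to the point the statement presupposes it is a set that fails to be connected, so in particular it is nonempty) and disconnected, so $\tilde H^0(X_1\cap X_2)\neq 0$. Therefore $\tilde H^1(X_1\cup X_2)\neq 0$, and by the duality statement above $\Ss^2\setminus(X_1\cup X_2)$ has more than one component, i.e.\ it is disconnected.

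The main obstacle — really the only delicate point — is justifying that Mayer--Vietoris and Alexander duality hold in the generality needed: $X_1, X_2$ are arbitrary continua, not CW complexes, so one must work with Čech cohomology (where the Mayer--Vietoris sequence for a closed cover is valid because $\Ss^2$ is compact Hausdorff and the pieces are closed) rather than singular cohomology, and one must use the correct form of Alexander duality for compact subsets of $\Ss^2$. An alternative, more elementary route that avoids cohomology altogether would be to argue directly: pick two points $a,b$ in distinct components of $X_1\cap X_2$, and suppose for contradiction that $\Ss^2\setminus(X_1\cup X_2)$ is connected; then using the fact that each $X_i$ does not separate $a$ from $b$ (a connected set — indeed here each $X_i$ is connected, hence non-separating follows from the standard unicoherence of $\Ss^2$) together with a Phragmén--Brouwer-type argument, one derives that $X_1\cap X_2$ must be connected, a contradiction. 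Either way, the combinatorial heart is exactly the exact-sequence computation above; I would present the cohomological version as the cleanest, citing \cite{KuratowskiTopologyVol2} for the classical treatment, since the paper already invokes it as a known result.
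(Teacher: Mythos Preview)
Your cohomological argument via Alexander duality and Mayer--Vietoris is correct and is the standard modern proof of Janiszewski's theorem. However, there is nothing to compare it against: the paper does not prove this result at all. Theorem~\ref{thm:janiszewski} is stated with a citation to \cite[Chapter X, Theorem 2]{KuratowskiTopologyVol2} and used as a black box throughout, so the ``paper's own proof'' is simply an appeal to the literature.

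One small remark on your alternative sketch: the parenthetical claim that ``each $X_i$ is connected, hence non-separating follows from the standard unicoherence of $\Ss^2$'' is not right as stated --- a connected continuum in $\Ss^2$ can certainly separate two points (a Jordan curve does). The Phragm\'en--Brouwer route works, but the correct input is that if neither $X_1$ nor $X_2$ separates $a$ from $b$ and $X_1\cap X_2$ is connected, then $X_1\cup X_2$ does not separate $a$ from $b$; one then contraposes. Since this was only an aside and your main argument is the cohomological one, this does not affect the validity of your proposal.
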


\subsubsection{The annulus}
\label{sec:annulus}

The \emph{open annulus} is given by $\A:=\T\times\R$. Its universal
covering map will be denoted by $P\colon\R^2\to\A$ and is defined by 
\begin{equation}
  \label{eq:P-covering-map}
  P(x,y):=\big(\pi(x),y\big)=(x+\Z,y), \quad\forall (x,y)\in\R^2.   
\end{equation}
We will always considered the annulus endowed the distance
\begin{displaymath}
  d_\A(x,y):=\min\left\{\norm{\tilde x-\tilde y} : \tilde x\in
    P^{-1}(x),\ \tilde y\in P^{-1}(y)\right\}, \quad\forall x,y\in\A. 
\end{displaymath}

We write $\hat\A:=\A\sqcup\{-\infty,+\infty\}$ for the two-end
compactification of the annulus $\A$. Observe $\hat\A$ is homeomorphic
to the $2$-sphere $\bb{S}^2$.

We will need the following elementary result about unbounded connected
subsets of $\A$:

\begin{lemma}
  \label{lem:lifting-unbounded-continuum}
  Let $C\subset\A$ be a closed connected unbounded set. If $P$ is the
  covering map given by \eqref{eq:P-covering-map}, then every
  connected component of $P^{-1}(C)$ is unbounded in $\R^2$,
\end{lemma}

\begin{proof}
  Reasoning by contradiction, let us suppose there is a bounded
  connected component $K$ of $P^{-1}(C)\subset\R^2$.

  If we write $\overline{C}$ for the closure of $C$ in $\hat\A$, we
  get that $\overline{C}$ is compact and connected as well, and
  contains at least one of the two ends. Without loss of
  generalization, we can assume the upper end $+\infty$ belongs to
  $\overline{C}$.

  Then we consider a sequence of open connected subsets
  $(U_n)_{n\geq 1}$ of $\hat\A$ satisfying the following properties:
  $\overline{C}\subset U_n\subset\hat\A$ and
  $\overline{U_{n+1}}\subset U_n$, for every $n\geq 1$; and
  \begin{displaymath}
    \overline{C} = \bigcap_{n\geq 1} U_n = \bigcap_{n\geq 1}
    \overline{U_n}. 
  \end{displaymath}

  Such a sequence of nested open sets can be constructed as follows:
  one consider a distance function $d_{\hat\A}$ on $\hat\A$ which is
  compatible with its topology and then defines
  $U_n:={\overline{C}}_{1/n}$, for every $n\geq 1$, where
  ${\overline{C}}_{1/n}$ denotes the $1/n$-neighborhood of $\overline
  C$ with respect to the distance $d_{\hat\A}$ as defined by
  \eqref{eq:epsilon-nbh-def}. 

  Now let $z_0$ be an arbitrary point of $K\subset\R^2$. So we have
  $P(z_0)\in C\subset U_n$, for every $n\geq 1$. Since $U_n$ is open
  and connected, there is a continuous curve
  $\gamma_n\colon [0,1]\to U_n$ such that $\gamma_n(0)=P(z_0)$,
  $\gamma_n(1)=+\infty$ and $\gamma_n(t)\in\A$, for each $n\in\N$ and
  every $t\in [0,1)$.

  Then observe that, since $P$ is a covering map, there exists a
  unique continuous curve $\tilde\gamma_n\colon [0,1)\to\R^2$ such
  that $\tilde\gamma_n(0)=z_0$ and $P\circ\tilde\gamma_n=\gamma_n$.

  If $\hat\R^2:=\R^2\sqcup\{\infty\}$ denotes the one-point
  compactification of $\R^2$, one sees that each $\tilde\gamma_n$ has
  a unique continuous extension from $[0,1]$ to $\hat\R^2$ just
  defining $\tilde\gamma_n(1):=\infty$. In this way, each
  $\tilde\gamma_n\big([0,1]\big)$ is a compact subset of $\hat\R^2$,
  and by compactness of the Hausdorff space $\mathcal{K}(\hat\R^2)$,
  there exists a sub-sequence $n_j\to\infty$ and a non-empty compact
  subset $L\subset\hat\R^2$ such that
  $\tilde\gamma_{n_j}\big([0,1]\big)\to L$, as $n_j\to\infty$, where
  the convergence is respect to the the Hausdorff distance. One can
  easily verify that $L$ is connected, both points $z_0$ and $\infty$
  belong to $L$ and $P\big(L\setminus\{\infty\}\big)\subset C$. In
  particular, $L\setminus\{\infty\}$ is a closed, connected, unbounded
  subset of $P^{-1}(C)\subset\R^2$ and
  $K\cap (L\setminus\{\infty\})\neq\emptyset$, contradicting the fact
  that $K$ were a bounded connected component of $P^{-1}(C)$.
\end{proof}

\subsection{Ergodic theory and cocycles}
\label{sec:ergodic-theory-cocycles}

Given a topological space $X$, we write $\B_X$ to denote its Borel
$\sigma$-algebra.

The Haar (probability) measure on $(\T^d,\B_{\T^d})$, also called
Lebesgue measure, will be denoted by $\Leb_d$. By a slight abuse of
notation, we will also write $\Leb_d$ for the Lebesgue measure on
$\R^d$; and for the sake of simplicity of notation, we shall just
write $\Leb$ instead of $\Leb_1$.

Given an arbitrary $\sigma$-finite measure space $(X,\ms{B},\mu)$, a
map $f\colon (X,\ms{B})\carr$ is said to be \emph{non-singular}
(respect to $\mu$) when it is measurable and, for every $B\in\ms{B}$,
it holds $\mu(f^{-1}(B))=0$ if and only if $\mu(B)=0$. A non-singular
map $f\colon (X,\ms{B})\carr $ is said to be \emph{conservative} (with
respect to $\mu$) when for every $B\in\ms{B}$ such that $\mu(B)>0$,
there exists $n\geq 1$ satisfying $\mu\big(B\cap f^{-n}(B)\big)>0$.

As usual, we say that a measurable map $f\colon (X,\B)\carr$ preserves
$\mu$ when $f_\star\mu=\mu$, where $f_\star\mu(B):=\mu(f^{-1}(B))$,
for every $B\in\ms{B}$; and $f$ is said to be an \emph{automorphism}
of $(X,\B,\mu)$ when it is bijective and its inverse is measurable and
preserves $\mu$, too.

Given an invertible map $f\colon X\carr$, a function
$\phi\colon X\to\R$ and any $n\in\Z$, one defines the \emph{Birkhoff
  sum}
\begin{equation}
  \label{eq:Birk-sum-def}
  \mc{S}_f^n(\phi) :=
  \begin{cases}
    \sum_{j=0}^{n-1}\phi\circ f^j,& \text{if } n\geq 1;\\
    0, & \text{if } n=0;\\
    -\sum_{j=1}^{-n}\phi\circ f^{-j},& \text{if } n<0.
  \end{cases}
\end{equation}

Putting together two classical results of
Atkinson~\cite[Theorem]{AtkinsonRecurrCocycles} and Schmidt
\cite[Proposition 6]{SchmidtRecCocStatRandWalk}, we get the following
\begin{theorem}
  \label{thm:atkinson}
  Let $(X,\ms{B},\mu)$ be a probability space,
  $f\colon (X,\mc{B},\mu)\carr$ be an ergodic automorphism and
  $\phi\in L^1(X,\ms{B},\mu)$ be a real function such that
  $\int_X\phi\dd\mu=0$. Then, the skew-product automorphism
  $F\colon X\times\R\carr$ given by
  \begin{equation}
    \label{eq:skew-product-Birk-definition}
    F(x,t):=\big(f(x),t+\phi(x)\big), \quad\forall (x,t)\in X\times\R,
  \end{equation}
  is conservative with respect to the $F$-invariant $\sigma$-finite
  measure $\mu\otimes\Leb$.
\end{theorem}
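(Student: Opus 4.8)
The plan is to prove this by combining the two quoted results in the natural way: Atkinson's theorem characterizes when a cocycle over an ergodic transformation is \emph{recurrent} in terms of the Birkhoff sums returning near zero, and Schmidt's Proposition~6 upgrades pointwise recurrence of the cocycle to genuine conservativity of the associated skew-product on $X \times \R$. So the argument has two halves: first verify recurrence, then translate recurrence into conservativity.

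First I would recall Atkinson's statement precisely: if $f$ is an ergodic automorphism of the probability space $(X,\ms{B},\mu)$ and $\phi \in L^1(\mu)$ has $\int_X \phi \dd\mu = 0$, then for $\mu$-a.e.\ $x$ and every $\varepsilon > 0$ one has $\abs{\mc{S}_f^n(\phi)(x)} < \varepsilon$ for infinitely many $n \geq 1$; equivalently, the $\R$-valued cocycle generated by $\phi$ is recurrent. This is exactly where the zero-mean hypothesis is used — if $\int \phi \neq 0$ the Birkhoff sums drift off to $\pm\infty$ by the ergodic theorem and nothing is recurrent. I would state this as the key input and not reprove it.

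Next I would invoke Schmidt's Proposition~6: for a skew-product of the form \eqref{eq:skew-product-Birk-definition} over an ergodic base, conservativity of $F$ with respect to $\mu \otimes \Leb$ is equivalent to recurrence of the cocycle $\phi$ in the sense just described. The forward direction we need goes: given a Borel set $B \subset X \times \R$ with $(\mu \otimes \Leb)(B) > 0$, one wants $n \geq 1$ with $(\mu\otimes\Leb)\big(B \cap F^{-n}(B)\big) > 0$. Since $F^n(x,t) = \big(f^n(x), t + \mc{S}_f^n(\phi)(x)\big)$, a point $(x,t) \in B$ returns to $B$ under $F^n$ precisely when $\big(f^n(x), t + \mc{S}_f^n(\phi)(x)\big) \in B$; by Fubini one reduces to fibers, and the recurrence of the Birkhoff sums (they come $\varepsilon$-close to $0$ infinitely often, for suitable $\varepsilon$ governed by the fiberwise size of $B$) together with a pigeonhole/measure-density argument on the $\R$-fibers produces the required positive-measure overlap. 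I would present this as ``apply \cite[Proposition~6]{SchmidtRecCocStatRandWalk} to the recurrent cocycle $\phi$'' rather than redoing the fiber bookkeeping.

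The one genuinely substantive point — the ``hard part'' — is making sure the hypotheses of Schmidt's proposition are met by our $F$: namely that $F$ is itself non-singular with respect to $\mu \otimes \Leb$ (clear, since it is a $(\mu\otimes\Leb)$-preserving automorphism, as $f$ preserves $\mu$ and $t \mapsto t + \phi(x)$ preserves $\Leb$ on each fiber), that the base $f$ is ergodic (given), and that recurrence is being invoked in the exact formulation Schmidt uses. Once those identifications are in place the two cited theorems fit together verbatim and conservativity follows; there is no further estimate to grind through.
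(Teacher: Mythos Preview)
Your proposal is correct and matches the paper's approach exactly: the paper does not give a proof but simply states that the theorem follows by ``putting together'' Atkinson's recurrence theorem and Schmidt's Proposition~6, which is precisely the two-step combination you outline. If anything, you have supplied more detail than the paper does.
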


\subsection{Groups of homeomorphisms}
\label{sec:groups-homeos}

From now on and until the end of this section, $M$ will denote an
arbitrary topological manifold. We write $\Homeo{}(M)$ for the group
of homeomorphisms from $M$ onto itself. The subgroup formed by those
homeomorphisms which are homotopic to the identity $id_M$ will be
denoted by $\Homeo0(M)$.

\subsubsection{Torus homeomorphisms and their lifts}
\label{sec:torus-homeomorphisms}

The group of lifts of torus homeomorphisms which are homotopic to the
identity will be denoted by
\begin{displaymath}
  \Thomeo(\T^d):=\left\{\tilde f\in\Homeo0(\R^d) : \tilde f-id_{\R^d}\in
    C^0(\T^d,\R^d)\right\}.
\end{displaymath}
Notice that in this definition, as it is usually done, we are
identifying the elements of $C^0(\T^d,\R^d)$ with those
$\Z^d$-periodic continuous functions from $\R^d$ to itself.

Making some abuse of notation, we also write
$\pi\colon\Thomeo(\T^d)\to\Homeo0(\T^d)$ for the map that associates
to each $\tilde f$ the only torus homeomorphism $\pi\tilde f$ such
that $\tilde f$ is a lift of $\pi\tilde f$. Notice that with our
notations, it holds $\pi T_\alpha =T_{\pi(\alpha)}\in\Homeo0(\T^d)$,
for every $\alpha\in\R^d$.

Given any $\tilde f\in\Thomeo(\T^d)$, we define its \emph{displacement
  function} by
\begin{equation}
  \label{eq:Delta-cocycle}
  \Delta_{\tilde f}:=\tilde f-id_{\R^d}\in C^0(\T^d,\R^d).
\end{equation}
Observe that this function can be naturally considered as a cocycle
over $f:=\pi\tilde f$ because
\begin{equation}
  \label{eq:Delta-Birk-sum}
  \Delta_{\tilde f^n} = \sum_{j=0}^{n-1}\Delta_{\tilde f}\circ
  f^j, 
  \quad\forall n\geq 1.
\end{equation}
For the sake of readability, we shall use the usual notation for
cocycles defining
\begin{displaymath}
  \Delta_{\tilde f}^{(n)}:=\Delta_{\tilde f^n},\quad\forall n\in\Z.  
\end{displaymath}

The map $\R^d\ni\alpha\mapsto T_\alpha\in\Thomeo(\T^d)$ defines an
injective group homomorphism, and hence, $\R^d$ naturally acts on
$\Thomeo(\T^d)$ by conjugacy. However, since every element of
$\Thomeo(\T^d)$ commutes with $T_{\boldsymbol{p}}$, for all
$\boldsymbol{p}\in\Z^d$, we conclude $\T^d$ itself acts on
$\Thomeo(\T^d)$ by conjugacy, \ie the map
$\Ad\colon\T^d\times\Thomeo(\T^d)\to\Thomeo(\T^d)$ given by
\begin{equation}
  \label{eq:Ad-Td-definition}
  \Ad_t(\tilde f):=T_{\tilde t}^{-1}\circ \tilde f\circ T_{\tilde t},
  \quad\forall(t,\tilde f)\in\T^d\times\Thomeo(\T^d),\ \forall
  \tilde t\in\pi^{-1}(t),
\end{equation}
is well-defined.

\subsubsection{Invariant measures}
\label{sec:invariant-measures}

We write $\M(M)$ for the space of Borel probability measures on $M$. A
measure $\mu\in\M(M)$ is said to have \emph{total support} when
$\mu(A)>0$ for every non-empty open set $A\subset M$. We say $\mu$ is
a \emph{topological measure} if it has total support and no atoms.

For every $\mu\in\M(M)$, we consider the group of homeomorphisms
\begin{displaymath}
  \Homeo\mu(M):=\left\{f\in\Homeo{}(M) : f_\star\mu=\mu\right\}.
\end{displaymath}
Given $f\in\Homeo{}(M)$, we write
$\M(f):=\{\nu\in\M(M) : f_\star\nu=\nu\}$.

The following classical result is due to Oxtoby and
Ulam~\cite{OxtobyUlam}:
\begin{theorem}
  \label{thm:Oxtoby-Ulam}
  Let $M$ be a compact topological manifold and $\mu,\nu\in\M(M)$ two
  topological measures. Then, there exists $h\in\Homeo{}(M)$ such that
  $h_\star\mu=\nu$.
\end{theorem}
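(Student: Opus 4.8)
The plan is to follow the classical argument of Oxtoby and Ulam \cite{OxtobyUlam}: reduce the statement to a normal form on a cube, and then build $h$ as a uniform limit produced by a dyadic refinement procedure. We may assume $M$ is connected (the general case follows by treating homeomorphic components together). First I would reduce to the cube: a compact connected $n$-manifold $M$ is the image of $I^n:=[0,1]^n$ under a continuous surjection $q$ which restricts to a homeomorphism of $\inter(I^n)$ onto an open subset of $M$ whose complement is a finite union of submanifolds of dimension $<n$; since $\mu$ and $\nu$ are topological measures they assign zero mass to that complement, so they pull back along $q$ to topological measures on $I^n$ that vanish on $\partial I^n$ and are compatible with the identifications $q$ makes on $\partial I^n$. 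Thus a homeomorphism of $I^n$ fixing $\partial I^n$ pointwise and carrying one pulled-back measure to the other descends to the desired $h$ on $M$. It then suffices to prove the following: for every topological measure $\lambda$ on $I^n$ with $\lambda(\partial I^n)=0$ there is $h_\lambda\in\Homeo{}(I^n)$, equal to the identity on $\partial I^n$, with $(h_\lambda)_\star\Leb=\lambda$, where $\Leb$ now denotes Lebesgue measure on $I^n$ normalized to total mass one --- indeed, $h_\nu\circ h_\mu^{-1}$ then pushes $\mu$ to $\nu$.

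For this core step I would fix a topological measure $\nu$ on $I^n$ with $\nu(\partial I^n)=0$ and construct $h=\lim_k h_k$ by induction on $k$, where each $h_k\in\Homeo{}(I^n)$ fixes $\partial I^n$ pointwise and maps the family $\mathcal{D}_k$ of generation-$k$ dyadic subcubes of $I^n$ onto a family of closed topological balls (``bricks'') with pairwise disjoint interiors, $\nu$-null boundaries, and $\nu\big(h_k(Q)\big)=\Leb(Q)=2^{-nk}$ for every $Q\in\mathcal{D}_k$. The induction also demands that $h_{k+1}$ differ from $h_k$ only by a homeomorphism supported inside the individual level-$k$ bricks, that the level-$(k+1)$ bricks refine the level-$k$ ones, and that $\max_{Q\in\mathcal{D}_k}\diam h_k(Q)\to 0$. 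Choosing the successive modifications so that $\sup_x d\big(h_{k+1}(x),h_k(x)\big)$ is summable forces $(h_k)$ and $(h_k^{-1})$ to converge uniformly to mutually inverse homeomorphisms; since the bricks have $\nu$-null boundaries, the Hausdorff limit passes through $\nu$ and yields $\nu\big(h(Q)\big)=\Leb(Q)$ for every dyadic cube $Q$, and as the dyadic cubes generate $\B_{I^n}$ the two probability measures $h_\star\Leb$ and $\nu$ coincide.

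The crux, and the step I expect to cost genuine work, is the inductive \emph{redistribution lemma} driving the construction: given a closed brick $B\subset I^n$ (a topological ball) with $\nu(\partial B)=0$ and $\nu(B)=2^{-nk}$, together with the subdivision of the corresponding dyadic cube $Q$ into its $2^n$ children, one must produce a homeomorphism of $B$ fixing $\partial B$ that partitions $B$ into $2^n$ subbricks, each again a topological ball with $\nu$-null boundary, prescribed $\nu$-mass $2^{-n(k+1)}$, and small diameter. I would prove this by bisecting one coordinate direction at a time; the heart of each bisection is that, for a non-atomic full-support measure, an intermediate-value argument along a sweep of hypersurfaces produces a splitting hypersurface with any prescribed mass ratio and with $\nu$-null trace, after which a Schoenflies-type extension realizes the prescribed subbricks by a homeomorphism fixing $\partial B$. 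Non-atomicity and total support of $\nu$ are exactly what keep every intermediate target mass strictly positive and attainable, and both hypotheses are genuinely necessary: an atom of $\nu$, or a nonempty $\nu$-null open set, would already obstruct $h_\star\Leb=\nu$, since a homeomorphism carries atoms to atoms of equal mass and nonempty open sets to nonempty open sets, whereas $\Leb$ has neither atoms nor null open subsets of $I^n$. Once the redistribution lemma is in hand, the remaining points --- nesting of bricks, uniform shrinking of their diameters, and summability of the perturbations --- are routine bookkeeping.
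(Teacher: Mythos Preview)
The paper does not give a proof of this statement: Theorem~\ref{thm:Oxtoby-Ulam} is simply stated as the classical result of Oxtoby and Ulam, with a citation to \cite{OxtobyUlam}, and is then used as a black box (for instance in Corollary~\ref{cor:minimal-homeos-rho-cap-Q2-empty} and in Theorem~\ref{thm:bounded-rot-deviations-for-minimal-homeos}). So there is no ``paper's own proof'' to compare your proposal against.

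Your sketch is essentially the classical Oxtoby--Ulam argument: reduce to the cube, then build the homeomorphism as a uniform limit of successive redistributions over dyadic subdivisions, using an intermediate-value bisection at each stage. That outline is correct in spirit. One point that would need genuine care if you were writing this out in full is the reduction step: your claim that every compact connected topological $n$-manifold admits a surjection from $I^n$ that is a homeomorphism off a lower-dimensional set is not immediate in the purely topological category (you are implicitly invoking something like a triangulation or handle decomposition). The original Oxtoby--Ulam paper and its modern treatments (e.g.\ Alpern--Prasad) handle this carefully, and for the applications in the present paper only $M=\T^2$ is needed, where the reduction is elementary.
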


For the sake of simplicity of notation, on the two-dimensional torus
we define the group of \emph{symplectomorphisms} (also called
\emph{area-preserving homeomorphisms}) by
\begin{displaymath}
  \Symp{}:=\left\{f\in\Homeo{}(\T^2) : \Leb_2\in\M(f)\right\}. 
\end{displaymath}

It is well known that its connected component containing the identity,
which will be denoted by $\Symp0$, coincides with
$\Symp{}\cap\Homeo0(\T^2)$. We write
\begin{displaymath}
  \Tsymp:=\pi^{-1}\left(\Symp0\right)<\Thomeo(\T^2).
\end{displaymath}

\subsection{Rotation set and rotation vectors}
\label{sec:rotation-set}

Let $f\in\Homeo0(\T^d)$ be an arbitrary homeomorphism and
$\tilde f\in\Thomeo(\T^d)$ be a lift of $f$. The \emph{rotation set
  of} $\tilde f$ is given by
\begin{equation}
  \label{eq:rot-set-def}
  \rho(\tilde f):= \bigcap_{m\geq 0} \overline{\bigcup_{n\geq
      m}\left\{\frac{\Delta_{\tilde f}^{(n)}(z)}{n} :
      z\in\R^d\right\}}.
\end{equation}
It can be easily shown that $\rho(\tilde f)$ is non-empty, compact and
connected.

When $d=1$, by classical Poincaré theory of circle homeomorphisms
\cite{Poincare1880memoire} we know that $\rho(\tilde f)$ reduces to a
point, but in general this does not hold in higher dimensions.

We summarized some elementary facts about rotation sets which are due
to Misiurewicz and Ziemian~\cite[Proposition 2.1]{MisiurewiczZiemian}:
\begin{proposition}
  \label{pro:rot-set-elementary-properties}
  Given any $\tilde f\in\Thomeo(\T^d)$, the following properties hold:
  \begin{enumerate}[(i)]
  \item
    $\rho(\tilde f^n)=n\rho(\tilde f) :=\{n\rho\in\R^d :
    \rho\in\rho(\tilde f)\}$, for any $n\in\Z$;
  \item
    $\rho(T_{\boldsymbol{p}}\circ \tilde
    f)=T_{\boldsymbol{p}}\big(\rho(\tilde f)\big)$, for any
    $\boldsymbol{p}\in\Z^d$.
  \end{enumerate}
\end{proposition}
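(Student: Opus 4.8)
The plan is to reduce everything to the cocycle identity \eqref{eq:Delta-Birk-sum} (equivalently, to the additive relation $\Delta_{\tilde f}^{(a+b)}=\Delta_{\tilde f}^{(a)}+\Delta_{\tilde f}^{(b)}\circ\tilde f^{\,a}$) together with the boundedness of $\Delta_{\tilde f}$, which follows from the compactness of $\T^d$ (so $\norm{\Delta_{\tilde f}}_{C^0}<\infty$). Throughout I will use the sequential description of the rotation set obtained by unwinding \eqref{eq:rot-set-def}: a vector $\rho$ lies in $\rho(\tilde g)$ if and only if there are $n_k\uparrow+\infty$ and $z_k\in\R^d$ with $\Delta_{\tilde g}^{(n_k)}(z_k)/n_k\to\rho$. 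I will also repeatedly use that multiplication by a nonzero scalar, and translation by a fixed vector, are homeomorphisms of $\R^d$ and hence commute with the closure operation appearing in \eqref{eq:rot-set-def}.

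\emph{Property (i).} The case $n=0$ is trivial since $\rho(id_{\R^d})=\{0\}$. For $n=-1$, putting $w:=\tilde f^{-k}(z)$ one computes $\Delta_{\tilde f^{-1}}^{(k)}(z)=\tilde f^{-k}(z)-z=w-\tilde f^{\,k}(w)=-\Delta_{\tilde f}^{(k)}(w)$; since $z\mapsto w$ is a bijection of $\R^d$, taking unions over the base point and closures gives $\rho(\tilde f^{-1})=-\rho(\tilde f)$, and then the general negative case follows by writing $\tilde f^{\,n}=(\tilde f^{-1})^{|n|}$ and invoking the positive case applied to $\tilde f^{-1}$. So assume $n\geq1$. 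Since $\Delta_{\tilde f^{\,n}}^{(k)}=\Delta_{\tilde f}^{(nk)}=n\cdot\tfrac1n\Delta_{\tilde f}^{(nk)}$, and $nk$ ranges inside $\{j\geq m\}$ as $k$ ranges over $\{k\geq m\}$, we immediately get $\rho(\tilde f^{\,n})\subseteq n\,\rho(\tilde f)$. For the reverse inclusion take $\rho\in\rho(\tilde f)$ witnessed by $\Delta_{\tilde f}^{(j_k)}(z_k)/j_k\to\rho$ with $j_k\uparrow+\infty$, and write $j_k=nq_k+r_k$ with $0\leq r_k<n$. The additive cocycle relation gives $\Delta_{\tilde f}^{(j_k)}(z_k)=\Delta_{\tilde f}^{(nq_k)}(z_k)+\Delta_{\tilde f}^{(r_k)}\big(\tilde f^{\,nq_k}(z_k)\big)$, and the last summand is bounded independently of $k$ (it is a Birkhoff sum of fewer than $n$ terms, each bounded by $\norm{\Delta_{\tilde f}}_{C^0}$), so $\Delta_{\tilde f}^{(nq_k)}(z_k)/j_k\to\rho$ as well. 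Since $q_k\uparrow+\infty$ and $j_k/q_k=n+r_k/q_k\to n$, we conclude $\Delta_{\tilde f^{\,n}}^{(q_k)}(z_k)/q_k=\Delta_{\tilde f}^{(nq_k)}(z_k)/q_k\to n\rho$, i.e. $n\rho\in\rho(\tilde f^{\,n})$. This proves $n\,\rho(\tilde f)\subseteq\rho(\tilde f^{\,n})$ and completes (i).

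\emph{Property (ii).} Because $\tilde f-id_{\R^d}$ is $\Z^d$-periodic, $\tilde f$ commutes with $T_{\boldsymbol p}$ for every $\boldsymbol p\in\Z^d$; hence an immediate induction yields $(T_{\boldsymbol p}\circ\tilde f)^n=T_{n\boldsymbol p}\circ\tilde f^{\,n}$ for all $n\in\Z$, and therefore $\Delta_{(T_{\boldsymbol p}\circ\tilde f)^{n}}=\Delta_{\tilde f^{\,n}}+n\boldsymbol p$. Dividing by $n$ and passing to the limit in \eqref{eq:rot-set-def} (again using that translation by $\boldsymbol p$ commutes with the closure) gives $\rho(T_{\boldsymbol p}\circ\tilde f)=\rho(\tilde f)+\boldsymbol p=T_{\boldsymbol p}\big(\rho(\tilde f)\big)$.

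\emph{Main obstacle.} Everything here is elementary; the only step that needs genuine care is the reverse inclusion in (i), where one must absorb the Euclidean-division remainder term $\Delta_{\tilde f}^{(r_k)}\circ\tilde f^{\,nq_k}$ — this is precisely where compactness of $\T^d$, in the guise of the uniform bound on the displacement, is used — and then check that changing the normalization from $1/j_k$ to $1/q_k$ does not affect the limit. The rest is bookkeeping with the cocycle identity and the passage from the $\bigcap_m\overline{\bigcup_{n\geq m}}$ form of $\rho(\tilde f)$ to its sequential description.
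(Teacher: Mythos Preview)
Your argument is correct. The paper itself does not supply a proof of this proposition: it is stated as a summary of elementary facts and attributed to Misiurewicz and Ziemian~\cite[Proposition 2.1]{MisiurewiczZiemian}, so there is nothing to compare against beyond noting that what you wrote is precisely the standard cocycle/Euclidean-division argument one finds in that reference.
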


As a consequence of \emph{(ii)} of
Proposition~\ref{pro:rot-set-elementary-properties}, we see that given
any $f\in\Homeo0(\T^d)$ and any lift $\tilde f\colon\R^d\carr$ of $f$,
we can define
\begin{displaymath}
  \rho(f):=\pi(\rho(\tilde f))\subset\T^d.
\end{displaymath}

We say that $f\in\Homeo0(\T^d)$ is a \emph{pseudo-rotation} when
$\rho(f)$ is a singleton.

By \eqref{eq:Delta-Birk-sum} and \eqref{eq:rot-set-def}, we know the
rotation set is formed by accumulation points of Birkhoff averages of
the displacement function. So given any $\mu\in\M(f)$, one can define
its \emph{rotation vector} by
\begin{displaymath}
  \rho_\mu(\tilde f) := \int_{\T^d}\Delta_{\tilde f}\dd\mu.
\end{displaymath}
Thus, by Birkhoff ergodic theorem we get
$\rho_\mu(\tilde f)\in\rho(\tilde f)$, for every $f$-invariant ergodic
probability measure $\mu$. Moreover, the following holds:

\begin{theorem}[Theorem 2.4 in \cite{MisiurewiczZiemian}]
  \label{thm:mis-ziem-extreme-points}
  Let $f\in\Homeo0(\T^d)$ and $\tilde f\colon\R^d\carr$ be a lift of
  $f$. Then, for every extreme point $w\in\rho(\tilde f)$, there
  exists an ergodic measure $\mu\in\M(f)$ such that
  $\rho_\mu(\tilde f)=w$. Consequently, it holds
  \begin{displaymath}
    \mathrm{Conv}\big(\rho(\tilde f)\big)=\left\{\rho_\nu(\tilde f) : 
      \nu\in\M(\pi\tilde f)\right\},
  \end{displaymath}
  where $\mathrm{Conv}(\cdot)$ denotes the convex hull operator.
\end{theorem}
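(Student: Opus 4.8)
The plan is to prove the statement in two stages: first the existence of an ergodic invariant measure realizing any prescribed extreme point $w\in\rho(\tilde f)$, and then the identification of $\mathrm{Conv}(\rho(\tilde f))$ with the set of all rotation vectors $\rho_\nu(\tilde f)$, $\nu\in\M(f)$, which follows easily from the first part together with the ergodic decomposition.

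For the first stage, fix an extreme point $w$ of the compact convex set $K:=\mathrm{Conv}(\rho(\tilde f))$. By the definition \eqref{eq:rot-set-def} of the rotation set, there exist sequences $n_k\uparrow+\infty$ and $z_k\in\R^d$ such that $\Delta_{\tilde f}^{(n_k)}(z_k)/n_k\to w$. I would form the empirical measures $\mu_k:=\frac1{n_k}\sum_{j=0}^{n_k-1}\delta_{f^j(\pi(z_k))}$ on $\T^d$; by compactness of $\M(\T^d)$ in the weak-$*$ topology, pass to a subsequence so that $\mu_k\to\mu_\infty$. A standard argument shows $\mu_\infty\in\M(f)$, and since $\Delta_{\tilde f}$ is continuous and $\int\Delta_{\tilde f}\,d\mu_k=\Delta_{\tilde f}^{(n_k)}(z_k)/n_k$ (using \eqref{eq:Delta-Birk-sum}), we get $\rho_{\mu_\infty}(\tilde f)=w$. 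Now take the ergodic decomposition $\mu_\infty=\int\mu_\omega\,d\mathbb P(\omega)$; then $w=\int\rho_{\mu_\omega}(\tilde f)\,d\mathbb P(\omega)$ is an average of points of $K$ (since each $\rho_{\mu_\omega}(\tilde f)\in\rho(\tilde f)\subset K$ by Birkhoff's theorem). Because $w$ is an \emph{extreme} point of $K$, this forces $\rho_{\mu_\omega}(\tilde f)=w$ for $\mathbb P$-a.e.\ $\omega$; any such $\mu_\omega$ is the desired ergodic measure.

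For the second stage, the inclusion $\{\rho_\nu(\tilde f):\nu\in\M(f)\}\subseteq K$ is immediate: by ergodic decomposition and Birkhoff's theorem, every $\rho_\nu(\tilde f)$ is an average of points of $\rho(\tilde f)$, hence lies in its convex hull. Conversely, $K$ is the convex hull of its extreme points (Krein–Milman, or in finite dimensions Minkowski–Carathéodory, since $K\subset\R^d$ is compact convex), each extreme point is realized by an ergodic — hence invariant — measure by the first stage, and the set $\{\rho_\nu(\tilde f):\nu\in\M(f)\}$ is convex because $\M(f)$ is convex and $\nu\mapsto\rho_\nu(\tilde f)$ is affine; so it contains the convex hull of all extreme points, namely $K$.

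The main obstacle is the first stage, specifically making rigorous that the weak-$*$ limit $\mu_\infty$ of the empirical measures is genuinely $f$-invariant and that the rotation vector passes to the limit — this requires the uniform continuity of $\Delta_{\tilde f}$ (guaranteed since $\Delta_{\tilde f}\in C^0(\T^d,\R^d)$) together with the telescoping estimate $\big|\int\Delta_{\tilde f}\,d(f_\star\mu_k)-\int\Delta_{\tilde f}\,d\mu_k\big|=O(1/n_k)$. Once invariance and the limit identity are secured, the extremality argument and the convexity bookkeeping in the second stage are routine.
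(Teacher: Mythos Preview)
The paper does not prove this statement at all: it is quoted verbatim as Theorem~2.4 of Misiurewicz--Ziemian and used as a black box, so there is no ``paper's own proof'' to compare against. Your argument is correct and is essentially the original one: realise the extreme point $w$ as the rotation vector of a weak-$*$ limit of empirical measures, then use the ergodic decomposition together with the extremality of $w$ in $\mathrm{Conv}(\rho(\tilde f))$ to extract an ergodic measure with the same rotation vector; the identification of $\mathrm{Conv}(\rho(\tilde f))$ with $\{\rho_\nu(\tilde f):\nu\in\M(f)\}$ then follows from convexity of $\M(f)$ and affineness of $\nu\mapsto\rho_\nu(\tilde f)$. One small point worth making explicit: you apply extremality in $K=\mathrm{Conv}(\rho(\tilde f))$ rather than in $\rho(\tilde f)$ itself, which is the right move since for $d\ge 3$ the rotation set need not be convex; this is harmless because every extreme point of $K$ automatically lies in the compact set $\rho(\tilde f)$, so the statement as phrased is covered.
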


However, in the two-dimensional case rotation sets are always convex:

\begin{theorem}[Theorem 3.4 in \cite{MisiurewiczZiemian}]
  \label{thm:mis-ziem-convexity}
  For every $\tilde f\in\Thomeo(\T^2)$, we have
  \begin{displaymath}
    \rho(\tilde f) = \left\{\rho_{\nu}(\tilde f) : \nu\in
      \M\big(\pi\tilde f\big)\right\}.
  \end{displaymath}
\end{theorem}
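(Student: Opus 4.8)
The whole statement reduces to the \emph{convexity} of $\rho(\tilde f)$, so I would first make that reduction and then prove convexity by a planar argument. Write $f:=\pi\tilde f$ and recall $\mc S_f^n(\Delta_{\tilde f})=\Delta_{\tilde f}^{(n)}$, so for any ergodic $\nu\in\M(f)$ Birkhoff's theorem gives $\tfrac1n\Delta_{\tilde f}^{(n)}\to\rho_\nu(\tilde f)$ $\nu$-a.e., whence $\rho_\nu(\tilde f)\in\rho(\tilde f)$; integrating over an ergodic decomposition shows $\{\rho_\nu(\tilde f):\nu\in\M(f)\}\subseteq\mathrm{Conv}\big(\rho(\tilde f)\big)$. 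Combined with Theorem~\ref{thm:mis-ziem-extreme-points} (which already gives $\mathrm{Conv}\big(\rho(\tilde f)\big)=\{\rho_\nu(\tilde f):\nu\in\M(f)\}$) and the trivial inclusion $\rho(\tilde f)\subseteq\mathrm{Conv}\big(\rho(\tilde f)\big)$, the asserted identity is \emph{equivalent} to $\mathrm{Conv}\big(\rho(\tilde f)\big)\subseteq\rho(\tilde f)$, i.e.\ to convexity of $\rho(\tilde f)\subseteq\R^2$. This is the only place dimension $2$ enters; in higher dimension rotation sets need not be convex.

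To prove convexity I would work with the continua $A_n:=\tfrac1n\,\Delta_{\tilde f}^{(n)}(\T^2)\subseteq\R^2$, each compact and connected as a continuous image of $\T^2$ and bounded by $\norm{\Delta_{\tilde f}}_{C^0}$. Two elementary facts: (i) $\rho(\tilde f)\subseteq\mathrm{Conv}(A_n)$ for every $n$, since $\tfrac1{kn}\Delta_{\tilde f}^{(kn)}(z)=\tfrac1k\sum_{j=0}^{k-1}\tfrac1n\Delta_{\tilde f}^{(n)}(f^{jn}z)$ is an average of points of $A_n$, and since increasing the exponent by less than $n$ changes $\Delta_{\tilde f}^{(\ell)}$ by at most $n\norm{\Delta_{\tilde f}}_{C^0}$, the set $A_\ell$ lies within distance $O(n/\ell)$ of $\mathrm{Conv}(A_n)$; letting $\ell\to\infty$ in the definition of $\rho(\tilde f)$ gives the claim. (ii) For every $\varepsilon>0$ and all large $n$, $A_n\subseteq\big(\rho(\tilde f)\big)_\varepsilon$; otherwise, by boundedness of the $A_n$ and a diagonal argument one produces a point of $\rho(\tilde f)=\limsup_n A_n$ at distance $\geq\varepsilon$ from $\rho(\tilde f)$. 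Now suppose $\rho(\tilde f)$ is not convex. If $\mathrm{Conv}\big(\rho(\tilde f)\big)$ were a point or a segment, then $\rho(\tilde f)$, being connected and containing the extreme points, would already be convex; so $\mathrm{Conv}\big(\rho(\tilde f)\big)$ has nonempty interior and there is $c$ in its interior (or in the relative interior of an edge) with $c\notin\rho(\tilde f)$, hence a disk $B_\delta(c)$ disjoint from $\rho(\tilde f)$; translating and applying a linear isomorphism of $\R^2$ (which conjugates $\tilde f$ to a lift of a homeomorphism of a rescaled $2$-torus and acts accordingly on all the $A_n$) we may assume $c=0$. By (ii), for all large $n$ the continuum $A_n$ misses $B_{\delta/2}(0)$, while by (i) $0\in\inter\mathrm{Conv}(A_n)$; thus $A_n$ is a connected compact set at distance $\geq\delta/2$ from $0$ whose radial projection onto $\Ss^1$ winds more than halfway around, for every large $n$.

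The remaining step — extracting from this persistent winding a point $z$ and an exponent $N$ with $\tfrac1N\Delta_{\tilde f}^{(N)}(z)$ arbitrarily close to $0$, contradicting $0\notin\rho(\tilde f)$ — is the main obstacle, and the one genuinely using planar topology. I would attack it through the self-similarity $A_{kn}\subseteq\mathrm{Conv}(A_n)$ together with the orbit structure behind the averages in (i): by Carathéodory, $0$ lies in the interior of a triangle with vertices $\tfrac1n\Delta_{\tilde f}^{(n)}(y_1),\tfrac1n\Delta_{\tilde f}^{(n)}(y_2),\tfrac1n\Delta_{\tilde f}^{(n)}(y_3)\in A_n$, so one must produce a single $f^n$-orbit segment visiting suitable neighbourhoods of $y_1,y_2,y_3$ with frequencies close to the barycentric weights, and without assuming transitivity of $f$ this forces one to iterate to higher exponents and track how the enclosing arcs of $A_n,A_{n^2},\dots$ compound — this bookkeeping is the delicate core of Misiurewicz--Ziemian's argument. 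Once a contradiction is reached, $\rho(\tilde f)$ is convex and, by the first paragraph, $\rho(\tilde f)=\{\rho_\nu(\tilde f):\nu\in\M(f)\}$. A complementary route would feed an ergodic measure realizing an extreme point of the hull into the Atkinson--Schmidt conservativity theorem (Theorem~\ref{thm:atkinson}) to get recurrence of the displacement cocycle, but one still needs the planar input above to descend from extreme points of the hull to an arbitrary missing point $c$.
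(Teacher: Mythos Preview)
The paper does not prove this statement; it merely quotes it as Theorem~3.4 of Misiurewicz--Ziemian, so there is no in-paper proof to compare against. Your reduction to convexity is clean and correct: once one accepts Theorem~\ref{thm:mis-ziem-extreme-points}, the identity $\rho(\tilde f)=\{\rho_\nu(\tilde f):\nu\in\M(f)\}$ is equivalent to $\rho(\tilde f)$ being convex, and your observation that a connected subset of a segment containing both endpoints is automatically convex disposes of the degenerate cases.

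That said, your argument is not a proof but an outline, and you acknowledge this yourself. The two preliminary facts about the continua $A_n=\tfrac1n\Delta_{\tilde f}^{(n)}(\T^2)$ are fine, and the contradiction setup --- a continuum missing a disk around $c$ yet with $c$ interior to its convex hull --- is exactly the geometric tension Misiurewicz--Ziemian exploit. But the step you label ``the main obstacle'' is the entire content of their planar argument, and you do not carry it out: you describe wanting to find a single orbit segment visiting neighbourhoods of the Carath\'eodory vertices with prescribed frequencies, note that this is delicate without transitivity, and then defer to ``the bookkeeping \dots\ of Misiurewicz--Ziemian's argument''. That is a citation, not a proof. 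The Atkinson--Schmidt aside at the end does not close the gap either, as you concede.

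One small technical slip: an arbitrary linear isomorphism of $\R^2$ does not conjugate $\tilde f$ to a lift of a torus homeomorphism unless it lies in $\GL_2(\Z)$, so the normalisation $c=0$ by ``translating and applying a linear isomorphism'' is not available as stated. This is harmless --- one can simply work with $c$ in place of $0$ throughout --- but worth correcting.
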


\subsection{Hamiltonian homeomorphisms}
\label{sec:hamiltonian-homeos}

In the symplectic setting, that is when $\tilde f\in\Tsymp$, the
rotation vector of $\Leb_2$ is also called the \emph{flux of
  $\tilde f$} and is usually denoted by
$\Flux(\tilde f):=\rho_{\Leb_2}(\tilde f)$.  In this case, it can be
easily shown that the flux map $\Flux\colon\Tsymp\to\R^2$ is indeed a
group homomorphism. Since
\begin{displaymath}
  \Flux(T_{\boldsymbol{p}}\circ\tilde f) =
  T_{\boldsymbol{p}}(\Flux(\tilde f)), \quad\forall
  \boldsymbol{p}\in\Z^2,\ \forall f\in\Tsymp
\end{displaymath}
this homomorphism clearly induces a map $\Symp0 \to\T^2$ which, by
some abuse of notation, will be denoted by $\Flux$, too.

The kernel of this homomorphism $\Flux\colon\Symp0\to\T^2$ is denoted
by
\begin{displaymath}
  \Ham:=\{f\in\Symp0 : \Flux(f)=0\}\lhd\Symp0,
\end{displaymath}
\ie it is a normal subgroup of $\Symp0$. The elements of $\Ham$ are
called \emph{Hamiltonian homeomorphisms.}

Analogously, the kernel of $\Flux\colon\Tsymp\to\R^2$ is denoted by
\begin{displaymath}
  \Tham:=\left\{\tilde f\in\Tsymp : \Flux(\tilde f)=0 \right\}.
\end{displaymath}

\begin{remark}
  \label{rem:Ham-Tham-identification}
  Notice that $\Ham$ and $\Tham$ can be naturally identified. In fact,
  the restriction $\pi\big|_{\Tham}\colon\Tham\to\Ham$ is a continuous
  group isomorphism.
\end{remark}

Observe the following short exact sequence splits:
\begin{displaymath}
  0\xrightarrow{}\Ham\hookrightarrow\Symp0\xrightarrow{\Flux} 
  \T^2\xrightarrow{}0.
\end{displaymath}
In fact, the map $\T^2\ni\alpha\mapsto T_\alpha$ is a section of
$\Flux$, and thus, the group $\Symp0$ can be decomposed as a
semi-direct product $\Symp0=\T^2\ltimes\Ham$. In other words, given
$\alpha,\beta\in\T^2$ and $h,g\in\Ham$, we have
\begin{displaymath}
  (T_\alpha\circ h)\circ (T_\beta\circ g) = T_{\alpha+\beta}\circ
  \left(\Ad_\beta(h)\circ g\right),
\end{displaymath}
where the $\T^2$-action $\Ad$ is given by \eqref{eq:Ad-Td-definition}.

This elementary fact about the group structure of $\Symp0$ is our main
inspiration for the construction of the fiber-wise Hamiltonian
skew-product we will perform in \S\ref{sec:fiberw-hamilt-skew-prod}.

\subsection{Rotation set, periodic points and minimality}
\label{sec:rotation-set-p-p-minimality}

The following result due to Handel asserts that the rotation set of a
periodic point free homeomorphism has empty interior:
\begin{theorem}[Handel \cite{HandelPerPointFree}]
  \label{thm:Handel-ppfree-rho-empty-int}
  Let $f\in\Homeo0(\T^2)$ be such that $\Per(f)=\emptyset$ and
  $\tilde f\in\Thomeo(\T^2)$ be a lift of $f$. Then, there exist
  $v\in\Ss^1$ and $\alpha\in\R$ so that
  \begin{displaymath}
    \frac{\scprod{\tilde f^n(z)-z}{v}}{n}\to\alpha,\quad \text{as }
    n\to\infty,
  \end{displaymath}
  where the convergence in uniform in $z\in\R^2$. In other words, the
  rotation set $\rho(\tilde f)\subset\ell_\alpha^v$, where the
  straight line $\ell_\alpha^v$ is given by
  \eqref{eq:line-ell-alpha-v-definition}.
\end{theorem}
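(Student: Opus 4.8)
The plan is to deduce the theorem of Handel from the general principle that a periodic point free torus homeomorphism cannot have a rotation set with nonempty interior, together with the convexity theorem of Misiurewicz--Ziemian (Theorem~\ref{thm:mis-ziem-convexity}). Indeed, by Handel's result (which, strictly speaking, is what we are citing here, but whose content we may take as the statement that $\inter\big(\rho(\tilde f)\big)=\emptyset$), the convex compact set $\rho(\tilde f)\subset\R^2$ has empty interior, so it is contained in some straight line $L\subset\R^2$. Writing $L=\ell_\alpha^v$ for a suitable $v\in\Ss^1$ and $\alpha\in\R$ (this is just the normal form of a line in the plane, using the notation \eqref{eq:line-ell-alpha-v-definition}), we get $\scprod{\rho}{v}=\alpha$ for every $\rho\in\rho(\tilde f)$. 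The remaining work is to promote this statement about the rotation set to the asserted \emph{uniform} convergence $\scprod{\tilde f^n(z)-z}{v}/n\to\alpha$.

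First I would record the one-dimensional cocycle $\psi:=\scprod{\Delta_{\tilde f}}{v}\in C^0(\T^2,\R)$ and its Birkhoff sums $\mc{S}_f^n(\psi)=\scprod{\Delta_{\tilde f}^{(n)}}{v}=\scprod{\tilde f^n(z)-z}{v}$ (using \eqref{eq:Delta-Birk-sum}), so that the claim becomes $\mc{S}_f^n(\psi)(x)/n\to\alpha$ uniformly in $x\in\T^2$. By Theorem~\ref{thm:mis-ziem-extreme-points}, for every ergodic $\mu\in\M(f)$ one has $\rho_\mu(\tilde f)\in\rho(\tilde f)\subset\ell_\alpha^v$, hence $\int_{\T^2}\psi\dd\mu=\scprod{\rho_\mu(\tilde f)}{v}=\alpha$; by the ergodic decomposition the same holds for \emph{every} $\mu\in\M(f)$. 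In other words, the continuous function $\psi-\alpha$ integrates to zero against every $f$-invariant measure. Then I would invoke the standard fact (essentially a consequence of the uniform/Birkhoff ergodic theorem, or of a compactness argument on $\M(f)$) that a continuous function all of whose invariant integrals vanish has Birkhoff averages tending to zero uniformly: if not, there would be a sequence $x_k$ and $n_k\to\infty$ with $\mc{S}_f^{n_k}(\psi-\alpha)(x_k)/n_k$ bounded away from $0$, and any weak-$*$ limit of the empirical measures $\frac1{n_k}\sum_{j=0}^{n_k-1}\delta_{f^j(x_k)}$ would be an $f$-invariant measure giving $\psi-\alpha$ a nonzero integral, a contradiction. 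This yields $\mc{S}_f^n(\psi)(x)/n\to\alpha$ uniformly, which is exactly the displayed convergence.

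The translation back to the torus is immediate: $\scprod{\tilde f^n(z)-z}{v}=\mc{S}_f^n(\psi)(\pi(z))$ depends only on $\pi(z)\in\T^2$ because $\Delta_{\tilde f}$ is $\Z^2$-periodic, so uniformity in $x\in\T^2$ is the same as uniformity in $z\in\R^2$. Finally, the uniform convergence forces $\rho(\tilde f)$ to actually lie in $\ell_\alpha^v$: for any $\rho=\lim_k \Delta_{\tilde f}^{(n_k)}(z_k)/n_k\in\rho(\tilde f)$ we get $\scprod{\rho}{v}=\lim_k \mc{S}_f^{n_k}(\psi)(\pi(z_k))/n_k=\alpha$. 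The only genuine input here is Handel's theorem that $\rho(\tilde f)$ has empty interior; everything afterward is a soft argument. Accordingly, the main obstacle is not in the deduction above but in the fact that the nonemptiness-of-interior statement is itself a deep theorem from \cite{HandelPerPointFree}, which we are using as a black box; absent that, one genuinely would have to rule out, say, a rotation set equal to a triangle, and no elementary argument is known.
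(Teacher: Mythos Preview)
The paper does not provide its own proof of this theorem: it is stated as a result due to Handel~\cite{HandelPerPointFree} and used as a black box throughout. There is therefore nothing to compare your proposal against in the paper itself.

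That said, your proposal is not a proof of Handel's theorem either, and you acknowledge this explicitly: the substantive content---that $\Per(f)=\emptyset$ forces $\inter\big(\rho(\tilde f)\big)=\emptyset$---is precisely what Handel proves in \cite{HandelPerPointFree}, and you invoke it as a black box. What you have written is a (correct and standard) reduction showing that the ``empty interior'' statement, combined with the convexity theorem of Misiurewicz--Ziemian and a routine weak-$*$ compactness argument on $\M(f)$, yields the uniform convergence displayed in the theorem. This reduction is fine, but it is worth being clear that it is an unpacking of the theorem's \emph{statement} rather than a proof of it: the uniform convergence along the direction $v$ is equivalent to $\rho(\tilde f)\subset\ell_\alpha^v$ essentially by definition of the rotation set and the argument you sketch, and Handel's paper is cited for the geometric fact, not for the equivalence.
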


The following result due to Franks will play a fundamental role in our
work:

\begin{theorem}[Franks \cite{FranksRotSetPerPoint}]
  \label{thm:Franks-symp-per-points}
  If $\tilde f\in\Tsymp$ and $\Flux(\tilde f)=(p_1/q,p_2/q)\in\Q^2$,
  then there exists $z\in\R^2$ such that
  \begin{displaymath}
    \tilde f^q(z)=z+(p_1,p_2).
  \end{displaymath}
  In particular, $\pi(z)\in\Per(\pi\tilde f)$.
\end{theorem}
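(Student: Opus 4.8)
The plan is to reduce the statement to the existence of a fixed point of a suitable Hamiltonian lift, and then to produce such a fixed point by combining Brouwer plane translation theory with the non-wandering property forced by area-preservation; this last step is exactly the substance of \cite{FranksRotSetPerPoint}.

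For the reduction, set $g:=T_{-(p_1,p_2)}\circ\tilde f^{\,q}$. Since $(p_1,p_2)\in\Z^2$ we have $\pi\big(T_{-(p_1,p_2)}\big)=id_{\T^2}$, so $g$ is a lift of $f^q$; and since $\Flux\colon\Tsymp\to\R^2$ is a homomorphism with $\Flux(T_\alpha)=\alpha$, one computes $\Flux(g)=-(p_1,p_2)+q\,\Flux(\tilde f)=0$, hence $g\in\Tham$. A point $z\in\R^2$ satisfies $\tilde f^{\,q}(z)=z+(p_1,p_2)$ if and only if $g(z)=z$, in which case $\pi(z)\in\Fix(f^q)\subset\Per(f)$. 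Thus it suffices to prove that \emph{every} $\tilde h\in\Tham$ has a fixed point in $\R^2$.

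To prove this, write $h:=\pi\tilde h$ and argue by contradiction, assuming $\Fix(\tilde h)=\emptyset$. Then $\tilde h$ is an orientation-preserving fixed-point-free homeomorphism of the plane, i.e.\ a Brouwer homeomorphism, so by the Brouwer plane translation theorem every point of $\R^2$ lies in a \emph{free} open topological disk $U$, meaning $\tilde h^n(U)\cap U=\emptyset$ for all $n\neq 0$. On the other hand $h$ preserves the full-support measure $\Leb_2$, hence $h$ is non-wandering on $\T^2$; so, choosing $U$ small enough that $\pi|_U$ is injective, non-wandering yields $n_1\geq 1$ and $p\in\Z^2$ with $\tilde h^{n_1}(U)\cap(U+p)\neq\emptyset$, and necessarily $p\neq 0$ because $U$ is free. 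The core of the argument is then Franks' lemma on periodic free disk chains: iterating returns of this type and using that $h$ is non-wandering, one produces a cyclic chain of free disks whose successive $\Z^2$-displacements have a nonzero sum, and Franks' lemma shows that such a chain cannot coexist with $\tilde h$ being fixed-point-free unless the ``total rotation'' it detects vanishes. Feeding this back into Birkhoff's ergodic theorem for the displacement cocycle $\Delta_{\tilde h}$, whose $\Leb_2$-average is $\rho_{\Leb_2}(\tilde h)=\Flux(\tilde h)=0$, produces a contradiction; hence $\tilde h$ must fix a point.

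The main obstacle is precisely this combinatorial-topological core — Franks' periodic disk chain lemma and the mechanism by which it converts the rotational recurrence coming from Brouwer theory plus non-wandering into an obstruction that only a genuine fixed point can remove — which I would invoke from \cite{FranksRotSetPerPoint} rather than reprove; by contrast, the reduction in the first two paragraphs is routine bookkeeping with the flux homomorphism. (Alternatively, in the case where $\Leb_2$ admits an ergodic component with vanishing rotation vector, one can instead apply Atkinson's theorem, Theorem~\ref{thm:atkinson}, to the displacement cocycle to produce a point that is recurrent for $\tilde h$ itself, which again contradicts $\tilde h$ being a Brouwer homeomorphism.)
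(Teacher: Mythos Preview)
The paper does not prove this statement at all; it is quoted as a result of Franks with a bare citation to \cite{FranksRotSetPerPoint}, so there is no ``paper's own proof'' to compare against. Your reduction to showing that every $\tilde h\in\Tham$ has a fixed point is correct and is exactly the natural first step; your subsequent outline (Brouwer plane translation theorem plus Franks' lemma on periodic free disk chains, driven by the non-wandering property coming from area preservation) is an accurate summary of how Franks' original argument runs. Since you explicitly defer the disk-chain core to the cited reference, your proposal is really ``reduction plus a correct pointer to Franks'', which is appropriate given that the paper itself treats the result as a black box.

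One caution on your parenthetical alternative: Theorem~\ref{thm:atkinson} as stated in the paper is for \emph{real}-valued cocycles, whereas $\Delta_{\tilde h}$ takes values in $\R^2$. Conservativity of the skew product for $\R^d$-valued cocycles of zero mean is not automatic when $d\geq 2$, so that shortcut does not follow directly from the tools the paper provides and would require an additional argument or a stronger version of Atkinson's theorem than the one quoted.
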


Every probability measure which is invariant under a minimal
homeomorphism is necessarily a topological measure. Hence, as a
straightforward consequence of Theorems \ref{thm:Oxtoby-Ulam} and
\ref{thm:Franks-symp-per-points}, we get the following
\begin{corollary}
  \label{cor:minimal-homeos-rho-cap-Q2-empty}
  If $f\in\Homeo0(\T^2)$ is minimal and $\tilde f\in\Thomeo(\T^2)$ is
  a lift of $f$, then
  \begin{displaymath}
    \rho(\tilde f)\cap\Q^2=\emptyset. 
  \end{displaymath}
\end{corollary}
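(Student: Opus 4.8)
The plan is to argue by contradiction, reducing to the case already covered by Franks' theorem (Theorem \ref{thm:Franks-symp-per-points}) after using the Oxtoby--Ulam theorem (Theorem \ref{thm:Oxtoby-Ulam}) to normalize an invariant measure to Lebesgue. Suppose that $f\in\Homeo0(\T^2)$ is minimal and that some lift $\tilde f$ has $\rho(\tilde f)\cap\Q^2\neq\emptyset$, say $(p_1/q,p_2/q)\in\rho(\tilde f)$. Since $\rho(\tilde f)$ is compact and convex (Theorem \ref{thm:mis-ziem-convexity}), this rational vector is a convex combination of extreme points of $\rho(\tilde f)$; but I actually want it realized by a single invariant measure. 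Here I would invoke Theorem \ref{thm:mis-ziem-convexity} directly: it states $\rho(\tilde f)=\{\rho_\nu(\tilde f):\nu\in\M(f)\}$, so there exists $\nu\in\M(f)$ with $\rho_\nu(\tilde f)=(p_1/q,p_2/q)$.

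Next I would exploit minimality: since $f$ is minimal, every $f$-invariant Borel probability measure has total support, and it has no atoms (an atom would give a finite invariant set, contradicting minimality unless $\T^2$ were finite). Hence $\nu$ is a topological measure in the sense defined in \S\ref{sec:invariant-measures}. By the Oxtoby--Ulam theorem there is a homeomorphism $h\in\Homeo{}(\T^2)$ with $h_\star\nu=\Leb_2$. Moreover $h$ can be taken isotopic to the identity: any homeomorphism of $\T^2$ is isotopic to a linear one, and since $h$ maps a total-support atomless measure to another such measure one checks the induced automorphism on $H_1(\T^2;\Z)$ can be arranged trivially — alternatively, replace $h$ by $h$ composed with a suitable linear map and re-apply Oxtoby--Ulam on the resulting measure, which is again topological. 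Then $g:=h\circ f\circ h^{-1}$ satisfies $g_\star\Leb_2=\Leb_2$, i.e. $g\in\Symp0$, and $g$ is still minimal, being topologically conjugate to $f$.

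Finally, I would track the rotation data through the conjugacy. Choosing the lift $\tilde h\in\Thomeo(\T^2)$ of $h$, the map $\tilde g:=\tilde h\circ\tilde f\circ\tilde h^{-1}$ is a lift of $g$, and a direct computation with the displacement cocycle shows $\Delta_{\tilde g^n}=\Delta_{\tilde f^n}\circ h^{-1}+(\tilde h-\mathrm{id})\circ\cdots$ — more cleanly, $\rho_\mu(\tilde g)=\rho_{h_\star^{-1}\mu}(\tilde f)$ for every $g$-invariant $\mu$, since conjugation by a map isotopic to the identity does not change integrals of the displacement modulo the telescoping term $(\tilde h-\mathrm{id})\circ g^n-(\tilde h-\mathrm{id})$, which is bounded and integrates to zero. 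In particular $\Flux(\tilde g)=\rho_{\Leb_2}(\tilde g)=\rho_{h_\star^{-1}\Leb_2}(\tilde f)=\rho_\nu(\tilde f)=(p_1/q,p_2/q)\in\Q^2$. Now Theorem \ref{thm:Franks-symp-per-points} gives a point $z$ with $\tilde g^q(z)=z+(p_1,p_2)$, so $\pi(z)\in\Per(g)$, contradicting minimality of $g$. This contradiction proves $\rho(\tilde f)\cap\Q^2=\emptyset$.

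The main obstacle is the bookkeeping in the last paragraph: making sure the conjugacy $h$ can be taken isotopic to the identity (so that $g\in\Symp0$ rather than merely $\Symp{}$, and so that the flux/rotation vector is preserved under the conjugacy with the natural choice of lift). This is where one must be slightly careful about the homology class of $h$; everything else is a routine chain of the three cited theorems.
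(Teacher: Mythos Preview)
Your proof is correct and follows essentially the same route as the paper: realize the rational point as the rotation vector of some invariant measure (Theorem~\ref{thm:mis-ziem-convexity}), observe that minimality forces this measure to be topological, conjugate to Lebesgue via Oxtoby--Ulam (Theorem~\ref{thm:Oxtoby-Ulam}), and then apply Franks' theorem (Theorem~\ref{thm:Franks-symp-per-points}) to produce a periodic point for the conjugate system, contradicting minimality. The care you take with the isotopy class of $h$ is exactly what the paper does elsewhere (see the proof of Theorem~\ref{thm:bounded-rot-deviations-for-minimal-homeos}, where it pre-composes with a linear automorphism to arrange $h\in\Homeo0(\T^2)$), so your bookkeeping concern is legitimate but routinely resolved.
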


\subsection{Classification of plane fixed points}
\label{sec:class-plane-fixed-points}

Let $V,V'\subset\R^2$ be two non-empty open sets and let
$f\colon V\to V'$ be a homeomorphism. Following the terminology of Le
Calvez~\cite{LeCalvezDynamiqueHomeosDuPlanVois}, a fixed point
$z_0\in\Fix(f)$ is said to be:
\begin{itemize}
\item \emph{isolated} when it is an isolated point of the set
  $\Fix(f)$;
\item \emph{accumulated} when every neighborhood of $z_0$ contains a
  periodic orbit of $f$ different from $z_0$;
\item \emph{dissipative} when $z_0$ admits a local basis
  $(U_n)_{n\geq 0}$ of neighborhoods such that
  $f(\partial U_n)\cap\partial U_n=\emptyset$, for every $n\geq 0$,
  \ie each neighborhood is either attractive or repulsive;
\item \emph{indifferent} when there exists a neighborhood $W$ of $z_0$
  such that $\overline{W}\subset V$ and for every Jordan domain
  $U\subset W$ which is a neighborhood of $z_0$ it holds
  \begin{displaymath}
    \cc\left(\I_f(\overline{U}), z_0\right)\cap\partial 
    U\neq\emptyset,
  \end{displaymath}
  where $\I_f(\overline{U})$ denotes the maximal $f$-invariant subset
  of $U$ given by \eqref{eq:max-inv-set-def}.
\end{itemize}

\subsection{Fixed point indexes}
\label{sec:fixed-point-indexes}

If $f\colon V\to V'$ is as in \S\ref{sec:class-plane-fixed-points},
and $\gamma\colon\Ss^1\to V$ is a Jordan curve such that
$\gamma(\Ss^1)\cap\Fix(f)=\emptyset$, then one defines the \emph{index
  of $f$ along $\gamma$} as the integer
\begin{displaymath}
  i(f,\gamma):=\deg\left(\Ss^1\ni t\mapsto
    \frac{f(\gamma(t))-\gamma(t)}{\norm{f(\gamma(t))-\gamma(t)}}
    \in\Ss^1\right),
\end{displaymath}
where $\deg(\cdot)$ denotes the topological degree.

When $z_0\in\Fix(f)$ is isolated, then one can define the \emph{index
  of $f$ at $z_0$} as
\begin{displaymath}
  i(f,z_0):=i(f,\partial U),
\end{displaymath}
where $U$ denotes any Jordan domain satisfying $\overline{U}\subset V$
and $\overline{U}\cap\Fix(f)=\{z_0\}$. Since this index does not
depend on the choice of $U$, this notion is well-defined.

We will need the following topological version of Leau-Fatou's flower
theorem due to Le Calvez~\cite{LeCalvezPropDynIndMaj1}, that has been
lately improved by Le Roux~\cite{LeRouxLeauFatou}:

\begin{theorem}
  \label{thm:Patrice-index-larger-1}
  Let us suppose $f\colon V\to V'$ is an orientation-preserving
  homeomorphism and $z_0\in\Fix(f)$ is an isolated fixed point such
  that $i(f,z_0)\geq 2$.  Then there exist two open non-empty subsets
  $W^+,W^-\subset V\setminus\{z_0\}$ such that
  \begin{enumerate}[(i)]
  \item $f^n(W^+)$ is well-defined for every $n\geq 0$,
    $f^m(W^+)\cap f^n(W^+)=\emptyset$ whenever $m$ and $n$ are
    different non-negative integers and $\omega_f(z)=\{z_0\}$, for
    every $z\in W^+$;
  \item $f^{-n}(W^-)$ is well-defined for every $n\geq 0$,
    $f^{-m}(W^-)\cap f^{-n}(W^-)=\emptyset$ whenever $m$ and $n$ are
    different non-negative integers and $\alpha_f(z)=\{z_0\}$, for
    every $z\in W^-$;
  \end{enumerate}
  where $\alpha_f$ and $\omega_f$ denote the $\alpha$- and
  $\omega$-limit sets, respectively.
\end{theorem}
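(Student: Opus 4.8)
The plan is to localize the problem near $z_0$ and then apply Le Calvez's equivariant foliated version of Brouwer's plane translation theorem; the step that upgrades single orbits to open sets with the stated disjointness is precisely Le Calvez's flower theorem as sharpened by Le Roux, and at that point the argument should quote that result rather than reprove it. \textit{Localization.} Since $z_0$ is an isolated point of $\Fix(f)$, choose a Jordan domain $U$ with $\overline U\subset V\cap V'$, with $\overline U\cap\Fix(f)=\{z_0\}$, and (shrinking $U$) with $U$ a topological open disk. By the very definition of the index at an isolated fixed point, $i(f,z_0)=i(f,\partial U)\ge 2$, so the hypothesis is preserved under this restriction, and after an orientation-preserving change of coordinates we may regard $f$ as an orientation-preserving embedding of a neighborhood of $\overline U$ into $\R^2$ whose only fixed point in $\overline U$ is $z_0$; thus $f$ acts freely on the punctured disk $U\setminus\{z_0\}$.

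Next I would produce a transverse foliation. Fixing a maximal isotopy from the identity to $f$ on $U$, Le Calvez's equivariant foliated translation theorem furnishes a singular oriented foliation $\F$ of $U$ whose singular set is contained in $\Fix(f)\cap U=\{z_0\}$ and which is dynamically transverse to $f$: for every $z$, the trajectory arc joining $z$ to $f(z)$ is homotopic rel endpoints to a path that crosses the leaves of $\F$ always positively. In particular every leaf is a Brouwer line for the local dynamics, $\F$ has no closed leaf encircling $z_0$, and $\F$ admits no closed transversal around $z_0$.

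The heart of the matter is the relation between $i(f,z_0)$ and the Poincaré index of the isolated singularity of $\F$ at $z_0$. Since $i(f,z_0)\ge 2$, this singularity is neither a sink, nor a source, nor a center: the sectorial (flower-type) analysis of its leaves forces its local leaf configuration to contain at least one \emph{attracting petal} and at least one \emph{repelling petal}, i.e.\ open sectors bounded by leaves issuing from $z_0$ in which $f$, respectively $f^{-1}$, pushes points toward $z_0$ along the transverse order of the strip foliating the sector. Inside an attracting petal I would take a small topological disk $W^+$ bounded by two leaf segments and two positively transverse arcs. Positive transversality together with the absence of a closed transversal around $z_0$ forces $f^m(W^+)\cap f^n(W^+)=\emptyset$ for distinct $m,n\ge 0$ — a nontrivial such intersection would produce a loop positively transverse to $\F$ enclosing $z_0$, or a free Brouwer disk meeting its own forward orbit. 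Moreover, since the petal is attracting and $z_0$ is the only point of $\Fix(f)$ in $\overline U$, the forward orbit of every $z\in W^+$ stays near $z_0$ (hence inside $V$, so the iterates in the statement are well defined) and satisfies $\omega_f(z)=\{z_0\}$. Carrying out the symmetric construction for $f^{-1}$ inside a repelling petal yields $W^-$ with the asserted properties.

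The main obstacle is the third step: making rigorous the local leaf structure of the transverse foliation at a fixed point of index $\ge 2$, and converting its petals into genuine wandering open sets with the precise disjointness and limit-set properties required. This is exactly the content of the topological Leau--Fatou flower theorem of Le Calvez refined by Le Roux, and the proof should invoke it at this point; the remaining verifications (localization, the index comparison, and the elementary transversality arguments for disjointness and convergence) are then routine.
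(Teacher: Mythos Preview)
This theorem is not proved in the paper at all: it appears in the preliminaries as a quoted result, attributed to Le Calvez and sharpened by Le Roux, and is used as a black box in the proof of Theorem~\ref{thm:Lambda-vert-sets-main-properties}. So there is no ``paper's own proof'' to compare your proposal against.

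That said, your outline is a reasonable sketch of how the result is actually obtained in those references (localize, build a transverse foliation via Le Calvez's equivariant Brouwer theorem, analyze the sectorial structure at the singularity). But notice that your proposal is somewhat circular as written: in the final paragraph you acknowledge that the crucial step---turning the petal structure into open wandering sets with the precise disjointness and $\omega$/$\alpha$-limit conclusions---``is exactly the content of the topological Leau--Fatou flower theorem of Le Calvez refined by Le Roux, and the proof should invoke it at this point.'' But that \emph{is} the theorem you are trying to prove. If your goal were to give a self-contained argument, you would need to actually carry out the sectorial analysis (the count of hyperbolic sectors forced by index $\geq 2$, and the construction of the wandering boxes inside them), which is precisely the nontrivial content of the cited papers. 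If instead your goal is only to match what the present paper does, then the correct move is simply to cite the result and not attempt a proof.
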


The following result about indexes of iterates of non-accumulated
fixed points is due to Le Calvez and
Yoccoz~\cite{LeCalvezYoccozSuitesDesIndices} but its proof has never
been published (see for instance \cite[Proposition
3.3]{LeCalvezDynamiqueHomeosDuPlanVois}):

\begin{theorem}
  \label{thm:LeCalvez-Yoccoz-indexes}
  If $f$ is an orientation-preserving homeomorphism and
  $z_0\in\Fix(f)$ is isolated, non accumulated, non indifferent and
  non dissipative, then there exist integers $q\geq 1$ and $r\geq 1$
  such that
  \begin{displaymath}
    \begin{cases}
      i(f^k,z_0)= 1, & \text{if } k\not\in q\Z;\\
      i(f^k,z_0)= 1- rq, & \text{if } k\in q\Z.
    \end{cases}
  \end{displaymath}
\end{theorem}

\subsection{Minimal homeomorphisms}
\label{sec:minimal-homeos}

In this paragraph we recall some classical and elementary results
about minimal homeomorphisms that we shall frequently use all along
the paper.

We say that a subset $A\subset\Z$ has \emph{bounded gaps} if there
exists $N\in\N$ such that
\begin{equation}
  \label{eq:bounded-gaps-def}
  A\cap\{n,n+1,\ldots,n+N\}\neq\emptyset, \quad\forall n\in\Z.
\end{equation}
The minimum natural number $N$ such that \eqref{eq:bounded-gaps-def}
holds shall be denoted by $\ms{G}(A)$.

The following three results are very well-known, but we decided to
include them here just for the sake of reference:
\begin{proposition}
  \label{pro:quasi-periodicity-minimal-homeos}
  If $(X,d)$ is a compact metric space and $f\colon X\carr$ is a
  minimal homeomorphism, then for every non-empty open set
  $U\subset X$ and any $x\in X$, the \emph{visiting time set}
  \begin{displaymath}
    \tau(x,U,f):=\{n\in\Z : f^n(x)\in U\}
  \end{displaymath}
  has bounded gaps.
\end{proposition}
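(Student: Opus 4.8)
The plan is to prove Proposition~\ref{pro:quasi-periodicity-minimal-homeos} by a standard compactness-plus-minimality argument. First I would fix a non-empty open set $U\subset X$, choose a point $y\in U$ together with a radius $r>0$ so that $B_r(y)\subset U$, and observe that by minimality the forward orbit of every point of $X$ is dense, so in particular every point of $X$ enters $B_{r}(y)$ at some positive time. Concretely, minimality gives $\bigcup_{n\in\Z} f^{-n}(B_r(y)) = X$, and since $f$ is a homeomorphism we may even restrict to $n\geq 1$: indeed $\bigcup_{n\geq 1} f^{-n}(B_r(y))$ is a non-empty open set whose closure is $f$-invariant (it is forward-invariant and, being the orbit-union of an open set under an onto map, one checks it is all of $X$), hence equals $X$ by minimality.

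Next I would extract a finite subcover: $\{f^{-n}(B_r(y))\}_{n\geq 1}$ is an open cover of the compact space $X$, so there is $N\in\N$ with $X = \bigcup_{n=1}^{N} f^{-n}(B_r(y))$. This means that for every $x\in X$ there is some $n\in\{1,\ldots,N\}$ with $f^n(x)\in B_r(y)\subset U$; equivalently the visiting time set $\tau(x,U,f)=\{n\in\Z: f^n(x)\in U\}$ meets $\{1,\ldots,N\}$. To upgrade this to the bounded-gaps condition \eqref{eq:bounded-gaps-def} I would apply the same statement to the point $f^m(x)$ in place of $x$, for an arbitrary $m\in\Z$: there is $k\in\{1,\ldots,N\}$ with $f^{k}(f^m(x))\in U$, i.e. $m+k\in\tau(x,U,f)$ with $m+k\in\{m+1,\ldots,m+N\}$. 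Hence $\tau(x,U,f)\cap\{m,m+1,\ldots,m+N\}\neq\emptyset$ for every $m\in\Z$, which is exactly the definition of bounded gaps, with $\ms{G}\big(\tau(x,U,f)\big)\leq N$.

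The only point that needs a little care — and the closest thing to an obstacle here — is justifying that the union of forward-time preimages (or forward iterates) of a non-empty open set is all of $X$ under a minimal homeomorphism, rather than merely the two-sided union. The cleanest way is: let $W=\bigcup_{n\geq 0} f^{n}(U)$; then $f(W)\subset W$, so $\overline{W}$ is a non-empty closed set with $f(\overline{W})\subset\overline{W}$, and since $f$ is a homeomorphism of the compact space $X$ the set $\bigcap_{n\geq 0} f^n(\overline{W})$ is a non-empty closed $f$-invariant set, hence equals $X$ by minimality; but this intersection is contained in $\overline{W}$, so $\overline{W}=X$. Applying this to $f^{-1}$ (also minimal) in place of $f$ gives that $\bigcup_{n\geq 1} f^{-n}(U)$ is dense, and being a union of open sets containing, after one more iterate, a neighbourhood of every point, a compactness argument as above still yields a finite $N$; alternatively one simply runs the finite-subcover argument directly with $\{f^{n}(U)\}_{n\geq 0}$ and reads off visiting times with a sign flip. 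Either route gives the uniform bound $N$ independent of $x$, completing the proof.
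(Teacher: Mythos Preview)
Your proof is correct and follows the standard compactness-plus-minimality argument. The paper does not actually give a proof of this proposition: it is stated as one of several ``very well-known'' results included only ``for the sake of reference,'' so there is nothing to compare against. One minor simplification: you can bypass the forward-vs-two-sided orbit discussion entirely by noting that $\bigcup_{n\in\Z} f^{-n}(U)$ is a non-empty open $f$-invariant set, hence equals $X$ by minimality, then extracting a finite subcover indexed by $\abs{n}\leq N_0$ and taking $N=2N_0$ in \eqref{eq:bounded-gaps-def}.
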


As a consequence of this result, one can easily show the following:
\begin{corollary}
  \label{cor:quas-periodicity-torus-translations}
  For every $\alpha\in\T^d$, any $x\in\T^d$ and any neighborhood
  $V\subset\T^d$ of $x$, the visiting time set $\tau(x,V,T_\alpha)$
  has bounded gaps.
\end{corollary}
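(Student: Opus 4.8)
The plan is to reduce the statement directly to Proposition \ref{pro:quasi-periodicity-minimal-homeos} by passing to a suitable minimal subsystem of $T_\alpha$. First I would recall that although $T_\alpha$ need not be minimal on all of $\T^d$ (its restriction is minimal precisely on cosets of the closed subgroup $\overline{\gen{\alpha}}$), the torus decomposes as a disjoint union of $T_\alpha$-invariant closed sets on each of which $T_\alpha$ acts minimally. Concretely, let $H:=\overline{\gen{\alpha}}\leq\T^d$ be the closure of the cyclic subgroup generated by $\alpha$; it is a closed subgroup, hence a compact topological group, and the orbit closure of any point $y\in\T^d$ under $T_\alpha$ is exactly the coset $y+H$. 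The key fact is that $T_\alpha$ restricted to $y+H$ is minimal: indeed $y+H$ is homeomorphic (via $z\mapsto z-y$) to $H$ equivariantly intertwining $T_\alpha|_{y+H}$ with $T_\alpha|_H$, and $T_\alpha|_H$ is a translation by a generator of a dense subgroup of the compact group $H$, hence minimal on $H$.

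Second, given the data $x\in\T^d$ and a neighborhood $V\subset\T^d$ of $x$, I would set $X:=x+H$, which is a compact metric space (with the restricted metric $d_{\T^d}$), and $f:=T_\alpha|_X\colon X\carr$, which by the previous paragraph is a minimal homeomorphism. Then $U:=V\cap X$ is a non-empty open subset of $X$ (non-empty because it contains $x$), and $x\in X$. Applying Proposition \ref{pro:quasi-periodicity-minimal-homeos} to $(X,d_{\T^d}|_X)$, $f$, $U$ and the point $x$ yields that
\begin{displaymath}
  \tau(x,U,f)=\{n\in\Z : f^n(x)\in U\}=\{n\in\Z : T_\alpha^n(x)\in V\cap X\}
\end{displaymath}
has bounded gaps. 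But since $T_\alpha^n(x)=x+n\alpha\in x+H=X$ automatically for every $n\in\Z$, the condition $T_\alpha^n(x)\in V\cap X$ is equivalent to $T_\alpha^n(x)\in V$, so $\tau(x,U,f)=\tau(x,V,T_\alpha)$. Hence $\tau(x,V,T_\alpha)$ has bounded gaps, which is the assertion.

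The only point requiring genuine (if standard) care is the claim that $T_\alpha$ is minimal on each coset $y+H$ — equivalently, that a translation of a compact abelian group by an element generating a dense subgroup is minimal. This follows because any non-empty closed invariant set $C\subseteq H$ must, by invariance, satisfy $C+\gen{\alpha}\subseteq C$ hence $C+\overline{\gen{\alpha}}=C+H\subseteq C$ by closedness; picking any $c\in C$ gives $c+H\subseteq C$, i.e. $C=H$. I expect this to be the main (and essentially the only) obstacle, and it is a routine fact about compact groups; everything else is bookkeeping identifying $\tau(x,V,T_\alpha)$ with a visiting-time set of the genuinely minimal system $(X,f)$.
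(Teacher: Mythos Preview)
Your proposal is correct and follows exactly the approach the paper indicates: the paper states the corollary immediately after Proposition~\ref{pro:quasi-periodicity-minimal-homeos} with the remark ``As a consequence of this result, one can easily show the following,'' and your reduction to that proposition via the minimal subsystem $x+\overline{\gen{\alpha}}$ is precisely the intended argument.
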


\begin{proposition}
  \label{pro:powers-min-homeos-min-too}
  If $(X,d)$ is a connected compact metric space and $f\colon X\carr$
  is a minimal homeomorphism, then $f^n$ is minimal for every
  $n\in\Z\setminus\{0\}$.
\end{proposition}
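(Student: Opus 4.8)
The plan is to argue by contradiction using the structure of the closed set of minimal sets of $f^n$. Fix $n\geq 1$ (the case $n<0$ follows since $f^n$ and $f^{-n}=(f^n)^{-1}$ have the same invariant closed sets). Suppose $f^n$ is not minimal, and pick a minimal set $K\subset X$ for $f^n$, which is a non-empty closed proper subset. First I would observe that $f(K)$ is again a minimal set for $f^n$ (because $f$ commutes with $f^n$, so $f$ carries $f^n$-invariant closed sets to $f^n$-invariant closed sets, and carries minimal ones to minimal ones), and more generally each $f^j(K)$, $0\le j\le n-1$, is an $f^n$-minimal set. Two distinct $f^n$-minimal sets are automatically disjoint, so the union $L:=\bigcup_{j=0}^{n-1} f^j(K)$ is a finite disjoint union of $m$ homeomorphic copies of $K$, where $m\mid n$, and $L$ is closed, non-empty, and $f$-invariant. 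By minimality of $f$ we must have $L=X$.

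Now the main point: $X$ is connected, but we have written it as a \emph{finite} disjoint union $X=\bigsqcup_{i=1}^{m} K_i$ of non-empty closed sets $K_i$ (the distinct images $f^j(K)$). If $m\geq 2$, each $K_i$ is both closed and, being the complement of the union of the other finitely many closed $K_{i'}$, also open; this exhibits a disconnection of $X$, contradicting connectedness. Hence $m=1$, i.e.\ $f(K)=K$. But then $K$ is a non-empty closed proper ($K\ne X$ since $f^n$ is assumed non-minimal, so in particular $X$ itself is not $f^n$-minimal unless... ) --- here I would be slightly more careful: if $f^n$ is not minimal then $X$ itself is not an $f^n$-minimal set, so any $f^n$-minimal set $K$ is a proper subset of $X$; thus $K$ is a proper non-empty closed $f$-invariant set, contradicting minimality of $f$. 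This contradiction shows $f^n$ is minimal.

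The only genuinely delicate step is the use of connectedness to force $m=1$; everything else is the elementary bookkeeping that images of $f^n$-minimal sets under $f$ are $f^n$-minimal and that distinct minimal sets are disjoint. I expect no real obstacle here, but one must not forget the hypothesis that $X$ is connected is exactly what rules out the otherwise perfectly possible situation of $f$ permuting several disjoint $f^n$-minimal pieces (which does occur, e.g., for a rotation by $1/n$ acting on $n$ disjoint circles). I would also remark that the statement can fail without connectedness, and that for $n<0$ one reduces to the positive case as noted above.
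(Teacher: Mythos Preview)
Your argument is correct and is the standard proof of this classical fact. Note, however, that the paper does not actually supply a proof of this proposition: it is listed among results that are ``very well-known'' and included ``just for the sake of reference,'' so there is nothing to compare against. Your write-up could be tightened slightly (the hedging around whether $K$ is proper is unnecessary once you have assumed $f^n$ is not minimal), but the logic is sound.
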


The last result we recall here is due to Gottschalk and Hedlund and
deals with cohomological equations:

\begin{theorem}[Gottschalk, Hedlund~\cite{GottschalkHedlundTopDyn}]
  \label{thm:gott-hedlund}
  Let $X$ be a compact metric space and $f\colon X\carr$ be a minimal
  homeomorphism. Let $\phi\colon X\to\R$ be a continuous function and
  assume there exists $x_0\in X$ such that
  \begin{displaymath}
    \sup_{n\in\N}\abs{\sum_{j=0}^{n-1}\phi\big(f^j(x_0)\big)}<\infty.
  \end{displaymath}

  Then, there is a continuous function $u\colon X\to\R$ such that
  $u\circ f- u =\phi$. In particular,
  \begin{displaymath}
    \sup_{n\in\N}\abs{\sum_{j=0}^{n-1}\phi\big(f^j(x)\big)}\leq
    2\norm{u}_{C^0}<\infty,  \quad\forall x\in X. 
  \end{displaymath}
\end{theorem}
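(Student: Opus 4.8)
The plan is to construct $u$ as a continuous section of an $F$-minimal set, where $F\colon X\times\R\carr$ is the skew-product $F(x,t):=\big(f(x),t+\phi(x)\big)$. Note that $F$ is a homeomorphism (its inverse is $(x,t)\mapsto(f^{-1}(x),t-\phi(f^{-1}(x)))$) and that $F^n(x,t)=\big(f^n(x),t+\mc{S}_f^n(\phi)(x)\big)$ for every $n\in\Z$. By hypothesis there is $C>0$ with $\abs{\mc{S}_f^n(\phi)(x_0)}\le C$ for all $n\ge 0$, so the forward orbit $\{F^n(x_0,0):n\ge 0\}$ lies in the compact strip $X\times[-C,C]$; hence its closure $Y_0$ is compact and satisfies $F(Y_0)\subseteq Y_0$. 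First I would replace $Y_0$ by the genuinely invariant compact set $Y_\infty:=\bigcap_{n\ge 0}F^n(Y_0)$, which is nonempty by the finite intersection property and satisfies $F(Y_\infty)=Y_\infty$ because $F$ is a homeomorphism and the sets $F^n(Y_0)$ decrease; then Zorn's lemma applied to the nonempty compact $F$-invariant subsets of $Y_\infty$ (ordered by reverse inclusion) yields a set $Y\subseteq X\times[-C,C]$ that is minimal for $F$.

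The key step is to show $Y$ is the graph of a function on $X$. Since $\pr{1}\circ F=f\circ\pr{1}$, the set $\pr{1}(Y)\subseteq X$ is nonempty, compact and $f$-invariant, so $\pr{1}(Y)=X$ by minimality of $f$. Suppose now that some fiber $Y_x:=\{t\in\R:(x,t)\in Y\}$ contains two distinct points $t_1<t_2$, and set $\delta:=t_2-t_1>0$. The vertical translation $\sigma_\delta(x,t):=(x,t+\delta)$ is a homeomorphism commuting with $F$, so $\sigma_\delta(Y)$ is again $F$-minimal; since $(x,t_2)\in Y\cap\sigma_\delta(Y)$, these two minimal sets intersect and are therefore equal (distinct minimal sets of a homeomorphism are disjoint, because the intersection of two minimal sets is closed and invariant). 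Thus $Y$ is invariant under $\sigma_\delta$, hence under $\sigma_{n\delta}$ for every $n\in\Z$, forcing $Y$ to be unbounded and contradicting $Y\subseteq X\times[-C,C]$. Therefore each $Y_x$ is a singleton, so $Y=\{(x,u(x)):x\in X\}$ for some $u\colon X\to\R$, and since $Y$ is compact the continuous bijection $\pr{1}\big|_Y\colon Y\to X$ is a homeomorphism, whence $u=\pr{2}\circ(\pr{1}\big|_Y)^{-1}$ is continuous.

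It then only remains to read off the cohomological equation and the uniform bound. From $F(Y)=Y$ we get $\big(f(x),u(x)+\phi(x)\big)\in Y$ for every $x$, i.e. $u(f(x))=u(x)+\phi(x)$, which is exactly $u\circ f-u=\phi$. Telescoping gives
\begin{displaymath}
  \sum_{j=0}^{n-1}\phi\big(f^j(x)\big)=\sum_{j=0}^{n-1}\Big(u\big(f^{j+1}(x)\big)-u\big(f^j(x)\big)\Big)=u\big(f^n(x)\big)-u(x),
\end{displaymath}
so $\abs{\sum_{j=0}^{n-1}\phi(f^j(x))}\le 2\norm{u}_{C^0}$ for all $x\in X$ and $n\in\N$.

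The main obstacle is the ``fiber is a point'' argument in the second paragraph: this is where minimality of $f$ is genuinely used, both to force $\pr{1}(Y)=X$ and, more essentially, through the dichotomy that two minimal sets of $F$ are either equal or disjoint, which drives the vertical-translation contradiction. A secondary point that must be handled carefully is the extraction of a truly $F$-invariant compact set from the merely forward-invariant orbit closure $Y_0$ before Zorn's lemma can be invoked, since ``minimal set'' requires full $F$-invariance.
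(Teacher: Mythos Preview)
The paper does not prove this theorem; it is stated in \S\ref{sec:minimal-homeos} as a classical result and attributed to Gottschalk and Hedlund with a citation, so there is no ``paper's own proof'' to compare against. Your argument is correct and is precisely the standard modern proof of the Gottschalk--Hedlund theorem via the skew-product $F(x,t)=(f(x),t+\phi(x))$: extract an $F$-minimal set inside a bounded strip, use commutation with vertical translations to force each fiber to be a singleton, and read off the transfer function from the resulting graph. The two points you flag as potential obstacles are both handled cleanly in your write-up: the passage from the forward-invariant orbit closure $Y_0$ to the fully invariant $Y_\infty=\bigcap_{n\ge 0}F^n(Y_0)$ is valid because $F$ is a bijection (so it commutes with intersections) and the $F^n(Y_0)$ are nested; and the fiber argument is exactly where minimality of $f$ enters, as you note.
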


\section{An ergodic deviation result}
\label{sec:ergodic-deviation-thm}

This section is devoted to prove an abstract ergodic deviation theorem
that will play a key role in \S\ref{sec:coex-stable-sets}. Even though
this result might be already known to some experts, we were not able
to find any reference in the literature and thus we have decided to
include its proof here.

\begin{theorem}
  \label{thm:ergodic-deviations}
  Let $(X,\ms{B},\mu)$ be a probability space,
  $f\colon (X,\ms{B},\mu)\carr$ an ergodic automorphism and
  $\phi\in L^1(X,\ms{B},\mu)$ such that $\int_X\phi\dd\mu=0$. Let us
  suppose that
  \begin{equation}
    \label{eq:unbound-from-above-bound-from-below}
    \sup_{n\geq 0} \Bs_f^n\phi(x) = + \infty, \quad\text{and}\quad
    \inf_{n\geq 0}\Bs_f^n\phi(x) > -\infty,
  \end{equation}
  for $\mu$-a.e. $x\in X$, where $\Bs_f^n\phi$ denotes the Birkhoff
  sum given by \eqref{eq:Birk-sum-def}.

  Then, it holds
  \begin{displaymath}
    \sup_{n\leq 0} \Bs_f^n\phi(x) = + \infty, \quad\text{and}\quad
    \inf_{n\leq 0}\Bs_f^n\phi(x) > -\infty,
  \end{displaymath}
  for $\mu$-a.e. $x\in X$.
\end{theorem}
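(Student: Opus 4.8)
The plan is to exploit the conservativity of the Birkhoff skew-product furnished by Atkinson's theorem (Theorem~\ref{thm:atkinson}) together with the ergodicity of $f$, and to transfer the one-sided deviation behavior of $\Bs_f^n\phi$ across $n=0$ by applying the hypothesis to the inverse automorphism $f^{-1}$ and using the cocycle identity $\Bs_f^{-n}\phi = -\Bs_{f^{-1}}^{n}(\phi\circ f^{-1})$, or equivalently $\Bs_f^{m+n}\phi = \Bs_f^m\phi + (\Bs_f^n\phi)\circ f^m$. First I would record that since $\int_X\phi\dd\mu=0$ and $f$ is an ergodic automorphism, the skew-product $F(x,t)=(f(x),t+\phi(x))$ on $X\times\R$ is conservative for $\mu\otimes\Leb$; this is exactly Theorem~\ref{thm:atkinson}. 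Conservativity will be the engine that forces the forward orbit of $(x,0)$ under $F$ — whose second coordinate at time $n$ is precisely $\Bs_f^n\phi(x)$ — to return infinitely often to any positive-measure set, and in particular near its starting height, which is what links the $n\geq 0$ and $n\leq 0$ statements.

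The key steps, in order. Step 1: Set $g^+(x):=\sup_{n\geq 0}\Bs_f^n\phi(x)$ and $g^-(x):=\inf_{n\geq 0}\Bs_f^n\phi(x)$; by hypothesis $g^+\equiv+\infty$ and $g^->-\infty$ a.e. Observe that $g^-$ is measurable and satisfies the recursive relation $g^-(x)=\min\{0,\ \phi(x)+g^-(f(x))\}$, hence $g^-\circ f - g^- \geq -\phi$ wherever things are finite; combined with $\int\phi\dd\mu=0$ this will force $g^-$ to be $f$-\emph{invariant} as an $L^0$ function once one checks integrability or uses a truncation argument, and then ergodicity makes $g^-$ a.e.\ constant, say $g^-\equiv -c$ with $c\geq 0$. (One must be slightly careful: $g^-$ may fail to be integrable, so the cleaner route is to work directly with the conservative system.) Step 2: Transfer to negative times. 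I want $\inf_{n\leq 0}\Bs_f^n\phi(x)>-\infty$ a.e. Using $\Bs_f^{-n}\phi(x) = -\Bs_f^{n}\phi(f^{-n}x)$ for $n\geq 0$, the statement $\inf_{n\leq 0}\Bs_f^n\phi(x)>-\infty$ becomes $\sup_{n\geq 0}\Bs_f^{n}\phi(f^{-n}x)<+\infty$. Step 3: Here conservativity does the work. Consider the set $A_K:=\{x : g^-(x)\geq -K\} = \{x:\Bs_f^n\phi(x)\geq -K\ \forall n\geq 0\}$; since $g^->-\infty$ a.e., $\mu(A_K)\uparrow 1$, so fix $K$ with $\mu(A_K)>0$. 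By the Poincaré recurrence/conservativity applied to $F$ restricted to $A_K\times\{|t|<1\}$ (a positive-measure set, using $\mu\otimes\Leb$), for a.e.\ $x$ the forward $F$-orbit of $(x,0)$ returns to $A_K\times(-1,1)$ at infinitely many times $n_j\uparrow\infty$; thus $f^{n_j}x\in A_K$ and $|\Bs_f^{n_j}\phi(x)|<1$. Now for any $m\geq 0$ write, with $n_j>m$, $\Bs_f^m\phi(f^{-m}\cdot)$... more directly: for a.e.\ $x$ and any $m\geq 0$, pick a return time $n_j>m$ for the point $f^{-m}x$... I would instead run the recurrence argument for $f^{-1}$: the hypothesis is symmetric enough that applying Atkinson to $f^{-1}$ with the cocycle $\psi:=-\phi\circ f^{-1}$ (which has zero integral) and combining the two conservativity statements yields that for a.e.\ $x$ there are $n_j\to-\infty$ with $|\Bs_f^{n_j}\phi(x)|$ bounded; intercalating such a return time between $0$ and any negative $n$ and using $\Bs_f^n\phi(x) = \Bs_f^{n_j}\phi(x) + \Bs_f^{n-n_j}\phi(f^{n_j}x)$ together with the a.e.\ finiteness of $\inf_{k\geq 0}\Bs_f^k\phi$ (applied at the point $f^{n_j}x$, noting $f^{n_j}x$ ranges over a full-measure set as $x$ does) bounds $\Bs_f^n\phi(x)$ from below uniformly over $n\leq 0$. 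Step 4: The matching statement $\sup_{n\leq 0}\Bs_f^n\phi(x)=+\infty$ a.e.: if instead $\sup_{n\leq 0}\Bs_f^n\phi$ were finite on a positive-measure (hence, by ergodicity, full-measure) set, then combining finiteness of $\sup_{n\leq0}$ and of $\inf_{n\geq0}$ one gets a two-sided orbit of $(x,0)$ under $F$ trapped in a horizontal strip $X\times[a,b]$ of finite $\mu\otimes\Leb$-measure on which $F$ acts conservatively — but then Atkinson/Kac forces $\Bs_f^n\phi$ to be \emph{bounded above for $n\geq 0$} as well (the orbit cannot escape upward if it is recurrent and also bounded below for $n\geq0$, bounded above for $n\leq0$), contradicting $\sup_{n\geq0}\Bs_f^n\phi\equiv+\infty$. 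Making this last implication precise is Step 4.

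The main obstacle I expect is Step 3/Step 4 — cleanly converting ``conservativity of $F$'' into the quantitative two-sided control, because $g^-$ need not be integrable and one cannot simply invoke Birkhoff's theorem on it; the careful move is to localize on the positive-measure sets $A_K$ where the one-sided sum is bounded below by a constant, apply Poincaré recurrence there (which is legitimate since $F$ is conservative, not merely non-singular), and then use the additive cocycle identity to splice a recurrence time into an arbitrary negative time. A secondary subtlety is that the hypothesis \eqref{eq:unbound-from-above-bound-from-below} is \emph{asymmetric} ($+\infty$ above, finite below), so one cannot just quote the theorem for $f^{-1}$ verbatim; one has to track which of the two conclusions comes from recurrence (the $\inf_{n\leq0}>-\infty$ part) and which comes from a contradiction argument ruling out a trapped strip (the $\sup_{n\leq0}=+\infty$ part). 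Everything else — measurability of $g^\pm$, the cocycle identities, $\mu(A_K)\uparrow1$ — is routine.
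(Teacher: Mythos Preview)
Your ingredients are right --- Atkinson's conservativity, the cocycle identity, and the sets $A_K=\{x:\inf_{n\geq 0}\Bs_f^n\phi(x)\geq -K\}$ --- but Steps~3 and~4 both have genuine gaps, and the missing idea is not a technicality.

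In Step~3 you need, for $\mu$-a.e.\ $x$, infinitely many $n_j\to-\infty$ with $f^{n_j}x\in A_K$ \emph{and} $\abs{\Bs_f^{n_j}\phi(x)}$ bounded. Conservativity of $F$ is a $\mu\otimes\Leb$-a.e.\ statement on $X\times\R$; it does not localize to the $\Leb$-null slice $X\times\{0\}$, so you cannot read off the desired simultaneous recurrence for orbits of the form $(x,0)$. Ergodicity alone gives you $n_j\to-\infty$ with $f^{n_j}x\in A_K$, and then the $A_K$-membership bounds $\Bs_f^{n_j}\phi(x)$ from \emph{above} by $K$ (via $\Bs_f^{n_j}\phi(x)=-\Bs_f^{-n_j}\phi(f^{n_j}x)\leq K$), but you have no lower bound, because the hypothesis gives only a lower bound on forward sums from $A_K$, not an upper one. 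So the splicing argument stalls. Step~4 has the same underlying problem: the ``trapped strip'' picture is wrong, since $\sup_{n\geq 0}\Bs_f^n\phi=+\infty$ means the forward orbit is \emph{not} trapped, and you never produce a set on which you can run a contradiction with conservativity.

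What the paper does to close this gap is the following two-step device. First, it isolates a lemma (Lemma~\ref{lem:erg-dev-lem}): if $\sup_{n\geq 0}\Bs_f^n\phi<M$ a.e., then $\inf_{n\geq 0}\Bs_f^n\phi\geq -M$ a.e.; this \emph{is} proved directly from conservativity of $F$, by a short contradiction argument on a product set $A\times[-\tfrac{1}{2n},\tfrac{1}{2n}]$. Second --- and this is the idea you are missing --- it passes to the \emph{first return map} $f_A$ on a positive-measure set $A\subset A_K$, with induced cocycle $\phi_A$. On the induced system the forward sums $\Bs_{f_A}^n\phi_A$ are bounded below by $-K$ (this is immediate from $A\subset A_K$), so the lemma applies and yields a matching upper bound $\Bs_{f_A}^n\phi_A\leq K$. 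Now you have \emph{two-sided} control on the induced sums, and the $+\infty$ excursions of the original sums are forced to occur \emph{inside} return blocks; this is exactly what lets you manufacture large positive values at controlled negative times (giving $\sup_{n\leq 0}=+\infty$) and also bound $\Bs_f^{-n}\phi$ below by decomposing along visits to $A$ (giving $\inf_{n\leq 0}>-\infty$). Try to prove the lemma first and then redo Steps~3--4 through the induced system; the rest of your outline will then go through.
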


To prove Theorem~\ref{thm:ergodic-deviations}, first we need a lemma
which is a simple consequence of Theorem~\ref{thm:atkinson}:

\begin{lemma}
  \label{lem:erg-dev-lem}
  Let $(X,\ms{B},\mu)$, $f$ and $\phi$ be as in
  Theorem~\ref{thm:ergodic-deviations} and let us assume there exists
  $M>0$ such that
  \begin{equation}
    \label{eq:Sn-bounded-from-above}
    \sup_{n\in\N}\mc{S}_f^n\phi(x)<M,\quad \text{for }
    \mu\text{-a.e. } x\in X.
  \end{equation}

  Then, it holds
  \begin{equation}
    \label{eq:Sn-bounded-from-below}
    \inf_{n\in\N} \mc{S}_f^n\phi(x)\geq -M, \quad \text{for }
    \mu\text{-a.e. } x\in X.
  \end{equation}
\end{lemma}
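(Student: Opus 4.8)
The plan is to use Theorem~\ref{thm:atkinson} (Atkinson--Schmidt) applied to the skew-product $F(x,t)=(f(x),t+\phi(x))$ on $X\times\R$, which is conservative with respect to $\mu\otimes\Leb$. The key observation is that the hypothesis \eqref{eq:Sn-bounded-from-above} says precisely that for $\mu$-a.e.\ $x$, the forward $F$-orbit of $(x,0)$ stays in the region $\{(x,t): t<M\}$; more generally, the forward orbit of $(x,t_0)$ stays below the level $t_0+M$. I would like to conclude that the Birkhoff sums are also bounded below by $-M$, and the mechanism for this is conservativity: if $\inf_{n}\Bs_f^n\phi(x)<-M$ on a positive-measure set, I will produce a wandering set of positive measure for $F$, contradicting Theorem~\ref{thm:atkinson}.

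Concretely, suppose for contradiction that the set $B:=\{x\in X : \inf_{n\in\N}\Bs_f^n\phi(x) < -M\}$ has positive $\mu$-measure. By $\sigma$-additivity there is $\delta>0$ and a positive-measure subset on which the infimum is $<-M-2\delta$; intersecting with the (full-measure) set where \eqref{eq:Sn-bounded-from-above} holds, I get a positive-measure set $A\subset X$ such that for every $x\in A$ there exists $n=n(x)\in\N$ with $\Bs_f^n\phi(x)<-M-2\delta$, while simultaneously $\sup_{m\in\N}\Bs_f^m\phi(x)<M$. Now consider in $X\times\R$ the box $E:=A\times(-\delta,\delta)$, which has positive $\mu\otimes\Leb$-measure. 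For a point $(x,t)\in E$ and the corresponding $n=n(x)$, the $n$-th iterate is $F^n(x,t)=(f^n(x),\,t+\Bs_f^n\phi(x))$, whose second coordinate is $< \delta + (-M-2\delta) = -M-\delta$. For any \emph{larger} iterate $m\ge n$, writing $\Bs_f^m\phi(x)=\Bs_f^n\phi(x)+\Bs_{f^n}^{\,m-n}\phi\big(f^{n}(x)\big)$ and using that $\sup_{k}\Bs_f^k\phi(x)<M$ gives $\Bs_f^m\phi(x)<M$, hence the second coordinate of $F^m(x,t)$ is $<\delta+M<2M$; but more importantly I want to show $F^m(E)\cap E=\emptyset$ for all $m\ge1$, which is the real point.

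So the heart of the argument is to exhibit a genuinely wandering set. The cleanest route: instead of the single box, use the upper-bound hypothesis directly. Let $A'$ be the full-measure set where $\sup_{n\in\N}\Bs_f^n\phi<M$ and consider $E':=A'\times(M,M+\delta)$ for small $\delta>0$; it has positive measure. For $(x,t)\in E'$ and any $m\ge1$, the second coordinate of $F^m(x,t)$ equals $t+\Bs_f^m\phi(x)$. Since $t>M$ we'd need $\Bs_f^m\phi(x)<0$ at the relevant times to return — this doesn't immediately close. The genuinely correct normalization is the one Atkinson's theorem is designed for: apply conservativity to the set $E'':=A'\times(0,\delta)$. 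For $(x,t)\in E''$, $\sup_m (t+\Bs_f^m\phi(x)) < \delta + M$, so the forward orbit never leaves $X\times(-\infty,M+\delta)$; that alone is not wandering. The resolution, and the step I expect to be the main obstacle, is a standard but slightly delicate measure-theoretic packing/exhaustion argument: since $\mu\otimes\Leb$ restricted to $X\times(0,\delta)$ is infinite only if $\Leb$ is, which it isn't here on a bounded interval, and $F$ preserves $\mu\otimes\Leb$, conservativity forces recurrence; one then runs the classical argument (as in Atkinson's proof) showing that if Birkhoff sums were bounded above by $M$ on a positive-measure set, the orbit is trapped in a half-space $X\times(-\infty,c)$ for a.e.\ starting height, and combining this with the analogous statement obtained by applying the same reasoning to the height-reversed picture yields that $\Bs_f^n\phi$ is a.e.\ bounded in absolute value by $M$; the symmetry between the roles of ``$\le M$ from above'' and ``$\ge -M$ from below'' comes out of conservativity together with $\int\phi\,d\mu=0$. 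I would write this final measure-theoretic step carefully, most likely by a direct appeal to the conclusion of Theorem~\ref{thm:atkinson}: the sequence of sets $F^{-n}\big(A'\times(0,\delta)\big)\cap\big(A'\times(0,\delta)\big)$ must have positive measure for some $n\ge1$, and tracking the second coordinate through this intersection — $t\in(0,\delta)$ and $t+\Bs_f^n\phi(x)\in(0,\delta)$ simultaneously, with $\sup$ bounded by $M$ — is exactly what forces, via a telescoping/averaging over the return times supplied by conservativity, the bound $\inf_n\Bs_f^n\phi\ge -M$ a.e.
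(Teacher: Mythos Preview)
Your strategy --- invoke Theorem~\ref{thm:atkinson} (conservativity of the skew-product $F$) and derive a contradiction from a positive-measure set where $\inf_n\Bs_f^n\phi<-M$ --- is exactly the paper's approach. But the execution has a genuine gap, and your several false starts with $E$, $E'$, $E''$ never close it.

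The concrete missing idea is this. From $\mu\{x:\inf_{n}\Bs_f^n\phi(x)<-M\}>0$ you should extract a \emph{fixed} time $N$ and $\varepsilon>0$ so that the set
\[
  A:=\{x:\Bs_f^N\phi(x)\le -M-\varepsilon\}
\]
has positive measure (your $n(x)$ depending on $x$ is not enough --- you need a single $N$ to telescope against later). Now apply conservativity to $\tilde A:=A\times[-\varepsilon/2,\varepsilon/2]$. A return $F^{-k}(\tilde A)\cap\tilde A$ of positive measure, with $k>N$, gives a positive-measure set of $x\in A$ with $f^k(x)\in A$ \emph{and} $\abs{\Bs_f^k\phi(x)}\le\varepsilon$. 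Then the single telescoping line
\[
  \Bs_f^{\,k-N}\phi\big(f^N(x)\big)=\Bs_f^k\phi(x)-\Bs_f^N\phi(x)
  \;>\; -\varepsilon + (M+\varepsilon)=M
\]
contradicts the hypothesis~\eqref{eq:Sn-bounded-from-above} on the positive-measure set $f^N(A)$. That is the whole proof.

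Your boxes $E'=A'\times(M,M+\delta)$ and $E''=A'\times(0,\delta)$ built over the \emph{full-measure} set $A'=\{\sup_n\Bs_f^n\phi<M\}$ cannot work: they throw away the only leverage you have, namely membership in the ``bad'' set $A$ where a specific Birkhoff sum is very negative. Conservativity applied to $A'\times(0,\delta)$ only tells you that $\Bs_f^k\phi$ is occasionally small, which is true for any $\phi$ with mean zero and says nothing about the infimum. The point is to force the return to land back in $A$, so that you can subtract off $\Bs_f^N\phi$ and exceed $M$.
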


\begin{proof}[Proof of Lemma~\ref{lem:erg-dev-lem}]
  Let us suppose \eqref{eq:Sn-bounded-from-below} is false. So, if we
  define
  \begin{displaymath}
    A_n^m:=\left\{x\in X : \Bs_f^m\phi(x) \leq -M - 1/n\right\},  
  \end{displaymath}
  for each $m,n\geq 1$, we have
  $\mu\Big(\bigcup_{m,n\geq 1} A_{m,n}\Big)>0$. Then, there exist
  $N,n\geq 1$ such that the set $A:=A_n^N$ satisfies $\mu(A)>0$ and
  \begin{equation}
    \label{eq:contradicting-Sn-bounded-from-below}
    \mc{S}_f^{N}\phi(x)\leq -M-\frac{1}{n}, \quad\forall x\in A.
  \end{equation}

  Now, let us consider the skew-product $F\colon X\times\R\carr$ given
  by \eqref{eq:skew-product-Birk-definition} and define the set
  $\tilde A:=A\times[-1/2n,1/2n]\subset X\times\R$. Since
  $\mu\otimes\Leb(\tilde A)=n^{-1}\mu(A)>0$, by
  Theorem~\ref{thm:atkinson} we know there exists $k\geq 1$ such that
  $\mu\otimes\Leb\big(\tilde A\cap F^{-k}(\tilde A)\big)>0$. By
  classical arguments in ergodic theory, this implies there exists a
  sequence $k_j\to+\infty$ such that
  \begin{displaymath}
    \mu\otimes\Leb\big(\tilde A\cap F^{-k_j}(\tilde
    A)\big)>0,\quad\forall j\in\N.
  \end{displaymath}
  This is equivalent to say that the set
  \begin{displaymath}
    B_j:=\left\{x\in A : f^{k_j}(x)\in A,\
      \abs{\Bs_f^{k_j}\phi(x)}\leq\frac{1}{n}\right\} 
  \end{displaymath}
  has positive $\mu$-measure, for every $j$.

  Now, choosing $j$ large enough in order to have $k_j>N$, and
  combining this with \eqref{eq:contradicting-Sn-bounded-from-below},
  we get
  \begin{displaymath}
    \mc{S}_f^{k_j-N}\big(f^N(x)\big) = \mc{S}_f^{k_j}(x) - \mc{S}_f^{N}(x)
    > -\frac{1}{n} + M+\frac{1}{n} = M,
  \end{displaymath}
  for $\mu$-a.e. $x\in B_j$. Since $\mu(B_j)>0$, this contradicts
  \eqref{eq:Sn-bounded-from-above}.
\end{proof}

\begin{proof}[Proof of Theorem~\ref{thm:ergodic-deviations}]
  Of course we can assume $\abs{\phi(x)}<\infty$, for every $x\in
  X$. By \eqref{eq:unbound-from-above-bound-from-below}, there exist
  $K>0$ and $A\in\ms{B}$ with $\mu(A)>0$ such that
  \begin{equation}
    \label{eq:-infty-bounded-on-A}
    \Bs_f^n\phi(x) > -K, \quad\forall n\geq 0,
  \end{equation}
  for $\mu$-a.e. $x\in A$.

  Consider the functions
  $\tau_A^{+},\tau_A^{-}\colon X\to\N_0\cup\{\infty\}$ given by
  \begin{displaymath}
    \begin{split}
      \tau_A^+(x)&:=\min\{n\geq 0 : f^n(x)\in A\}, \\
      \tau_A^-(x)&:=\min\{n\geq 0 : f^{-n}(x)\in A\}, \quad\forall
      x\in X.
    \end{split}
  \end{displaymath}
  Since $f$ is ergodic, $\tau_A^+$ and $\tau_A^-$ are finite
  $\mu$-a.e. We also define the \emph{first return time} to $A$ by
  $r_A:=\tau_A^+\circ f\big|_A +1$.

  Now, we consider the probability space $(A,\ms{B}_A,\mu_A)$ given by
  $\ms{B}_A:=\{B\in\ms{B} : B\subset A\}$ and
  $\mu_A:=\mu(A)^{-1}\mu\big|_A$ and the ergodic automorphism
  $f_A\colon(A,\ms{B}_A,\mu_A)\carr$ given by the first return map:
  \begin{displaymath}
    f_A(x) := f^{\tau_A^+(f(x))}\big(f(x)\big) = f^{r_A(x)}(x),
  \end{displaymath}
  for $\mu$-a.e. $x\in A$. We also define the function
  $\phi_A(x):=\Bs_f^{r_A(x)}\phi(x)$. Then it holds
  $\phi_A\in L^1(A,\ms{B}_A,\mu_A)$ and $\int_A\phi_A\dd\mu =0$. On
  the other hand, by \eqref{eq:-infty-bounded-on-A} we know
  \begin{equation}
    \label{eq:bound-from-below-on-A}
    \Bs_{f_A}^n\phi_A(x) > -K, \quad\forall n\geq 0,
  \end{equation}
  for $\mu_A$-a.e. $x\in A$.

  Then, applying Lemma~\ref{lem:erg-dev-lem} in this context, we
  conclude that for $\mu_A$-a.e. $x\in A$ it holds
  \begin{equation}
    \label{eq:bound-from-above-on-A}
    \Bs_{f_A}^n\phi_A(x)\leq K, \quad\forall n\geq 0.
  \end{equation}

  Now, let us consider the measurable functions $M\colon A\to\R$ and
  $N\colon A\to\N_0$ given by
  \begin{equation}
    \label{eq:M-func-on-A-def}
    M(x) := \sup_{1\leq n \leq r_A(x)} \Bs_f^{n-1}\phi(x),
  \end{equation}
  and
  \begin{equation}
    \label{eq:N-funct-on-A-def}
    N(x):=\inf\left\{n\geq 0 : n<r_A(x),\ \Bs_f^n\phi(x) =
      M(x)\right\},
  \end{equation}
  and notice they are well define $\mu$-a.e. $x\in A$.

  For each pair $(m,n)\in\N\times\N_0$, let us consider the set
  \begin{displaymath}
    A_m^n := \left\{x\in A : m\leq M(x) < \infty,\ N(x)=n\right\}.  
  \end{displaymath}

  Putting together \eqref{eq:unbound-from-above-bound-from-below} and
  \eqref{eq:bound-from-above-on-A} it follows that
  \begin{equation}
    \label{eq:M-not-in-Linfty}
    \mu\bigg(\bigcup_{n\geq 0}A_m^n\bigg)>0, \quad\forall m\in\N. 
  \end{equation}

  By \eqref{eq:M-not-in-Linfty}, for each $m\in\N$ there exists
  $n_m\in\N$ so that $\mu(A_m^{n_m})>0$. So, let us define the set
  \begin{equation}
    \label{eq:B-set-definition}
    B:=\bigcap_{m\in\N} \bigcap_{i\geq 0} \bigcup_{j\geq i}
    f^j(A_m^{n_m}).
  \end{equation}
  Since $f$ is an ergodic automorphism, $B$ has full $\mu$-measure.

  Now, let us consider an arbitrary point $x\in B$ and any
  $m\in\N$. Since $A_m^{n_m}\subset A$, it clearly holds
  $\tau_A^-(x)<\infty$. By \eqref{eq:B-set-definition}, there exists a
  natural number $j=j(x,m)>n_m$ such that
  \begin{equation}
    \label{eq:choosing-j-to-fit-A-m-n-m}
    f^{-j}\big(f^{-\tau_A^-(x)}(x)\big) \in A_m^{n_m}. 
  \end{equation}
  Since both points $f^{-\tau_A^-(x)}(x)$ and
  $f^{-j}\big(f^{-\tau_A^-(x)}(x)\big)$ belong to $A$, there exists
  $j_A\in\N$ such that
  \begin{displaymath}
    f_A^{-j_A}\big(f^{-\tau_A^-(x)}(x)\big)=
    f^{-j}\big(f^{-\tau_A^-(x)}(x)\big).
  \end{displaymath}

  Now invoking \eqref{eq:bound-from-below-on-A},
  \eqref{eq:M-func-on-A-def}, \eqref{eq:N-funct-on-A-def} and
  \eqref{eq:choosing-j-to-fit-A-m-n-m}, we get
  \begin{equation}
    \label{eq:unboundedness-neg-times}
    \begin{split}
      &\Bs_f^{-\tau_A^-(x)-j+n_m}\phi(x) = \Bs_f^{-\tau_A^-(x)}\phi(x)
      +
      \Bs_f^{-j+n_m}\phi\big(f^{-\tau_A^-(x)}(x)\big) \\
      & = \Bs_f^{-\tau_A^-(x)}\phi(x) +
      \Bs_{f_A}^{-j_A}\phi_A\big(f^{-\tau_A^-(x)}(x)\big) +
      \Bs_{f}^{n_m}\phi\Big(
      f_A^{-j_A}\big(f^{-\tau_A^-(x)}(x)\big)\Big) \\
      &\geq \Bs_f^{-\tau_A^-(x)}\phi(x) - K + m.
    \end{split}
  \end{equation}

  Since $m$ is arbitrary in \eqref{eq:unboundedness-neg-times},
  $\mu(B)=1$ and $j>n_m$, we have proved that
  \begin{displaymath}
    \sup_{n\leq 0}\Bs_f^{n}\phi(x) = + \infty, \quad\text{for }
    \mu\text{-a.e. } x\in X.  
  \end{displaymath}

  On the other hand, let us consider the set
  \begin{displaymath}
    C:=\bigcap_{i\geq 0}\bigcup_{j\geq i} f^j(A).
  \end{displaymath}
  Since, $f$ is ergodic, it holds $\mu(C)=1$.

  Now, consider any $x\in C$ and any $n\in\N$. Thus, $\tau_A^-(x)$ and
  $\tau_A^-\big(f^{-n}(x)\big)$ are both finite, and both points
  $f^{-\tau_A^-(x)}(x)$ and $f^{-n-\tau_A^-(f^{-n}(x))}(x)$ belong to
  $A$. So, there exists $l_A=l_A(x,n)\in\N$ such that
  \begin{displaymath}
    f_A^{l_A}\big(f^{-n-\tau_A^-(f^{-n}(x))}(x)\big)=f^{-\tau_A^-(x)}(x).
  \end{displaymath}

  Then, we have
  \begin{displaymath}
    \begin{split}
      \Bs_f^{-n}\phi(x) &= \Bs_f^{-n-\tau_A^-(f^{-n}(x))}\phi(x) +
      \Bs_f^{\tau_A^-(f^{-n}(x))}\phi
      \big(f^{-n-\tau_A^-(f^{-n}(x))}(x)\big) \\
      &> \Bs_f^{-n-\tau_A^-(f^{-n}(x))}\phi(x) - K \\
      &= \Bs_f^{-\tau_A^-(x)}\phi(x) -
      \Bs_f^{n+\tau_A^-(f^{-n}(x))-\tau_A^-(x)}\phi
      \big(f^{-n-\tau_A^-(f^{-n}(x))}(x)\big) -K \\
      &= \Bs_f^{-\tau_A^-(x)}\phi(x) -
      \Bs_{f_A}^{l_A}\phi_A\big(f^{-n-\tau_A^-(f^{-n}(x))}(x)\big) -K \\
      &\geq \Bs_f^{-\tau_A^-(x)}\phi(x) - 2K, \\
    \end{split}
  \end{displaymath}
  where the first inequality follows from
  \eqref{eq:-infty-bounded-on-A} and the second one from
  \eqref{eq:bound-from-above-on-A}.

  From this last estimate, and since $\mu(C)=1$, it follows that
  \begin{displaymath}
    \inf_{n\leq 0}\Bs_f^n\phi(x) > -\infty, \quad\text{for }
    \mu\text{-a.e. } x\in X.  
  \end{displaymath}
\end{proof}

\section{Rotational deviations}
\label{sec:rotat-deviations}

In this section we enter into the core of this work: the study of
rotational deviations for $2$-torus homeomorphisms in the identity
isotopy class.

Let us start recalling some definitions we introduced in
\cite{KocRotDevPerPFree}. Let $f\colon\T^2\carr$ be a homeomorphism
homotopic to the identity and $\tilde f\in\Thomeo(\T^2)$ be a lift of
$f$. Let us suppose there exist $v\in\Ss^1$ and $\alpha\in\R$ such
that
\begin{equation}
  \label{eq:rot-set-in-line}
  \rho(\tilde f)\subset \ell_\alpha^v=\{\alpha v + tv^\perp :
  t\in\R\}.
\end{equation}
Observe that, by Theorem~\ref{thm:Handel-ppfree-rho-empty-int}, this
hypothesis is always satisfied when $f$ is periodic point free.

We say that a point $z_0\in\T^2$ exhibits \emph{bounded
  $v$-deviations} when there exists a real constant $M=M(z_0,f)>0$
such that
\begin{equation}
  \label{eq:bound-v-deviations}
  \scprod{\tilde f^n(\tilde z_0)-\tilde z_0-n\rho}{v} =
  \scprod{\Delta_{\tilde f}^{(n)}(z_0)}{v} - n\alpha \leq M,
  \quad\forall n\in\Z,
\end{equation}
for any $\tilde z_0\in\pi^{-1}(z_0)$, any $\rho\in\rho(\tilde f)$ and
where $\Delta_{\tilde f}$ denotes the displacement cocycle of
$\tilde f$ given by \eqref{eq:Delta-cocycle}.
 
Moreover, we say that $f$ exhibits \emph{uniformly bounded
  $v$-deviation} when there exists $M=M(f)>0$ such that
\begin{displaymath}
  \scprod{\Delta_{\tilde f}^{(n)}(z)}{v}- n\alpha \leq M, \quad\forall
  z\in\T^2,\ \forall n\in\Z. 
\end{displaymath}

Even though the straight lines $\ell_\alpha^v$ and
$\ell_{-\alpha}^{-v}$ coincide as subsets of $\R^2$, there is no
\emph{à priori} obvious relation between boundedness of $v$-deviation
and $(-v)$-deviation, because in our definition of ``bounded
$v$-deviations'' given by \eqref{eq:bound-v-deviations} we are just
considering boundedness from above.

However, we got the following result that relates both $v$- and
$(-v)$-deviations:

\begin{theorem}[Corollary 3.2 in \cite{KocRotDevPerPFree}]
  \label{thm:v-vs--v-rot-dev}
  If $f\in\Homeo0(\T^2)$ and $\tilde f\colon\R^2\carr$ is a lift of
  $f$ such that condition \eqref{eq:rot-set-in-line} holds, then $f$
  exhibits uniformly bounded $v$-deviations if and only if exhibits
  uniformly bounded $(-v)$-deviations.
\end{theorem}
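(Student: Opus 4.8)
The plan is to reduce the two-sided deviation estimate to the one-sided ergodic dichotomy established in Theorem~\ref{thm:ergodic-deviations}, applied to the displacement cocycle in the direction $v$. Concretely, I would fix a lift $\tilde f$ satisfying \eqref{eq:rot-set-in-line} and consider the continuous function $\phi:=\scprod{\Delta_{\tilde f}}{v}-\alpha\in C^0(\T^2,\R)$. Since $\rho(\tilde f)\subset\ell_\alpha^v$, Theorem~\ref{thm:mis-ziem-convexity} (or already the elementary properties of the rotation set) shows that $\int_{\T^2}\phi\dd\mu=0$ for every $f$-invariant Borel probability measure $\mu$, because $\rho_\mu(\tilde f)\in\rho(\tilde f)$ and hence $\scprod{\rho_\mu(\tilde f)}{v}=\alpha$. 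Note that the Birkhoff sum of $\phi$ over $f$ is exactly $\Bs_f^n\phi(z)=\scprod{\Delta_{\tilde f}^{(n)}(z)}{v}-n\alpha$, i.e. the quantity appearing in \eqref{eq:bound-v-deviations}. So ``$f$ exhibits uniformly bounded $v$-deviations'' says precisely that $\sup_{n\in\Z,\,z}\Bs_f^n\phi(z)<\infty$, while bounded $(-v)$-deviations corresponds to the analogous statement for $-\phi$.

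Next I would set up the reduction to an ergodic automorphism. Suppose $f$ does \emph{not} exhibit uniformly bounded $(-v)$-deviations; I want to conclude it does not exhibit uniformly bounded $v$-deviations either. The key observation is that by continuity of $\phi$ and compactness of $\T^2$, the functions $z\mapsto\sup_{n\geq 0}\Bs_f^n\phi(z)$ and $z\mapsto\sup_{n\geq 0}\Bs_f^n(-\phi)(z)$ are lower semicontinuous and $\{+\infty\}$-valued on an invariant Borel set; one then passes to an ergodic component. More precisely, failure of uniformly bounded $(-v)$-deviation together with the cohomological criterion of Gottschalk--Hedlund (Theorem~\ref{thm:gott-hedlund}) rules out that $\phi$ is a continuous coboundary, and standard arguments (Atkinson--Schmidt, i.e. Theorem~\ref{thm:atkinson}) show that there is an ergodic $\mu\in\M(f)$ for which the one-sided Birkhoff sums of $\phi$ are $\mu$-a.e. unbounded above \emph{and} unbounded below; equivalently the hypotheses of Theorem~\ref{thm:ergodic-deviations} hold for $(\phi,f,\mu)$ up to swapping $\phi\leftrightarrow-\phi$. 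Applying Theorem~\ref{thm:ergodic-deviations} (and its mirror image with $-\phi$) then forces $\sup_{n\leq 0}\Bs_f^n\phi=+\infty$ $\mu$-a.e.\ as well, so the two-sided sup of $\Bs_f^n\phi$ is $+\infty$, which is incompatible with uniformly bounded $v$-deviations. By symmetry ($v\leftrightarrow -v$) the converse implication holds, giving the stated equivalence.

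The main obstacle is the passage from the \emph{topological} failure of uniform boundedness to the \emph{measure-theoretic} dichotomy of Theorem~\ref{thm:ergodic-deviations} --- that is, producing a single ergodic measure witnessing two-sided unboundedness of the one-sided sums. The subtlety is that \emph{a priori} one only knows that, say, $\sup_{n\geq0}\Bs_f^n(-\phi)$ is unbounded over $\T^2$; one must rule out the ``trivial'' alternative in which for every invariant measure the sums stay bounded on one side (which, by Theorem~\ref{thm:atkinson}, would correspond to $\phi$ being an $L^1$-coboundary and, via Theorem~\ref{thm:gott-hedlund}, a continuous one), and one must ensure the chosen ergodic component actually sees the unboundedness in \emph{both} directions so that Theorem~\ref{thm:ergodic-deviations} applies. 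I expect this to be handled by a clean dichotomy argument: either every ergodic component has uniformly (hence, by compactness and Gottschalk--Hedlund, uniformly over $\T^2$) bounded one-sided sums for $\phi$ --- contradicting the assumed failure for $-\phi$ after invoking Lemma~\ref{lem:erg-dev-lem} --- or some ergodic component satisfies \eqref{eq:unbound-from-above-bound-from-below}, at which point Theorem~\ref{thm:ergodic-deviations} closes the argument. The rest is bookkeeping with the sign conventions, since the paper's definition of bounded deviations uses only upper bounds.
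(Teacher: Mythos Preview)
The paper does not prove this statement at all: it is quoted verbatim as Corollary~3.2 of \cite{KocRotDevPerPFree} and used as a black box. So there is no ``paper's own proof'' to compare against.

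That said, your proposal has both a genuine gap and a conceptual misdiagnosis. The gap is your appeal to Gottschalk--Hedlund (Theorem~\ref{thm:gott-hedlund}): that theorem requires $f$ to be minimal, and Theorem~\ref{thm:v-vs--v-rot-dev} carries no such hypothesis --- it is stated for an arbitrary $f\in\Homeo0(\T^2)$ satisfying \eqref{eq:rot-set-in-line}. For the same reason, the passage ``by compactness and Gottschalk--Hedlund, uniformly over $\T^2$'' in your dichotomy is unjustified here. The ergodic-component argument you sketch is also vague: you never explain why \emph{some} ergodic measure must witness the precise one-sided pattern \eqref{eq:unbound-from-above-bound-from-below} needed to invoke Theorem~\ref{thm:ergodic-deviations}.

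More importantly, with the definitions exactly as written in this paper the statement is elementary and needs none of this machinery. Set $\phi:=\scprod{\Delta_{\tilde f}}{v}-\alpha$, so that $\Bs_f^n\phi(z)=\scprod{\Delta_{\tilde f}^{(n)}(z)}{v}-n\alpha$. Uniformly bounded $v$-deviations means $\Bs_f^n\phi(z)\le M$ for all $n\in\Z$ and all $z$, while uniformly bounded $(-v)$-deviations means $\Bs_f^n\phi(z)\ge -M$ for all $n\in\Z$ and all $z$. The cocycle identity gives
\[
\Bs_f^{-n}\phi\big(f^n(z)\big)=-\,\Bs_f^{n}\phi(z),\qquad\forall n\in\Z,\ \forall z\in\T^2,
\]
and since $f$ is a bijection of $\T^2$, the map $z\mapsto f^n(z)$ is onto. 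Hence
\[
\inf_{n\in\Z,\,z\in\T^2}\Bs_f^n\phi(z)=-\sup_{n\in\Z,\,z\in\T^2}\Bs_f^n\phi(z),
\]
so one side is finite if and only if the other is. The crucial point you overlooked is that the definition of bounded $v$-deviations in \eqref{eq:bound-v-deviations} and the displayed line following it ranges over \emph{all} $n\in\Z$, not just $n\ge 0$; this is what makes the two-sided symmetry immediate. Theorem~\ref{thm:ergodic-deviations}, Lemma~\ref{lem:erg-dev-lem} and Theorem~\ref{thm:atkinson} are not needed.
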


As a particular case of our definition of boundedness rotational
deviations, let us recall that a homeomorphism $f\in\Homeo0(\T^2)$ is
said to be \emph{annular} (see for instance \cite{KoroTalStricTor,
  JaegerTalIrratRotFact}) when there exist a lift
$\tilde f\in\Thomeo(\T^2)$, $M>0$ and $v\in\Ss^1$ with rational slope
such that
\begin{equation}
  \label{eq:annularity-definition}
  \abs{\scprod{\Delta_{\tilde f}^{(n)}(z)}{v}}\leq M, \quad\forall
  z\in\T^ 2,\ \forall n\in\Z.  
\end{equation}
Observe that in such a case, the rotation set $\rho(\tilde f)$ is
contained in the line $\ell_0^v$, \ie the straight line parallel to
$v$ and passing through the origin.

On the other hand, a homeomorphisms $f\in\Homeo0(\T^2)$ is said to be
\emph{eventually annular} when there exists $k\in\N$ such that $f^k$
is annular.

In \cite{KocRotDevPerPFree} we proved that boundedness of
$v$-deviations is equivalent to the existence of certain invariant
topological object called torus \emph{pseudo-foliation.}

\subsection{Pseudo-foliations}
\label{sec:pseudo-foliations}

In this paragraph we recall the notions of \emph{plane and torus
  pseudo-foliations} we introduced in \cite{KocRotDevPerPFree}.

\subsubsection{Plane pseudo-foliations}
\label{sec:plane-pseudo-foliations}

Let $\F$ be a partition of $\R^2$. We shay that $\F$ is a \emph{plane
  pseudo-foliation} when every atom of $\F$ is closed, connected, has
empty interior and disconnects $\R^2$ in exactly two connected
components.

Given any $z\in\R^2$, we write $\F_z$ for the atom of the partition
$\F$ containing the point $z$. If $h\colon\R^2\carr$ is an arbitrary
map, we say that $\F$ is $h$-invariant when
\begin{displaymath}
  h\big(\F_z\big)=\F_{h(z)}, \quad\forall z\in\R^2. 
\end{displaymath}

Let us recall the following result of \cite[Preposition
5.1]{KocRotDevPerPFree}:
\begin{proposition}
  \label{pro:plane-pseudo-fol-unbound-connec-comp}
  If $\F$ is a plane pseudo-foliation, then both connected component
  of $\R^2\setminus\F_z$ are unbounded, for every $z\in\R^2$.
\end{proposition}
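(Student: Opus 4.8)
The plan is to argue by contradiction: assuming the conclusion fails, I will manufacture, inside a hypothetical bounded complementary component of an atom, a strictly decreasing transfinite chain of non‑empty compact sets, which $\R^2$ cannot support.

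First the setup. Every atom is closed, so the complement of an atom is open and, by the definition of a plane pseudo‑foliation, has exactly two components, both open, at least one of them unbounded; hence if the statement fails there is $z_0$ with $\R^2\setminus\F_{z_0}$ having a bounded component $\Delta$. Two elementary facts hold for every atom $A$ and every bounded component $\Delta$ of $\R^2\setminus A$: (a) $\Delta$ is a union of atoms --- if $w\in\Delta$ then $\F_w$ is a non‑degenerate continuum (a point cannot disconnect $\R^2$) disjoint from $A$, so it lies in one of the two components of $\R^2\setminus A$, necessarily in $\Delta$ since $w\in\Delta$; (b) consequently, for $w\in\Delta$ the atom $\F_w$ is a bounded continuum, so $\R^2\setminus\F_w$ has exactly one unbounded component $E_w$ and exactly one bounded component $D_w\ne\emptyset$, with $\partial D_w\cup\partial E_w\subseteq\F_w$, and the closed set $R_w:=\R^2\setminus E_w=D_w\cup\F_w$ is connected (a union of the continua $\overline{D_w}$ and $\F_w$, meeting along the non‑empty set $\partial D_w$).

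The heart of the proof is the following claim: with $A$, $\Delta$, $w$ as above one has $A\subseteq E_w$, and therefore $\overline{D_w}\subseteq R_w\subsetneq\Delta$. Since $A$ is connected and disjoint from $\F_w$, either $A\subseteq D_w$ or $A\subseteq E_w$. In the first case let $E_A$ be the unbounded component of $\R^2\setminus A$; then $E_A$ is connected, unbounded and disjoint from $\F_w$, so $E_A\subseteq E_w$, and symmetrically $E_w\subseteq E_A$, whence $E_w=E_A$ and $\partial E_w=\partial E_A\subseteq\F_w\cap A=\emptyset$, impossible since $E_w$ is a non‑empty proper open subset of the connected space $\R^2$. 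So $A\subseteq E_w$. Then $D_w$ is connected, disjoint from $A$, and meets the open set $\Delta$ (because $\emptyset\ne\partial D_w\subseteq\F_w\subseteq\Delta$), hence $D_w\subseteq\Delta$ and $R_w\subseteq\Delta$; and $R_w\ne\Delta$, for otherwise $\Delta$ would be a non‑empty bounded clopen subset of $\R^2$. In plain words: no atom is simultaneously ``inside'' another atom and ``around'' it, and this asymmetry is exactly what forces strict decrease below.

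Finally, the transfinite descent. Put $\Delta_0:=\Delta$, the bounded complementary component of $\F_{z_0}$. At a successor stage, given the bounded complementary component $\Delta_\alpha$ of an atom, choose $v_\alpha\in\Delta_\alpha$ and set $\Delta_{\alpha+1}:=D_{v_\alpha}$; by the claim this is a non‑empty bounded complementary component of $\F_{v_\alpha}$ with $\overline{\Delta_{\alpha+1}}\subseteq R_{v_\alpha}\subsetneq\Delta_\alpha\subsetneq\overline{\Delta_\alpha}$. At a limit stage $\lambda$, the sets $\overline{\Delta_\alpha}$ $(\alpha<\lambda)$ form a decreasing chain of non‑empty compact sets, so their intersection $C_\lambda$ is non‑empty; pick $v_\lambda\in C_\lambda$. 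For every $\alpha<\lambda$ one has $\alpha+1<\lambda$ and $v_\lambda\in\overline{\Delta_{\alpha+1}}\subseteq\Delta_\alpha$, so $v_\lambda\in\bigcap_{\alpha<\lambda}\Delta_\alpha$ and $\F_{v_\lambda}\subseteq\bigcap_{\alpha<\lambda}\Delta_\alpha$; setting $\Delta_\lambda:=D_{v_\lambda}$, the claim applied inside each $\Delta_\alpha$ gives $\overline{\Delta_\lambda}\subseteq R_{v_\lambda}\subsetneq\Delta_\alpha\subseteq\overline{\Delta_\alpha}$ for all $\alpha<\lambda$. Thus the recursion yields a strictly decreasing chain $(\overline{\Delta_\alpha})_{\alpha<\omega_1}$ of non‑empty closed subsets of $\R^2$, which is absurd since $\R^2$ is second countable. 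This contradiction proves that for every $z$ both components of $\R^2\setminus\F_z$ are unbounded. The delicate points are the claim of the third paragraph --- excluding mutually enclosing atoms --- and verifying that the descent never stalls, in particular the limit step.
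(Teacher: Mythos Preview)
The present paper does not prove this proposition; it is quoted from the author's earlier work \cite{KocRotDevPerPFree}. So there is no in-paper proof to compare against, and I can only assess your argument on its own merits.

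Your proof is correct. The central claim---that if $A$ is an atom with a bounded complementary component $\Delta$ and $w\in\Delta$, then the ``filled atom'' $R_w=\F_w\cup D_w$ is a compact connected set with $R_w\subsetneq\Delta$---is established cleanly: the alternative $A\subseteq D_w$ would make $A$ bounded as well, and then the two unbounded complementary components satisfy $E_A\subseteq E_w$ and $E_w\subseteq E_A$, forcing $\partial E_w\subseteq A\cap\F_w=\emptyset$, which is absurd. The transfinite descent then proceeds without obstruction; the delicate point is the limit step, and you handle it correctly by observing that $\bigcap_{\alpha<\lambda}\overline{\Delta_\alpha}\subseteq\bigcap_{\alpha<\lambda}\Delta_\alpha$ (via $\overline{\Delta_{\alpha+1}}\subseteq\Delta_\alpha$), so any point of the intersection lies in every earlier $\Delta_\alpha$ and the claim applies there. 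Second countability of $\R^2$ then rules out a strictly decreasing $\omega_1$-chain of closed sets, since the corresponding strictly increasing chain of open complements would admit a countable cofinal subfamily.

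One cosmetic remark: there is a notational collision at limit ordinals. You write $v_\lambda$ for the point of $C_\lambda$ used to define $\Delta_\lambda:=D_{v_\lambda}$, but your successor recipe would then also name $v_\lambda$ the point chosen inside $\Delta_\lambda$ to produce $\Delta_{\lambda+1}$. These cannot coincide, since the first lies in $\F_{v_\lambda}$, which is disjoint from $D_{v_\lambda}=\Delta_\lambda$. Renaming the limit-step point (say $u_\lambda$) resolves this and leaves the argument intact.
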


\subsubsection{Torus pseudo-foliations}
\label{sec:torus-pseudo-foliations}

A partition $\F$ of $\T^2$ is said to be a \emph{toral
  pseudo-foliation} whenever there exists a plane pseudo-foliation
$\widetilde\F$, called the \emph{lift of $\F$}, satisfying
\begin{displaymath}
  \pi\left(\widetilde\F_z\right)=\F_{\pi(z)},\quad\forall z\in\R^2.
\end{displaymath}
Notice that such a plane pseudo-foliation is $\Z^2$-invariant, \ie
$\widetilde\F$ is $T_{\boldsymbol{p}}$-invariant for every
$\boldsymbol{p}\in\Z^2$.

In \cite{KocRotDevPerPFree} we have gotten the following result that
guaranties the existence of an asymptotic homological direction for
torus pseudo-foliations:
\begin{theorem}
  \label{thm:torus-pseudo-fol-exist-asymp-dir}
  If $\widetilde\F$ is the lift of torus pseudo-foliation, then there
  exists $v\in\Ss^1$ and $M>0$ such that 
  \begin{displaymath}
    \abs{\scprod{w-z}{v}}\leq M, \quad\forall z\in\R^2,\
    \forall w\in\widetilde\F_z. 
  \end{displaymath}
\end{theorem}

The vector $v$ given by
Theorem~\ref{thm:torus-pseudo-fol-exist-asymp-dir} is unique up to
multiplication by $(-1)$. So, we will call it the \emph{asymptotic
  direction} of either the torus pseudo-foliation $\F$ or its lift
$\widetilde\F$.

One of the main results of \cite{KocRotDevPerPFree} is the following:
\begin{theorem}[Theorem 5.5 in \cite{KocRotDevPerPFree}]
  \label{thm:bounded-dev-iff-pseudo-fol}
  Let $f\in\Homeo0(\T^2)$ be a periodic point free, area-preserving,
  non-wandering and non-eventually annular homeomorphism. If $f$
  exhibits uniformly bounded $v$-deviations, for some $v\in\Ss^1$,
  then there exists an $f$-invariant pseudo-foliation whose asymptotic
  direction is given by $v^\perp$.
\end{theorem}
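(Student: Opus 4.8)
The plan is to reduce to two-sided control on the deviations, identify the leaves as (closures of) connected components of the level sets of a suitable equivariant function, and then verify the pseudo-foliation axioms; emptiness of the interiors of the leaves will be the main obstacle.

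Let $\alpha$ be the constant with $\rho(\tilde f)\subset\ell^v_\alpha$. By Theorem~\ref{thm:v-vs--v-rot-dev}, uniformly bounded $v$-deviations imply uniformly bounded $(-v)$-deviations, so there is $M_0>0$ with $\abs{\scprod{\Delta_{\tilde f}^{(n)}(z)}{v}-n\alpha}\le M_0$ for all $z\in\R^2$ and $n\in\Z$. Equivalently, the continuous function $\varphi:=\scprod{\Delta_{\tilde f}}{v}-\alpha\in C^0(\T^2,\R)$ has uniformly bounded Birkhoff sums, $\abs{\mc{S}_f^n\varphi}\le M_0$, and $\int_{\T^2}\varphi\dd\Leb_2=\scprod{\Flux(\tilde f)}{v}-\alpha=0$ since $\Flux(\tilde f)\in\rho(\tilde f)\subset\ell^v_\alpha$. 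Note also that $\Flux(\tilde f)\notin\Q^2$, by Theorem~\ref{thm:Franks-symp-per-points} and $\Per(f)=\emptyset$; this, together with area-preservation and non-wandering, will be used below.

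If $f$ were minimal, Theorem~\ref{thm:gott-hedlund} would solve $\varphi=u\circ f-u$ with $u\in C^0(\T^2,\R)$, and then $\Phi:=\scprod{\cdot}{v}-u\circ\pi$ would satisfy $\Phi\circ\tilde f=\Phi+\alpha$, $\Phi\circ T_{\mathbf{p}}=\Phi+\scprod{\mathbf{p}}{v}$ and $\abs{\Phi-\scprod{\cdot}{v}}\le\norm{u}_{C^0}$; the partition of $\R^2$ into connected components of level sets of $\Phi$ would be an $\tilde f$- and $\Z^2$-invariant plane pseudo-foliation whose leaves lie in strips of bounded width, descending to the desired $f$-invariant torus pseudo-foliation. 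Without minimality $u$ need not exist, so one replaces $\Phi$ by the lower semicontinuous function $\psi(z):=\sup_{n\in\Z}\big(\scprod{\tilde f^n(z)}{v}-n\alpha\big)=\scprod{z}{v}+\sup_{n\in\Z}\mc{S}_f^n\varphi(\pi(z))$, which is finite, satisfies the same equivariances $\psi\circ\tilde f=\psi+\alpha$ and $\psi\circ T_{\mathbf{p}}=\psi+\scprod{\mathbf{p}}{v}$, and obeys $\abs{\psi-\scprod{\cdot}{v}}\le M_0$. One then lets $\widetilde{\F}$ be the partition of $\R^2$ whose atoms are the connected components of the level sets of $\psi$, after the standard modification that turns each such component into a closed connected set lying between the corresponding sub- and superlevel regions of $\psi$; checking that this modification is consistent and yields an honest partition into closed connected sets is the first technical step. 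Equivariance of $\psi$ makes $\widetilde{\F}$ both $\tilde f$- and $\Z^2$-invariant, so it descends to an $f$-invariant partition $\F$ of $\T^2$; and the bound $\abs{\psi-\scprod{\cdot}{v}}\le M_0$ confines each leaf to a strip of bounded width, which---once $\F$ is known to be a pseudo-foliation---fixes its asymptotic direction (Theorem~\ref{thm:torus-pseudo-fol-exist-asymp-dir}) to be $v^\perp$.

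It remains to verify that each leaf $L$ of $\widetilde{\F}$ is closed and connected (built in), disconnects $\R^2$ into exactly two components---both necessarily unbounded once the remaining axioms hold, by Proposition~\ref{pro:plane-pseudo-fol-unbound-connec-comp}---and, crucially, has empty interior. The separation property should follow from $L$ lying in a finite-width strip while the $\Z^2$-translates of the leaves cover $\R^2$ and stack monotonically in the transverse direction, the two complementary sides being the sub- and superlevel regions of $\psi$. The genuinely delicate point, which I expect to be the main obstacle, is emptiness of the interior of the leaves: a leaf with nonempty interior, being confined to a finite-width strip and almost periodic under both $\tilde f$ and $\Z^2$, would force either an $f$-invariant essential sub-annulus of $\T^2$, contradicting non-eventual annularity, or---using area-preservation, non-wandering, $\Flux(\tilde f)\notin\Q^2$ and a Brouwer/Le Calvez-type fixed point argument inside the resulting invariant region---a periodic point of $f$, contradicting $\Per(f)=\emptyset$. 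Pinning down this dichotomy rigorously is where all four standing hypotheses enter essentially, and it is the technical heart of the argument.
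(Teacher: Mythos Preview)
This theorem is not proved in the present paper; it is quoted as Theorem~5.5 of \cite{KocRotDevPerPFree}. However, \S\ref{sec:proof-thm-C} recalls enough of that construction to compare.

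Your approach and the one in \cite{KocRotDevPerPFree} share the same architecture: build an $\tilde f$- and $\Z^2$-equivariant function on $\R^2$ close to $z\mapsto\scprod{z}{v}$, and declare its level sets to be the pseudo-leaves. The difference lies in how the function is built. You use the raw Birkhoff sup $\psi(z)=\sup_{n}\big(\scprod{\tilde f^n(z)}{v}-n\alpha\big)$. The referenced proof instead passes through the fibered stable sets at infinity $\Lambda_r^v(t)$ of \S\ref{sec:fibered-stable-sets}, forms the nested open sets $U_r=\cc\big(\inter\Lambda_r^v(0),\bb{H}_{r+C}^v\big)$, and sets $H(z)=\sup\{r:z\in U_r\}$ as in \eqref{eq:H-def-func-level-sets-pseudo-leaves}. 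The point of this detour is that the $\Lambda_r^v$ sets carry the structural information---non-emptiness via fixed-point index theory (Theorem~\ref{thm:Lambda-non-empty}) and density of translates via Franks' theorem and the periodic-point-free hypothesis (Theorem~\ref{thm:translates-of-Lambdas-dense})---which is exactly what is used to check that leaves have empty interior and disconnect the plane into two pieces. Your outline for empty interior (``force an invariant essential sub-annulus or a periodic point'') is plausible in spirit, but you have not shown how to actually extract either object from a leaf with interior using only the four standing hypotheses; this is where the $\Lambda$-machinery does the real work in the original argument, and it is not clear your shortcut can replace it.

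There is also an earlier technical gap: your ``standard modification'' turning connected components of level sets of the merely lower-semicontinuous $\psi$ into a genuine partition by closed connected sets is left unspecified, and it is not obvious that closures of such components remain pairwise disjoint. The construction via the monotone family $(U_r)_r$ sidesteps this by producing an upper-semicontinuous $H$ whose level sets are analyzed through the topology of the $\Lambda_r^v$'s rather than through ad hoc closure operations.
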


\subsection{Rotational deviations for minimal homeomorphisms}
\label{sec:rotat-devi-minim}

In this paragraph we present some simple results about rotational
deviations of minimal homeomorphisms. So, from now on let us assume
$f\in\Homeo0(\T^2)$ is minimal and $\tilde f\colon\R^2\carr$ is a lift
of $f$. By Theorem~\ref{thm:Handel-ppfree-rho-empty-int} we know there
exist $v$ and $\alpha$ such that the rotation set of $\tilde f$ is
contained in the line $\ell_\alpha^v$, \ie inclusion
\eqref{eq:rot-set-in-line} holds.

The following result is an improvement of
Theorem~\ref{thm:v-vs--v-rot-dev} under the minimality assumption:

\begin{proposition}
  \label{pro:gott-hedl-applied-to-torus}
  If $f$ is minimal, $\tilde f$ is a lift of $f$ and $v$ and $\alpha$
  are such that condition \eqref{eq:rot-set-in-line} holds, then the
  following properties are equivalent:
  \begin{enumerate}[(i)]
  \item $f$ does not exhibit uniformly bounded $v$-deviations;
  \item $f$ does not exhibit uniformly bounded $(-v)$-deviations;
  \item \label{item:large-deviations-future-past} for every $z\in\T^2$
    it holds
    \begin{displaymath}
      \sup_{n\geq 0}\abs{\scprod{\Delta_{\tilde f}^{(n)}(z)}{v}-
        n\alpha} = \sup_{n\leq 0}\abs{\scprod{\Delta_{\tilde
            f}^{(n)}(z)}{v}- n\alpha} = \infty.
    \end{displaymath}
  \end{enumerate}
\end{proposition}

\begin{proof}
  This is a straightforward consequence of
  Theorems~\ref{thm:gott-hedlund} and \ref{thm:v-vs--v-rot-dev}.
\end{proof}

For the proof of Theorem B we shall need the following

\begin{proposition}
  \label{pro:minimal-not-annular}
  If $f\in\Homeo0(\T^2)$ is a minimal homeomorphism, then it is not
  eventually annular.
\end{proposition}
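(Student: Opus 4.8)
The plan is to argue by contradiction, combining eventual annularity with Corollary~\ref{cor:minimal-homeos-rho-cap-Q2-empty} and Franks' theorem (Theorem~\ref{thm:Franks-symp-per-points}) via the Oxtoby--Ulam trick. Suppose $f$ is minimal and eventually annular, so there is $k\in\N$ with $g:=f^k$ annular. By Proposition~\ref{pro:powers-min-homeos-min-too}, $g$ is still minimal (here $\T^2$ is connected). So it suffices to show that a minimal homeomorphism of $\T^2$ in the identity isotopy class cannot be annular; replacing $f$ by $g$, assume $f$ itself is annular, witnessed by a lift $\tilde f$, a constant $M>0$, and a rational-slope unit vector $v$ with $\abs{\scprod{\Delta_{\tilde f}^{(n)}(z)}{v}}\le M$ for all $z,n$.

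First I would extract the consequences of annularity for the rotation set. Since $\scprod{\Delta_{\tilde f}^{(n)}(z)}{v}$ is uniformly bounded, dividing by $n$ and taking limits gives $\scprod{\rho}{v}=0$ for every $\rho\in\rho(\tilde f)$, i.e. $\rho(\tilde f)\subset\ell_0^v=\R v^\perp$. Because $v$ has rational slope, $v^\perp$ also has rational slope, so the supporting line $\R v^\perp$ is a rational line through the origin; in particular every rational point of this line is an actual rational point of $\Q^2$, and the line contains infinitely many points of $\Q^2$. Now Corollary~\ref{cor:minimal-homeos-rho-cap-Q2-empty} tells us $\rho(\tilde f)\cap\Q^2=\emptyset$, so $\rho(\tilde f)$ is a (compact, connected) subset of $\R v^\perp$ avoiding all the rational points on that line; hence $\rho(\tilde f)$ is a point or a segment strictly between two consecutive rationals of the line (suitably scaled), and in any case $0\notin\rho(\tilde f)$ — indeed $\rho(\tilde f)$ lies strictly to one side of $0$ along $v^\perp$, or could straddle $0$ only if $0\notin\Q^2$, which is false, so $\rho(\tilde f)$ avoids $0$.

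The heart of the argument is then to produce a periodic point, contradicting minimality (a minimal homeomorphism of an infinite space has no periodic points). Here is where I would invoke Oxtoby--Ulam: since $f$ is minimal, any $f$-invariant Borel probability measure $\mu$ (which exists by compactness) is a topological measure — full support, no atoms — so by Theorem~\ref{thm:Oxtoby-Ulam} there is $h\in\Homeo{}(\T^2)$ with $h_\star\mu=\Leb_2$. Then $h f h^{-1}$ preserves $\Leb_2$; one checks $h$ can be taken isotopic to the identity (or one composes with a translation-free homeomorphism, since all we need is that $hfh^{-1}\in\Symp0$ with a lift whose flux we can compute), and that conjugation by a homeomorphism isotopic to the identity does not change the rotation set of the corresponding lift. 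So $hfh^{-1}$ is area-preserving with rotation set equal to $\rho(\tilde f)$, still avoiding $\Q^2$. Now the displacement bound along $v$ is a \emph{conjugacy-invariant} up to a bounded error only if $h$ is isotopic to the identity, which makes $hfh^{-1}$ annular as well with rotation set in a rational line. The key point: an annular area-preserving homeomorphism of $\T^2$ isotopic to the identity, whose rotation set lies in a rational line $\R v^\perp$, has a lift whose flux is a rational vector on that line (the flux is the $\Leb_2$-rotation vector, which is an element of $\rho(\tilde f)\subset\R v^\perp$, and one uses annularity plus Gottschalk--Hedlund, Theorem~\ref{thm:gott-hedlund}, applied to the cocycle $\scprod{\Delta}{v}-\alpha$ over the minimal map $f$: boundedness of Birkhoff sums forces the existence of a continuous coboundary, hence the $v$-component of the flux is $\alpha$, and then a translation adjustment in the $v^\perp$-direction makes the full flux rational). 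Applying Theorem~\ref{thm:Franks-symp-per-points} to this area-preserving lift with rational flux yields a periodic point of $f$, contradicting minimality.

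I expect the main obstacle to be the bookkeeping in the last paragraph: arranging that after the Oxtoby--Ulam conjugation the map is simultaneously area-preserving, isotopic to the identity, and still annular with the \emph{same} (rational) supporting line, and then pinning the flux down to a rational vector rather than merely a vector on a rational line. The cleanest route is probably: use Gottschalk--Hedlund on the original minimal $f$ to get that $\scprod{\Delta_{\tilde f}}{v}$ is cohomologous to a constant $\alpha$, note $\alpha=\scprod{\rho_\mu(\tilde f)}{v}$ must be rational (since $\rho_\mu(\tilde f)$ lies on the rational line $\R v^\perp$, its $v$-component is $0$, so in fact $\alpha=0$ after normalizing, meaning $\rho_\mu\in\R v^\perp$ already — wait, this only gives rationality of one coordinate). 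To get the full flux rational one then replaces $f$ by a further power $f^q$ and composes with a suitable translation $T_{\boldsymbol p}$, $\boldsymbol p\in\Z^2$, using that $\rho_\mu(\tilde f)$ is irrational in the $v^\perp$-direction \emph{unless} it is rational — but Corollary~\ref{cor:minimal-homeos-rho-cap-Q2-empty} forbids it being rational, so actually the cleaner contradiction is: annularity forces $\rho(\tilde f)$ into a rational line, and if $\rho_\mu(\tilde f)$ were on that line with rational $v^\perp$-coordinate we would contradict the Corollary, while if it is irrational in that direction we are in the Franks--Misiurewicz ``irrational slope'' situation which contradicts rationality of the line. Resolving which of these two closing moves is correct, and stating it without circularity, is the delicate part; the rest is routine.
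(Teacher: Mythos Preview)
Your reduction to the case where $f$ itself is annular is fine, and so is the deduction that $\rho(\tilde f)\subset\R v^\perp$ for a rational-slope $v$. But the closing argument does not work. You want to apply Franks' theorem to an area-preserving conjugate of $f$, but Franks requires the \emph{flux} to be rational, and you yourself observe (via Corollary~\ref{cor:minimal-homeos-rho-cap-Q2-empty}) that no point of $\rho(\tilde f)$ is rational---in particular the flux is not rational, so Franks simply does not apply. Your attempted dichotomy in the last paragraph collapses: having $\rho_\mu(\tilde f)$ on a rational line with irrational $v^\perp$-coordinate is perfectly consistent and has nothing to do with the ``irrational slope'' case of Franks--Misiurewicz (that concerns the slope of the \emph{segment}, not of a single rotation vector). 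So no contradiction is ever reached, and the proof is incomplete.

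The paper's argument avoids Franks entirely. After reducing to $v=(1,0)$, it applies Gottschalk--Hedlund to the bounded cocycle $\pr{1}\circ\Delta_{\tilde f}$ over the minimal map $f$, obtaining a continuous transfer function $u$. The map $\tilde h(x,y)=(x-u(x,y),y)$ then semi-conjugates $\tilde f$ to a map $\tilde g$ that fixes the first coordinate. Since $h$ is homotopic to the identity it is surjective, so $g$ is a factor of $f$; but $g$ preserves every vertical circle and hence is not minimal, contradicting minimality of $f$. The key idea you are missing is that annularity plus Gottschalk--Hedlund produces a \emph{non-minimal factor}, which is an immediate contradiction---no periodic point is needed.
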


\begin{proof}
  By Proposition~\ref{pro:powers-min-homeos-min-too}, $f^k$ is minimal
  for any $k\in\N$. So, it is enough to show that $f$ is not annular.

  Reasoning by contradiction, let us suppose $f$ is annular. Then,
  there exist a lift $\tilde f\colon\R^2\carr$ and $v\in\Ss^1$ with
  rational slope such that
  \begin{displaymath}
    \sup_{n\in\Z,\ z\in\T^2}\abs{\scprod{\Delta_{\tilde
          f}^{(n)}(z)}{v}} <\infty.
  \end{displaymath}
  Since $v$ has rational slope, by
  Proposition~\ref{pro:rot-set-elementary-properties} there is no loss
  of generality assuming $v=(1,0)$. So, by
  Theorem~\ref{thm:gott-hedlund}, there exists $u\in C^0(\T^2,\R)$
  satisfying
  \begin{equation}
    \label{eq:Delta1-coboundary}
    \pr{1}\circ \Delta_{\tilde f} = u\circ f - u.
  \end{equation}

  Now, let us consider the continuous maps
  $\tilde g, \tilde h\colon\R^2\carr$ given by
  \begin{align*}
    \tilde g(x,y)&:=\big(x, y+\pr{2}\circ\Delta_{\tilde f}(x,y)\big),\\ 
    \tilde h(x,y)&:=\big(x-u(x,y),y\big),
  \end{align*}
  for every $(x,y)\in\R^2$.

  As consequence of \eqref{eq:Delta1-coboundary}, we know
  $\tilde h\circ\tilde f = \tilde g\circ\tilde h$, and hence,
  $h\circ f=g\circ h$, where $g$ and $h$ are the continuous torus maps
  whose lifts are $\tilde g$ and $\tilde h$, respectively.  However,
  since $h$ is homotopic to the identity, it is surjective and $g$ is
  clearly not minimal, contradicting the minimality of $f$.

  So, $f$ is not annular.
\end{proof}

Even though our next result is rather simple, it may be useful in
future works:
\begin{theorem}
  \label{thm:bounded-rot-deviations-for-minimal-homeos}
  Let $f\in\Homeo0(\T^2)$ be a minimal homeomorphism,
  $\tilde f\colon\R^2\carr$ a lift of $f$ and $\rho$ any point in
  $\rho(\tilde f)$. Then for every $\varepsilon>0$ there exists
  $\delta>0$ such that for any $n\in\Z$ satisfying
  $d(n\rho,\Z^2)<\delta$, there is $z\in\R^2$ such that
  \begin{displaymath}
    \norm{\tilde f^n(z)-z-n\rho}<\varepsilon.
  \end{displaymath}
\end{theorem}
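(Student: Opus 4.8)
The plan is to recast the statement as one about near‑periodic points of $f$, and then to produce such a point by a recurrence argument combining Atkinson's theorem with Poincaré recurrence on an auxiliary toral skew product.

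\medskip
\emph{Setup.} By Theorem~\ref{thm:mis-ziem-convexity} fix an $f$‑invariant probability measure $\mu$ with $\rho_{\mu}(\tilde f)=\rho$; since $f$ is minimal, $\mu$ is a topological measure, in particular of full support. The map $\tilde z\mapsto\tilde f^{\,n}(\tilde z)-\tilde z-n\rho$ is $\Z^{2}$‑periodic, hence descends to a continuous $\Phi_{n}\colon\T^{2}\to\R^{2}$ with $\int_{\T^{2}}\Phi_{n}\dd\mu=0$ by invariance of $\mu$. So the assertion to prove is that $\inf_{x\in\T^{2}}\norm{\Phi_{n}(x)}<\varepsilon$ whenever $d(n\rho,\Z^{2})<\delta$. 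Two preliminary observations: since $\rho\notin\Q^{2}$ (Corollary~\ref{cor:minimal-homeos-rho-cap-Q2-empty}), for each $N$ the quantity $\min_{1\le n\le N}d(n\rho,\Z^{2})$ is strictly positive, so by taking $\delta$ small we may assume $n$ as large as needed; and, $f$ being periodic point free, Theorem~\ref{thm:Handel-ppfree-rho-empty-int} supplies $v\in\Ss^{1}$, $\alpha\in\R$ with $\rho(\tilde f)\subset\ell_{\alpha}^{v}$, so $\scprod{\rho}{v}=\alpha$ and $\tfrac1n\scprod{\Phi_{n}(x)}{v}\to0$ uniformly in $x$.

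\medskip
\emph{Recurrence.} For the $v$‑component I would use that the scalar cocycle $\psi_{v}:=\scprod{\Delta_{\tilde f}}{v}-\alpha$ over $f$ has $\int\psi_{v}\dd\nu=0$ for \emph{every} invariant $\nu$ (because $\scprod{\rho_{\nu}(\tilde f)}{v}=\alpha$). Picking an ergodic $\mu_{0}$ and applying Theorem~\ref{thm:atkinson}, the skew product $(x,t)\mapsto(f(x),t+\psi_{v}(x))$ on $\T^{2}\times\R$ is conservative, so for $\mu_{0}$‑a.e.\ $x$ one has $\abs{\scprod{\Phi_{n}(x)}{v}}$ arbitrarily small along arbitrarily large $n$. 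For the transverse component I would pass to the toral skew product $G(x,y):=\bigl(f(x),\,y+\pi(\Delta_{\tilde f}(x)-\rho)\bigr)$ on $\T^{2}\times\T^{2}$; a Fubini computation shows $G$ preserves the topological probability measure $\mu\otimes\Leb_{2}$, hence $G$ is conservative (Poincaré recurrence) and $\mu$‑a.e.\ $x$ is $G$‑recurrent, which is exactly the statement that $d(\Phi_{n}(x),\Z^{2})\to0$ along some sequence $n\to\infty$. The role of the hypothesis $d(n\rho,\Z^{2})<\delta$ is to upgrade this \emph{recurrence modulo $\Z^{2}$} into genuine smallness: it forces the only possible integer translation to be $0$, so that, combined with the uniform Handel estimate and the elementary remark that any continuous $\T^{2}\to\R$ function of zero $\mu$‑mean must vanish somewhere (by connectedness and full support of $\mu$), one can extract a single $x$ with $\norm{\Phi_{n}(x)}<\varepsilon$ for all admissible $n$ (the finitely many small $n$ having been discarded above).

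\medskip
\emph{Main obstacle.} The heart of the matter is synchronizing these two recurrences. Atkinson's theorem only gives conservativity of an $\R$‑valued cocycle; the $\R^{2}$‑valued displacement cocycle need \emph{not} be conservative, and both the scalar recurrence and the recurrence of $G$ a priori occur along sparse, uncontrolled sequences of times, whereas the conclusion must hold with the \emph{same} $\varepsilon$ precisely at the prescribed indices $n$ with $d(n\rho,\Z^{2})<\delta$. Converting the toral recurrence of $G$ into an $\R^{2}$‑level estimate with trivial winding number exactly at those $n$, and coordinating it with the $v$‑direction return times, is the delicate step; the measure‑theoretic inputs themselves are, as the remark preceding the statement hints, elementary.
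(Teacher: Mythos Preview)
Your approach has a genuine gap at precisely the point you flag as the ``main obstacle,'' and the gap is not merely technical. The recurrence inputs you invoke---Atkinson for the $v$-component and Poincar\'e recurrence for $G$---each produce good behavior only along a sparse, uncontrolled sequence of times $n_k\to\infty$ depending on the base point, whereas the theorem demands a point $z_n$ for \emph{every} $n$ in the arithmetically prescribed set $\{n: d(n\rho,\Z^2)<\delta\}$. Nothing in your argument links the recurrence times to those arithmetic times. The step where you assert that $d(n\rho,\Z^2)<\delta$ ``forces the only possible integer translation to be $0$'' is also unjustified: $G$-recurrence at time $n$ tells you $\Phi_n(x)$ is close to some $\boldsymbol{p}\in\Z^2$, but $\boldsymbol{p}$ has no reason to vanish just because the unrelated vector $n\rho$ is itself near a lattice point. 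Finally, the zero-mean intermediate-value remark pins each coordinate of $\Phi_n$ at a possibly different point; it cannot produce a single $x$ where the vector $\Phi_n(x)$ is small (a smooth map $\T^2\to\R^2$ of zero mean can easily avoid the origin).

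The paper avoids all of this with one two-dimensional fixed-point input. After an Oxtoby--Ulam conjugacy $h$ turning $\mu$ into Lebesgue, the conjugate $\tilde g:=\tilde h^{-1}\tilde f\tilde h$ lies in $\Tsymp$ with $\Flux(\tilde g)=\rho$, so $T_\rho^{-n}\circ\tilde g^{\,n}\in\Tham$ for \emph{every} $n$. Franks' theorem (Theorem~\ref{thm:Franks-symp-per-points}) then supplies a genuine fixed point $z_n$, \ie $\tilde g^{\,n}(z_n)=z_n+n\rho$ exactly, for every $n$. Transporting back, $\tilde f^{\,n}(\tilde h(z_n))-\tilde h(z_n)-n\rho=\Delta_{\tilde h}(z_n+n\rho)-\Delta_{\tilde h}(z_n)$, which is $<\varepsilon$ whenever $d(n\rho,\Z^2)<\delta$ simply by uniform continuity of the $\Z^2$-periodic function $\Delta_{\tilde h}$. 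Thus the arithmetic condition enters only through a modulus of continuity, and no synchronization of recurrence times is needed. The missing ingredient in your sketch is this Franks fixed-point theorem for Hamiltonian torus homeomorphisms; it is a genuinely two-dimensional topological fact and is not recoverable from ergodic recurrence alone.
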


\begin{proof}
  By Theorem~\ref{thm:mis-ziem-convexity}, there exists $\mu\in\M(f)$
  such that $\rho_\mu(\tilde f)=\rho$. Since $f$ is minimal, $\mu$ is
  a topological measure (\ie has total support and no atoms). So, by
  Theorem~\ref{thm:Oxtoby-Ulam}, there exists $h\in\Homeo{}(\T^2)$
  such that $h_\star\Leb_2=\mu$. Moreover, after pre-composing with a
  linear automorphism of $\T^2$ if necessary, we can assume that $h$
  is isotopic to the identity. Then, if $\tilde h\in\Thomeo(\T^2)$ is
  a lift of $h$ and we write
  $\tilde g:=\tilde h^{-1}\circ\tilde f\circ \tilde h$, we have
  $\tilde g\in\Tsymp$ and $\Flux(\tilde g)=\rho$.

  Observing the displacement function $\Delta_{\tilde h}$ is
  $\Z^2$-periodic, and hence, uniformly continuous, given any
  $\varepsilon>0$ there exists $\delta>0$ such that
  $\norm{\Delta_{\tilde h}(x)-\Delta_{\tilde h}(y)}<\varepsilon$
  whenever $d_{\T^2}(x,y)<\delta$.

  On the other hand, we have
  \begin{displaymath}
    \Flux\big(T_\rho^{-n}\circ\tilde g^n\big) =
    \Flux\big(T_\rho^{-n}\big) + \Flux(g) = -n\rho +n\rho = 0.
  \end{displaymath}
  So, $T_\rho^{-n}\circ\tilde g^n\in\Tham$ for every $n\in\Z$. By
  Theorem~\ref{thm:Franks-symp-per-points}, for each $n$ there exists
  $z_n\in\R^2$ such that $T_\rho^{-n}\circ \tilde g^n(z_n)=z_n$. Then,
  \begin{displaymath}
    \tilde f^n\big(\tilde h(z_n)\big) = \tilde h(z_n+n\rho) = z_n +
    n\rho + \Delta_{\tilde h}(z_n+n\rho),
  \end{displaymath}
  and consequently, defining $w_n:=\tilde h(z_n)$, we get
  \begin{displaymath}
    \norm{\tilde f^n(w_n)-w_n - n\rho} = \norm{\Delta_{\tilde h}(z_n+n\rho)
      -\Delta_{\tilde h}(z_n)} <\varepsilon,
  \end{displaymath}
  whenever $d(n\rho,\Z^2)<\delta$.
\end{proof}

\section{Stable sets at infinity: transverse direction}
\label{sec:stable-sets-infinity-trans-dir}

All along this section $f\in\Homeo0(\T^2)$ will continue to denote a
minimal homeomorphism and $\tilde f\in\Thomeo(\T^2)$ a lift of
$f$. Here we start the study of \emph{stable sets at infinity}
associated to (certain lifts of) of $f$. We begin considering stable
sets at infinity with respect to the horizontal direction assuming
there exists a lift $\tilde f$ such that the rotation set
$\rho(\tilde f)$ intersects the horizontal axis.

We call this ``a transverse direction'' because, under the hypotheses
of Theorem A, there is no loss of generality assuming $\rho(\tilde f)$
intersects transversely the horizontal axis, modulo finite iterates,
conjugacy and appropriate choice of the lift.

To simplify our notation, in this section we will write $u$ to denote
either the vector $(0,1)$ or $(0,-1)$.

\begin{theorem}
  \label{thm:Lambda-vert-sets-main-properties}
  Let $f$ and $\tilde f$ be as above, and assume $\rho(\tilde f)$
  intersects the horizontal axis, \ie
  \begin{equation}
    \label{eq:rot-set-inters-X-axes}
    \rho(\tilde f)\cap\ell_0^{(0,1)}\neq\emptyset.
  \end{equation}
  For each $r\in\R$ and $u\in\{(0,1),(0,-1)\}$, consider the set
  \begin{equation}
    \label{eq:Lambda-ver-definition}
    \Lambda^{u}_r:=\cc\left(\I_{\tilde
        f}\big(\overline{\bb{H}_r^{u}}\big), \infty\right),
  \end{equation}
  where $\I_{\tilde f}\big(\overline{\bb{H}_r^{u}}\big)$ denotes the
  maximal invariant set given by \eqref{eq:max-inv-set-def}.

  Then, it holds:
  \begin{enumerate}[(i)]
  \item \label{eq:Lambda-v-equivariant-prop}
    $\Lambda_r^u=T_{(1,0)}(\Lambda_r^u)$,
    $T_{(0,1)}(\Lambda_r^u) = \Lambda_{r+1}^u$, and
    $\Lambda_r^u\subset \Lambda_s^u$, for any $r\in\R$ and any $s<r$;
  \item\label{eq:Lambda-v-non-empty}
    $\Lambda_r^{u}\cap\ell_r^{u}\neq\emptyset$, for every $r\in\R$;
  \item\label{eq:Lambda-v-minus-v-no-inter}
    $\I_{\tilde f}(\overline{\bb{H}_r^u})\cap \I_{\tilde
      f}(\overline{\bb{H}_{r'}^{-u}})=\emptyset$, for every
    $r,r'\in\R$;
  \item\label{eq:Lambda-v-in-no-strip} given any $r$ and any connected
    unbounded closed subset $\Gamma\subset\Lambda_r^u$, it holds
    \begin{displaymath}
      \Gamma\cap\ell_{s}^{u}
      \neq\emptyset,\quad \forall s>\inf\{\abs{\pr{2}(z)} :
      z\in\Gamma\}; 
    \end{displaymath}
    (see Figure~\ref{fig:Gamma-unbounded});
  \item\label{eq:Lambda-v-r-union-dense}
    \begin{displaymath}
      \overline{\bigcup_{r\in\R} \Lambda_r^{u}}=\R^2;
    \end{displaymath}
  \item\label{eq:Lambda-v-not-disconnect} For each $r\in\R$, the set
    $\I_{\tilde f}(\overline{\bb{H}_r^u})$ has empty interior and does
    not disconnect $\R^2$, \ie
    $\R^2\setminus\I_{\tilde f}(\overline{\bb{H}_r^u})$ is
    connected. In particular, this implies $\Lambda_r^u$ has empty
    interior and does not disconnect $\R^2$ as well.
  \end{enumerate}
\end{theorem}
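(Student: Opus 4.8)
The plan is to establish the six properties in roughly the order they are stated, since each relies on elementary invariance, connectedness, and minimality arguments, with the genuinely substantial input coming from Handel's theorem and a Zorn-type or nested-intersection argument for non-emptiness.

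\medskip

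\textbf{Setting up and property \eqref{eq:Lambda-v-equivariant-prop}.} First I would record that $\tilde f$ commutes with all integer translations, so $T_{(1,0)}$ preserves $\overline{\bb H_r^u}$ and hence $\I_{\tilde f}(\overline{\bb H_r^u})$, while $T_{(0,1)}$ sends $\overline{\bb H_r^u}$ to $\overline{\bb H_{r+1}^u}$ when $u=(0,1)$ (and analogously, with the appropriate sign convention, when $u=(0,-1)$), giving $T_{(0,1)}\I_{\tilde f}(\overline{\bb H_r^u})=\I_{\tilde f}(\overline{\bb H_{r+1}^u})$. Both maps send unbounded connected components to unbounded connected components, so they act on $\Lambda_r^u$ as claimed; the inclusion $\Lambda_{r+1}^u\subset\Lambda_r^u$ follows since $\overline{\bb H_{r+1}^u}\subset\overline{\bb H_r^u}$ implies $\I_{\tilde f}(\overline{\bb H_{r+1}^u})\subset\I_{\tilde f}(\overline{\bb H_r^u})$, and an unbounded connected subset of the smaller invariant set is contained in one of the unbounded components of the larger one (using that the smaller set is itself connected on the pieces, or more carefully, that connected components only grow under inclusion of ambient sets).

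\medskip

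\textbf{Property \eqref{eq:Lambda-v-non-empty}: non-emptiness.} This is where I expect the main work. The hypothesis \eqref{eq:rot-set-inters-X-axes} gives, via Theorem~\ref{thm:mis-ziem-extreme-points} (or directly Theorem~\ref{thm:mis-ziem-convexity}), an ergodic invariant measure $\mu$ with $\rho_\mu(\tilde f)\in\ell_0^{(0,1)}$, i.e. with zero vertical component of its rotation vector. Then $\int \pr{2}\circ\Delta_{\tilde f}\dd\mu = 0$, so the vertical Birkhoff sums $\pr{2}\circ\Delta_{\tilde f}^{(n)}$ are, by Atkinson's theorem (Theorem~\ref{thm:atkinson}, or the recurrence it provides), recurrent: for $\mu$-a.e.\ point the vertical displacement returns near $0$ infinitely often in both time directions. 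Picking such a generic point $\tilde z$ and, for fixed $r$, translating so that $\pr{2}(\tilde z)=r$ (using minimality/density of the orbit to land arbitrarily close to the line $\ell_r^u$, or using the $T_{(0,1)}$-equivariance of \eqref{eq:Lambda-v-equivariant-prop} to reduce to a single value of $r$), I would show the forward and backward orbit of a suitable point stays in a bounded vertical neighborhood of $\ell_r^u$; intersecting with the half-plane and taking a limit of these bounded orbit segments (they live in a strip, so the closure of the orbit inside $\overline{\bb H_r^u}$, or rather the relevant connected invariant piece) produces a point of $\I_{\tilde f}(\overline{\bb H_r^u})$ on or near $\ell_r^u$. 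To get it exactly on $\ell_r^u$ and inside an \emph{unbounded} component one argues that a genuinely unbounded invariant connected set must exist: the $T_{(1,0)}$-invariance from \eqref{eq:Lambda-v-equivariant-prop} forces any nonempty piece to spread horizontally, hence be unbounded. Alternatively, and more robustly, I would invoke Handel's theorem (Theorem~\ref{thm:Handel-ppfree-rho-empty-int}, applicable since a minimal homeomorphism is periodic-point free by Corollary~\ref{cor:minimal-homeos-rho-cap-Q2-empty} combined with minimality) to get a transverse direction $v$ with $\rho(\tilde f)\subset\ell_\alpha^v$ and uniform convergence of $v$-averages, and use that to control how far orbits can drift off the half-plane boundary.

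\medskip

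\textbf{Properties \eqref{eq:Lambda-v-minus-v-no-inter}, \eqref{eq:Lambda-v-in-no-strip}, \eqref{eq:Lambda-v-r-union-dense}, \eqref{eq:Lambda-v-not-disconnect}.} For \eqref{eq:Lambda-v-minus-v-no-inter}, a point of $\Lambda_r^u\cap\Lambda_{r'}^{-u}$ would have a full orbit confined to both $\overline{\bb H_r^u}$ and $\overline{\bb H_{r'}^{-u}}$, hence to the bounded strip $\A$ between the two lines; its $\pi$-image would then be a compact invariant subset of $\T^2$ strictly smaller than $\T^2$ (it misses an open set because the strip has finite vertical width and a minimal map cannot preserve a proper closed set whose lift lies in a strip — indeed such an invariant set projects to something contained in no vertical fiber-complement, contradicting minimality unless empty). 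More precisely, if the strip is $\A^u_{r,r'}$ then its $\Z^2$-translates do not cover $\R^2$ unless the width is $\geq 1$, but even then the invariant set inside a bounded strip, when projected, cannot be all of $\T^2$ because the orbit stays in a bounded vertical band — contradicting density of every orbit. For \eqref{eq:Lambda-v-in-no-strip}, suppose $\Gamma\subset\Lambda_r^u$ is connected, unbounded, closed and misses $\ell_s^u$ for some $s>\inf\{|\pr{2}(z)|:z\in\Gamma\}$; then $\Gamma$ lies in a bounded strip (below $\ell_s^u$ and above $\ell_r^u$ up to the sign of $u$), so it is a connected unbounded subset of a strip, forcing it to be unbounded horizontally, which is fine — but then its closure contains horizontal translates and again its orbit confined to a strip contradicts minimality of $f$ as in \eqref{eq:Lambda-v-minus-v-no-inter}; so it must meet $\ell_s^u$. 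For \eqref{eq:Lambda-v-r-union-dense}: $\bigcup_r\Lambda_r^u\supset\bigcup_r(\Lambda_r^u\cap\ell_r^u)$ by \eqref{eq:Lambda-v-non-empty}, and using $T_{(1,0)}$-invariance plus minimality the orbit closure of any single point of some $\Lambda_{r_0}^u$ is all of $\T^2$, so the union of all $\Lambda_r^u$ (over all $r$, i.e.\ over all vertical translates) projects densely; lifting back, density in $\R^2$ follows from $\Z^2$-equivariance. Finally \eqref{eq:Lambda-v-not-disconnect}: empty interior holds because an open invariant set inside a half-plane projects to a proper open invariant subset of $\T^2$, impossible by minimality (a minimal homeomorphism has no invariant open set other than $\emptyset$ and $\T^2$, and $\T^2$ is not covered by half-plane projections). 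That $\R^2\setminus\Lambda_r^u$ is connected I would prove by a plane-topology argument: $\Lambda_r^u$ is a closed subset of the half-plane $\overline{\bb H_r^u}$, it is the union of unbounded components of an invariant set, and I would show any bounded complementary component would be an invariant (or at least $\tilde f$-periodic up to the dynamics) Jordan-domain-type region whose projection contradicts minimality, or use that $\Lambda_r^u$ deformation-retracts appropriately; the cleanest route is to note $\Lambda_r^u$ lies in a half-plane and use that a closed connected-at-infinity set in a half-plane whose every point escapes to infinity horizontally does not separate the plane.

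\medskip

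\textbf{Main obstacle.} The crux is property \eqref{eq:Lambda-v-non-empty}: producing a genuine point of the maximal invariant set \emph{on} the boundary line $\ell_r^u$ and inside an \emph{unbounded} connected component. The recurrence of vertical Birkhoff sums (Atkinson) gives orbit segments staying near the boundary for arbitrarily long times, but upgrading finite-time confinement to an honest bi-infinite orbit in the closed half-plane, and then to an unbounded connected invariant piece meeting the line, requires a compactness/limiting argument combined with the horizontal $T_{(1,0)}$-equivariance to force unboundedness. I expect this to be the technically delicate step; the non-separation claim in \eqref{eq:Lambda-v-not-disconnect} is a close second, as it needs a careful Janiszewski-type argument (Theorem~\ref{thm:janiszewski}) or an explicit description of the complementary components.
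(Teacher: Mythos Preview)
Your outline for \eqref{eq:Lambda-v-equivariant-prop} and \eqref{eq:Lambda-v-r-union-dense} is essentially what the paper does. However, there are genuine gaps elsewhere, and for the central property \eqref{eq:Lambda-v-non-empty} the paper takes a completely different route.

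\medskip

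\textbf{The gap in \eqref{eq:Lambda-v-non-empty}.} Atkinson's theorem gives you that the vertical Birkhoff sums are \emph{recurrent}, not that any orbit is \emph{confined}. A $\mu$-generic point returns near its starting height infinitely often, but between returns it may (and, in the unbounded-deviation regime the whole paper is about, typically will) wander arbitrarily far below $\ell_r^u$. So your limiting argument does not produce a point of $\I_{\tilde f}(\overline{\bb H_r^u})$ at all, let alone one on the boundary line in an unbounded component. You correctly flag this as the crux, but the sketch as written does not close the gap. The paper's argument is entirely different: it passes to the two-end compactification $\hat\A=\T\times\R\sqcup\{\pm\infty\}$, where $\tilde f$ induces $\hat f$ with isolated fixed points $\pm\infty$, and then uses index theory to show these fixed points are \emph{indifferent} in the sense of Le Calvez. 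Concretely: Lefschetz gives $i(\hat f^n,+\infty)+i(\hat f^n,-\infty)=2$; the topological Leau--Fatou flower theorem (Le Calvez, Theorem~\ref{thm:Patrice-index-larger-1}) rules out index $\geq 2$ at either end, since the resulting wandering petals are incompatible with the ergodic dynamics forced by the rotation-set hypothesis; hence both indices equal $1$ for all iterates, and the Le~Calvez--Yoccoz theorem (Theorem~\ref{thm:LeCalvez-Yoccoz-indexes}) then forces indifference. Indifference is exactly the statement that the maximal invariant set in any Jordan neighborhood of $\pm\infty$ touches the boundary, which is precisely $\Lambda_r^u\cap\ell_r^u\neq\emptyset$.

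\medskip

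\textbf{The gap in \eqref{eq:Lambda-v-minus-v-no-inter} and \eqref{eq:Lambda-v-in-no-strip}.} Your argument ``the orbit stays in a bounded vertical band, contradicting density of every orbit'' does not work: an $\tilde f$-orbit confined to a horizontal strip in $\R^2$ can perfectly well project to a dense $f$-orbit in $\T^2$ (think of any annular homeomorphism). What confinement to a strip actually gives is bounded vertical Birkhoff sums at one point, hence by Gottschalk--Hedlund uniformly bounded vertical deviations, i.e.\ $f$ is \emph{annular}. The contradiction then comes from Proposition~\ref{pro:minimal-not-annular}, which shows a minimal homeomorphism is never annular (via a semiconjugacy to a non-minimal map). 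The same correction applies to your sketch of \eqref{eq:Lambda-v-in-no-strip}. Similarly, your empty-interior argument in \eqref{eq:Lambda-v-not-disconnect} fails: $\pi(\Lambda_r^u)$ being $f$-invariant and open would force it to equal $\T^2$, but that is not a contradiction since the half-plane already surjects onto $\T^2$. The paper instead gets empty interior by combining \eqref{eq:Lambda-v-minus-v-no-inter} with \eqref{eq:Lambda-v-r-union-dense}: the dense set $\bigcup_{r'}\Lambda_{r'}^{-u}$ is disjoint from $\Lambda_r^u$.
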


\begin{figure}
  \centering \def\svgwidth{.9\columnwidth} 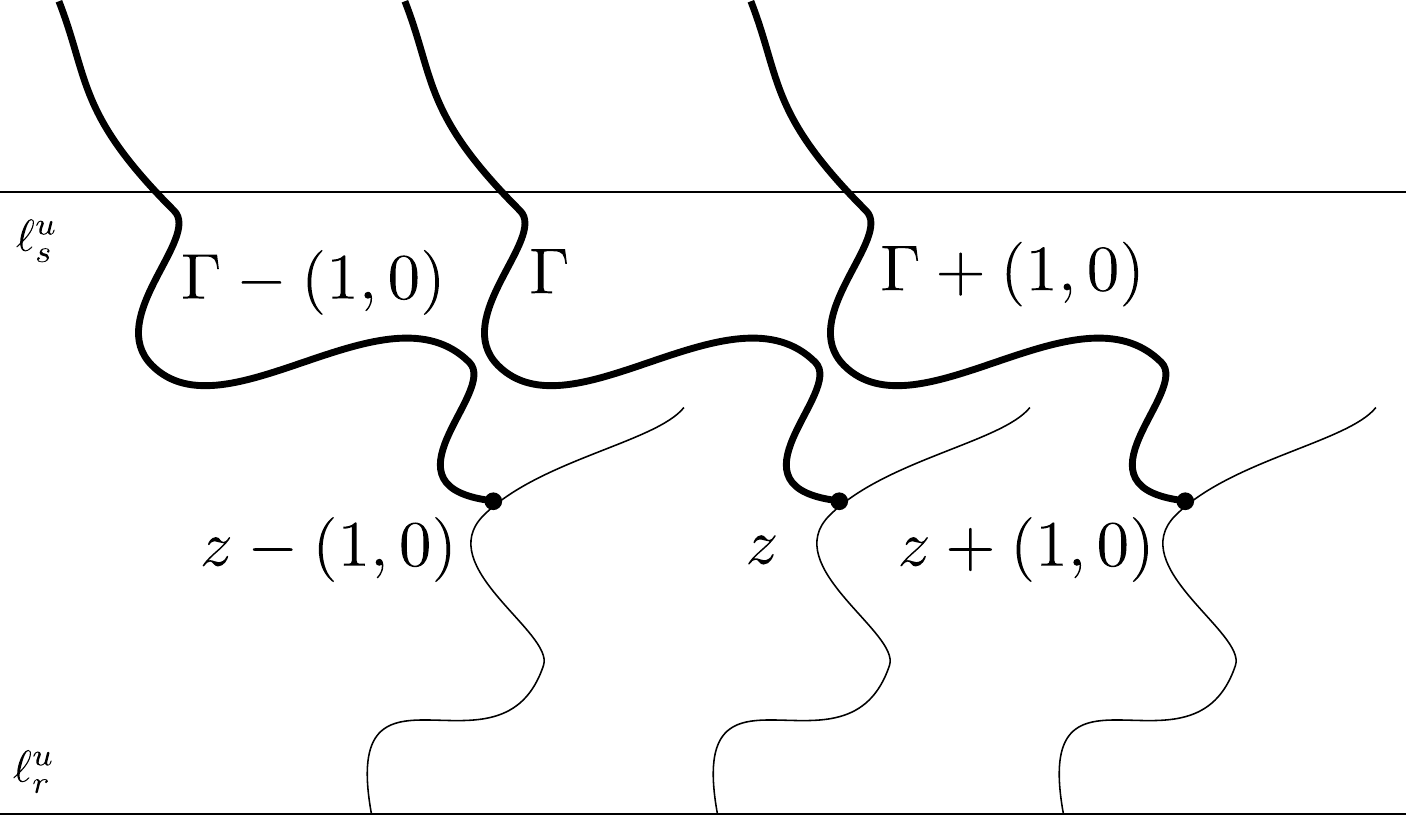
    \caption{$\Gamma\subset\Lambda_r^u$ intersects $\ell_s^u$ for
      $u=(0,1)$ and $s>\pr{2}(z)$.}
    \label{fig:Gamma-unbounded}
\end{figure}

\begin{proof}
  Statement \eqref{eq:Lambda-v-equivariant-prop} easily follows from
  the fact that $\bb{H}_r^u\subset\bb{H}_s^u $ whenever $r>s$ and
  recalling that $\tilde f$ commutes with every integer translation.

  To show \eqref{eq:Lambda-v-non-empty}, let $\A$ be the open annulus
  and $P\colon\R^2\to\A$ the covering map as defined in
  \S\ref{sec:annulus}. Since $\tilde f$ commutes with all deck
  transformations of $P$, it induces a homeomorphism on $\A$; and if
  we write $\hat\A:=\A\sqcup\{-\infty,+\infty\}$ for the two-end
  compactification of the annulus $\A$, this homeomorphism admits a
  unique extension to $\hat\A$. More precisely, one can define
  $\hat f\in\Homeo{}(\hat\A)$ by
  \begin{displaymath}
    \hat f(z) :=
    \begin{cases}
      +\infty,& \text{if } z=+\infty; \\
      -\infty,& \text{if } z=-\infty;\\
      P\big(\tilde f(\tilde z)\big),& \text{if } z\in\A,\ \tilde z\in
      P^{-1}(z).
    \end{cases}
  \end{displaymath}

  Now, we want to show both fixed points $-\infty$ and $+\infty$ are
  indifferent for $\hat f$, according to the classification of fixed
  points given in \S\ref{sec:class-plane-fixed-points}.

  Since $f$ is minimal, it holds
  $\Fix(\hat f)=\Per(\hat f)=\{-\infty,+\infty\}$. In particular, both
  fixed points are non-accumulated.

  On the other hand, since $\hat\A$ is homeomorphic to $\Ss^2$ and
  $\hat f$ is isotopic to the identity, by Lefschetz fixed point
  theorem one gets
  \begin{equation}
    \label{eq:Lefschetz-fix-pnt-formula}
    i\big(\hat f^n,-\infty\big) + i\big(\hat
    f^n,+\infty\big)=L(\hat f^n)=\chi(\hat\A)=2, \quad\forall 
    n\in\Z\setminus\{0\}. 
  \end{equation}

  Then, we make the following
  \begin{claim}
    \label{cla:index-less-than-2-at-infty}
    Fixed point indexes at $+\infty$ and $-\infty$ satisfy
    \begin{equation}
      \label{eq:index-less-than-2-at-infty}
      i\big(\hat f^n,\pm\infty\big)\leq 1, \quad\forall
      n\in\Z\setminus\{0\}. 
    \end{equation}
  \end{claim}

  Since the statement is completely symmetric, we will just prove
  Claim \ref{cla:index-less-than-2-at-infty} for the point
  $+\infty$. Let us proceed by contradiction. Suppose that
  $i(\hat f,+\infty)\geq 2$. Then, let $W^+$ and $W^-\subset\A$ denote
  the open sets given by Theorem~\ref{thm:Patrice-index-larger-1}.

  Let $\nu\in\M(f)$ be any ergodic $f$-invariant measure. We will
  consider the three possible cases: $\pr{2}(\rho_\nu(\tilde f))$ is
  either positive, negative or zero. Let us start assuming
  $\pr{2}(\rho_\nu(\tilde f))>0$. Then by Birkhoff ergodic theorem,
  for $\nu$-almost every $z\in\T^2$ and every
  $\hat z\in(id\times\pi)^{-1}(z)$, it holds
  \begin{equation}
    \label{eq:hat-z-points-converge-to-infty}
    \hat f^{-n}(\hat z)\to -\infty\in\hat\A,\quad\text{as }
    n\to+\infty, 
  \end{equation}
  where $id\times\pi\colon\A=\T\times\R\to\T^2$ denotes the natural
  covering map.  Since $f$ is minimal, $\nu$ is a topological measure
  and hence, taking into account $ W^-$ is open, there exists a point
  $\hat z\in W^-$ satisfying
  \eqref{eq:hat-z-points-converge-to-infty}. This contradicts the fact
  that $\alpha_{\hat f}(\hat z)=\{+\infty\}$, for every
  $\hat z\in W^-$.

  Analogously, one gets a contradiction assuming
  $\pr{2}\big(\rho_\nu(\tilde f)\big)<0$.

  So, it just remains to consider the case
  $\pr{2}\big(\rho_\nu(\tilde f)\big)=0$. In such a case, as
  consequence of by Theorem~\ref{thm:atkinson} we know
  $\hat f\colon\hat\A\carr$ is non-wandering, \ie
  $\Omega(\hat f)=\hat\A$. In fact, let $\hat V\subset\A$ be a
  non-empty open set and suppose $\diam\hat V<1/4$; let us define
  $V:=(id\times\pi)(\hat V)\subset\T^2$. Since $id\times\pi$ is a
  covering map, $V$ is open and thus, $\nu(V)>0$. On the other hand,
  since we are assuming $\pr{2}\big(\rho_\nu(\tilde f)\big)=0$, we
  have
  \begin{displaymath}
    \int_{\T^2} \pr{2}\circ\Delta_{\tilde f}\dd\nu =0.
  \end{displaymath}
  Then, invoking Theorem~\ref{thm:atkinson} we know there exists
  $z\in V$ and $n\geq 1$ such that $f^n(z)\in V$ and
  $\abs{\Bs_f^n\big(\pr{2}\circ\Delta_{\tilde f}\big)(z)} =
  \abs{\pr{2}\circ\Delta_{\tilde f}^{(n)}(z)}<1/4$. This implies
  $\hat V\cap \hat f^{-n}(\hat V)\neq\emptyset$. So,
  $\A\subset\Omega(\hat f)$, then clearly we have
  $\Omega(\hat f)=\hat\A$. But the both sets $W^+$ and $W^-$ given by
  Theorem~\ref{thm:Patrice-index-larger-1} are wandering for $\hat f$,
  getting a contradiction.

  So, $i(\hat f,+\infty)\leq 1$. By
  Proposition~\ref{pro:powers-min-homeos-min-too}, one can easily
  adapt the previous reasoning for the general case, \ie where
  $i(\hat f^n,+\infty)\geq 2$, with $\abs{n}\geq 2$; and
  Claim~\ref{cla:index-less-than-2-at-infty} is proven.

  Now, putting together \eqref{eq:Lefschetz-fix-pnt-formula} and
  \eqref{eq:index-less-than-2-at-infty} we conclude that
  \begin{equation}
    \label{eq:index-iterates-equal-1}
    i\big(\hat f^k,-\infty\big)=i\big(\hat
    f^k,+\infty\big)=1,\quad\forall k\in\Z\setminus\{0\}.
  \end{equation}

  By an argument similar to that one we used to prove
  Claim~\ref{cla:index-less-than-2-at-infty}, one can show both fixed
  points $-\infty$ and $+\infty$ are not dissipative (\ie they are
  neither attractive nor repulsive). In fact, arguing by contradiction
  let us suppose, for instance, that there is a trapping neighborhood
  $V$ of $+\infty$, \ie $V\subset\hat\A$ is an open set such that
  $+\infty\in V$, $-\infty\not\in V$ and
  $\hat f(\overline{V})\subset V$. Following the very same reasoning
  we used to prove Claim~\ref{cla:index-less-than-2-at-infty} one can
  conclude in this case that
  \begin{displaymath}
    \pr{2}\big(\rho_\nu(\tilde f)\big)>0, \quad\forall\nu\in\M(f).
  \end{displaymath}
  However this last estimate is incompatible with the convexity of
  $\rho(\tilde f)$ and our hypothesis
  \eqref{eq:rot-set-inters-X-axes}. So, $-\infty$ and $+\infty$ are
  non-dissipative.

  Now, combining this last assertion,
  Theorem~\ref{thm:LeCalvez-Yoccoz-indexes} and
  \eqref{eq:index-iterates-equal-1} we show that $+\infty$ and
  $-\infty$ are indifferent (according to the classification of fixed
  points given in \S\ref{sec:class-plane-fixed-points}). In other
  words, if we define
  \begin{equation}
    \label{eq:V-neighborhood}
    V_r^{\pm(0,1)}:=P\Big(\overline{\bb{H}_r^{\pm(0,1)}}\Big)
    \sqcup\{\pm\infty\}\subset\hat\A,  
  \end{equation}
  it can be easily verified that $V_r^{\pm(0,1)}$ is a neighborhood of
  $\pm\infty$ and therefore,
  \begin{displaymath}
    \cc\left(\I_{\hat f}\big(V_r^{\pm(0,1)}\big),
      \pm\infty\right)\cap\partial V_r^{\pm(0,1)}\neq\emptyset,
    \quad\forall r\in\R,
  \end{displaymath}
  where $\partial V_r^{\pm(0,1)}=\T\times\{r\}\subset\A$.
  In particular, the set
  \begin{equation}
    \label{eq:Lambda-hat-vertical-def}
    \hat\Lambda_r^{\pm(0,1)}:=
    P^{-1}\left(\cc\Big(\I_{\hat f}\big(V_r^{\pm(0,1)}\big),
      \pm\infty\Big)\setminus\{\pm\infty\}\right)\subset\R^2 
  \end{equation}
  intersects the horizontal line $\ell_r^{\pm(0,1)}$, for every
  $r\in\R$. By Lemma~\ref{lem:lifting-unbounded-continuum}, every
  connected component of $\hat\Lambda_r^{\pm(0,1)}$ is unbounded, so
  it holds
  \begin{equation}
    \label{eq:Lambda-from-annulus-contained-Lambda-plane}
    \hat\Lambda_r^{u}\subset\Lambda_r^{u},\quad\forall
    r\in\R.
  \end{equation}
  Hence, \eqref{eq:Lambda-v-non-empty} is proven (see
  Figure~\ref{fig:hat-Lambda-in-Lambda} for an illustrative
  representation of the construction we have performed).

  \begin{figure}
  \centering \def\svgwidth{.9\columnwidth} 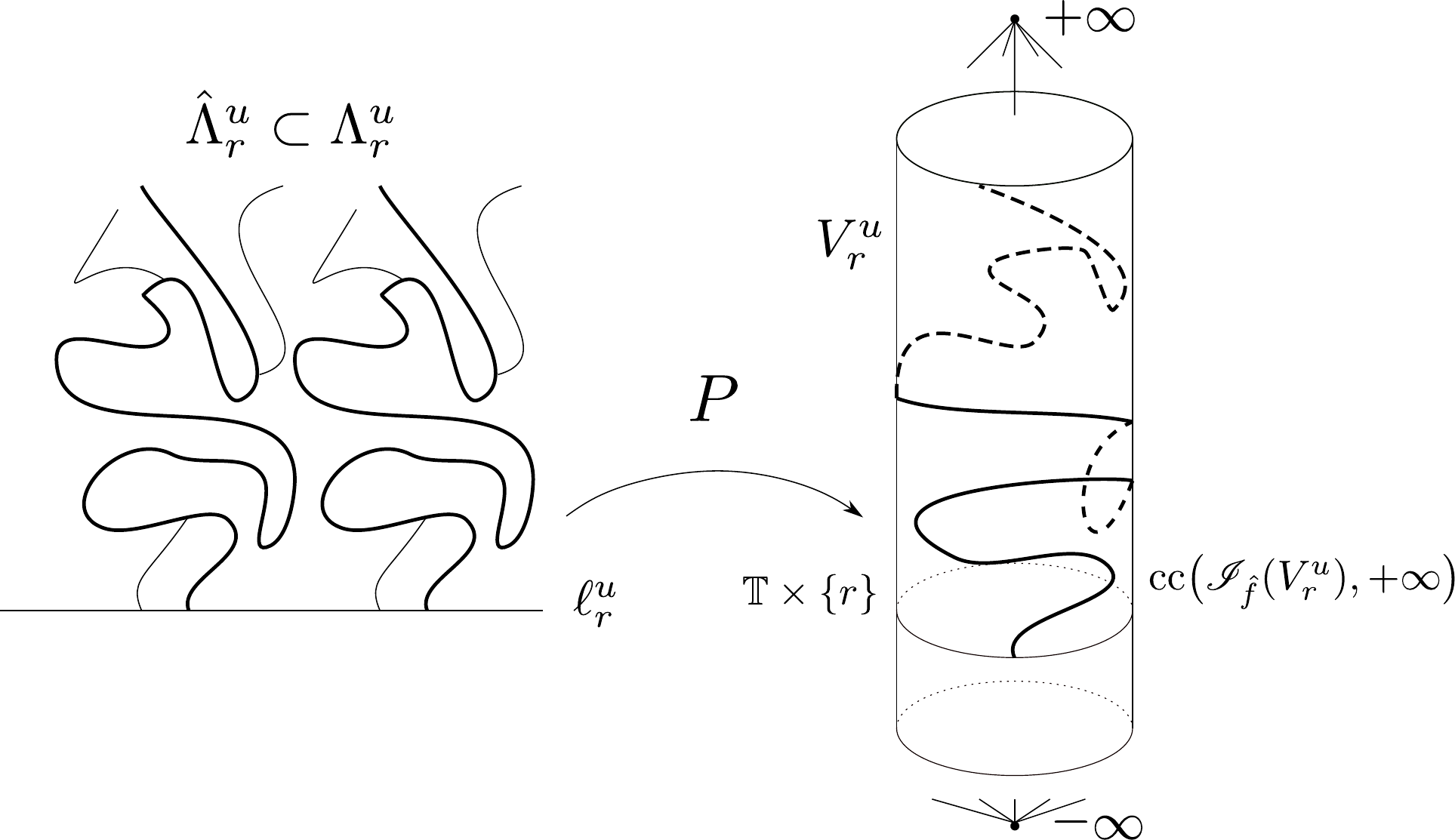
  \caption{$\hat{\Lambda}_r^u:=P^{-1}\big(\I_{\hat
      f}(V_r^u),+\infty\big)\subset\Lambda_r^u$, with $u=(0,1)$.}
    \label{fig:hat-Lambda-in-Lambda}
\end{figure}

  Assertion \eqref{eq:Lambda-v-minus-v-no-inter} easily follows from
  Proposition~\ref{pro:gott-hedl-applied-to-torus} and
  Proposition~\ref{pro:minimal-not-annular}. In fact, let us assume
  there exists
  $z\in\I_{\tilde f}\Big(\overline{\bb{H}^{(0,1)}_r}\Big)\cap
  \I_{\tilde f}\Big(\overline{\bb{H}^{(0,-1)}_{r'}}\Big)$, for some
  $r,r'\in\R$. Then, this implies
  \begin{displaymath}
    r\leq \pr{2}\Big(\Delta_{\tilde f}^{(n)}(z)\Big) =
    \sum_{j=0}^{n-1}\pr{2}\circ\Delta_{\tilde f}\big(\tilde
    f^j(z)\big) \leq -r', \quad\forall n\in\N.  
  \end{displaymath}
  So, by Proposition~\ref{pro:gott-hedl-applied-to-torus}, $f$ should
  be annular and by Proposition~\ref{pro:minimal-not-annular}, this is
  incompatible with minimality of $f$.

  Then, let us prove \eqref{eq:Lambda-v-in-no-strip} reasoning by
  contradiction. Suppose there exists a connected closed
  unbounded set $\Gamma\subset\Lambda_r^u$ such that
  $\Gamma\cap\ell_s^u=\emptyset$, for some real number
  $s>\inf\{\abs{\pr{2}(z)} : z\in\Gamma\}$. This means $\Gamma$ is
  contained in $\A_{r,s}^u$, where the strip $\A_{r,s}^u$ is given by
  \eqref{eq:Asv-strip-def}.

  By \eqref{eq:Lambda-v-equivariant-prop} we know that $\Lambda_r^u$
  is $T_{1,0}$-invariant. So,
  \begin{displaymath}
    \Gamma':=\bigcup_{n\in\Z}
    T_{1,0}^n(\Gamma)\subset\Lambda_r^u,
  \end{displaymath}
  and $\Gamma'$ is contained in $\A_{r,s}^u$ as well. Moreover, since
  $\I_{\tilde f}\big(\overline{\bb{H}}_r^u\big)$ is a closed
  $\tilde f$-invariant set, and
  $\Gamma'\subset\Lambda_r^u\subset \I_{\tilde
    f}\big(\overline{\bb{H}}_r^u\big)$, we conclude that
  \begin{equation}
    \label{eq:Gamma-prime-closure-in-I}
    \overline{\Gamma'}\subset \I_{\tilde
      f}\big(\overline{\bb{H}}_r^u\big)\cap\bb{A}_{r,s}^u. 
  \end{equation}

  On the other hand, since $\Gamma$ is unbounded, one sees that
  $\overline{\Gamma'}$ is contained in the interior of the strip
  $\A_{r-1,s+1}$ and separates both connected components of its
  boundary.

  Then let us write $\hat\Gamma':=P(\overline{\Gamma'})$ and
  $\hat\A_{r-1,s+1}^u:=P(\A_{r-1,s+1}^u)$, where $P\colon\R^2\to\A$
  denotes the covering map given by \eqref{eq:P-covering-map}. Observe
  that, since $\overline{\Gamma'}$ is $T_{1,0}$-invariant and
  $T_{1,0}$ generates the group of deck transformations of $P$,
  $\hat\Gamma'$ is a compact subset of $\A$.

  So $\hat\Gamma'\subset\hat\A_{r-1,s+1}^u$ and when $\hat\Gamma'$ is
  considered as a compact subset of
  $\hat\A=\A\sqcup\{-\infty,+\infty\}$, it separates the horizontal
  circle $P(\ell_{s+1}^u)$ and the point $-\sign(u)\infty\in\hat\A$,
  where $\sign(u)=1$, for $u=(0,1)$ and $\sign(u)=-1$, for $u=(0,-1)$.

  In the proof of \eqref{eq:Lambda-v-non-empty} we have shown that the
  set
  $\cc\left(\I_{\hat f}\big(V_{-s-1}^{-u}\big),-\sign(u)\infty\right)$
  intersects the boundary of $V_{-s-1}^{-u}$, where the set
  $V_{-s-1}^{-u}$ is given by \eqref{eq:V-neighborhood}, and so we
  have
  \begin{displaymath}
    \cc\left(\I_{\hat
        f}\big(V_{-s-1}^{-u}\big),-\sign(u)\infty\right)\cap\hat
    \Gamma'\neq\emptyset.
  \end{displaymath}
  By \eqref{eq:Lambda-from-annulus-contained-Lambda-plane}, this
  implies
  \begin{displaymath}
    \emptyset\neq\hat\Lambda_{-s-1}^{-u}\cap\overline{\Gamma'}
    \subset \I_{\tilde f}\big(\overline{\bb{H}_{-s-1}^{-u}}\big)\cap
    \I_{\tilde f}\big(\overline{\bb{H}_{r}^{u}}\big),
  \end{displaymath}
  contradicting \eqref{eq:Lambda-v-minus-v-no-inter}.


  In order to prove \eqref{eq:Lambda-v-r-union-dense}, first notice
  that, as a consequence of \eqref{eq:Lambda-v-equivariant-prop}, the
  set $\bigcup_{r\in\R}\Lambda_r^{u}$ is $\Z^2$-invariant, \ie it is
  $T_{\boldsymbol{p}}$-invariant, for every $\boldsymbol{p}\in\Z^2$.

  On the other hand, since the set $\Lambda_r^{u}$ is defined as the
  union of unbounded connected components of an $\tilde f$-invariant
  closed set, it is $\tilde f$-invariant itself.

  So, the set $\bigcup_{r\in\R}\Lambda_r^{u}$ is invariant under the
  abelian subgroup
  $\langle\tilde f,\{T_{\boldsymbol{p}}\}_{\boldsymbol{p}\in\Z^2}
  \rangle<\Homeo+(\R^2)$ which acts minimally on $\R^2$. Then,
  $\bigcup_{r\in\R}\Lambda_r^{u}$ is dense in $\R^2$, as desired.

  Last assertion \eqref{eq:Lambda-v-not-disconnect} is a rather
  straightforward consequence of \eqref{eq:Lambda-v-minus-v-no-inter},
  \eqref{eq:Lambda-v-in-no-strip} and
  \eqref{eq:Lambda-v-r-union-dense}.

  In fact, first observe that, combining
  \eqref{eq:Lambda-v-minus-v-no-inter} and
  \eqref{eq:Lambda-v-r-union-dense} one easily shows that
  $\Lambda_r^u$ has empty interior.

  On the other hand, if $\R^2\setminus\Lambda_r^{u}$ were not
  connected, then there should exist a connected component
  $V\in\pi_0(\R^2\setminus\Lambda_r^u)$ such that
  $V\subset\bb{H}_r^{u}$.

  By \eqref{eq:Lambda-v-r-union-dense}, there exists $r'\in\R$ such
  that $\Lambda_{r'}^{-u}\cap V\neq\emptyset$. If $z_0$ is any point
  in $\Lambda_{r'}^{-u}\cap V$, then by
  \eqref{eq:Lambda-v-in-no-strip} we know that
  $\cc\big(\Lambda_{r'}^{-u},z_0\big)$ is not contained in the strip
  $\A^u_{r,-r'}$. Consequently, the connected set
  $\cc\big(\Lambda_{r'}^{-u},z_0\big)$ is not contained in $V$. So it
  intersects the boundary of $V$ which is contained in
  $\Lambda_r^u$. This contradicts \eqref{eq:Lambda-v-minus-v-no-inter}
  and \eqref{eq:Lambda-v-not-disconnect} is proved.
\end{proof}

\begin{figure}
    \centering
    \def\svgwidth{.7\columnwidth}
    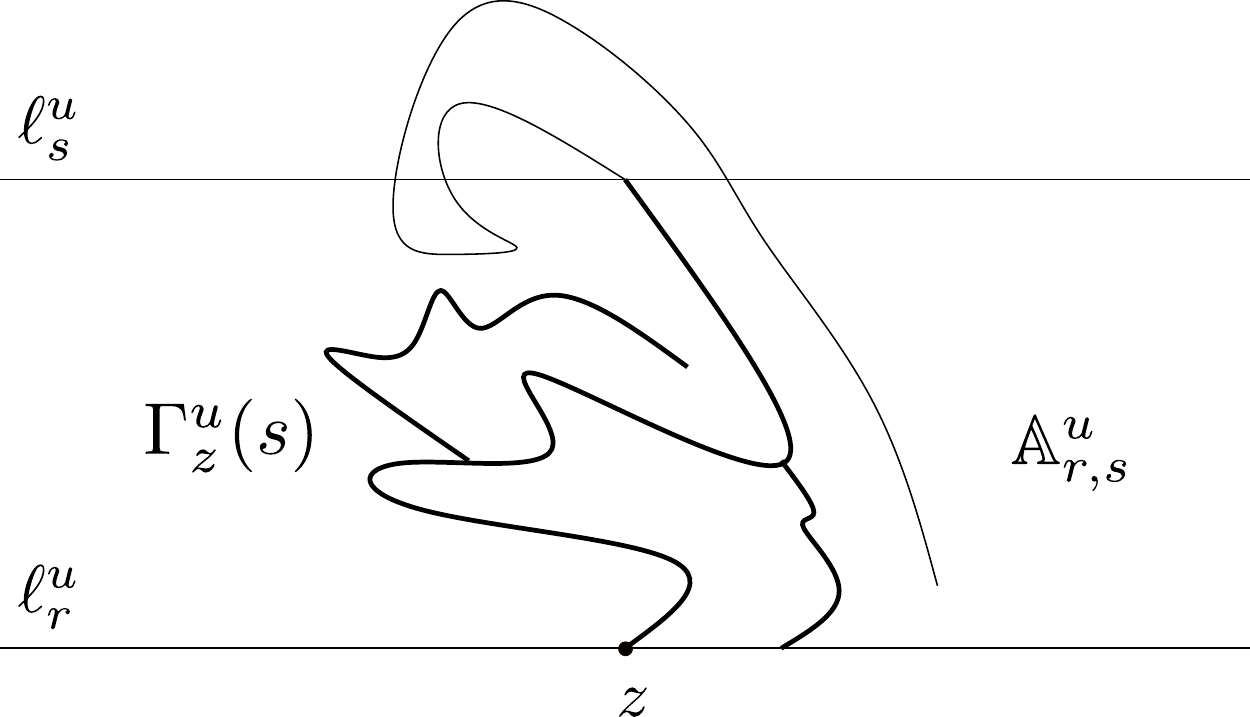
    \caption{Definition of the set $\Gamma_z^u(s)$ sets for $u=(0,1)$}
    \label{fig:Gamma-z-u-s-def}
\end{figure}

Let us continue assuming $\tilde f\in\Thomeo(\T^2)$ is a lift of a
minimal homeomorphism $f$ and condition
\eqref{eq:rot-set-inters-X-axes} holds. Fixing a real number $r$, for
each $z\in\Lambda_r^u\cap\ell_r^u$ and every $s>r$ we define the set
\begin{equation}
  \label{eq:Gamma-s-z-definition}
  \Gamma_z^u(s):=\cc\left(\Lambda_r^u\cap\A_{r,s}^{u}, 
    z\right),
\end{equation}
where the strip $\A_{r,s}^u$ is given by \eqref{eq:Asv-strip-def} (see
Figure \ref{fig:Gamma-z-u-s-def}).

As consequence of Theorem~\ref{thm:Lambda-vert-sets-main-properties},
we get the following result about the geometry of the sets
$\Gamma_z^u(s)$:
\begin{corollary}
  \label{cor:Lambda-vert-oscilations}
  For every $r\in\R$ and $u\in\{(0,1);(0,-1)\}$ the following
  conditions are satisfied:

  \begin{enumerate}[(i)]
  \item for every $z\in\Lambda_r^u\cap\ell_r^u$ and any $s>r$,
    \begin{equation}
      \label{eq:Gamma-z-s-disjoint-integer-trans}
      T_{1,0}^n\big(\Gamma_z^u(s)\big)\cap\Gamma_z^u(s)=\emptyset,
      \quad\forall n\in\Z\setminus\{0\}; 
    \end{equation}
  \item for any $s>r$, there exists a real number $D=D(f,s,r)>0$ such
    that
    \begin{equation}
      \label{eq:Gamma-z-unif-bounded-diamater}
      \diam\left(\pr{1}\big(\Gamma_z^u(s)\big)\right)
      \leq D, \quad\forall z\in\Lambda_r^u\cap\ell_r^u,
    \end{equation}
    and so, $\Gamma_z^u(s)$ is compact;
  \item for every
    $U\in\pi_0\big(\bb{H}_r^{u}\setminus\Lambda_r^{u}\big)$,
    \begin{equation}
      \label{eq:cc-Lambda-v-disj-hor-trans}
      T_{1,0}^n(U)\cap U=\emptyset, \quad\forall
      n\in\Z\setminus\{0\}.
    \end{equation}
  \end{enumerate}

  See Figure \ref{fig:Gamma-sets-horizontal-trans} for a graphical
  representation of these properties. 
\end{corollary}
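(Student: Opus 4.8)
The plan is to deduce the three items, in this order, from the properties of the sets $\Lambda_r^u$ collected in Theorem~\ref{thm:Lambda-vert-sets-main-properties}, together with elementary remarks about integer translations and a little plane topology. The common starting point for \eqref{eq:Gamma-z-s-disjoint-integer-trans} and \eqref{eq:cc-Lambda-v-disj-hor-trans} is that, since $\tilde f$ commutes with $T_{1,0}$ and both the strip $\A_{r,s}^u$ and the half-plane $\bb{H}_r^u$ are $T_{1,0}$-invariant, property \eqref{eq:Lambda-v-equivariant-prop} gives $T_{1,0}^n(\Gamma_z^u(s))=\Gamma_{T_{1,0}^n z}^u(s)$, and similarly $T_{1,0}^n(U)$ is again a connected component of $\bb{H}_r^u\setminus\Lambda_r^u$; hence in both cases a nonempty intersection with a $T_{1,0}^n$-translate forces equality with that translate. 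So, to prove \eqref{eq:Gamma-z-s-disjoint-integer-trans} I would assume $T_{1,0}^n(\Gamma_z^u(s))=\Gamma_z^u(s)$ with $n\neq 0$: then $\Gamma_z^u(s)$, being nonempty and invariant under a nontrivial translation, is unbounded, while being at the same time connected, closed (a connected component of the closed set $\Lambda_r^u\cap\A_{r,s}^u$) and contained in $\Lambda_r^u$; property \eqref{eq:Lambda-v-in-no-strip} then forces it to meet $\ell_{s'}^u$ for every sufficiently large $s'$, which is absurd since $\Gamma_z^u(s)\subset\A_{r,s}^u$.

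The very same contradiction with \eqref{eq:Lambda-v-in-no-strip} shows that every $\Gamma_z^u(s)$ is bounded, hence compact, giving one half of \eqref{eq:Gamma-z-unif-bounded-diamater}. For the uniform bound I would argue by contradiction: if $\diam\big(\pr{1}(\Gamma_{z_k}^u(s))\big)\to\infty$ along a sequence $z_k\in\Lambda_r^u\cap\ell_r^u$ (a set that is nonempty by \eqref{eq:Lambda-v-non-empty}), then, after replacing each $z_k$ by a suitable integer horizontal translate — which by \eqref{eq:Lambda-v-equivariant-prop} permutes the sets $\Gamma_z^u(s)$ and preserves horizontal diameters — I may assume $z_k\to z_\infty\in\Lambda_r^u\cap\ell_r^u$. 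A boundary-bumping argument then yields, as a diagonal Hausdorff limit of the continua $\Gamma_{z_k}^u(s)\cap\overline{B_m(z_k)}$ as $m\to\infty$, a connected closed unbounded set $Q\subset\Lambda_r^u\cap\A_{r,s}^u$ containing $z_\infty$, contradicting \eqref{eq:Lambda-v-in-no-strip} once more. I expect this continuum-theoretic limiting step to be the main technical point: one has to verify that the diagonal limit is genuinely connected, closed, unbounded, and still contained in the closed set $\Lambda_r^u\cap\A_{r,s}^u$, which is where the boundary-bumping lemma and the closedness of $\Lambda_r^u$ enter.

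Finally, for \eqref{eq:cc-Lambda-v-disj-hor-trans} it remains, by the opening remark, to rule out that a connected component $U$ of $\bb{H}_r^u\setminus\Lambda_r^u$ is $T_{1,0}^n$-invariant for some $n\neq 0$. Here I would pass to the annulus model used in the proof of Theorem~\ref{thm:Lambda-vert-sets-main-properties}: the map $\hat f\colon\hat\A\carr$, the covering $P$ of \eqref{eq:P-covering-map}, the neighbourhood $V_r^u$ of the end $\sign(u)\infty$ given by \eqref{eq:V-neighborhood}, and the identity $\Lambda_r^u=P^{-1}\big(\cc(\I_{\hat f}(V_r^u),\sign(u)\infty)\setminus\{\sign(u)\infty\}\big)$ obtained there (the equality in \eqref{eq:Lambda-from-annulus-contained-Lambda-plane}). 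Then $P(U)$ is a connected open subset of the open annulus $P(\bb{H}_r^u)$ disjoint from $P(\Lambda_r^u)$, and the $T_{1,0}^n$-invariance of $U$ makes $P(U)$ essential — a path in $U$ joining a point $w$ to $T_{1,0}^n w$ projects to a noncontractible loop in $P(U)$ — so $P(U)$ contains an essential simple closed curve $\gamma$. In the closed disc $V_r^u$, the curve $\gamma$ separates $\sign(u)\infty$ from the boundary circle $P(\ell_r^u)$; hence the connected set $\cc(\I_{\hat f}(V_r^u),\sign(u)\infty)$, which contains $\sign(u)\infty$ and is disjoint from $\gamma$, lies in the Jordan domain bounded by $\gamma$ that contains $\sign(u)\infty$, and in particular is disjoint from $P(\ell_r^u)$. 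This contradicts $\Lambda_r^u\cap\ell_r^u\neq\emptyset$, that is, property \eqref{eq:Lambda-v-non-empty}. (The only nonroutine external input here is the standard fact that an open connected essential subset of an annulus contains an essential simple closed curve.)
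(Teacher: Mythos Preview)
Your proof is correct, but parts (ii) and (iii) take genuinely different routes from the paper.

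For (i) your argument and the paper's are essentially the same: both reduce to the impossibility of a closed connected unbounded subset of $\Lambda_r^u$ lying in the strip $\A_{r,s}^u$. You invoke property \eqref{eq:Lambda-v-in-no-strip} directly; the paper instead notes that the $T_{1,0}$-saturation of such a set would force $\Lambda_r^u$ to disconnect $\R^2$, contradicting \eqref{eq:Lambda-v-not-disconnect}. These are equivalent.

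For (ii) the paper's argument is considerably simpler than your limiting/boundary-bumping approach. Once (i) is established, fix any $w\in\Lambda_r^u\cap\ell_r^u$: the integer translates $T_{1,0}^n(\Gamma_w^u(s))$ are pairwise disjoint, so any other $\Gamma_z^u(s)$ is trapped between two consecutive ones and therefore $\diam\big(\pr{1}(\Gamma_z^u(s))\big)\leq \diam\big(\pr{1}(\Gamma_w^u(s))\big)+2$. This avoids the Hausdorff-limit continuum argument entirely.

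For (iii) the paper does not pass to the annulus picture. Instead it defines a \emph{boundary at level $r$} operator
\[
  \partial_r^u\colon \pi_0\big(\bb{H}_r^u\setminus\Lambda_r^u\big)\to\pi_0\big(\ell_r^u\setminus\Lambda_r^u\big),\qquad
  \partial_r^u U:=(\overline U\cap\ell_r^u)\setminus\Lambda_r^u,
\]
and proves (via Janiszewski's theorem) that this is a well-defined bijection which intertwines the $T_{1,0}$-actions on both sides. Since the $T_{1,0}$-action on the intervals of $\ell_r^u\setminus\Lambda_r^u$ obviously has no periodic orbit, neither does the action on $\pi_0\big(\bb{H}_r^u\setminus\Lambda_r^u\big)$. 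Your essential-curve argument is a valid alternative, but the bijection $\partial_r^u$ is not a detour: it is reused later in the paper (in Claim~\ref{cla:base-cc-bounded-rot-dev}) to control ``rotational deviations'' of the components $U_n$, so the paper's route pays for itself.
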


\begin{figure}
    \centering
    \def\svgwidth{.7\columnwidth}
    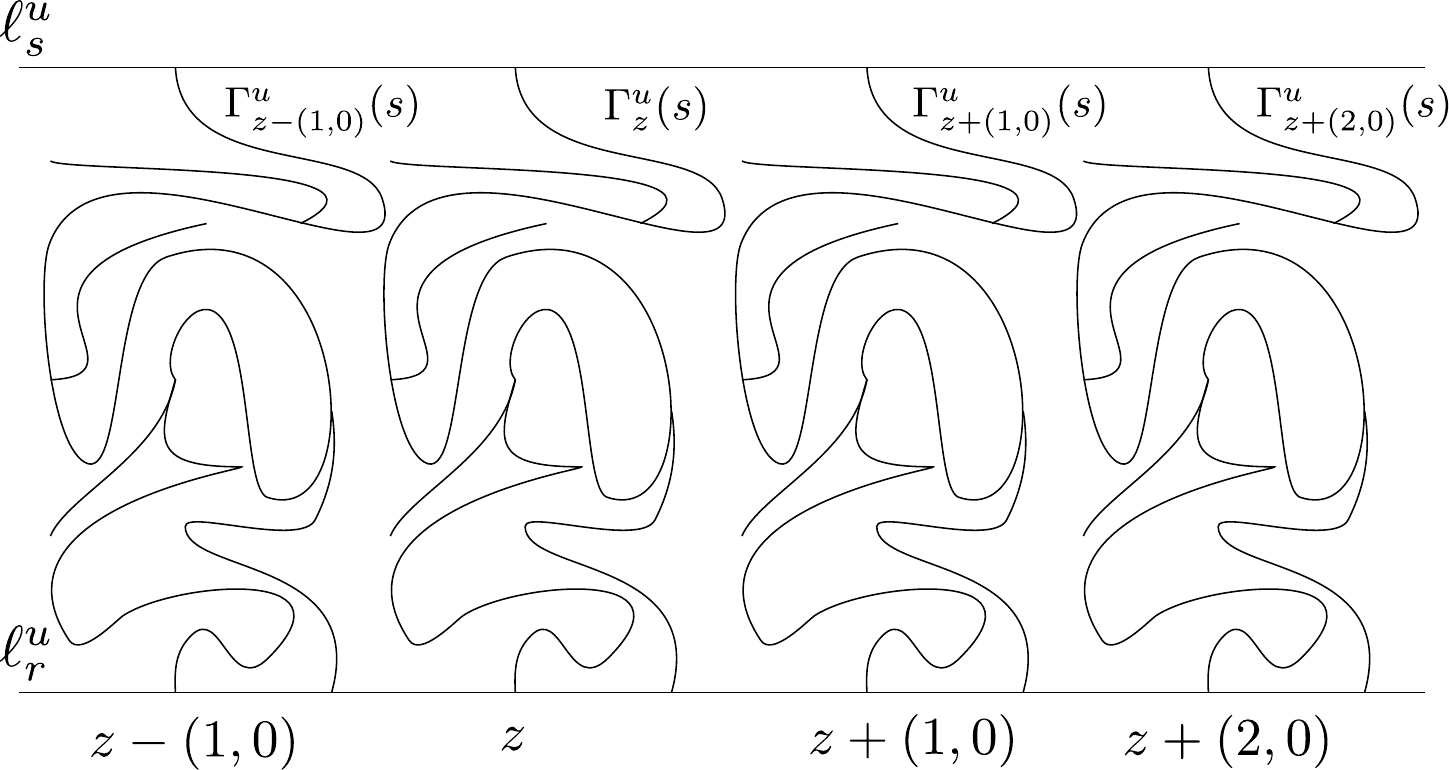
    \caption{$\Gamma_z^u(s)$ and their horizontal translations for for
      $u=(0,1)$}
    \label{fig:Gamma-sets-horizontal-trans}
\end{figure}


\begin{proof}
  Let us fix real numbers $s>r$ and let $z$ denote an arbitrary point
  in $\Lambda_r^u\cap\ell_r^u$. Reasoning by contradiction, let us
  start supposing $\diam\big(\pr{1}(\Gamma_z^u(s)\big)$ is
  infinite. Then, the set
  \begin{displaymath}
    \Gamma := \overline{\bigcup_{n\in\Z}
      T_{1,0}^n\big(\Gamma_z^u(s)\big)}
  \end{displaymath}
  disconnects $\R^2$ and since
  $\I_{\tilde f}\big(\overline{\bb{H}_r^u}\big)$ is
  $T_{1,0}$-invariant,
  $\Gamma\subset\I_{\tilde f}\big(\overline{\bb{H}_r^u}\big)$. So,
  $\I_{\tilde f}\big(\overline{\bb{H}_r^u}\big)$ disconnects $\R^2$
  contradicting \eqref{eq:Lambda-v-not-disconnect} of
  Theorem~\ref{thm:Lambda-vert-sets-main-properties}. Thus it holds
  \begin{equation}
    \label{eq:Gamma-z-bounded-diamater}
    \diam\Big(\pr{1}\big(\Gamma_z^u(s)\big)\Big)<\infty,\quad\forall
    z\in\Lambda_r^u\cap\ell_r^u,\  \forall s>r. 
  \end{equation}

  Now suppose \eqref{eq:Gamma-z-s-disjoint-integer-trans} is false,
  \ie there exist $z$, $s$ and $n$ such that
  \begin{displaymath}
    T_{n,0}\big(\Gamma_z^u(s)\big)\cap\Gamma_z^u(s)\neq\emptyset,
  \end{displaymath}
  with $n\neq 0$. 
  Then, since $\Lambda_r^u$ is $T_{1,0}$-invariant, the set
  \begin{displaymath}
    \bigcup_{m\in\Z}
    T_{n,0}^m\big(\Gamma_z^u(s)\big)\subset\Lambda_r^u 
  \end{displaymath}
  is connected, contains $\Gamma_z^u(s)$ and so, it coincides with
  $\Gamma_z^u(s)$. Since $n\neq 0$, we conclude
  $\diam(\pr{1}(\Gamma_z^u(s)))=\infty$, contradicting
  \eqref{eq:Gamma-z-bounded-diamater}.

  Property \eqref{eq:Gamma-z-unif-bounded-diamater} is a
  straightforwards consequence of
  \eqref{eq:Gamma-z-s-disjoint-integer-trans} and
  \eqref{eq:Gamma-z-bounded-diamater}. In fact, for fixed real numbers
  $s>r$ and any point $w\in\Lambda_r^u\cap\ell_r^u$, by
  \eqref{eq:Gamma-z-bounded-diamater} we know that
  $\diam(\pr{1}(\Gamma_w(s)))$ is finite. Then, given any
  $z\in\Lambda_r^u\cap\ell_r^u$, there exists a unique $n\in\Z$ such
  that
  $\pr{1}\circ T^n_{1,0}(w)\leq\pr{1}(z)<\pr{1}\circ
  T^{n+1}_{1,0}(w)$. By \eqref{eq:Gamma-z-s-disjoint-integer-trans},
  $T_{1,0}^n\big(\Gamma_w^u(s)\big)$ and
  $T_{1,0}^{n+1}\big(\Gamma_w^u(s)\big)$ are disjoint. Thus, it holds
  \begin{displaymath}
    \diam\Big(\pr{1}\big(\Gamma_z^u(s)\big)\Big)\leq
    \diam\Big(\pr{1}\big(\Gamma_w^u(s)\big)\Big) + 2,\quad\forall
    z\in\Lambda_r^u\cap\ell_r^u,
  \end{displaymath}
  and \eqref{eq:Gamma-z-unif-bounded-diamater} is proved.

  To prove \eqref{eq:cc-Lambda-v-disj-hor-trans}, we first need to
  introduce some definitions: for each connected component
  $U\in\pi_0\big(\bb{H}_r^{u}\setminus\Lambda_r^{u}\big)$ we define
  its \emph{boundary at level $r$} by
  \begin{equation}
    \label{eq:bound-at-level-r}
    \partial_r^u U:= (\partial U\cap\ell^u_r)\setminus\Lambda_r^u =
    (\overline{U}\cap\ell_r^u)\setminus\Lambda_r^u,
  \end{equation}
  where $\partial U$ denotes the boundary of $U$ in $\R^2$ (see
  Figure~\ref{fig:boundary-level-r-def}).

  \begin{figure}
  \centering \def\svgwidth{.7\columnwidth} 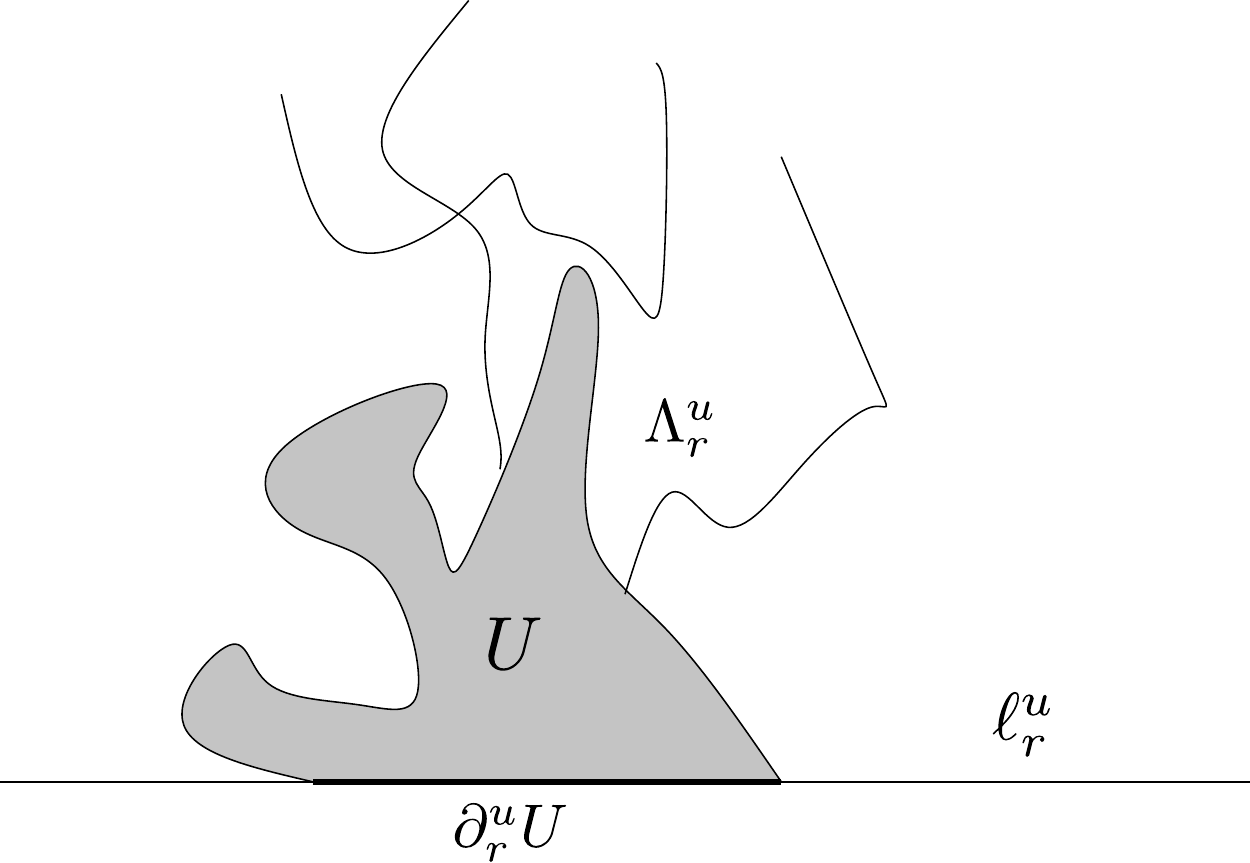
    \caption{$\partial_r^u U$ definition for a $u=(0,1)$.}
    \label{fig:boundary-level-r-def}
  \end{figure}

  So, the boundary at level $r$ operator satisfies the following
  property:
  \begin{claim}
    \label{cla:bound-at-level-r-properties}
    Every boundary at level $r$ is connected and non-empty. In other
    words, the operator
    \begin{displaymath}
      \partial_r^u\colon \pi_0\big(\bb{H}_r^u\setminus\Lambda_r^u\big)
      \to\pi_0\big(\ell_r^u\setminus\Lambda_r^u\big)
    \end{displaymath}
    given by \eqref{eq:bound-at-level-r} is a well-defined bijection.
  \end{claim}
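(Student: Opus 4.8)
\emph{Plan.}
I would reduce Claim~\ref{cla:bound-at-level-r-properties} to two assertions about an arbitrary $U\in\pi_0(\bb{H}_r^u\setminus\Lambda_r^u)$: that $\partial_r^u U$ is non-empty, and that $\partial_r^u U$ is connected; everything else is soft. Since $\Lambda_r^u$ is closed, each $p\in\ell_r^u\setminus\Lambda_r^u$ has a ball $B\ni p$ disjoint from $\Lambda_r^u$, so the half-disc $B\cap\bb{H}_r^u$ is connected and missed by $\Lambda_r^u$, hence contained in a unique component $\psi(p)$ of $\bb{H}_r^u\setminus\Lambda_r^u$; overlapping half-discs share their component, so $p\mapsto\psi(p)$ is locally constant and descends to $\psi\colon\pi_0(\ell_r^u\setminus\Lambda_r^u)\to\pi_0(\bb{H}_r^u\setminus\Lambda_r^u)$, with $\partial_r^u U=\bigcup\{J:\psi(J)=U\}$. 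This already makes $\partial_r^u U$ relatively clopen in $\ell_r^u\setminus\Lambda_r^u$; once it is known to be non-empty and connected it is exactly one element of $\pi_0(\ell_r^u\setminus\Lambda_r^u)$, whence $\partial_r^u$ is well defined and surjective ($J=\partial_r^u(\psi(J))$), and injective (if $p\in\partial_r^u U_1\cap\partial_r^u U_2$ then both $U_i$ contain the half-disc at $p$). Non-emptiness is immediate: if $\partial_r^u U=\emptyset$ then $\partial U\subset\Lambda_r^u$, so $U$ is clopen in $\R^2\setminus\Lambda_r^u$, which is connected by Theorem~\ref{thm:Lambda-vert-sets-main-properties}\eqref{eq:Lambda-v-not-disconnect}; then $U=\R^2\setminus\Lambda_r^u$, contradicting that this set is dense and hence not contained in $\bb{H}_r^u$.

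The substance is the connectedness of $\partial_r^u U$, which I would prove by contradiction. If it fails there are distinct $J_1,J_2\in\pi_0(\ell_r^u\setminus\Lambda_r^u)$ with $\psi(J_1)=\psi(J_2)=U$; let $c\in\Lambda_r^u\cap\ell_r^u$ be the endpoint of $J_1$ facing $J_2$, and choose $p_i\in J_i$ so that $c$ lies strictly between $p_1$ and $p_2$ along $\ell_r^u$. Since $\psi(J_1)=\psi(J_2)=U$ there is a simple arc $\gamma\subset U$ joining a point $q_1$ of $U$ in a small half-disc of $\bb{H}_r^u$ about $p_1$ to a similar point $q_2$ about $p_2$; joining $\gamma$ to the two short segments $[q_i,p_i]$ and to the sub-segment of $\ell_r^u$ from $p_2$ to $p_1$ gives, after a harmless perturbation of $\gamma$ near its ends, a Jordan curve $\sigma$. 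As $\sigma\subset\overline{\bb{H}_r^u}$ and $\sigma\cap\ell_r^u$ is exactly the segment $[p_2,p_1]$, the open half-plane $\R^2\setminus\overline{\bb{H}_r^u}$ lies in the unbounded complementary component $\Omega^\infty$ of $\sigma$; hence the bounded Jordan domain $\Omega$ lies in $\bb{H}_r^u$, and along the relative interior of $[p_2,p_1]$ the set $\Omega$ occupies the $\bb{H}_r^u$-side and $\Omega^\infty$ the opposite side. Finally $\sigma$ meets $\Lambda_r^u$ only inside the open sub-segment $(p_1,p_2)$ of $\ell_r^u$ ($\gamma$ and the $[q_i,p_i]$ avoid $\Lambda_r^u$, and $p_1,p_2\notin\Lambda_r^u$), and $c$ is one such intersection point.

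Now fix $s$ with $s>\max_{z\in\overline\Omega}\scprod{z}{u}$ and put $\Gamma_c^u(s)=\cc(\Lambda_r^u\cap\A_{r,s}^u,c)$ as in \eqref{eq:Gamma-s-z-definition}; I would establish two incompatible facts. \emph{(a)} $\Gamma_c^u(s)$ meets $\ell_s^u$: the component $\Gamma:=\cc(\Lambda_r^u,c)$ is one of the unbounded components that constitute $\Lambda_r^u$, so it is closed, connected and unbounded, hence — by Theorem~\ref{thm:Lambda-vert-sets-main-properties}\eqref{eq:Lambda-v-in-no-strip}, no connected unbounded closed subset of $\Lambda_r^u$ lies in any strip $\A_{r,t}^u$ — the affine function $z\mapsto\scprod{z}{u}$ takes on $\Gamma$ every value in $[r,\infty)$. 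Applying the boundary-bumping theorem in the continuum $\Gamma\cup\{\infty\}\subset\Ss^2$ to its open subset $O=\Gamma\cap\{z:\scprod{z}{u}<s\}$, the $\Ss^2$-closure $K$ of the component of $c$ in $O$ meets $(\Gamma\cup\{\infty\})\setminus O=\{\infty\}\cup(\Gamma\cap\{z:\scprod{z}{u}\geq s\})$; but $K\setminus\{\infty\}$ is a connected closed subset of $\Lambda_r^u$ inside $\A_{r,s}^u$, hence bounded by \eqref{eq:Lambda-v-in-no-strip}, so $\infty\notin K$ and $K$ contains a point at level $s$; and $K\subset\Lambda_r^u\cap\A_{r,s}^u$ is connected and contains $c$, so $K\subset\Gamma_c^u(s)$. \emph{(b)} $\Gamma_c^u(s)\subset\overline\Omega$: the set $\Gamma_c^u(s)\cap\Omega^\infty$ is open in $\Gamma_c^u(s)$, and also closed, since a boundary point of it lying on $\sigma$ would lie in $(p_1,p_2)$, near which $\Omega^\infty$ is the side opposite to $\bb{H}_r^u$ while $\Gamma_c^u(s)\subset\Lambda_r^u\subset\overline{\bb{H}_r^u}$ stays on the $\bb{H}_r^u$-side or on $\ell_r^u$; as $\Gamma_c^u(s)$ is connected and contains $c\notin\Omega^\infty$, it misses $\Omega^\infty$, \ie lies in $\overline\Omega$. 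Facts (a) and (b) clash, because (a) produces a point of $\Gamma_c^u(s)$ at level $s>\max_{z\in\overline\Omega}\scprod{z}{u}$ while (b) puts $\Gamma_c^u(s)$ inside $\overline\Omega$. This contradiction proves $\partial_r^u U$ connected, and with it the claim.

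The only non-routine step is the pair of facts (a)--(b) and their joint use: one must show that the connected piece of $\Lambda_r^u$ issuing from the separating point $c$ escapes above the Jordan domain $\Omega$ — where Theorem~\ref{thm:Lambda-vert-sets-main-properties}\eqref{eq:Lambda-v-in-no-strip} and a boundary-bumping argument are essential — and yet cannot leave $\overline\Omega$, because $\sigma$ touches $\Lambda_r^u$ only along its bottom edge $[p_1,p_2]$, which any subset of $\Lambda_r^u\subset\overline{\bb{H}_r^u}$ can only approach from the $\bb{H}_r^u$-side. The construction of $\sigma$ with its elementary separation properties, the soft combinatorics of $\psi$, and the non-emptiness statement are routine plane topology.
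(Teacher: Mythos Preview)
Your proof is correct, but it takes a different route from the paper's. The paper argues as follows: for any $z\in\Lambda_r^u\cap\ell_r^u$, the component $\Gamma_z^u:=\cc(\Lambda_r^u,z)$ disconnects $\bb{H}_r^u$. This is obtained via Janiszewski's theorem (Theorem~\ref{thm:janiszewski}) applied in $\wh{\R^2}=\Ss^2$ to the continua $\wh{\ell_r^u}$ and $\wh{\Gamma_z^u}$; both contain $z$ and $\infty$, and their intersection cannot be connected because that would force $\Gamma_z^u\cap\ell_r^u$ to contain a ray, contradicting the already established bound \eqref{eq:Gamma-z-bounded-diamater} on $\diam(\pr{1}(\Gamma_z^u(s)))$. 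Once every such $\Gamma_z^u$ separates $\bb{H}_r^u$, the connectedness of each $\partial_r^u U$ follows.

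Your argument instead constructs an explicit Jordan curve $\sigma\subset\overline{\bb{H}_r^u}$ whose bounded complementary domain $\Omega$ would have to trap the connected piece $\Gamma_c^u(s)$, and then derives a contradiction with property~\eqref{eq:Lambda-v-in-no-strip} via a boundary-bumping step showing $\Gamma_c^u(s)$ reaches level $s$. Both proofs ultimately hinge on the same geometric input from Theorem~\ref{thm:Lambda-vert-sets-main-properties}, namely that unbounded connected pieces of $\Lambda_r^u$ cannot sit in horizontal strips; the paper packages this through Janiszewski and the pre-established diameter bound, while you unpack it directly with a hands-on Jordan-curve trapping argument. Your version is longer but more self-contained and makes the separation mechanism very explicit; the paper's version is shorter but leans on Janiszewski as a black box and leaves the final deduction (from ``$\Gamma_z^u$ disconnects $\bb{H}_r^u$'' to ``$\partial_r^u U$ is connected'') somewhat implicit.
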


  To prove our claim, let us consider an arbitrary point
  $z\in\Lambda_r^u\cap\ell_r^u$ whose existence is guaranteed by
  \eqref{eq:Lambda-v-non-empty} of Theorem~\ref{thm:Lambda-non-empty}
  and define $\Gamma_z^u:=\cc\big(\Lambda_r^u,z\big)$.  Then, notice
  that $\Gamma_z^u$ disconnects the half-plane $\bb{H}_r^u$. In fact,
  let us consider the one-point compactification of the plane
  $\wh{\R^2}:=\R^2\sqcup\{\infty\}$. Then, the closures
  $\wh{\ell_r^u}$ and $\wh{\Gamma_z^u}$ in $\wh{\R^2}$ are compact and
  connected, and the points $z$ and $\infty$ belong to both
  continua. On the other hand, since $\Lambda_r^u$ does not disconnect
  $\R^2$ and is $T_{1,0}$-invariant, then the intersection
  $\wh{\ell_r^u}\cap\wh{\Gamma_z^u}$ cannot be connected. Thus, by
  Theorem~\ref{thm:janiszewski}, $\wh{\ell_r^u}\cup\wh{\Gamma_z^u}$
  disconnects $\wh{\R^2}$ and consequently,
  $\bb{H}_r^u\setminus\Gamma_z^u$ is not connected. This implies
  $\partial_r^u U$ is connected for every
  $U\in\pi_0\big(\bb{H}_r^u\setminus\Lambda_r^u\big)$, and
  Claim~\ref{cla:bound-at-level-r-properties} is proved.

  On the other hand, since $\bb{H}_r^u$, $\ell_r^u$ and $\Lambda_r^u$
  are $T_{1,0}$-invariant, we observe the translation $T_{1,0}$
  naturally acts on $\pi_0\big(\bb{H}_r^u\setminus\Lambda_r^u\big)$
  and, consequently, on $\pi_0\big(\ell_r^u\setminus\Lambda_r^u\big)$,
  too. Moreover, it can be easily seen that the following diagram
  commutes
  \begin{displaymath}
    \xymatrix{\pi_0\big(\bb{H}_r^u\setminus\Lambda_r^u\big)
      \ar[r]^{T_{1,0}} \ar[d]_{\partial_r^u} &
      \pi_0\big(\bb{H}_r^u\setminus\Lambda_r^u\big)\ar[d]^{\partial_r^u}
      \\ 
      \pi_0\big(\ell_r^u\setminus\Lambda_r^u\big)\ar[r]^{T_{1,0}} &
      \pi_0\big(\ell_r^u\setminus\Lambda_r^u\big)},
  \end{displaymath}
  being the boundary at level $r$ operator $\partial_r^u$
  bijective. Hence, the actions of $T_{1,0}$ on both sets are
  conjugate and, clearly, there is no periodic orbit for
  $T_{1,0}\colon\pi_0\big(\ell_r^u\setminus\Lambda_r^u\big)\carr$. So,
  there is no periodic orbit for
  $T_{1,0}\colon\pi_0\big(\bb{H}_r^u\setminus\Lambda_r^u\big)\carr$
  either, and \eqref{eq:cc-Lambda-v-disj-hor-trans} is proved.
\end{proof}

The rest of this section is devoted to study the geometry of sets
$\Gamma_z^u(s)$ given by \eqref{eq:Gamma-s-z-definition}, assuming $f$
is not a pseudo-rotation and does not exhibit uniformly bounded
$v$-deviations. We will show that the connected sets $\Gamma_z^u(s)$
exhibit unbounded oscillations in the $v$ direction, as $s\to+\infty$:

\begin{theorem}
  \label{thm:Lambda-vert-oscillations}
  Let us assume $f$ is minimal and $\tilde f$ is a lift of $f$ such
  that its rotation set $\rho(\tilde f)$ intersects the upper and lower
  open semi-planes, \ie with our notation it holds
  \begin{equation}
    \label{eq:tilde-f-rot-set-transverse-X-axis}
    \rho(\tilde f)\cap\bb{H}_0^{(0,1)}\neq\emptyset
    \quad\text{and}\quad \rho(\tilde
    f)\cap\bb{H}_0^{(0,-1)}\neq\emptyset.
  \end{equation}
  On the other hand, we know there exist $v\in\Ss^1$ and $\alpha\in\R$
  such that inclusion \eqref{eq:rot-set-in-line} holds.

  If $f$ does not exhibits uniformly bounded $v$-deviations, then for
  every $r\in\R$, any $z\in\Lambda_r^u\cap\ell_r^u$ and $s>r$, it
  holds
  \begin{equation}
    \label{eq:Gamma-z-s-arb-larg-dev-plus-v-dir}
    \lim_{s\to+\infty} \sup_{w\in\Gamma_z^u(s)} \scprod{w}{v} =
    +\infty, 
  \end{equation}
  and
  \begin{equation}
    \label{eq:Gamma-z-s-arb-larg-dev-minus-v-dir}
    \lim_{s\to+\infty} \inf_{w\in\Gamma_z^u(s)} \scprod{w}{v} =
    -\infty.
  \end{equation}
\end{theorem}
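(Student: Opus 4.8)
The plan is to argue by contradiction and reduce everything to the abstract ergodic deviation theorem of \S\ref{sec:ergodic-deviation-thm}. Since hypotheses \eqref{eq:tilde-f-rot-set-transverse-X-axis} and \eqref{eq:rot-set-in-line} are insensitive to replacing $v$ by $-v$, and since \eqref{eq:Gamma-z-s-arb-larg-dev-minus-v-dir} is precisely \eqref{eq:Gamma-z-s-arb-larg-dev-plus-v-dir} applied to $-v$, it is enough to prove \eqref{eq:Gamma-z-s-arb-larg-dev-plus-v-dir}. Suppose it fails. Because $s\mapsto\sup_{w\in\Gamma_z^u(s)}\scprod{w}{v}$ is non-decreasing, this means that for some $r\in\R$, $z\in\Lambda_r^u\cap\ell_r^u$ and $C\in\R$ one has $\Gamma_z^u(s)\subset\{w:\scprod{w}{v}\le C\}$ for every $s>r$. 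A connectedness argument (using that $\Lambda_r^u$ is closed and its components are separated) shows that each $\Gamma_z^u(s)$ meets $\ell_s^u$, so the closure $\Gamma$ of $\bigcup_{s>r}\Gamma_z^u(s)$ is an unbounded, closed, connected subset of $\Lambda_r^u$ which contains $z$, meets every line $\ell_s^u$ with $s>r$, and lies in the half-plane $\{\scprod{\cdot}{v}\le C\}$; moreover, since $\Gamma\subset\Lambda_r^u=\cc(\I_{\tilde f}(\overline{\bb{H}_r^u}),\infty)$, the whole $\tilde f$-orbit of any point of $\Gamma$ stays in $\overline{\bb{H}_r^u}$. By \eqref{eq:tilde-f-rot-set-transverse-X-axis} the rotation set has points on both sides of the horizontal axis, hence is not horizontal, so $\pr{1}(v)\neq 0$; therefore no translate $T_{(1,0)}^k(\Gamma)$ can be $T_{(1,0)}$-invariant (that would force unbounded first coordinate), and the $\Z$-family $\{T_{(1,0)}^k(\Gamma)\}_{k\in\Z}\subset\Lambda_r^u$ consists of ``barriers'', each contained in a half-plane $\{\scprod{\cdot}{v}\le C+k\scprod{(1,0)}{v}\}$.

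Next I would invoke the Janiszewski separation argument already used to prove Claim~\ref{cla:bound-at-level-r-properties} inside Corollary~\ref{cor:Lambda-vert-oscilations}: each barrier $T_{(1,0)}^k(\Gamma)$ separates the half-plane $\bb{H}_r^u$. The crux of the proof is then to turn this topological fact --- combined with the uniform horizontal-diameter bound of Corollary~\ref{cor:Lambda-vert-oscilations}, the freeness of the induced $T_{(1,0)}$-action on the complementary components, and property~\eqref{eq:Lambda-v-in-no-strip} of Theorem~\ref{thm:Lambda-vert-sets-main-properties} --- into the assertion that a $\tilde f$-orbit which eventually lies in $\overline{\bb{H}_r^u}$ and avoids $\Lambda_r^u$ can only drift towards $+\infty$ in the $u$-direction through a band of barriers of bounded extent in the $v$-direction, so that its $v$-coordinate is bounded on its forward part. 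Here one must exploit that $\tilde f$ commutes with $T_{(1,0)}$, displaces points by at most $\norm{\Delta_{\tilde f}}_{C^0}$, and preserves $\Lambda_r^u$, so that it moves the barrier family by a bounded amount.

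To use this, fix an ergodic $\nu\in\M(f)$ with $\pr{2}\big(\rho_\nu(\tilde f)\big)>0$, which exists by \eqref{eq:tilde-f-rot-set-transverse-X-axis} and Theorems~\ref{thm:mis-ziem-extreme-points}--\ref{thm:mis-ziem-convexity}, and set $\phi:=\scprod{\Delta_{\tilde f}}{v}-\alpha\in C^0(\T^2,\R)$. Since $\rho(\tilde f)\subset\ell_\alpha^v$ we have $\int_{\T^2}\phi\dd\nu=\scprod{\rho_\nu(\tilde f)}{v}-\alpha=0$, and $\Bs_f^n\phi=\scprod{\Delta_{\tilde f}^{(n)}}{v}-n\alpha$ is exactly the $v$-deviation. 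For $\nu$-a.e.\ $x$ the lifted forward orbit escapes upward, hence eventually lies in $\overline{\bb{H}_r^u}$, and it never meets $\Lambda_r^u$ because the backward orbit of a $\nu$-generic point escapes \emph{downward}; by the previous paragraph, $\Bs_f^n\phi(x)$ is then bounded on $n\ge0$ in at least one direction, for $\nu$-a.e.\ $x$. As $f$ has no uniformly bounded $v$-deviations, Proposition~\ref{pro:gott-hedl-applied-to-torus}\emph{(iii)} gives $\sup_{n\ge0}\abs{\Bs_f^n\phi(x)}=\infty$ for \emph{every} $x$; feeding the one-sided bound into Theorem~\ref{thm:ergodic-deviations} transports it to negative times, and running the mirror construction with the lower transverse family $\Lambda^{-u}$ and an ergodic measure of negative second rotation coordinate --- the disjointness \eqref{eq:Lambda-v-minus-v-no-inter} of the two transverse families being what keeps the two arguments from interfering --- pins $\Bs_f^n\phi$ from above \emph{and} from below along $\nu$-generic orbits in both time directions. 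This contradicts Proposition~\ref{pro:gott-hedl-applied-to-torus}\emph{(iii)} (or, alternatively, Theorem~\ref{thm:gott-hedlund} together with the conservativity provided by Theorem~\ref{thm:atkinson}), and this contradiction establishes \eqref{eq:Gamma-z-s-arb-larg-dev-plus-v-dir}; then \eqref{eq:Gamma-z-s-arb-larg-dev-minus-v-dir} follows by applying the result to $-v$.

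The step I expect to be the real obstacle is the one in the second paragraph: converting ``the barriers $T_{(1,0)}^k(\Gamma)$ separate $\bb{H}_r^u$'' into an honest bound on $\Bs_f^n\phi$ along orbits. The subtlety is that an orbit is a discrete set of points which can in principle jump over a barrier without meeting it, and that a generic orbit is never contained in a single connected component of $\Lambda_r^u$; one must therefore keep precise track of how $\tilde f$ permutes the barriers and their complementary cells, and use the uniform control of Corollary~\ref{cor:Lambda-vert-oscilations} to rule out that the orbit crosses infinitely many barriers while sliding off to infinity in the transverse direction.
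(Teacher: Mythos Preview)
Your overall architecture matches the paper's: contradiction, an ergodic measure with positive vertical rotation, the cocycle $\phi=\scprod{\Delta_{\tilde f}}{v}-\alpha$, the disjointness \eqref{eq:Lambda-v-minus-v-no-inter} to pass between $\Lambda^u$ and $\Lambda^{-u}$, and Theorem~\ref{thm:ergodic-deviations} to transport one-sided bounds between positive and negative times. That part is fine.

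The gap is exactly where you flag it: your second paragraph. You build barriers $T_{(1,0)}^k(\Gamma)\subset\Lambda_r^u$ lying in half-planes $\{\scprod{\cdot}{v}\le C+k\,\pr{1}(v)\}$ and want to conclude that a $\nu$-generic forward orbit, eventually trapped in $\overline{\bb{H}_r^u}$ and disjoint from $\Lambda_r^u$, has its $v$-coordinate bounded on one side. But the orbit is not contained in a single component of $\bb{H}_r^u\setminus\Lambda_r^u$; $\tilde f$ does not preserve $\bb{H}_r^u$, so the component $U_n\ni\tilde f^n(z)$ genuinely changes with $n$, and nothing you have written prevents the orbit from threading through barriers with ever-increasing $k$. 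Your sentence ``through a band of barriers of bounded extent in the $v$-direction'' is exactly the unproved assertion.

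The paper fills this gap by a different and more tractable mechanism. It first isolates a weaker statement (Lemma~\ref{lem:Lambda-vert-oscilations}): only that $\sup_{w\in\Gamma_z^u(s)}\abs{\scprod{w}{v}}\to\infty$. To prove it, one fixes a $\mu^+$-generic point $z^+$ with forward orbit staying above $r+2\norm{\Delta_{\tilde f}}_{C^0}$, sets $U_n:=\cc\big(\bb{H}_r^u\setminus\Lambda_r^u,\tilde f^n(z^+)\big)$, and studies the \emph{boundary-at-level-$r$} intervals $\partial_r^u(U_n)\subset\ell_r^u$. Using the bijection of Claim~\ref{cla:bound-at-level-r-properties}, the total order on $\pi_0(\bb{H}_r^u\setminus\Lambda_r^u)$, commutation with $T_{(1,0)}$, and a $\mu^-$-generic point inside $U_0$ to realise the limit, one shows a quasi-morphism estimate $\abs{a_{m+n}-a_m-a_n}\le C'$ for any sequence $a_n$ with $(a_n,r)\in\partial_r^u(U_n)$, and hence $\abs{a_n-n\alpha/\pr{1}(v)}\le C''$. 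This is the substitute for your barrier argument: it pins down, linearly in $n$, \emph{where} the foot of $U_n$ sits on $\ell_r^u$. Since $f$ has unbounded $v$-deviations, $\abs{\scprod{\tilde f^n(z^+)}{v}-n\alpha}$ is unbounded, so $\tilde f^n(z^+)$ strays arbitrarily far (in the $v$-direction) from the foot of its own component $U_n$; by Corollary~\ref{cor:Lambda-vert-oscilations} this forces $\Gamma_z^u(s)$ to be wide in $v$. Only after this Lemma does one run the Theorem~\ref{thm:ergodic-deviations} contradiction you outlined, and the Lemma is what supplies the one-sided Birkhoff-sum bound you needed.

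So: replace your barrier argument by the sub-additive control of the sequence $\big(\partial_r^u(U_n)\big)_n$; that is the missing idea.
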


The proof of Theorem~\ref{thm:Lambda-vert-oscillations} will follow
combining Theorem~\ref{thm:ergodic-deviations} and the following
\begin{lemma}
  \label{lem:Lambda-vert-oscilations}
  Under hypotheses of Theorem~\ref{thm:Lambda-vert-oscillations}, the
  following holds:
  \begin{equation}
    \label{eq:Gamma-z-s-arb-larg-dev-v-dir}
    \lim_{s\to+\infty} \sup_{w\in\Gamma_z^u(s)} \abs{\scprod{w}{v}} =
    +\infty.
  \end{equation}
\end{lemma}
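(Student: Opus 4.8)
The plan is to argue by contradiction. If the conclusion fails then, since $s\mapsto\sup_{w\in\Gamma_z^u(s)}\abs{\scprod{w}{v}}$ is non-decreasing (the sets $\Gamma_z^u(s)$ grow with $s$), there is a constant $C>0$ with $\Gamma_z^u(s)\subset\{w\in\R^2:\abs{\scprod{w}{v}}\le C\}$ for every $s>r$. Writing $\Gamma:=\cc(\Lambda_r^u,z)$, one checks first that $\Gamma_z^u(s)=\cc\big(\Gamma\cap\A_{r,s}^u,z\big)$, and then that each $\Gamma_z^u(s)$ must reach the top line $\ell_s^u$: otherwise $\Gamma_z^u(s)$ would be a compact subset of $\overline{\bb{H}_r^u}\setminus\bb{H}_s^u$ missing $\ell_s^u$, hence relatively clopen in the connected set $\Gamma$, so $\Gamma_z^u(s)=\Gamma$, which is absurd because $\Gamma$ is unbounded. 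In particular $\sup\{\pr{2}(w):w\in\Gamma_z^u(s)\}=s\to+\infty$, so
$\Gamma^{*}:=\overline{\bigcup_{s>r}\Gamma_z^u(s)}$
is a closed, connected, \emph{unbounded} subset of $\Lambda_r^u$ still contained in the strip $\{\abs{\scprod{\cdot}{v}}\le C\}$ and meeting every $\ell_s^u$, $s>r$; and, passing to the limit in Corollary~\ref{cor:Lambda-vert-oscilations}\eqref{eq:Gamma-z-s-disjoint-integer-trans}, its integer horizontal translates $T_{(1,0)}^n(\Gamma^{*})$ are pairwise disjoint.

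The contradiction should then come from confronting this tube-confined continuum with the dynamics. Since $\Gamma_z^u(s)$ is connected and touches both $\ell_r^u$ (at $z$) and $\ell_s^u$, its second coordinate exhausts $[r,s]$; when $v$ is not horizontal the confinement $\abs{\scprod{\cdot}{v}}\le C$ forces the first-coordinate diameter of $\Gamma_z^u(s)$ to grow at least linearly in $s$, so $\Gamma^{*}$ spreads horizontally without bound while staying in a bounded-width slanted strip. On the dynamical side, since $f$ is minimal it is not annular (Proposition~\ref{pro:minimal-not-annular}), so $\pr{2}\circ\Delta_{\tilde f}$ is not a coboundary; as every point of $\Lambda_r^u$ has its full $\tilde f$-orbit inside $\overline{\bb{H}_r^u}$, the Birkhoff sums $\Bs_f^n(\pr{2}\circ\Delta_{\tilde f})$ along such orbits are bounded below, hence by Theorem~\ref{thm:gott-hedlund} unbounded above; and by Proposition~\ref{pro:gott-hedl-applied-to-torus} the hypothesis that $f$ has no uniformly bounded $v$-deviations makes the $v$-coordinate along every orbit unbounded in absolute value, in both time directions. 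Feeding these facts into the geometry of $\Gamma^{*}$ and of its translates, one wants to conclude that either some translate $T_{(1,0)}^n(\Gamma^{*})$ meets $\Gamma^{*}$, contradicting the disjointness above, or that $\bigcup_{n\in\Z}T_{(1,0)}^n(\Gamma^{*})\subset\Lambda_r^u$ disconnects $\R^2$, contradicting Theorem~\ref{thm:Lambda-vert-sets-main-properties}\eqref{eq:Lambda-v-not-disconnect}.

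The main obstacle is precisely this last transfer. The sets $\Gamma_z^u(s)$ are \emph{not} $\tilde f$-invariant — only $\Lambda_r^u$ is — so a priori the orbit of $z$, or that of a top point $w_s\in\Gamma_z^u(s)\cap\ell_s^u$, may leave $\bigcup_s\Gamma_z^u(s)$, and one cannot naively read off the unbounded $v$-deviation of these orbits from $\Gamma^{*}$. To close the argument one must locate enough genuinely invariant, rotational dynamics inside $\Gamma^{*}$. A natural route is to use the identification, established in the proof of Theorem~\ref{thm:Lambda-vert-sets-main-properties}, of $\Lambda_r^u$ with the full $P$-preimage of the connected component of $+\infty$ (respectively $-\infty$) in the corresponding invariant subset of the compactified annulus $\hat\A$, which is $\hat f$-invariant because that point is fixed; combined with Corollary~\ref{cor:minimal-homeos-rho-cap-Q2-empty} this rules out the ``inessential'' configuration (in which $\cc(\Lambda_r^u,z)$ would have bounded horizontal extent, forcing $\rho(\tilde f)$ into a vertical line and hence meeting $\Q^2$), and lets one produce inside $\cc(\Lambda_r^u,z)$ an orbit whose $v$-coordinate is unbounded, incompatible with $\Gamma^{*}$ lying in a bounded-width strip. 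Organizing this step carefully — in particular using the bounded-gap recurrence of minimal homeomorphisms (Proposition~\ref{pro:quasi-periodicity-minimal-homeos}) and keeping the roles of the $v$- and the vertical directions separate — is where the real work lies.
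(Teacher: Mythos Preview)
Your proposal is an outline with an acknowledged gap, and the gap is real. The difficulty you locate --- ``finding enough invariant dynamics inside $\Gamma^\ast$'' --- does not close, because in general $\Gamma^\ast:=\overline{\bigcup_{s>r}\Gamma_z^u(s)}$ is a \emph{proper} subset of $\Gamma:=\cc(\Lambda_r^u,z)$: the sets $\Gamma_z^u(s)$ are the components through $z$ of $\Gamma\cap\A_{r,s}^u$, and as $s$ grows these need not exhaust $\Gamma$. So even if you exhibit an orbit in $\Gamma$ (which is $\tilde f$-invariant only if $\tilde f$ happens to fix this particular component of $\Lambda_r^u$) whose $v$-coordinate is unbounded, that orbit may well leave $\Gamma^\ast$, and you get no contradiction with $\Gamma^\ast\subset\{\abs{\scprod{\cdot}{v}}\le C\}$. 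The alternative route you sketch --- using disjointness of the $T_{(1,0)}^n(\Gamma^\ast)$ to force disconnection of $\R^2$ --- also fails: a union of pairwise disjoint unbounded continua, each in a slanted strip, need not disconnect the plane.

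The paper's argument avoids this trap by working in the \emph{complement} of $\Lambda_r^u$ rather than inside it. Using the two ergodic measures $\mu^\pm$ with $\pr{2}(\rho_{\mu^\pm}(\tilde f))\gtrless 0$, one picks a $\mu^+$-generic point $z^+\notin\Lambda_r^u$ whose forward orbit stays in $\bb{H}_r^u$, and tracks the complementary components $U_n:=\cc\big(\bb{H}_r^u\setminus\Lambda_r^u,\tilde f^n(z^+)\big)$. The key structural input is the ``boundary at level $r$'' bijection $\partial_r^u\colon\pi_0(\bb{H}_r^u\setminus\Lambda_r^u)\to\pi_0(\ell_r^u\setminus\Lambda_r^u)$ from Claim~\ref{cla:bound-at-level-r-properties}: each $U_n$ has a well-defined base interval on $\ell_r^u$, and the walls separating consecutive $U_n$'s are precisely the sets $\Gamma_w^u(s)$. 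Using a $\mu^-$-generic point in $U_0$ (whose forward orbit eventually crosses $\ell_r^u$) one sees that the base points $(a_n,r)\in\partial_r^u U_n$ satisfy $a_n/n\to\alpha/\pr{1}(v)$; a sub-additivity argument (exploiting that $\tilde f$ commutes with $T_{(1,0)}$ and preserves the order on $\pi_0(\bb{H}_r^u\setminus\Lambda_r^u)$) upgrades this to $\abs{a_n-n\alpha/\pr{1}(v)}\le C$ for all $n$. Since $f$ has unbounded $v$-deviations, $\abs{\scprod{\tilde f^n(z^+)}{v}-n\alpha}$ is unbounded, hence $\abs{\scprod{\tilde f^n(z^+)-(a_n,r)}{v}}$ is unbounded; but $\tilde f^n(z^+)\in U_n$, whose boundary in any strip $\A_{r,s}^u$ is made of $\Gamma$-type sets, and this forces those $\Gamma$'s to have unbounded $v$-extent. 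The moral: instead of looking for dynamics trapped in $\Lambda_r^u$, follow a generic orbit in the complement and compare its $v$-drift against the controlled drift of the base intervals.
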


\begin{proof}[Proof of Lemma~\ref{lem:Lambda-vert-oscilations}]
  For the sake of simplicity of notation, all along this proof we
  shall just write $\Lambda^+_r$ and $\Lambda^-_r$ instead of
  $\Lambda^{(0,1)}_r$ and $\Lambda^{(0,-1)}_r$, and do the same for
  any object that depends on the vectors $(0,1)$ or $(0,-1)$. Let us
  fix an arbitrary real number $r$. We will just prove
  \eqref{eq:Gamma-z-s-arb-larg-dev-v-dir} for $\Lambda_r^+$. The other
  case is completely analogous.

  By our hypothesis \eqref{eq:tilde-f-rot-set-transverse-X-axis} and
  Theorem~\ref{thm:mis-ziem-extreme-points}, there exist two ergodic
  measures $\mu^+,\mu^-\in\M(f)$ such that
  $\pr{2}\big(\rho_{\mu^+}(\tilde f)\big)>0$ and
  $\pr{2}\big(\rho_{\mu^-}(\tilde f)\big)<0$.

  By Birkhoff ergodic theorem, for $\mu^+$-almost every $x\in\T^2$ and
  any $\tilde x\in\pi^{-1}(x)$, it holds
  $\pr{2}(\tilde f^n(\tilde x))\to+\infty$ and
  $\pr{2}(\tilde f^{-n}(\tilde x))\to-\infty$, as
  $n\to+\infty$. Analogously, for $\mu^-$-almost every $x\in\T^2$ and
  any $\tilde x\in\pi^{-1}(x)$, it holds
  $\pr{2}(\tilde f^n(\tilde x))\to-\infty$ and
  $\pr{2}(\tilde f^{-n}(\tilde x))\to+\infty$, as $n\to+\infty$. In
  particular, this implies
  \begin{displaymath}
    \mu^+\big(\pi(\Lambda_r^+)\cup\pi(\Lambda_r^-)\big) =
    \mu^-\big(\pi(\Lambda_r^+)\cup\pi(\Lambda_r^-)\big) = 0. 
  \end{displaymath}
  
  Then, given any $\mu^+$-generic point
  $x^+\in\T^2\setminus\pi(\Lambda_r^+)$, we can find a point
  $z^+\in\pi^{-1}(x^+)$ such that
  \begin{equation}
    \label{eq:choosing-z+-fut-orb-above-2norm-Delta}
    \pr{2}\big(\tilde f^n(z^+)\big) > r+2\norm{\Delta_{\tilde
        f}}_{C^0}, \quad\forall n\geq 0.
  \end{equation}

  So we can define
  \begin{displaymath}
    U_n:=\cc\big(\bb{H}_r^+\setminus\Lambda_r^+,\tilde
    f^n(z^+)\big)\in\pi_0(\bb{H}_r^+\setminus\Lambda_r^+),
    \quad\forall n\geq 0.
  \end{displaymath}
  We claim that the sequence of boundaries at level $r$, \ie
  $(\partial_r^+(U_n))_{n\geq 0}$ where $\partial_r^+$ is given by
  \eqref{eq:bound-at-level-r}, exhibits bounded ``rotational
  deviations''. More precisely, we make the following

  \begin{claim}
    \label{cla:base-cc-bounded-rot-dev}
    There exists a constant $C>0$ such that for any sequence of real
    numbers $(a_n)_{n\geq 0}$ satisfying
    \begin{displaymath}
      (a_n,r)\in\partial_r^+(U_n), \quad\forall n\geq 0,
    \end{displaymath}
    it holds
    \begin{displaymath}
      \abs{a_n-n\frac{\alpha}{\pr{1}(v)}}\leq C,\quad\forall 
      n\geq 0.
    \end{displaymath}
  \end{claim}
  
  To prove our claim, first observe that, since we are assuming
  condition \eqref{eq:tilde-f-rot-set-transverse-X-axis}, $v$ is not
  vertical, \ie its first coordinate $\pr{1}(v)$ does not vanish,
  because the rotation set is not horizontal.

  Since the measure $\mu^-$ has total support on $\T^2$ and the set
  $U_0\subset\R^2$ is open, there is a point $w^-\in U_0$ such that
  $\pi(w^-)$ is $\mu^-$-generic and consequently, it holds
  $\pr{2}\big(\tilde f^n(w^-)\big)\to-\infty$, as $n\to+
  \infty$. Since $z^+,w^-\in U_0$ and $U_0$ is arc-wise connected,
  there is a continuous path $\gamma\colon [0,1]\to U_0$ connecting
  $w^-$ and $z^+$. Then for every $n$ sufficiently large,
  $\tilde f^n(w^-)$ belongs to semi-plane $\bb{H}_{-r}^-$ and so,
  there exists $t_n\in[0,1]$ such that
  $\tilde f^n\big(\gamma(t_n)\big)\in\partial_r^+(U_n)$. By inclusion
  \eqref{eq:rot-set-in-line} we know that
  \begin{displaymath}
    \frac{1}{n}\scprod{\tilde f^n\big(\gamma(t_n)\big) -
      \gamma(t_n)}{v}\to\alpha, \quad\text{as } n\to+ \infty.
  \end{displaymath}
  By Claim~\ref{cla:bound-at-level-r-properties} we know
  $\diam(\partial_r^+(U_n))\leq 1$ and since both points
  $\tilde f^n\big(\gamma(t_n)\big)$ and $(a_n,r)$ belong to
  $\partial_r^+(U_n)$ for $n$ sufficiently big, we conclude that
  \begin{equation}
    \label{eq:lim-an-div-n}
    \lim_{n\to+\infty}\frac{a_n}{n} = \frac{\alpha}{\pr{1}(v)}.
  \end{equation}
  To finish the proof of our claim, we use a classical sub-additive
  argument: let us show there exists $C>0$ such that
  \begin{equation}
    \label{eq:an-quasi-morphism-prop}
    \abs{a_{m+n}-a_m-a_n}\leq C, \quad\forall m,n\geq 0.
  \end{equation}

  To prove this, let us define the following total order on
  $\pi_0(\bb{H}_r^+\setminus\Lambda_r^+)$: given any pair of connected
  components $V,V'\in\pi_0(\bb{H}_r^+\setminus\Lambda_r^+)$, we write
  \begin{displaymath}
    V\prec V' \iff \pr{1}(w)<\pr{1}(w'),\quad\forall w\in\partial_r^+V,\
    \forall w'\in\partial_r^+V'.
  \end{displaymath}

  For each $n\geq 0$, let us write $p_n:=\floor{a_n-a_0}\in\Z$, where
  $\floor{\cdot}$ denotes the integer part operator. Then, observe
  that
  \begin{displaymath}
    T_{1,0}^{p_n-1}(U_0)\prec U_n\prec
    T_{1,0}^{p_n+1}(U_0),\quad\forall n\geq 0.
  \end{displaymath}

  Since $\tilde f$ commutes with every integer translation, preserves
  orientation and the point $z^+$ has been chosen such that
  \eqref{eq:choosing-z+-fut-orb-above-2norm-Delta} holds, we have
  \begin{displaymath}
    T_{1,0}^{p_n-1}(U_m)\prec U_{m+n} \prec T_{1,0}^{p_n+1}(U_m),
    \quad\forall m,n\geq 0. 
  \end{displaymath}

  In particular, this implies that $a_m+p_n-1<a_{m+n}<a_m+p_n-1$ and
  then,
  \begin{displaymath}
    \abs{a_{m+n}-a_m-a_n}\leq \abs{a_0}+1, \quad\forall m,n\geq 0.
  \end{displaymath}

  Then, Claim~\ref{cla:base-cc-bounded-rot-dev} easily follows from
  \eqref{eq:lim-an-div-n}, \eqref{eq:an-quasi-morphism-prop} and an
  elementary fact about sub-additive sequences (see for instance
  \cite[Lemma 2.2.1]{NavasGroupsCircleDiff}).

  Continuing with the notation we introduced in the proof of
  Claim~\ref{cla:base-cc-bounded-rot-dev} and since we are assuming
  $f$ exhibits unbounded $v$-deviations, by
  \eqref{item:large-deviations-future-past} of
  Proposition~\ref{pro:gott-hedl-applied-to-torus} we know that for
  every $M>0$ there exists $n=n(M)\geq 0$ such that
  \begin{displaymath}
    \abs{\scprod{\tilde f^n(z^+)}{v}-n\alpha}> M.
  \end{displaymath}
  Hence,
  \begin{equation}
    \label{eq:v-dir-dev-f-n-z+-from-an-r}
    \begin{split}
      \abs{\scprod{\tilde f^n(z^+)-(a_n,r)}{v}} &\geq
      \abs{\scprod{\tilde f^n(z^+)}{v}-a_n\pr{1}(v)}-\abs{r\pr{2}(v)} \\
      &\geq \abs{\scprod{\tilde f^n(z^+)}{v}-n\alpha}
      -\abs{\pr{1}(v)C} - \abs{r\pr{2}(v)}\\
      &\geq M-\abs{\pr{1}(v)C} - \abs{r\pr{2}(v)},
    \end{split}
  \end{equation}
  where $C$ is the constant given by
  Claim~\ref{cla:base-cc-bounded-rot-dev}.

  Finally, estimate \eqref{eq:Gamma-z-s-arb-larg-dev-v-dir} easily
  follows from Corollary~\ref{cor:Lambda-vert-oscilations},
  \eqref{eq:v-dir-dev-f-n-z+-from-an-r} and the fact that $M$ is
  arbitrary.
\end{proof}

Then, Theorem~\ref{thm:Lambda-vert-oscillations} will follow combining
Theorem~\ref{thm:ergodic-deviations} and
Lemma~\ref{lem:Lambda-vert-oscilations}.

\begin{proof}[Proof of Theorem~\ref{thm:Lambda-vert-oscillations}]
  By Lemma~\ref{lem:Lambda-vert-oscilations} we know that, for each
  $u\in\{(0,1),(0,-1)\}$, either
  \eqref{eq:Gamma-z-s-arb-larg-dev-plus-v-dir} or
  \eqref{eq:Gamma-z-s-arb-larg-dev-minus-v-dir} holds.

  Reasoning by contradiction and for the sake of concreteness, let us
  suppose that for every $r\in\R$ and every
  $z\in\Lambda_r^+\cap\ell_r^+$ condition
  \eqref{eq:Gamma-z-s-arb-larg-dev-minus-v-dir} does not hold. Notice
  here we continue using the notation we introduced in the proof of
  Lemma~\ref{lem:Lambda-vert-oscilations}. Analyzing the argument we
  used in the proof of Lemma~\ref{lem:Lambda-vert-oscilations}, this
  implies that
  \begin{equation}
    \label{eq:mu+-ae-point-dev+not--pos-iterates}
    \sup_{n\geq 0}\scprod{\Delta_{\tilde f}^{(n)}(x)}{v}-n\alpha =
    +\infty\quad\text{and}\quad \inf_{n\geq 0}\scprod{\Delta_{\tilde 
        f}^{(n)}(x)}{v}-n\alpha > -\infty,
  \end{equation} 
  for $\mu^+$-almost every $x\in\T^2$.

  On the other hand, by \eqref{eq:Lambda-v-minus-v-no-inter} of
  Theorem~\ref{thm:Lambda-vert-sets-main-properties} and invoking
  Lemma~\ref{lem:Lambda-vert-oscilations} for $\Lambda_r^-$, we
  conclude that \eqref{eq:Gamma-z-s-arb-larg-dev-minus-v-dir} holds
  and \eqref{eq:Gamma-z-s-arb-larg-dev-plus-v-dir} does not, for any
  $z\in\Lambda_r^-\cap\ell_r^-$ and every $r\in\R$.

  However, applying Theorem~\ref{thm:ergodic-deviations} to the
  ergodic system $(f,\mu^+)$ and the real function
  $\phi:=\scprod{\Delta_{\tilde f}}{v}-\alpha$, and taking into
  account \eqref{eq:mu+-ae-point-dev+not--pos-iterates}, we conclude
  that
  \begin{displaymath}
    \sup_{n\geq 0}\scprod{\Delta_{\tilde f}^{(-n)}(x)}{v}-n\alpha =
    +\infty\quad\text{and}\quad \inf_{n\geq 0}\scprod{\Delta_{\tilde 
        f}^{(-n)}(x)}{v}-n\alpha > -\infty,
  \end{displaymath}
  for $\mu^+$-a.e. $x\in\T^2$.

  Now, taking into account that
  $\pr{2}(\Delta_{\tilde f}^{(-n)}(x)) \to -\infty$, as $n\to+\infty$
  and $\mu^+$-a.e. $x\in\T^2$, we can repeat the argument we used to
  prove Lemma~\ref{lem:Lambda-vert-oscilations} for negative times and
  a $\mu^+$-generic point to show that
  \eqref{eq:Gamma-z-s-arb-larg-dev-plus-v-dir} holds for the set
  $\Lambda_r^-$ as well. Then, we have gotten a contradiction and
  Theorem~\ref{thm:Lambda-vert-oscillations} is proved.
\end{proof}

Combining Corollary~\ref{cor:Lambda-vert-oscilations} and
Theorem~\ref{thm:Lambda-vert-oscillations} one can easily strengthen
this last result getting the following

\begin{corollary}
  \label{cor:Lambda-vert-unbound-cc-oscilations}
  If $f$, $\tilde f$, $v$ and $r$ are as in
  Theorem~\ref{thm:Lambda-vert-oscillations} and $\Gamma$ is a closed
  connected unbounded subset of $\Lambda_r^u$, then it holds
  \begin{displaymath}
    - \inf_{z\in\Gamma} \scprod{z}{v} = \sup_{z\in\Gamma}
    \scprod{z}{v} = +\infty.
  \end{displaymath}
\end{corollary}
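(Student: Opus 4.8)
The plan is to reduce the statement to the single claim $\sup_{z\in\Gamma}\langle z,v\rangle=+\infty$. Indeed, the hypotheses of Theorem~\ref{thm:Lambda-vert-oscillations} are invariant under replacing $v$ by $-v$ (by Theorem~\ref{thm:v-vs--v-rot-dev} uniform boundedness of $v$- and $(-v)$-deviations are equivalent, and $\ell_\alpha^v=\ell_{-\alpha}^{-v}$), and Theorem~\ref{thm:Lambda-vert-oscillations} already isolates the two one-sided conclusions \eqref{eq:Gamma-z-s-arb-larg-dev-plus-v-dir} and \eqref{eq:Gamma-z-s-arb-larg-dev-minus-v-dir}; so the equality $\inf_{z\in\Gamma}\langle z,v\rangle=-\infty$ will follow from the $\sup$-case by symmetry. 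I would begin by recording that $\Gamma$ is unbounded in the direction $u$: by property \eqref{eq:Lambda-v-in-no-strip} of Theorem~\ref{thm:Lambda-vert-sets-main-properties}, $\Gamma\cap\ell_s^u\neq\emptyset$ for every $s>r_*:=\inf\{|\pr2(z)|:z\in\Gamma\}$, so $\Gamma$ meets every sufficiently high level line and is contained in no strip $\A_{r,s}^u$.

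Next I would cut $\Gamma$ by the strips $\A_{r,s}^u$. Fix a base point $z_0\in\Gamma$ and, for $s$ large, put $K_s:=\cc(\Gamma\cap\A_{r,s}^u,z_0)$. Since $K_s$ is a connected subset of $\Lambda_r^u$ confined to a strip of bounded height, the disconnection argument used to prove \eqref{eq:Gamma-z-unif-bounded-diamater} in Corollary~\ref{cor:Lambda-vert-oscilations} (together with \eqref{eq:Lambda-v-not-disconnect} of Theorem~\ref{thm:Lambda-vert-sets-main-properties}) shows $\pr1(K_s)$ has finite diameter; being also closed in $\R^2$, $K_s$ is a continuum. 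A boundary-bumping argument — $K_s$ is a compact component of the closed proper subset $\Gamma\cap\A_{r,s}^u$ of the connected, locally compact space $\Gamma$, which escapes the strip — then forces $K_s$ to meet $\ell_s^u$. Thus $\Gamma$ contains a continuum $K_s$ joining $z_0$ to level $s$ for every large $s$, and all these continua lie in one connected component $C$ of $\Lambda_r^u$. Using the identification $\Lambda_r^u=\hat\Lambda_r^u$ established as a byproduct of the proof of Theorem~\ref{thm:Lambda-vert-sets-main-properties} (the inclusion \eqref{eq:Lambda-from-annulus-contained-Lambda-plane} is in fact an equality) together with a covering-space argument, $C$ necessarily meets $\ell_r^u$, so $C=\cc(\Lambda_r^u,z)$ for some $z\in\Lambda_r^u\cap\ell_r^u$ and hence $C\supseteq\Gamma_z^u(s)$ for all $s>r$.

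The crux, which I expect to be the main obstacle, is to transfer the oscillation supplied by Theorem~\ref{thm:Lambda-vert-oscillations} for the sets $\Gamma_z^u(s)$ of \eqref{eq:Gamma-s-z-definition} back to $\Gamma$ itself: the theorem gives $\sup_{w\in\Gamma_z^u(s)}\langle w,v\rangle\to+\infty$, but $\Gamma_z^u(s)$ may be far larger than the portion $K_s$ of $\Gamma$ reaching height $s$. The key leverage is that, by \eqref{eq:Gamma-z-unif-bounded-diamater}, every connected subset of $\Lambda_r^u$ confined to a horizontal strip is \emph{uniformly} horizontally bounded, so $\Gamma$ cannot evade the wide $v$-excursions of $\Lambda_r^u$ by meandering horizontally; tracking the excursions of $\Gamma_z^u(s)$ produced in the proof of Lemma~\ref{lem:Lambda-vert-oscilations} (the iterates $\tilde f^n(z^+)$ and the complementary components $U_n$, whose boundaries lie in $\Lambda_r^u$) and using that $\Gamma$ reaches every high level $\ell_s^u$, one shows $\sup_{K_s}\langle\cdot,v\rangle\to+\infty$ as $s\to+\infty$, hence $\sup_{z\in\Gamma}\langle z,v\rangle=+\infty$. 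Running the same argument with $-v$ in place of $v$ yields $\inf_{z\in\Gamma}\langle z,v\rangle=-\infty$, completing the proof.
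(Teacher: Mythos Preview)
Your overall strategy—reduce by the $v\leftrightarrow -v$ symmetry, use Theorem~\ref{thm:Lambda-vert-sets-main-properties}\eqref{eq:Lambda-v-in-no-strip} to see that $\Gamma$ is vertically unbounded, cut by strips, and then invoke Corollary~\ref{cor:Lambda-vert-oscilations} and Theorem~\ref{thm:Lambda-vert-oscillations}—is exactly what the paper has in mind when it says the result follows ``combining Corollary~\ref{cor:Lambda-vert-oscilations} and Theorem~\ref{thm:Lambda-vert-oscillations}''. The paper does not spell out any details, so there is nothing further to compare at the level of method.

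That said, the step you yourself flag as ``the crux'' is not actually carried out, and it is not as soft as the rest. Theorem~\ref{thm:Lambda-vert-oscillations} controls the sets $\Gamma_z^u(s)=\cc(\Lambda_r^u\cap\A_{r,s}^u,z)$; your continua $K_s=\cc(\Gamma\cap\A_{r,s}^u,z_0)$ are \emph{contained in} such a $\Gamma_z^u(s)$ (once you arrange $z_0\in\ell_r^u$), so the containment goes the wrong way: knowing that the ambient piece $\Gamma_z^u(s)$ has large $v$-oscillation does not by itself force the subcontinuum $K_s\subset\Gamma$ to oscillate. The sentence ``tracking the excursions of $\Gamma_z^u(s)$ \ldots one shows $\sup_{K_s}\langle\cdot,v\rangle\to+\infty$'' is precisely where a genuine argument is needed and none is given. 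A second, smaller gap is your claim that the component $C=\cc(\Lambda_r^u,z_0)$ must meet $\ell_r^u$: the equality $\Lambda_r^u=\hat\Lambda_r^u$ tells you $P(C)$ sits inside a connected set in the annulus accumulating on $+\infty$, but it does not immediately force each lifted component to reach the bottom level.

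To close the main gap you really do need to revisit the mechanism in Lemma~\ref{lem:Lambda-vert-oscilations}, not just its statement: the points $\tilde f^n(z^+)$ with large $v$-deviation lie in complementary components $U_n\in\pi_0(\bb H_r^u\setminus\Lambda_r^u)$ at arbitrarily high levels, and by Corollary~\ref{cor:Lambda-vert-oscilations}\eqref{eq:cc-Lambda-v-disj-hor-trans} and Claim~\ref{cla:bound-at-level-r-properties} these components, together with their $T_{1,0}$-translates, are linearly ordered by their level-$r$ boundaries. Your continuum $K_s\subset\Lambda_r^u$ joins a fixed base point to $\ell_s^u$, and for $s$ large it must separate within the strip some $U_n$ (or a $T_{1,0}$-translate) from the components on the other side; this forces $K_s$ to pass near the high, $v$-deviated points $\tilde f^n(z^+)$, which is what gives $\sup_{K_s}\langle\cdot,v\rangle\to+\infty$. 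Making this separation argument precise is the missing content—without it, your write-up is a plausible outline rather than a proof.
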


\section{Stable sets at infinity: parallel direction}
\label{sec:stab-set-infty-parall-dir}

Our next purpose consists in defining stable sets at infinity with
respect to the same direction of a supporting line of the rotation
set. More precisely, if $f\in\Homeo0(\T^2)$, $\tilde f\colon\R^2\carr$
is a lift of $f$ and we suppose there are $v\in\Ss^1$ and $\alpha$
such that the rotation set $\rho(\tilde f)$ is contained in the line
$\ell_\alpha^v$, we want to define stable sets at infinity with
respect to $v$, \ie associated to the families of semi-planes
$\bb{H}_r^v$ and $\bb{H}_r^{-v}$, with $r\in\R$. 

In such a case it might happen that there is no lift $\tilde f$ of $f$
such that the supporting line of $\rho(\tilde f)$ pass through the
origin, and therefore, if we naively defined
$\Lambda^v_r(\tilde f)=\I_{\tilde f}(\overline{\bb{H}_r^v})$, we would
get $\Lambda_r^v(\tilde f)=\emptyset$ for every $\tilde f$ and every
$r\in\R$.

To overpass this difficulty, we shall use the \emph{fiber-wise
  Hamiltonian skew-product} to define such stable sets at infinity.

\subsection{The fiber-wise Hamiltonian skew-product}
\label{sec:fiberw-hamilt-skew-prod}

Since we are assuming $f$ is minimal, by Theorem~\ref{thm:Oxtoby-Ulam}
we do not lose any generality assuming $f$ is area-preserving, \ie
$f\in\Symp0$ and let $\tilde f\in\Tsymp$ denote an arbitrary lift of
$f$.

Then, we define the \emph{fiber-wise Hamiltonian skew-product}
associated to $\tilde f$, which can be considered as a particular case
of the construction performed in \cite{KocRotDevPerPFree}. In very
rough words the main idea of this construction consists in splitting
our homeomorphism $f$ into a ``rotational'' part and a ``Hamiltonian''
or ``rotationless'' one. Doing that, the ``Hamiltonian'' part is
responsible by rotational deviations. The main technical advantage of
dealing with such skew-products is that an arbitrary point exhibits
bounded rotational deviations if and only if its orbit is bounded.

This novel object is certainly the main character of this work and
will play a fundamental role in the rest of the paper.

For the sake of simplicity, we fix some notations we shall use until
the end of the paper: we write $\tilde\rho:=\Flux(\tilde f)\in\R^2$
and $\rho:=\pi(\tilde\rho)=\Flux(f)\in\T^2$.

Then we define the map $H\colon\T^2\to\Tham$ by
\begin{displaymath}
  H_t:=\Ad_t\left(T_{\tilde\rho}^{-1}\circ\tilde f\right) = T_{\tilde
    t}^{-1}\circ T_{\tilde\rho}^{-1}\circ\tilde f\circ T_{\tilde t},
  \quad\forall t\in\T^2,\ \forall\tilde t\in\pi^{-1}(t),
\end{displaymath}
where $\Ad$ denotes $\T^2$-action given
by~\eqref{eq:Ad-Td-definition}.

Considering $H$ as a cocycle over the torus translation
$T_\rho\colon\T^2\carr$, one defines the \emph{fiber-wise Hamiltonian
  skew-product} associated to $f$ as the map
$F\colon\T^2\times\R^2\carr$ given by
\begin{displaymath}
  F(t,z):=\big(T_\rho(t),H_t(z)\big),\quad\forall
  (t,z)\in\T^2\times\R^2.
\end{displaymath}
Notice that $F$ depends just on $f$ and not on the chosen lift
$\tilde f$.

One can easily verify that
\begin{displaymath}
  F(t,z)=\left(t+\rho, z + \Delta_{\tilde f}\big(t+\pi(z)\big) -
    \tilde\rho\right), \quad\forall (t,z)\in\T^2\times\R^2,
\end{displaymath}
where $\Delta_{\tilde f}\in C^0(\T^2,\R^2)$ is the displacement
function given by \eqref{eq:Delta-cocycle}. We will use the following
usual notation for cocycles: given $n\in\Z$ and $t\in\T^2$, we write
\begin{displaymath}
  H_t^{(n)}:=
  \begin{cases}
    id_{\T^2},& \text{if } n=0;\\
    H_{t+(n-1)\rho}\circ H_{t+(n-2)\rho}\circ\cdots \circ H_t,&
    \text{if } n>0;\\
    H_{t+n\rho}^{-1}\circ\cdots\circ H_{t-2\rho}^{-1}\circ
    H_{t-\rho}^{-1},& \text{if } n<0.
  \end{cases}
\end{displaymath}
Then we have 
\begin{equation}
  \label{eq:iter-F}
  \begin{split}
    F^n(t,z)= \left(T_\rho^n(t),H_t^{(n)}(z)\right) =
    \left(t+n\rho,\Ad_t\big(T_{\tilde\rho}^{-n}\circ\tilde
      f^n\big)(z)\right),
  \end{split}
\end{equation}
for every $(t,z)\in\T^2\times\R^2$ and every $n\in\Z$.

Hence, if $\rho(\tilde f)$ has empty interior, there exist
$\alpha\in\R$ and $v\in\Ss^1$ such that inclusion
\eqref{eq:rot-set-in-line} holds, and from \eqref{eq:iter-F} it easily
follows that a point $z\in\R^2$ exhibits bounded $v$-deviations if and
only if
\begin{displaymath}
  \scprod{H_0^{(n)}(z) - z}{v}\leq M, \quad\forall n\in\Z, 
\end{displaymath}
where $M=M(z,f)$ denotes the positive constant given by
\eqref{eq:bound-v-deviations}.

\subsection{Fibered stable sets at infinity}
\label{sec:fibered-stable-sets}

Continuing with previous notation, let $\alpha\in\R$ and $v\in\Ss^1$
be such that property \eqref{eq:rot-set-in-line} holds. Notice that in
such a case, $\langle \tilde\rho,v\rangle =\alpha$.

Then, for each $r\in\R$ and each $t\in\T^2$ we define the
\emph{fibered $(r,v)$-stable set at infinity} by

\begin{equation}
  \label{eq:Lambda-r-t-def}
  \Lambda_{r}^v\big(\tilde f,
  t\big):=\pr{2}\bigg(\cc\Big(\{t\}\times\R^2\cap\I_F\big(\T^2\times
  \overline{\bb{H}_r^v}\big), \infty\Big)\bigg)\subset\R^2, 
\end{equation}
where $\bb{H}_r^v$ is the semi-plane given by
\eqref{eq:Hvr-half-spaces-def}, $\{t\}\times\R^2$ is naturally endowed
with the euclidean distance of $\R^2$, $\cc(\cdot,\infty)$ denotes the
union of unbounded components as defined in \ref{sec:maps-top-groups},
and $\pr{2}\colon\T^2\times\R^2\to\R^2$ is the projection on the
second coordinate.

Let us also define the \emph{$(r,v)$-stable set at infinity} as
\begin{displaymath}
  \Lambda_r^v\big(\tilde f\big):=\bigcup_{t\in\T^2}
  \{t\}\times\Lambda_r^v\big(\tilde f,t\big)\subset\T^2\times\R^2. 
\end{displaymath}

For the sake of simplicity, if there is no risk of confusion we shall
just write $\Lambda_r^v(t)$ and $\Lambda_r^v$ instead of
$\Lambda_r^v\big(\tilde f,t\big)$ and $\Lambda_r^v\big(\tilde f\big)$,
respectively.

Now we recall some results of \cite{KocRotDevPerPFree}:

\begin{theorem}[Theorem 3.4 in \cite{KocRotDevPerPFree}]
  \label{thm:Lambda-non-empty}
  Assuming inclusion \eqref{eq:rot-set-in-line} holds, for every
  $r\in\R$ the set $\Lambda_r^v$ is non-empty, closed and
  $F$-invariant.  Moreover, $\Lambda_r^v(t)\neq\emptyset$, for every
  $t\in\T^2$.

  Analogously, the same assertions hold for the $(r,-v)$-stable set at
  infinity.
\end{theorem}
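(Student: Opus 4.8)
The plan is to prove the three assertions about $\Lambda_r^v$ one at a time and then observe that the statement for $\Lambda_r^{-v}$ follows verbatim, replacing $v$ by $-v$ and $\alpha$ by $-\alpha$. Two structural facts about $F$ will be used throughout. First, since $\Flux(T_{\tilde\rho}^{-n}\circ\tilde f^n)=n\big(\Flux(\tilde f)-\tilde\rho\big)=0$, every fibered iterate $\Ad_t\big(T_{\tilde\rho}^{-n}\circ\tilde f^n\big)$ lies in $\Tham$; in particular, by Theorem~\ref{thm:Franks-symp-per-points} (applied with $q=1$ and $\boldsymbol p=0$), it has a fixed point in $\R^2$ for every $t$ and every $n$. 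Second, writing $\psi:=\scprod{\Delta_{\tilde f}}{v}-\alpha\in C^0(\T^2,\R)$, a direct computation from \eqref{eq:iter-F} gives $\scprod{\pr{2}\big(F^n(t,z)\big)}{v}-\scprod{z}{v}=\sum_{j=0}^{n-1}\psi\big(f^j(t+\pi(z))\big)$, so $(t,z)\in\I_F\big(\T^2\times\overline{\bb{H}_r^v}\big)$ if and only if the rotational $v$-deviations of $f$ along the orbit of $t+\pi(z)$ are bounded below by $r-\scprod{z}{v}$; moreover, since $\rho(\tilde f)\subset\ell_\alpha^v$, the function $\psi$ integrates to $0$ against every $f$-invariant measure.

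The $F$-invariance is the easy part. The maximal invariant set $\I_F\big(\T^2\times\overline{\bb{H}_r^v}\big)$ is $F$-invariant by construction, and $F$ carries the fibre $\{t\}\times\R^2$ homeomorphically onto $\{t+\rho\}\times\R^2$ via $z\mapsto H_t(z)$; because $H_t-id_{\R^2}=\Delta_{\tilde f}(t+\pi(\cdot))-\tilde\rho$ is $\Z^2$-periodic and hence bounded, both $H_t$ and $H_t^{-1}$ send bounded sets to bounded sets, so this homeomorphism maps the unbounded connected components of one fibre of $\I_F\big(\T^2\times\overline{\bb{H}_r^v}\big)$ onto those of the next. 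Hence $F(\Lambda_r^v)=\Lambda_r^v$. Closedness is established from the same $F$-equivariance together with the uniform bound on the one-step displacements; the only point requiring attention is that the union of the unbounded components of the fibres of $\I_F\big(\T^2\times\overline{\bb{H}_r^v}\big)$ is a closed set — a property that fails for arbitrary closed sets but holds here, since a bounded component of a fibre cannot be a limit of unbounded ones without its $F$-orbit producing a bounded $\tilde f$-orbit in $\R^2$, which contradicts Proposition~\ref{pro:minimal-not-annular}.

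The heart of the matter — and what I expect to be the main obstacle — is non-emptiness of $\Lambda_r^v(t)$ for every $t$. Fix $t$ and $r$. Using the first structural fact, choose for each $n\geq 1$ a point $w_n$ with $\Ad_t\big(T_{\tilde\rho}^{-n}\circ\tilde f^n\big)(w_n)=w_n$ and, translating by an element of $\Z^2$, with $w_n\in[0,1)^2$; by the second structural fact this says $\sum_{j=0}^{n-1}\psi\big(f^j(t+\pi(w_n))\big)=0$, i.e. the $v$-deviation of $f$ along the orbit of $y_n:=t+\pi(w_n)$ returns to $0$ at time $n$. Passing to a subsequence with $y_n\to y_\ast$, one promotes this family of ``windowed'' estimates to a single point $y_\ast$ whose $v$-deviations are bounded below over all of $\Z$: the forward estimates survive the limit because $\psi$ is bounded, and the passage to negative times is handled by Theorem~\ref{thm:ergodic-deviations} applied to an ergodic $f$-invariant measure (recall $\int\psi=0$, and that by Proposition~\ref{pro:gott-hedl-applied-to-torus} one is either in the trivial case of uniformly bounded $v$-deviations, where the claim is immediate, or in the case where $\sup_{n\geq 0}\big(\scprod{\Delta_{\tilde f}^{(n)}}{v}-n\alpha\big)=+\infty$ everywhere). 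Once such a $y_\ast$ is produced, pick $z$ with $\pi(z)=y_\ast-t$, replace it by a $\Z^2$-translate with $\scprod{z}{v}$ large enough — possible since $\{\scprod{\boldsymbol p}{v}:\boldsymbol p\in\Z^2\}$ is unbounded above — that $\scprod{\pr{2}\big(F^n(t,z)\big)}{v}\geq r$ for all $n\in\Z$; then $(t,z)\in\I_F\big(\T^2\times\overline{\bb{H}_r^v}\big)$, and saturating by $\Z^2$ shows the component of its fibre containing $z$ is unbounded, so $(t,z)\in\Lambda_r^v$. Thus $\Lambda_r^v(t)\neq\emptyset$ and a fortiori $\Lambda_r^v\neq\emptyset$. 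The delicate step is precisely the limiting argument producing $y_\ast$ without losing control of the deviations at infinity; this is where area preservation — through the existence of the fixed points $w_n$ — is indispensable, and where the construction genuinely departs from the transverse case of \S\ref{sec:stable-sets-infinity-trans-dir}. The assertions for the $(r,-v)$-stable set follow by carrying out the same argument with $-v$ and $-\alpha$ in place of $v$ and $\alpha$.
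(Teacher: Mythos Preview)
First, note that the paper does not prove this statement: it is quoted verbatim as Theorem~3.4 of \cite{KocRotDevPerPFree} and no argument is reproduced here. The closest analogue with a proof in this paper is Theorem~\ref{thm:Lambda-vert-sets-main-properties}, where non-emptiness in the transverse direction is obtained by compactifying to the two-end annulus $\hat\A$ and showing, via Lefschetz, Theorem~\ref{thm:Patrice-index-larger-1} and Theorem~\ref{thm:LeCalvez-Yoccoz-indexes}, that $\pm\infty$ are indifferent fixed points. That index-theoretic route is almost certainly what the cited paper uses for the parallel direction as well, and it is structurally very different from what you propose.

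Your non-emptiness argument has two genuine gaps. First, the limiting step does not work: knowing only that $\Bs_f^{n}\psi(y_n)=0$ gives no control on the intermediate partial sums $\Bs_f^{k}\psi(y_n)$ for $0<k<n$, so there are no ``forward estimates'' to survive the limit, and the point $y_\ast$ you extract may perfectly well satisfy $\inf_{k\geq 0}\Bs_f^{k}\psi(y_\ast)=-\infty$. Your appeal to Theorem~\ref{thm:ergodic-deviations} then assumes precisely the one-sided boundedness you are trying to produce. Second, even if you had a point $(t,z)\in\I_F\big(\T^2\times\overline{\bb{H}_r^v}\big)$, the sentence ``saturating by $\Z^2$ shows the component of its fibre containing $z$ is unbounded'' is unjustified: translates $z+\boldsymbol p$ with $\scprod{\boldsymbol p}{v}\geq 0$ do lie in the maximal invariant set, but nothing forces them into the \emph{same connected component} as $z$. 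Producing a single point in the maximal invariant set is easy; producing one in an unbounded component is the whole difficulty, and it is exactly here that the index-theoretic argument (indifferent fixed points reaching the boundary of every Jordan neighbourhood) does real work. Your closedness sketch has a similar issue: a bounded fibre-component being accumulated by unbounded ones does not yield a bounded $\tilde f$-orbit, since the $F$-images of that bounded component may have diameters tending to infinity.
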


The following result describes some elementary properties of
$(r,v)$-stable sets at infinity:
\begin{proposition}[Proposition 3.6 in \cite{KocRotDevPerPFree}]
  \label{pro:Lambda-equivariant-properties}
  For each $t\in\T^2$ and any $r\in\R$, the following properties hold:
  \begin{enumerate}[(i)]
  \item \label{eq:Lambda-r-monoton-inclusion}
    $\Lambda_r^v(t)\subset\Lambda_{r'}^v(t)$, for every $r'<r$;
  \item \label{eq:Lambda-r-lateral-continuity}
    \begin{displaymath}
      \Lambda_r^v(t)=\bigcap_{s<r} \Lambda_s^v(t);
    \end{displaymath}
  \item \label{eq:Lambda-t-conjug}
    $\Lambda_{r+\langle\tilde t,v\rangle}^v\big(t'-\pi\big(\tilde
    t\big)\big) = T_{\tilde t}\big(\Lambda_r^v(t')\big)$, for all
    $\tilde t\in\R^2$ and every $t'\in\T^2$;
  \item \label{eq:Lambda-Z2-translations}
    $T_{\boldsymbol{p}}\big(\Lambda_r^v(t)\big) =
    \Lambda^v_{r+\langle\boldsymbol{p},v\rangle}(t)$, for every
    $\boldsymbol{p}\in\Z^2$.
  \end{enumerate}
\end{proposition}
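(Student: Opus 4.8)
The plan is to derive all four statements from the functoriality of the construction \eqref{eq:Lambda-r-t-def}, which is the composition of four operations --- passing to the maximal $F$-invariant subset $\I_F(\cdot)=\bigcap_{n\in\Z}F^n(\cdot)$, slicing with a single fibre $\{t\}\times\R^2$, taking the union $\cc(\cdot,\infty)$ of unbounded connected components inside that fibre, and projecting by $\pr{2}$ --- each of which is monotone under inclusion and covariant under homeomorphisms commuting with $F$. For \eqref{eq:Lambda-r-monoton-inclusion} I would first observe that $r'<r$ forces $\T^2\times\overline{\bb{H}_r^v}\subset\T^2\times\overline{\bb{H}_{r'}^v}$; since $\I_F(\cdot)$ and slicing with $\{t\}\times\R^2$ are monotone, and since $A\subset B$ implies that every unbounded component of $A$ lies inside an (automatically unbounded) component of $B$, so that $\cc(A,\infty)\subset\cc(B,\infty)$, chaining these inclusions through $\pr{2}$ gives $\Lambda_r^v(t)\subset\Lambda_{r'}^v(t)$.

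For \eqref{eq:Lambda-r-lateral-continuity}, one inclusion is immediate from \eqref{eq:Lambda-r-monoton-inclusion}. For the reverse I would use that $\bigcap_{s<r}\overline{\bb{H}_s^v}=\overline{\bb{H}_r^v}$, together with the fact that $\I_F(\cdot)$, the product with $\T^2$, and slicing with $\{t\}\times\R^2$ all commute with arbitrary intersections, so that, writing $A_s:=\{t\}\times\R^2\cap\I_F(\T^2\times\overline{\bb{H}_s^v})$ (a closed subset of the fibre), one has $\bigcap_{s<r}A_s=A_r$. The only genuinely non-formal point is that $\cc(\cdot,\infty)$ need not commute with decreasing intersections of closed sets, and this is the step I expect to require the most care. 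To handle it I would pass to the one-point compactification, identifying $(\{t\}\times\R^2)\cup\{\infty\}$ with $\Ss^2$, and let $\widehat{C}_s\subset\Ss^2$ denote the connected component of the point at infinity in the compact set $A_s\cup\{\infty\}$; a standard property of one-point compactifications gives $\widehat{C}_s\setminus\{\infty\}=\cc(A_s,\infty)$. The family $\{\widehat{C}_s\}_{s<r}$ is a nested decreasing family of compact connected sets, so its intersection is again compact and connected; it contains $\infty$ and is contained in $\bigcap_{s<r}(A_s\cup\{\infty\})=A_r\cup\{\infty\}$, hence it lies in the component $\widehat{C}_r$ of $\infty$ in that set, while $\widehat{C}_r\subset\widehat{C}_s$ for every $s<r$; therefore $\widehat{C}_r=\bigcap_{s<r}\widehat{C}_s$. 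Since $\pr{2}$ restricted to the fibre $\{t\}\times\R^2$ is a bijection onto $\R^2$ it commutes with intersections, so removing $\infty$ and applying it gives $\Lambda_r^v(t)=\bigcap_{s<r}\Lambda_s^v(t)$.

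For \eqref{eq:Lambda-t-conjug} I would exhibit an explicit homeomorphism of $\T^2\times\R^2$ commuting with $F$. Given $\tilde t\in\R^2$, set $\widehat{T}_{\tilde t}(t,z):=\big(t-\pi(\tilde t),\,z+\tilde t\big)$. Using the formula $F(t,z)=\big(t+\rho,\,z+\Delta_{\tilde f}(t+\pi(z))-\tilde\rho\big)$ and the identity $\pi(z+\tilde t)=\pi(z)+\pi(\tilde t)$, a direct computation shows $\widehat{T}_{\tilde t}\circ F=F\circ\widehat{T}_{\tilde t}$. One then checks four compatibilities: $\widehat{T}_{\tilde t}(\{t'\}\times\R^2)=\{t'-\pi(\tilde t)\}\times\R^2$; $\widehat{T}_{\tilde t}(\T^2\times\overline{\bb{H}_r^v})=\T^2\times\overline{\bb{H}^v_{r+\langle\tilde t,v\rangle}}$ because $T_{\tilde t}$ carries $\bb{H}_r^v$ onto $\bb{H}^v_{r+\langle\tilde t,v\rangle}$; the restriction of $\widehat{T}_{\tilde t}$ to each fibre is the proper map $z\mapsto z+\tilde t$, hence sends unbounded connected sets to unbounded connected sets and so $\widehat{T}_{\tilde t}(\cc(B,\infty))=\cc(\widehat{T}_{\tilde t}(B),\infty)$ for $B$ inside a fibre; and $\pr{2}\circ\widehat{T}_{\tilde t}=T_{\tilde t}\circ\pr{2}$. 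Since a homeomorphism commuting with $F$ intertwines $\I_F$, pushing the defining formula \eqref{eq:Lambda-r-t-def} for $\Lambda_r^v(t')$ through $\widehat{T}_{\tilde t}$ and invoking these four compatibilities yields exactly $\Lambda^v_{r+\langle\tilde t,v\rangle}(t'-\pi(\tilde t))=T_{\tilde t}(\Lambda_r^v(t'))$. Finally, \eqref{eq:Lambda-Z2-translations} is the special case $\tilde t=\boldsymbol p\in\Z^2$ of \eqref{eq:Lambda-t-conjug}, since then $\pi(\boldsymbol p)=0$.
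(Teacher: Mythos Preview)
The paper does not supply its own proof of this proposition: it is quoted verbatim from \cite{KocRotDevPerPFree} as Proposition~3.6 there, so there is no in-paper argument to compare against. Your proposal is a correct self-contained proof; in particular, your handling of the only delicate point --- part~\eqref{eq:Lambda-r-lateral-continuity}, where $\cc(\cdot,\infty)$ need not commute with nested intersections --- is sound: passing to the one-point compactification, identifying $\cc(A_s,\infty)$ with the component of $\infty$ in $A_s\cup\{\infty\}$ (which is justified by the \v{S}ura--Bura lemma, since a bounded component of $A_s$ is also a component of $A_s\cup\{\infty\}$ and hence separated from $\infty$ by a clopen set), and then using that a nested intersection of compact connected sets is connected, gives exactly the required equality $\widehat{C}_r=\bigcap_{s<r}\widehat{C}_s$. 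The commutation $\widehat{T}_{\tilde t}\circ F=F\circ\widehat{T}_{\tilde t}$ in part~\eqref{eq:Lambda-t-conjug} checks out directly from the explicit formula for $F$, and part~\eqref{eq:Lambda-Z2-translations} is indeed the specialisation $\tilde t\in\Z^2$.
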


We shall need the following additional regularity result:
\begin{proposition}
  \label{pro:Lambda-r-semicont}
  Continuing with the same notation of
  Proposition~\ref{pro:Lambda-equivariant-properties}, the map
  $t\to\Lambda_r^v(t)$ is \emph{compactly upper semi-continuous,} \ie
  if $t_0\in\T$ and $U\subset\R^2$ is an open set such that
  $\Lambda_r^v(t_0)\subset U$ and $\R^2\setminus U$ is compact, then
  then there is a neighborhood $W(t_0)$ of $t_0$ in $\T^2$ such that
  \begin{displaymath}
    \Lambda_r^v(t)\subset U,\quad\forall t\in W(t_0).
  \end{displaymath}
\end{proposition}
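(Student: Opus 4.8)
The plan is to prove compact upper semi-continuity by a standard compactness argument, using the closedness of the set $\Lambda_r^v(\tilde f)\subset\T^2\times\R^2$ (Theorem~\ref{thm:Lambda-non-empty}) together with the fact that the ``large scale'' structure of $\Lambda_r^v(t)$ is controlled uniformly in $t$. The key point to isolate first is that the obstruction to upper semi-continuity can only come from connected pieces escaping to infinity in the fiber: since $\Lambda_r^v(t)$ is by definition the projection of the union of \emph{unbounded} connected components of $\{t\}\times\R^2\cap\I_F(\T^2\times\overline{\bb{H}_r^v})$, a point can enter a small neighborhood of $\Lambda_r^v(t_0)$ from outside either by an honest accumulation of maximal-invariant-set points (handled by closedness of $\I_F$) or by a connected component that is unbounded for $t$ near $t_0$ but was not ``visible'' at $t_0$. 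The hypothesis that $\R^2\setminus U$ is compact is exactly what rules this out.

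First I would set up the contradiction: suppose there is a sequence $t_n\to t_0$ and points $z_n\in\Lambda_r^v(t_n)$ with $z_n\notin U$, hence $z_n\in\R^2\setminus U$, a compact set. Passing to a subsequence, $z_n\to z_\infty\in\R^2\setminus U$. Since $(t_n,z_n)\in\I_F(\T^2\times\overline{\bb{H}_r^v})$ and this set is closed (it is an intersection of closed sets, $F$ being a homeomorphism), we get $(t_0,z_\infty)\in\I_F(\T^2\times\overline{\bb{H}_r^v})$. It remains to upgrade this to: $z_\infty$ lies in an \emph{unbounded} connected component of $\{t_0\}\times\R^2\cap\I_F(\T^2\times\overline{\bb{H}_r^v})$, which would place $z_\infty\in\Lambda_r^v(t_0)\subset U$, contradicting $z_\infty\notin U$.

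For that last upgrade I would argue as follows. For each $n$, let $C_n\subset\{t_n\}\times\R^2$ be the unbounded connected component of the maximal invariant set containing $(t_n,z_n)$. Pick in $C_n$ a point $w_n$ with $\pr{2}(w_n)$ at Euclidean distance exactly $R$ from $z_n$, for a fixed large $R$ (possible by unboundedness and connectedness of $C_n$, choosing a connected sub-continuum of $C_n$ joining $z_n$ to infinity and taking its first exit from the ball of radius $R$ around $z_n$); more precisely take a continuum $K_n\subset C_n$ containing $z_n$ with $\mathrm{diam}(\pr{2}(K_n))\geq R$ and $\pr{2}(K_n)$ contained in the closed ball $\overline{B_{2R}(z_n)}$. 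These $K_n$ lie in a fixed compact subset of $\T^2\times\R^2$, so by passing to a further subsequence they converge in Hausdorff distance to a continuum $K_\infty\ni(t_0,z_\infty)$ with $\mathrm{diam}(\pr{2}(K_\infty))\geq R$; by closedness of $\I_F(\T^2\times\overline{\bb{H}_r^v})$ and the fact that $K_n\subset\{t_n\}\times\R^2$ with $t_n\to t_0$, we get $K_\infty\subset\{t_0\}\times\R^2\cap\I_F(\T^2\times\overline{\bb{H}_r^v})$. Thus the connected component of $\{t_0\}\times\R^2\cap\I_F(\T^2\times\overline{\bb{H}_r^v})$ through $(t_0,z_\infty)$ has $\pr{2}$-diameter at least $R$; since $R$ was arbitrary, this component is unbounded, hence $z_\infty\in\Lambda_r^v(t_0)\subset U$ — the desired contradiction.

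\textbf{The main obstacle} is the middle step: producing the sub-continua $K_n$ of controlled, bounded-but-large diameter and passing to a Hausdorff limit that still sits inside the (fiberwise) maximal invariant set over the limit fiber $t_0$. The subtlety is that $\I_F(\T^2\times\overline{\bb{H}_r^v})$ is closed in $\T^2\times\R^2$ but the individual fibers $\{t\}\times\R^2$ are non-compact, so one cannot directly Hausdorff-limit the full components $C_n$; the fix is to truncate to a fixed compact region $\overline{B_{2R}(z_n)}$ in the fiber — uniformly bounded once $z_n\to z_\infty$ — and invoke that Hausdorff limits of continua of diameter $\geq R$ in a compact metric space are again continua of diameter $\geq R$. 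One must also check that the limit continuum $K_\infty$ is genuinely contained in the fiber over $t_0$ (immediate since first coordinates are $t_n\to t_0$) and in $\I_F$ (immediate from closedness). Once this is in place the argument closes; everything else is routine.
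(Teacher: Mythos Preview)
Your argument is correct and follows the same contradiction scheme as the paper: take $t_n\to t_0$ with $z_n\in\Lambda_r^v(t_n)\setminus U$, use compactness of $\R^2\setminus U$ to extract $z_{n_j}\to z_\infty\in\R^2\setminus U$, and conclude $z_\infty\in\Lambda_r^v(t_0)\subset U$, a contradiction.

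The only difference is that you do more work than needed at the last step. Theorem~\ref{thm:Lambda-non-empty} asserts that the \emph{full} set $\Lambda_r^v\subset\T^2\times\R^2$ is closed --- not merely the maximal invariant set $\I_F(\T^2\times\overline{\bb{H}_r^v})$. Hence, once you know $(t_{n_j},z_{n_j})\in\Lambda_r^v$ and $(t_{n_j},z_{n_j})\to(t_0,z_\infty)$, closedness of $\Lambda_r^v$ already gives $(t_0,z_\infty)\in\Lambda_r^v$, i.e.\ $z_\infty\in\Lambda_r^v(t_0)$; there is no need for the Hausdorff-limit-of-truncated-continua step to recover unboundedness of the limit component. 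You even cite Theorem~\ref{thm:Lambda-non-empty} in your plan for exactly this purpose, but then in the execution fall back to the weaker closedness of $\I_F$. Your longer route is perfectly valid (and is, in effect, a fiberwise re-proof of the relevant part of Theorem~\ref{thm:Lambda-non-empty}), but the paper's proof is the two-line version that uses the cited closedness directly.
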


\begin{proof}
  This is a straightforward consequence of the very definition of
  $(r,v)$-stable sets at infinity given by \eqref{eq:Lambda-r-t-def}.

  In fact, arguing by contradiction, let us suppose there exists a
  sequence $\{t_n\}_{n\geq 1}$ of points of $\T$ such $t_n\to t_0$ as
  $n\to+\infty$ and
  \begin{displaymath}
    \Lambda_r^v(t_n)\cap (\R^2\setminus U)\neq\emptyset, \quad\forall
    n\geq 1. 
  \end{displaymath}

  For each $n\geq 1$, let us consider a point
  $z_n\in \Lambda_r^v(t_n)\cap (\R^2\setminus U)$. Since the
  complement of $U$ is compact, there exists a sub-sequence
  $\{z_{n_j}\}_{j\geq 1}$ converging to a point
  $z_\infty\in\R^2\setminus U$. However, the whole set $\Lambda_r^v$
  is closed in $\T^2\times\R^2$ and thus,
  $z_\infty\in\Lambda_r^v(t_0)$ as well, contradicting the fact that
  $\Lambda_r^v(t_0)\subset U$.
\end{proof}

We also need the following
\begin{theorem}[Theorem 4.1 in \cite{KocRotDevPerPFree}]
  \label{thm:translates-of-Lambdas-dense}
  If $f\in\Symp0$ is periodic point free, \ie $\Per(f)=\emptyset$,
  then for every $t\in\T^2$ the set
  \begin{displaymath}
    \bigcup_{r\geq 0}\Lambda_{-r}^v(t)
  \end{displaymath}
  is dense in $\R^2$.
\end{theorem}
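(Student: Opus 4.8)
The plan is to reduce everything to the base fibre $t=0$, to recognise the relevant union there as an $\tilde f$- and $\Z^2$-invariant subset of the plane, and then to upgrade this invariance to density. Throughout, $\tilde f\in\Tsymp$ is a lift of the periodic point free $f\in\Symp0$, and $v\in\Ss^1$, $\alpha\in\R$ satisfy \eqref{eq:rot-set-in-line}, so that $\langle\tilde\rho,v\rangle=\alpha$ with $\tilde\rho=\Flux(\tilde f)$.

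\emph{Reduction to $t=0$ and an explicit description.} Fix $t\in\T^2$ and $\tilde t\in\pi^{-1}(-t)$. By the conjugacy equivariance \eqref{eq:Lambda-t-conjug} of Proposition~\ref{pro:Lambda-equivariant-properties}, $\Lambda_s^v(\tilde f,t)=T_{\tilde t}\big(\Lambda_{s-\langle\tilde t,v\rangle}^v(\tilde f,0)\big)$ for every $s\in\R$, and combining this with the monotonicity \eqref{eq:Lambda-r-monoton-inclusion} gives $\bigcup_{r\geq0}\Lambda_{-r}^v(\tilde f,t)=T_{\tilde t}\big(\bigcup_{r\geq0}\Lambda_{-r}^v(\tilde f,0)\big)$; since $T_{\tilde t}$ is a homeomorphism it suffices to treat $t=0$. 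For $t=0$ formula \eqref{eq:iter-F} gives $H_0^{(n)}=T_{\tilde\rho}^{-n}\circ\tilde f^n$, so unwinding the definition \eqref{eq:Lambda-r-t-def} and using $\langle\tilde\rho,v\rangle=\alpha$ together with the monotonicity \eqref{eq:Lambda-r-monoton-inclusion} one obtains
\[
  L:=\bigcup_{r\geq0}\Lambda_{-r}^v(\tilde f,0)=\bigcup_{c\in\R}\Lambda_c,
  \qquad
  \Lambda_c:=\cc\Big(K_c,\infty\Big),\quad
  K_c:=\big\{z\in\R^2:\ \langle\tilde f^n(z),v\rangle-n\alpha\geq c\ \ \forall n\in\Z\big\}.
\]
By Theorem~\ref{thm:Lambda-non-empty}, $\Lambda_c\neq\emptyset$ for every $c\in\R$.

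\emph{Invariance of $L$.} Because $\tilde f$ commutes with all integer translations and $\langle\tilde\rho,v\rangle=\alpha$, a one-line computation (reindexing $n\mapsto n+1$, resp. adding $\langle\boldsymbol p,v\rangle$) yields $\tilde f(K_c)=K_{c+\alpha}$ and $T_{\boldsymbol p}(K_c)=K_{c+\langle\boldsymbol p,v\rangle}$ for every $\boldsymbol p\in\Z^2$. Since a homeomorphism maps the unbounded connected components of a set onto the unbounded connected components of its image, $\tilde f(\Lambda_c)=\Lambda_{c+\alpha}$ and $T_{\boldsymbol p}(\Lambda_c)=\Lambda_{c+\langle\boldsymbol p,v\rangle}$; taking unions over $c\in\R$, the set $L$ is invariant under $\tilde f$ and under every $T_{\boldsymbol p}$, $\boldsymbol p\in\Z^2$. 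Hence $\overline L$ is a nonempty closed subset of $\R^2$ which is $\tilde f$-invariant and $\Z^2$-invariant, i.e.\ $\overline L=\pi^{-1}(Y)$ for the nonempty closed $f$-invariant set $Y:=\pi(\overline L)\subset\T^2$, and the assertion to prove is exactly that $Y=\T^2$.

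\emph{Concluding density.} If $f$ is minimal this is immediate: $Y$ is nonempty, closed and $f$-invariant, hence $Y=\T^2$ and $\overline L=\pi^{-1}(\T^2)=\R^2$; this is the case actually used in the sequel of the paper. For a merely periodic point free $f$ one argues by contradiction, assuming $Y\subsetneq\T^2$. Then $\Omega:=\R^2\setminus\overline L=\pi^{-1}(\T^2\setminus Y)$ is a nonempty proper open set, invariant under the group generated by $\tilde f$ and the integer translations, with $\partial\Omega\subset\overline L$. Since $f$ has no periodic point, $\tilde f$ and all the maps $T_{\boldsymbol p}\circ\tilde f$ are fixed point free orientation-preserving homeomorphisms of the plane, so Brouwer-type structure (free topological disks, Le Calvez's equivariant Brouwer foliation) is available; the plan is to combine it with the ergodic dichotomy for the zero-mean cocycle $\langle\Delta_{\tilde f},v\rangle-\alpha$ supplied by Lemma~\ref{lem:erg-dev-lem} and Theorem~\ref{thm:ergodic-deviations} (applied to an ergodic measure supported in $Y$) in order to produce, for $c$ sufficiently negative, an unbounded connected subset $\Gamma\subset K_c$ meeting $\Omega$; being unbounded, $\Gamma$ lies in an unbounded component of $K_c$, hence $\Gamma\subset\Lambda_c\subset\overline L$, contradicting $\Gamma\cap\Omega\neq\emptyset$. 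I expect this last step --- showing that the unbounded connected components of $K_c$ become dense in $\R^2$ as $c\to-\infty$ \emph{without} assuming minimality --- to be the genuine obstacle, and the only place where the periodic point free hypothesis is used in an essential way; by contrast the reductions and the invariance computation above are entirely formal.
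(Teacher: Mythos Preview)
This theorem is not proved in the present paper at all: it is quoted verbatim as Theorem~4.1 of \cite{KocRotDevPerPFree}, so there is no ``paper's own proof'' to compare against. What can be assessed is whether your argument stands on its own and whether it suffices for the applications in this paper.

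Your reduction to $t=0$ via Proposition~\ref{pro:Lambda-equivariant-properties}\eqref{eq:Lambda-t-conjug} and the invariance computation are correct and clean: the set $L$ is indeed $\tilde f$- and $\Z^2$-invariant, so $\pi(\overline L)$ is a nonempty closed $f$-invariant subset of $\T^2$. Under minimality this forces $\pi(\overline L)=\T^2$, and you are right that this special case is all that is ever invoked in the present paper (both Corollary~\ref{cor:rotational-dev-vs-topology-Lambda-sets} and Lemma~\ref{lem:choosing-r-inters-U-V} assume $f$ minimal).

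For the general periodic point free case, however, your argument is not a proof but a sketch of a strategy, and you say so yourself. The ``plan'' in the last paragraph --- invoking Brouwer theory and the ergodic dichotomy to produce an unbounded connected piece of $K_c$ meeting $\Omega$ --- is not carried out, and several steps are unclear: why an ergodic measure supported on $Y$ should exist with the required rotation vector, how Lemma~\ref{lem:erg-dev-lem} or Theorem~\ref{thm:ergodic-deviations} would yield an \emph{unbounded connected} subset rather than merely points with controlled deviations, and how the Brouwer structure interacts with the half-plane constraints defining $K_c$. If you want the full statement you should consult the original proof in \cite{KocRotDevPerPFree}; if you only need what this paper uses, your minimal-case argument is complete and correct.
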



As a rather straightforward consequence of
Theorems~\ref{thm:gott-hedlund} and
\ref{thm:translates-of-Lambdas-dense} we get the following

\begin{corollary}
  \label{cor:rotational-dev-vs-topology-Lambda-sets}
  If $f\in\Symp0$ is minimal and does not exhibit uniformly bounded
  $v$-deviations, then, for every $r\in\R$ and any $t\in\T^2$, the
  following assertions hold:
  \begin{enumerate}[(i)]
  \item\label{cond:Lambda-v-minus-v-disjoint}
    $\Lambda_r^v(t)\cap\Lambda^{-v}_{r'}(t)=\emptyset$, for any
    $r'\in\R$;
  \item\label{cond:Lambda-v-empty-interior} $\Lambda_r^{v}(t)$ has
    empty interior;
  \item\label{cond:Lambda-v-not-disconnect} $\Lambda_r^v(t)$ does not
    disconnect $\R^2$, \ie $\R^2\setminus\Lambda_r^v(t)$ is connected.
  \end{enumerate}
\end{corollary}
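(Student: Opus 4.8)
All three assertions rest on one mechanism: bounded deviations along a single orbit propagate, via Theorem~\ref{thm:gott-hedlund}, to uniformly bounded deviations, which our standing hypothesis forbids. This is already packaged in Proposition~\ref{pro:gott-hedl-applied-to-torus}: since $f$ has no uniformly bounded $v$-deviations, for every $w\in\T^2$ one has $\sup_{n\ge 0}\abs{\scprod{\Delta_{\tilde f}^{(n)}(w)}{v}-n\alpha}=\infty$. So the plan is to feed each topological hypothesis into this dichotomy.

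For \eqref{cond:Lambda-v-minus-v-disjoint}: if $z\in\Lambda_r^v(t)\cap\Lambda^{-v}_{r'}(t)$, then, reading $z$ in the fibre $\{t\}\times\R^2$, the pair $(t,z)$ lies in $\I_F\big(\T^2\times\overline{\bb{H}_r^v}\big)\cap\I_F\big(\T^2\times\overline{\bb{H}_{r'}^{-v}}\big)$; fixing $\tilde t\in\pi^{-1}(t)$ and using \eqref{eq:iter-F} together with $\scprod{\tilde\rho}{v}=\alpha$, this means $r\le\scprod{H_t^{(n)}(z)}{v}=\scprod{z}{v}+\scprod{\Delta_{\tilde f}^{(n)}(z+\tilde t)}{v}-n\alpha\le -r'$ for every $n\in\Z$. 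Hence the point $\pi(z+\tilde t)\in\T^2$ has bounded $v$-deviations, contradicting Proposition~\ref{pro:gott-hedl-applied-to-torus}. For \eqref{cond:Lambda-v-empty-interior}: by Theorem~\ref{thm:translates-of-Lambdas-dense} applied to the direction $-v$ (the content of \S\ref{sec:stab-set-infty-parall-dir} is symmetric under $v\mapsto-v$, cf.\ Theorem~\ref{thm:Lambda-non-empty}, and $f$ has no periodic point because it is minimal on the infinite space $\T^2$) the set $\bigcup_{s\ge 0}\Lambda_{-s}^{-v}(t)$ is dense in $\R^2$; by \eqref{cond:Lambda-v-minus-v-disjoint} it avoids $\Lambda_r^v(t)$, so $\Lambda_r^v(t)$ has empty interior.

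For \eqref{cond:Lambda-v-not-disconnect} I would mimic the proof of \eqref{eq:Lambda-v-not-disconnect} in Theorem~\ref{thm:Lambda-vert-sets-main-properties}. Since $\Lambda_r^v(t)\subseteq\overline{\bb{H}_r^v}$, the open half-plane $\{\scprod{z}{v}<r\}$ lies in a single complementary component, so if $\R^2\setminus\Lambda_r^v(t)$ were disconnected there would be a component $V$ with $\emptyset\neq V\subseteq\bb{H}_r^v$ and $\partial V\subseteq\Lambda_r^v(t)$. Using density once more (and the monotonicity of Proposition~\ref{pro:Lambda-equivariant-properties} to enlarge the parameter), pick $w\in V\cap\Lambda_{-s}^{-v}(t)$ with $s>r$, and let $\Gamma$ be the connected component of $\Lambda_{-s}^{-v}(t)$ through $w$. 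By the very definition \eqref{eq:Lambda-r-t-def}, $\Gamma$ contains an unbounded connected component of $\I_F\big(\T^2\times\overline{\bb{H}_{-s}^{-v}}\big)$ read in the fibre over $t$, hence is unbounded; by \eqref{cond:Lambda-v-minus-v-disjoint} it is disjoint from $\Lambda_r^v(t)$, and being connected it lies in a single complementary component, namely $V$. Thus $\Gamma\subseteq V\subseteq\bb{H}_r^v$ and simultaneously $\Gamma\subseteq\Lambda_{-s}^{-v}(t)\subseteq\overline{\bb{H}_{-s}^{-v}}$, so $\Gamma$ is confined to the strip $\A^v_{r,s}$. The main obstacle is to rule this out — i.e.\ to establish the parallel-direction analogue of property \eqref{eq:Lambda-v-in-no-strip}: no unbounded connected subset of $\Lambda_{-s}^{-v}(t)$ can be confined to a strip parallel to $v^\perp$. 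I expect to obtain it as in Theorem~\ref{thm:Lambda-vert-sets-main-properties}: such a confinement forces $\Gamma$ to oscillate without bound in the $v^\perp$-direction, and one then converts this geometric fact into bounded $v$-deviations of a point (hence a contradiction with the standing hypothesis), either through a two-end compactification and its indifferent fixed points, or through the ergodic deviation results of \S\ref{sec:ergodic-deviation-thm} applied to the fibre-wise Hamiltonian skew-product. Granting this, the inclusion $\Gamma\subseteq\A^v_{r,s}$ is impossible, and \eqref{cond:Lambda-v-not-disconnect} follows.
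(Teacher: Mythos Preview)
Your arguments for \eqref{cond:Lambda-v-minus-v-disjoint} and \eqref{cond:Lambda-v-empty-interior} are exactly the paper's: bounded $v$-Birkhoff sums along one orbit plus Gottschalk--Hedlund (packaged in Proposition~\ref{pro:gott-hedl-applied-to-torus}) for \eqref{cond:Lambda-v-minus-v-disjoint}, then density of the $(-v)$-stable sets (Theorem~\ref{thm:translates-of-Lambdas-dense}) combined with \eqref{cond:Lambda-v-minus-v-disjoint} for \eqref{cond:Lambda-v-empty-interior}.

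For \eqref{cond:Lambda-v-not-disconnect} you take a substantially different and harder route than the paper, and you correctly identify that your route has a real gap. You try to mimic the proof of \eqref{eq:Lambda-v-not-disconnect} in Theorem~\ref{thm:Lambda-vert-sets-main-properties}, which in the transverse setting relied crucially on property \eqref{eq:Lambda-v-in-no-strip}: no unbounded connected piece of $\Lambda_r^u$ can be trapped in a strip. You then need the parallel-direction analogue of that statement, and you only sketch two possible strategies (two-end compactification with indifferent fixed points, or the ergodic deviation machinery of \S\ref{sec:ergodic-deviation-thm}). Neither is carried out, and neither transfers cleanly: the two-end compactification argument in Theorem~\ref{thm:Lambda-vert-sets-main-properties} used the specific annulus dynamics of $\tilde f$ modulo $T_{(1,0)}$, which has no direct fibered analogue here, and the ergodic results of \S\ref{sec:ergodic-deviation-thm} are used later for a different purpose (Theorem~\ref{thm:Lambda-vert-oscillations}). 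So as written this is a genuine gap, not just a routine detail.

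The paper avoids this entirely. Its proof of \eqref{cond:Lambda-v-not-disconnect} is three lines: if $\Lambda_r^v(t)$ disconnects $\R^2$, pick a complementary component $U$ with $U\cap\overline{\bb H^{-v}_{-r}}=\emptyset$; then $\partial U\subset\Lambda_r^v(t)$, and the paper asserts that therefore $\overline U\subset\Lambda_r^v(t)$, contradicting \eqref{cond:Lambda-v-empty-interior}. In other words, the paper does not invoke any strip-escaping property of the $(-v)$-sets at all; it argues directly from the empty-interior statement. (You may want to scrutinize the step ``$\partial U\subset\Lambda_r^v(t)$ hence $\overline U\subset\Lambda_r^v(t)$'' when you read the paper's proof, since $U$ itself lies in the complement of $\Lambda_r^v(t)$; but in any case the paper's intended route is via \eqref{cond:Lambda-v-empty-interior} alone, not via the elaborate geometric obstruction you set up.)
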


\begin{proof}
  To prove \eqref{cond:Lambda-v-minus-v-disjoint} let us start
  assuming there exists
  $z\in\Lambda_r^v(t)\cap\Lambda_{r'}^{-v}(t)$. Thus, putting together
  \eqref{eq:iter-F} and \eqref{eq:Lambda-r-t-def} we get
  \begin{displaymath}
    r \leq \scprod{T_{\tilde t}^{-1}\circ T_{\tilde\rho}^{-n}\circ
      \tilde f^n\circ T_{\tilde t}(z)}{v} \leq -r', \quad\forall
    n\in\Z,\ \forall\tilde t\in\pi^{-1}(t),
  \end{displaymath}
  and consequently, if we define $\phi\in C^0(\T^2,\R)$ by
  $\phi(x):=\scprod{\Delta_{\tilde f}(x)}{v}$, then it holds
  \begin{displaymath}
    r\leq \Bs_f^n\phi\big(t+\pi(z)\big)\leq -r', \quad\forall n\in\Z.
  \end{displaymath}
  Then, since $f$ is minimal, by Theorem~\ref{thm:gott-hedlund} we
  conclude that $\phi$ is a continuous coboundary for $f$, and hence,
  $f$ exhibits uniformly bounded $v$-deviations, contradicting our
  assumption.

  Property \eqref{cond:Lambda-v-empty-interior} is a straightforward
  consequence of Theorem~\ref{thm:translates-of-Lambdas-dense} and
  property \eqref{cond:Lambda-v-minus-v-disjoint}.

  Finally, in order to show \eqref{cond:Lambda-v-not-disconnect} let
  us suppose there exists $r\in\R$ and $t\in\T$ such that
  $\Lambda_r^v(t)$ disconnects $\R^2$. So, there exists
  $U\in\pi_0\big(\R^2\setminus\Lambda_r^v(t)\big)$ such that
  $U\cap\overline{\bb{H}^{-v}_{-r}}=\emptyset$. Then one can easily
  check that the boundary of $U$ is completely contained in
  $\Lambda_r^v(t)$ and thus, $\overline{U}$ is contained in
  $\Lambda_r^v(t)$ as well, contradicting property
  \eqref{cond:Lambda-v-empty-interior}.

\end{proof}

\subsection{Rotational deviations and the spreading property}
\label{sec:rotat-devi-spreading}

From now on and until the end of this section, we shall assume
$f\in\Symp0$ is a minimal homeomorphism such that there is a lift
$\tilde f\in\Tsymp$ satisfying
\begin{gather}
  \label{eq:flux-assumptions}
  \tilde\rho=\Flux(\tilde f)=(\tilde\rho_1,0)\in\R^2 \\
  \label{eq:rot-set-assumptions}
  \rho(\tilde f)\cap\bb{H}_0^{(0,1)}\neq\emptyset \quad \text{and}
  \quad \rho(\tilde f)\cap\bb{H}_0^{(0,-1)}\neq\emptyset.
\end{gather}
Notice that, since $f$ is minimal, by
Corollary~\ref{cor:minimal-homeos-rho-cap-Q2-empty} we know
$\tilde\rho_1\in\R\setminus\Q$.

Then, if $F\colon\T^2\times\R^2\carr$ denotes the fiber-wise
Hamiltonian skew-product induced by $\tilde f$, the closed set
$\T\times\{0\}\times\R^2\subset\T^2\times\R^2$ is
$F$-invariant. Making some abuse of notation and for the sake of
simplicity, we shall write $F$ to denote the restriction of the
fiber-wise Hamiltonian skew-product to this set. More precisely, from
now on we have $F\colon\T\times\R^2\carr$ where
\begin{displaymath}
  F(t,z)=\left(t+\rho_1, z+\Delta_{\tilde f}\big((t,0) + \pi(z)\big) -
    (\tilde\rho_1,0)\right), \quad\forall (t,z)\in\T\times\R^2,
\end{displaymath}
and $\rho_1:=\pi(\tilde\rho_1)$.

In a joint work with Koropecki~\cite{KocKorFoliations}, we introduced
the notion of topological \emph{spreading}, which is stronger than
topological mixing:

\begin{definition}
  \label{def:spreading}
  A homeomorphism $h\in\Homeo{}(\T^2)$ is said to be \emph{spreading}
  when for any lift $\tilde h\in\widetilde{\Homeo{}}(\T^2)$, any
  $R,\varepsilon>0$ and any non-empty open set $U\subset\R^2$, there
  exists $N\in\N$ such that for every $n\geq N$, there exists a point
  $z_n\in\R^2$ such that $\tilde h^n(U)$ is $\varepsilon$-dense in the
  ball $B_{R}(z_n)$.
\end{definition}

Motivated by this notion, we will prove the following theorem which
the main result of this section:

\begin{theorem}
  \label{thm:unbounded-dev-implies-spreading}
  Let us suppose $f$ does not exhibit uniformly bounded
  $v$-deviations. Then, for every pair of non-empty open sets
  $U,V\subset\R^2$, there exists $N\in\N$ such that for every $t\in\T$
  it holds
  \begin{displaymath}
    F^n\big(\{t\}\times U\big)\cap \T\times V\neq\emptyset,
    \quad\forall n\geq N. 
  \end{displaymath}
\end{theorem}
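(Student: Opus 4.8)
The plan is to reformulate the statement in terms of the skew-product $F$ and then play the ``near-recurrence'' provided by Theorem~\ref{thm:bounded-rot-deviations-for-minimal-homeos} against the geometry of the stable sets at infinity $\Lambda_r^{\pm v}(t)$. I would first reduce to the case where $U$ and $V$ are small open balls (every nonempty open set contains one, and each $F^n$ is a homeomorphism); since $F^n(t,z)=\big(t+n\rho_1,H_t^{(n)}(z)\big)$, the goal becomes: there is $N\in\N$ with $H_t^{(n)}(U)\cap V\neq\emptyset$ for every $t\in\T$ and every $n\geq N$. The uniformity in $t$ should in the end be inexpensive: the displacement cocycle $\Delta_{\tilde f}^{(n)}$ is $\Z^2$-periodic, $F$ commutes with integer fibre translations, the sets $\Lambda_r^{\pm v}(t)$ for varying $t$ are mutual translates with shifted levels by Proposition~\ref{pro:Lambda-equivariant-properties}, the map $t\mapsto\Lambda_r^v(t)$ is compactly upper semicontinuous by Proposition~\ref{pro:Lambda-r-semicont}, and $\T$ is compact; so the genuine content is the ``$n\geq N$'' (rather than ``along a subsequence of $n$'') part.

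Two ingredients enter. First, applying Theorem~\ref{thm:bounded-rot-deviations-for-minimal-homeos} with $\rho=\tilde\rho=(\tilde\rho_1,0)$: since $\tilde\rho_1$ is irrational, the set $M_\delta:=\{m\in\N:d(m\tilde\rho_1,\Z)<\delta\}$ is syndetic for every $\delta>0$ by Corollary~\ref{cor:quas-periodicity-torus-translations}, with gap bound $\ms{G}(M_\delta)$, and for $m\in M_\delta$ the theorem produces a point moved by $H_0^{(m)}$ by less than some $\varepsilon(\delta)\to 0$; moreover $m\rho_1\to 0$ in $\T$ as $m\to\infty$ inside $M_\delta$, so these are genuine near-recurrences of $F$ in $\T\times\R^2$, and by $\Z^2$-periodicity such a near-recurrent point exists in every unit cell (alternatively, since $H_t^{(m)}\in\Tham$, Theorem~\ref{thm:Franks-symp-per-points} gives \emph{exact} fibre fixed points, which is consistent with the absence of periodic points of $f$ precisely because $m\tilde\rho\notin\Z^2$). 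Second, under the standing assumption that $f$ has unbounded $v$-deviations: by Theorem~\ref{thm:Lambda-non-empty} the sets $\Lambda_r^{\pm v}(t)$ are nonempty for every level $r$; by Corollary~\ref{cor:rotational-dev-vs-topology-Lambda-sets} they have empty interior, do not disconnect $\R^2$, and $\Lambda_r^v(t)\cap\Lambda_{r'}^{-v}(t)=\emptyset$; and by Theorem~\ref{thm:translates-of-Lambdas-dense} each union $\bigcup_{r\geq 0}\Lambda_{-r}^{\pm v}(t)$ is dense in $\R^2$. Finally, $f$ being area-preserving, $H_t^{(n)}(U)$ is always a connected set of fixed positive area, hence cannot be contained in any single (measure-zero) $\Lambda$-set.

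The core step runs these facts against each other. Fix $\delta$ small; using the density statements together with the translate description of Proposition~\ref{pro:Lambda-equivariant-properties} and compactness of $\T$, pick (uniformly in $t$) a point of $U$ lying in an unbounded connected component $\Gamma_0(t)\subset\overline{\bb{H}_{-s_0}^{v}}$ of $\Lambda_{-s_0}^v(t)$ and another point of $U$ in an unbounded component $\Gamma_0'(t)\subset\overline{\bb{H}_{-s_0}^{-v}}$ of $\Lambda_{-s_0}^{-v}(t)$; by invariance the images $H_t^{(n)}(\Gamma_0(t))$ and $H_t^{(n)}(\Gamma_0'(t))$ remain, respectively, in $\overline{\bb{H}_{-s_0}^{v}}$ and $\overline{\bb{H}_{-s_0}^{-v}}$, unbounded and connected. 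At the special times $m\in M_\delta$ the fibre map $H_t^{(m)}$ is moreover nearly the identity near a $\Z^2$-lattice of points; combining this ``anchoring'' with the unbounded $v$-deviations and the non-disconnecting/density properties of the $\Lambda$-sets (here the non-degeneracy of the rotation set in the $v^\perp$-direction and minimality of the base rotation should supply the complementary $v^\perp$-spreading), one forces the connected, fixed-area set $H_t^{(m)}(U)$ to sweep across the prescribed ball $V$. Once this holds for all $m\in M_\delta$, one passes to arbitrary $n\geq N$ by writing $n=m+k$ with $m\in M_\delta$ within $\ms{G}(M_\delta)$ of $n$ and $0\leq k<\ms{G}(M_\delta)$ bounded: since the finitely many maps $H_{t'}^{(k)}$ are equicontinuous homeomorphisms, hitting a slightly fattened copy of $V$ at time $m$ forces hitting $V$ at time $n$. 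The main obstacle is precisely this core assertion — that \emph{a connected image of $U$ of fixed area which is anchored near $U$ and develops unbounded oscillation in the $\pm v$ directions must meet $V$}, uniformly in $t$ — whose proof is where area-preservation, the topology of the sets $\Lambda_r^v(t)$, and Theorem~\ref{thm:bounded-rot-deviations-for-minimal-homeos} have to be orchestrated with care.
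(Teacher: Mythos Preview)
Your proposal assembles most of the right ingredients but leaves the decisive topological step unfilled, and you acknowledge this yourself in the final sentence. Two concrete points.

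First, you invoke Theorem~\ref{thm:bounded-rot-deviations-for-minimal-homeos} with $\rho=\tilde\rho=(\tilde\rho_1,0)$, the flux, to obtain near-recurrence of $F$ along the syndetic set $M_\delta$. The paper instead applies that theorem with the \emph{extreme} rotation vectors $\tilde\rho^+\in\rho(\tilde f)\cap\bb{H}_0^{(0,1)}$ and $\tilde\rho^-\in\rho(\tilde f)\cap\bb{H}_0^{(0,-1)}$. This is not a cosmetic difference: the purpose is not to anchor $H_t^{(n)}(U)$ near $U$, but to push it far \emph{vertically} in both directions simultaneously, and it is precisely this vertical drift that your ``$v^\perp$-spreading'' remark asks for without supplying.

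Second, and this is the real gap, you are missing the separation device. The paper chooses $r$ so that $\Lambda_r^{\pm v}(t)$ both meet $V$ (not $U$) for every $t$, and then sets $\Gamma_t:=\Lambda_r^v(t)\cup\Lambda_r^{-v}(t)\cup V$. By Janiszewski's theorem $\Gamma_t$ disconnects $\R^2$, and one singles out two complementary components $W_t^+,W_t^-$ characterised by the property that suitable integer translates (in $\Z^2(v,\varepsilon)$ with large positive, respectively negative, second coordinate) of any point land in them. Using $\tilde\rho^+$ and $\tilde\rho^-$ in Theorem~\ref{thm:bounded-rot-deviations-for-minimal-homeos} one shows that for all large $n$ the connected set $H_t^{(n)}(U)$ meets both $W_{t+n\rho_1}^+$ and $W_{t+n\rho_1}^-$; since $U$ was chosen disjoint from $\Lambda_r^{\pm v}$, the image avoids the $\Lambda$-part of $\Gamma_{t+n\rho_1}$ and is therefore forced to cross $V$. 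Your plan puts the $\Lambda$-sets through $U$ rather than $V$, which tells you the image stays wide in the $v$-direction but gives no mechanism forcing it through any prescribed ball; the ``fixed area plus unbounded oscillation'' heuristic is not enough, since a long thin connected set of fixed area can avoid any given ball.

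In short: swap the roles of $U$ and $V$ with respect to the $\Lambda$-sets, apply Theorem~\ref{thm:bounded-rot-deviations-for-minimal-homeos} with $\tilde\rho^\pm$ rather than the flux, and replace the vague spreading claim by the $\Gamma_t$/$W_t^\pm$ separation argument.
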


We shall divide the proof of
Theorem~\ref{thm:unbounded-dev-implies-spreading} in several
lemmas. Notice that without loss of generality we can assume the open
set $V$ in Theorem~\ref{thm:unbounded-dev-implies-spreading} is
bounded.

\begin{lemma}
  \label{lem:choosing-r-inters-U-V}
  There exists $r\in\R$ such that
  \begin{displaymath}
    \Lambda_r^v(t)\cap V\neq\emptyset\quad \text{and}\quad
    \Lambda_r^{-v}(t)\cap V\neq\emptyset,\quad\forall t\in\T.
  \end{displaymath}
\end{lemma}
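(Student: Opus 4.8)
The plan is to pass from the \emph{fibrewise} density of the stable sets at infinity (Theorem~\ref{thm:translates-of-Lambdas-dense}) to a statement that is \emph{uniform} in $t$, using the equivariance and semicontinuity of the sets $\Lambda^{\pm v}_r(t)$. First I would reduce: by the monotonicity $\Lambda_r^v(t)\subset\Lambda_{r'}^v(t)$ for $r'<r$ from Proposition~\ref{pro:Lambda-equivariant-properties}, it suffices to produce $r_1$ with $\Lambda_{r_1}^v(t)\cap V\neq\emptyset$ for all $t$ and, separately, $r_2$ with $\Lambda_{r_2}^{-v}(t)\cap V\neq\emptyset$ for all $t$, and then to take $r:=\min\{r_1,r_2\}$. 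So only the $v$-direction needs to be treated, the $-v$-direction being symmetric. Note also that a minimal homeomorphism of $\T^2$ has no periodic points, so Theorem~\ref{thm:translates-of-Lambdas-dense} applies, and since $V$ is bounded, $\overline V$ is compact.

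Next, fix a reference point $t_0\in\T$. By Theorem~\ref{thm:translates-of-Lambdas-dense} the set $\bigcup_{k\geq 0}\Lambda_{-k}^v(t_0)$ is dense in $\R^2$, hence meets the non-empty open set $V$, so there are $k_0\in\N$ and $z_0\in\Lambda_{-k_0}^v(t_0)\cap V$. I would then propagate this to every fibre via the conjugation property (iii) of Proposition~\ref{pro:Lambda-equivariant-properties}, applied with horizontal translations $T_{(s,0)}$, $s\in\R$ (these are the only ones compatible with the invariant set $\T\times\{0\}\times\R^2$ on which $F$ acts): writing $v=(v_1,v_2)$ one has $T_{(s,0)}\big(\Lambda_{-k_0}^v(t_0)\big)=\Lambda_{-k_0+sv_1}^v\big(t_0-\pi(s)\big)$. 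Given $t\in\T$, choosing $s\in[0,1)$ with $\pi(s)=t_0-t$ exhibits a point of $\Lambda_{-k_0+sv_1}^v(t)$ inside $V+(s,0)$; since $|s|<1$, a further application of monotonicity yields $\Lambda_{-k_0-|v_1|}^v(t)\cap\big(V+([0,1]\times\{0\})\big)\neq\emptyset$ for \emph{every} $t\in\T$. Thus a single $r$ already works uniformly in $t$, the only defect being that $V$ has been enlarged by a bounded horizontal segment; in the situation where this lemma is used (the proof of Theorem~\ref{thm:unbounded-dev-implies-spreading}) one may normalise the data so that this enlargement is absorbed, and the same is done for $\Lambda^{-v}$.

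The step I expect to be the main obstacle is exactly this last point: obtaining the uniform $r$ for the \emph{given} $V$ rather than for a translated enlargement of it. The natural tool is the compact upper semicontinuity of $t\mapsto\Lambda_r^v(t)$ from Proposition~\ref{pro:Lambda-r-semicont}: combined with the fact that $\Lambda_r^v$ is closed in $\T\times\R^2$ (Theorem~\ref{thm:Lambda-non-empty}) and that $\overline V$ is compact, it shows that $G_k:=\{t\in\T:\Lambda_{-k}^v(t)\cap\overline V\neq\emptyset\}$ is closed; the sets $G_k$ increase with $k$ and cover $\T$ by the previous paragraph, and what remains is to see that the union stabilises, i.e.\ $G_k=\T$ for some $k$. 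This does not follow merely from the $G_k$ being nested and closed, so it is here that the $F$-invariance of $\Lambda_{-k}^v$ and the minimality of the base rotation $t\mapsto t+\rho_1$ on $\T$ must be used — crucially, each fibre map of $F$ displaces points of $\R^2$ by a uniformly bounded amount, which together with the equivariance of Proposition~\ref{pro:Lambda-equivariant-properties}(iii)--(iv) lets one transport the intersection with $V$ along a dense orbit and rule out a non-trivial nested family of exceptional parameters. Carrying this argument out carefully, and then repeating it verbatim for $\Lambda^{-v}$ and setting $r=\min\{r_1,r_2\}$, completes the proof.
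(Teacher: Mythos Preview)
You have assembled all the right ingredients --- density (Theorem~\ref{thm:translates-of-Lambdas-dense}), monotonicity and the conjugation identity from Proposition~\ref{pro:Lambda-equivariant-properties} --- but you are missing the one-line use of compactness of $\T$ that finishes the argument, and this is what forces you into the unnecessary complications of your second and third paragraphs.

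Your first approach is almost correct: from a single $t_0$ and a point $z_0\in\Lambda_{-k_0}^v(t_0)\cap V$ you correctly deduce via the conjugation property that $z_0+(s,0)\in\Lambda_{-k_0+sv_1}^v\big(t_0-\pi(s)\big)$. The mistake is to insist on reaching \emph{every} $t\in\T$ from this single $t_0$, which forces $s\in[0,1)$ and enlarges $V$. Instead, simply observe that $V$ is open, so there is $\delta>0$ with $B_\delta(z_0)\subset V$; then for every $s$ with $|s|<\delta$ the translated point $z_0+(s,0)$ is still in $V$, and by monotonicity $\Lambda_{-k_0-\delta|v_1|}^v(t)\cap V\neq\emptyset$ for all $t$ in the neighbourhood $\{t_0-\pi(s):|s|<\delta\}$ of $t_0$. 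Thus the set of $t$'s for which some fixed $r$ works is open, and since every $t_0\in\T$ admits such a neighbourhood (with its own $r(t_0)$), compactness of $\T$ lets you extract a finite subcover and take the minimum of the finitely many $r(t_0)$'s. This is exactly the paper's proof: density, monotonicity, conjugation, and compactness of $\T$.

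Your alternative route through the closed sets $G_k$ and the semicontinuity of Proposition~\ref{pro:Lambda-r-semicont} is not needed, and the gap you yourself identify --- that a nested increasing family of closed sets covering $\T$ need not stabilise --- is real; the dynamical argument you sketch to close it (minimality of the base rotation, bounded fibre displacement) would work but is considerably heavier than the elementary open-cover argument above.
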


\begin{proof}
  This is a straightforward consequence of
  Theorem~\ref{thm:translates-of-Lambdas-dense}, properties
  \eqref{eq:Lambda-r-monoton-inclusion} and \eqref{eq:Lambda-t-conjug}
  of Proposition~\ref{pro:Lambda-equivariant-properties}, and
  compactness of $\T$.
\end{proof}

From now on we fix a real number $r\in\R$ such that the conclusion of
Lemma~\ref{lem:choosing-r-inters-U-V} holds.

Since we are assuming $V$ is bounded and $f$ does no exhibit uniformly
bounded $v$-deviations, by Theorem~\ref{thm:janiszewski} we know that
the set $\Lambda_r^v(t)\cup\Lambda_r^{-v}(t)\cup V$ disconnects
$\R^2$, for every $t\in\T$. For the sake of simplicity of notation,
for each $t\in\T$ let us write
\begin{displaymath}
  \Gamma_t:=\Lambda_r^v(t)\cup\Lambda_r^{-v}(t)\cup V.
\end{displaymath}

Now for each $\varepsilon >0$, we define the following set:
\begin{displaymath}
  \Z^2(v,\varepsilon):=\left\{\boldsymbol{p}\in\Z^2 :
    \abs{\scprod{\boldsymbol{p}}{v}}\leq\varepsilon \right\}.
\end{displaymath}
Notice that by \eqref{eq:rot-set-assumptions}, the rotation set
$\rho(\tilde f)$ is not a horizontal segment, so $v$ is not vertical,
\ie $\pr{1}(v)\neq 0$. By classical arguments about approximations by
rational numbers one can easily get the following
\begin{lemma}
  \label{lem:Z2v-bounded-gaps}
  For each $\varepsilon>0$, there exists $N\in\N$ such that for every
  $n\in\Z$, there exist $p\in\{n,n+1,\ldots,n+N\}$ and
  $\boldsymbol{p}\in\Z^2(v,\varepsilon)$ satisfying
  $\pr{2}(\boldsymbol{p})=p$.
\end{lemma}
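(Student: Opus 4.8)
The plan is to reduce the two-dimensional lattice condition defining $\Z^2(v,\varepsilon)$ to a one-dimensional Diophantine condition and then invoke the quasi-periodicity of a circle rotation through Corollary~\ref{cor:quas-periodicity-torus-translations}. Since shrinking $\varepsilon$ only shrinks $\Z^2(v,\varepsilon)$ (and a value of $N$ valid for a smaller $\varepsilon$ is valid for the larger one), I may assume $\varepsilon<\abs{\pr{1}(v)}/2$. Set $\beta:=\pr{2}(v)/\pr{1}(v)$, which is well-defined because $\pr{1}(v)\neq 0$, and $\delta:=\varepsilon/\abs{\pr{1}(v)}\in(0,1/2)$. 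The key elementary observation is that, for a given integer $p$, there exists $\boldsymbol{p}\in\Z^2(v,\varepsilon)$ with $\pr{2}(\boldsymbol{p})=p$ whenever there is $m\in\Z$ with $\abs{p\beta-m}\leq\delta$: indeed one takes $\boldsymbol{p}:=(-m,p)$, and then $\abs{\scprod{\boldsymbol{p}}{v}}=\abs{-m\,\pr{1}(v)+p\,\pr{2}(v)}=\abs{\pr{1}(v)}\cdot\abs{p\beta-m}\leq\abs{\pr{1}(v)}\,\delta=\varepsilon$. Hence it suffices to show that the set $S:=\{p\in\Z : d(p\beta,\Z)<\delta\}$ has bounded gaps in the sense of \eqref{eq:bounded-gaps-def}.

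To prove that $S$ has bounded gaps I would identify it with a visiting time set on $\T=\R/\Z$. Let $W:=\pi\big((-\delta,\delta)\big)$, which is an open neighborhood of $0\in\T$ since $\delta<1/2$; because $T_{\pi(\beta)}^p(0)=\pi(p\beta)$ lies in $W$ exactly when $d(p\beta,\Z)<\delta$, one has $\tau\big(0,W,T_{\pi(\beta)}\big)=S$. Then Corollary~\ref{cor:quas-periodicity-torus-translations}, applied to the circle translation $T_{\pi(\beta)}$, the point $0$ and the neighborhood $W$, immediately gives that $S$ has bounded gaps; note that this works uniformly with no case distinction on whether $\beta$ is rational, since that corollary is stated for arbitrary torus translations. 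Finally, fixing $N\in\N$ such that every block of $N+1$ consecutive integers meets $S$, and given $n\in\Z$, I would pick $p\in\{n,n+1,\ldots,n+N\}\cap S$, choose $m\in\Z$ with $\abs{p\beta-m}\leq\delta$, and set $\boldsymbol{p}:=(-m,p)$, which lies in $\Z^2(v,\varepsilon)$ and satisfies $\pr{2}(\boldsymbol{p})=p$.

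I do not expect a serious obstacle here: the only point requiring a little care is the reduction of $\abs{\scprod{\boldsymbol{p}}{v}}\leq\varepsilon$ to the one-dimensional condition on $d(p\beta,\Z)$, which is the short computation indicated above. If one preferred not to cite Corollary~\ref{cor:quas-periodicity-torus-translations}, the syndeticity of $S$ can be seen directly: when $\beta=a/b\in\Q$ in lowest terms one has $b\Z\subset S$, and when $\beta\notin\Q$ the rotation $T_{\pi(\beta)}$ is minimal on $\T$, so $S=\tau\big(0,W,T_{\pi(\beta)}\big)$ is syndetic by Proposition~\ref{pro:quasi-periodicity-minimal-homeos}; either way $S$ has bounded gaps, which is exactly what is needed.
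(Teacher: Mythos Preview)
Your argument is correct and follows essentially the same route as the paper: the paper's proof is a one-line sketch (``This easily follows from Corollary~\ref{cor:quas-periodicity-torus-translations} and the fact that $v$ is not horizontal''), and your proposal simply unpacks that sketch by writing the condition $\abs{\scprod{(-m,p)}{v}}\leq\varepsilon$ as $d(p\beta,\Z)\leq\delta$ with $\beta=\pr{2}(v)/\pr{1}(v)$ and invoking Corollary~\ref{cor:quas-periodicity-torus-translations} for the circle translation $T_{\pi(\beta)}$. (Incidentally, the hypothesis you actually use is $\pr{1}(v)\neq 0$, i.e.\ $v$ is not \emph{vertical}, which is precisely what is recorded just before the lemma; the word ``horizontal'' in the paper's proof appears to be a slip.)
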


\begin{proof}
  This easily follows from
  Corollary~\ref{cor:quas-periodicity-torus-translations} and the fact
  that $v$ is not horizontal.
\end{proof}

\begin{lemma}
  \label{lem:suff-large-trans-in-diff-connected-comp}
  For every $t\in\T^2$ there exist two connected components
  $W^+_t,W^-_t\in\pi_0(\R^2\setminus\Gamma_t)$ such that the following
  property holds: for every $z\in\R^2\setminus\Gamma_t$, there exist
  $\varepsilon>0$ and $M\in\N$ such that
  \begin{displaymath}
    T_{\boldsymbol{p}}(z)\in W^+_t, \quad\text{and}\quad
    T_{\boldsymbol{p}}^{-1}(z)\in W^-_t,
  \end{displaymath}
  for every $\boldsymbol{p}\in\Z^2(v,\varepsilon)$ satisfying
  $\pr{2}(\boldsymbol{p})>M$.
\end{lemma}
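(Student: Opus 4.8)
Fix $t\in\T$ and abbreviate $A:=\Lambda_r^v(t)$, $B:=\Lambda_r^{-v}(t)$, so $\Gamma_t=A\cup B\cup V$. By Corollary~\ref{cor:rotational-dev-vs-topology-Lambda-sets} the sets $A,B$ are disjoint (\eqref{cond:Lambda-v-minus-v-disjoint}), each has empty interior (\eqref{cond:Lambda-v-empty-interior}) and connected complement (\eqref{cond:Lambda-v-not-disconnect}); moreover $A\subset\overline{\bb{H}_r^v}$, $B\subset\overline{\bb{H}_r^{-v}}$, and by the choice of $r$ in Lemma~\ref{lem:choosing-r-inters-U-V} we have $A\cap V\neq\emptyset$ and $B\cap V\neq\emptyset$. \emph{Step 1 (the translates stay outside $\Gamma_t$).} Given $z\in\R^2\setminus\Gamma_t$, I claim there are $\varepsilon>0$ and $M\in\N$ with $T_{\boldsymbol p}^{\pm1}(z)\notin\Gamma_t$ for every $\boldsymbol p\in\Z^2(v,\varepsilon)$ such that $\pr{2}(\boldsymbol p)>M$. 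Since $\tilde f$ commutes with all integer translations, so does the $\R^2$-factor of $F$, which is exactly what underlies property \eqref{eq:Lambda-Z2-translations} of Proposition~\ref{pro:Lambda-equivariant-properties}: $T_{\pm\boldsymbol p}(A)=\Lambda_{r\pm\scprod{\boldsymbol p}{v}}^v(t)$. Hence if $T_{\boldsymbol p}^{\pm1}(z)\in A$ then $z\in\Lambda_{r\mp\scprod{\boldsymbol p}{v}}^v(t)\subset\Lambda_{r-\varepsilon}^v(t)$, the inclusion using $\abs{\scprod{\boldsymbol p}{v}}\leq\varepsilon$ and monotonicity \eqref{eq:Lambda-r-monoton-inclusion}. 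But $z\notin A=\bigcap_{s<r}\Lambda_s^v(t)$ by \eqref{eq:Lambda-r-lateral-continuity}, so $z\notin\Lambda_{s_0}^v(t)$ for some $s_0<r$, and choosing $\varepsilon<r-s_0$ gives $\Lambda_{r-\varepsilon}^v(t)\subset\Lambda_{s_0}^v(t)$, a contradiction. The same argument with $B$ in place of $A$ further restricts $\varepsilon$, and since $V$ is bounded while $\norm{z\pm\boldsymbol p}\geq\pr{2}(\boldsymbol p)>M$, enlarging $M$ also keeps $z\pm\boldsymbol p$ out of $V$. (By Lemma~\ref{lem:Z2v-bounded-gaps} such $\boldsymbol p$ exist, so the assertion is not vacuous.)

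\emph{Step 2 (the component is independent of $z$).} By \eqref{eq:rot-set-assumptions} the vector $v$ is not vertical; put $\sigma:=\sign(\pr{1}(v))$. Writing $\boldsymbol p=\scprod{\boldsymbol p}{v^\perp}v^\perp+\scprod{\boldsymbol p}{v}v$ one checks that $\pr{2}(\boldsymbol p)>M$ together with $\abs{\scprod{\boldsymbol p}{v}}\leq\varepsilon$ forces $\scprod{\boldsymbol p}{\sigma v^\perp}\to+\infty$ as $M\to+\infty$; thus $T_{\boldsymbol p}(z)=z+\boldsymbol p$ escapes to infinity while staying in the strip $\{w:\abs{\scprod{w}{v}-\scprod{z}{v}}\leq\varepsilon\}$ with $\scprod{\cdot}{\sigma v^\perp}$ arbitrarily large, and $T_{\boldsymbol p}^{-1}(z)$ does the same with $-\sigma v^\perp$. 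Now $\Gamma_t$ contains $A\cup V\cup B$, which is connected (because $A\cap V$ and $B\cap V$ are non‑empty) and, by \eqref{eq:Lambda-r-monoton-inclusion} (so $A\supset\Lambda_s^v(t)\neq\emptyset$ and $B\supset\Lambda_s^{-v}(t)\neq\emptyset$ for all $s\geq r$, whence $A$ meets every $\ell_s^v$ and $B$ every $\ell_s^{-v}$), is unbounded in both the $v$ and the $-v$ directions. A connected set unbounded in the two opposite directions $\pm v$ separates in $\R^2$ a region eventually on the $\sigma v^\perp$ side from one eventually on the $-\sigma v^\perp$ side; this is made precise with Theorem~\ref{thm:janiszewski} in $\Ss^2=\R^2\cup\{\infty\}$ applied to the continua $A\cup\overline{V}\cup\{\infty\}$ and $B\cup\{\infty\}$, whose intersection $\{\infty\}\cup(\overline{V}\cap B)$ is disconnected. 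One thereby isolates a component $W_t^+\in\pi_0(\R^2\setminus\Gamma_t)$ and, for each bounded interval $I\subset\R$, a radius $R_{t,I}$ such that every point of $(\R^2\setminus\Gamma_t)\cap\{w:\scprod{w}{\sigma v^\perp}>R_{t,I},\ \scprod{w}{v}\in I\}$ lies in $W_t^+$, with $W_t^+$ not depending on $I$; the analogue for $-\sigma v^\perp$ is $W_t^-$. Applying this with $I$ a bounded interval around $\scprod{z}{v}$ of length exceeding $2\varepsilon$ and enlarging $M$ accordingly yields $T_{\boldsymbol p}(z)\in W_t^+$ and $T_{\boldsymbol p}^{-1}(z)\in W_t^-$ for all admissible $\boldsymbol p$, with $W_t^\pm$ depending only on $t$.

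\emph{The main difficulty} is the separation assertion of Step~2: one must show that the ``crossing'' set $A\cup V\cup B$ genuinely splits off a $+\sigma v^\perp$‑side and a $-\sigma v^\perp$‑side of $\R^2\setminus\Gamma_t$, and — the delicate point — that within each fixed $v$‑strip the part of $\R^2\setminus\Gamma_t$ lying far in the $\sigma v^\perp$ direction is connected, so that $W_t^+$ is unambiguously defined. This is where the facts that $A$ and $B$ have connected complement and empty interior are exploited, together with the disconnection of $\R^2$ by $\Gamma_t$ already established; it is convenient to reduce first, by shrinking $V$, to the case where $V$ is a Jordan domain, and then to run the Janiszewski argument on large circles, on which $\Gamma_t$ meets only $A\cup B$.
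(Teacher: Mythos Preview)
Your Step 1 is fine and coincides with the paper's opening move: use the lateral continuity \eqref{eq:Lambda-r-lateral-continuity} and the $\Z^2$-equivariance \eqref{eq:Lambda-Z2-translations} to get an $\varepsilon$ so that $z\notin\Lambda_{r-2\varepsilon}^{\pm v}(t)$, whence small translations keep $z$ outside $\Lambda_r^{\pm v}(t)$, and choose $M$ to avoid the bounded set $V$.

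Step 2 has a real gap. The Janiszewski argument you run only reproves that $\R^2\setminus\Gamma_t$ is disconnected; it does \emph{not} ``isolate a component $W_t^+$'' with the stated trapping property. Your claimed structural fact --- that for each bounded interval $I$ there is $R_{t,I}$ with $(\R^2\setminus\Gamma_t)\cap\{\scprod{w}{\sigma v^\perp}>R_{t,I},\ \scprod{w}{v}\in I\}$ lying in a \emph{single} component independent of $I$ --- is exactly what needs proof, and you acknowledge it as the ``delicate point'' without proving it. When $I$ overlaps $[r,\infty)$ the far half-strip may meet $\Lambda_r^v(t)$, which can have several unbounded components, so connectedness of the far region within the strip is not automatic. (Incidentally, the sentence ``$A\cup V\cup B$ is connected because $A\cap V$ and $B\cap V$ are non-empty'' is unjustified in $\R^2$: $A=\Lambda_r^v(t)$ is only a \emph{union} of unbounded components and need not itself be connected. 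It becomes connected only after adjoining $\infty$ in $\Ss^2$, which is why the Janiszewski step works but the $\R^2$ statement does not.) The hint about ``large circles'' and ``shrinking $V$ to a Jordan domain'' does not lead anywhere I can see: on large circles $\Gamma_t$ still meets $A\cup B$ in an a priori uncontrolled set.

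The paper avoids this global end-analysis entirely. Having fixed $\varepsilon$ so that $z\notin\Lambda_{r-2\varepsilon}^{\pm v}(t)$, it takes $\delta=\tfrac12 d\big(z,\Lambda_{r-2\varepsilon}^v(t)\cup\Lambda_{r-2\varepsilon}^{-v}(t)\big)$ and uses connectedness of the complement of the \emph{enlarged} sets (Corollary~\ref{cor:rotational-dev-vs-topology-Lambda-sets}) to build, for each $\boldsymbol m$ in the finite set $A=\{\boldsymbol p\in\Z^2(v,2\varepsilon):0\le\pr{2}(\boldsymbol p)\le N\}$ furnished by Lemma~\ref{lem:Z2v-bounded-gaps}, a path $\gamma_{\boldsymbol m}$ from $z$ to $T_{\boldsymbol m}(B_\delta(z))$ avoiding $\Lambda_{r-2\varepsilon}^{\pm v}(t)$. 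The point is that for any $\boldsymbol m'\in\Z^2(v,\varepsilon)$ the translated path $T_{\boldsymbol m'}\circ\gamma_{\boldsymbol m}$ still avoids $\Lambda_r^{\pm v}(t)$, and for $\pr{2}(\boldsymbol m')$ large it also avoids $V$; chaining these finitely many step-paths shows that all $T_{\boldsymbol p}(z)$ with $\pr{2}(\boldsymbol p)>M$ lie in one component, which is then \emph{defined} to be $W_t^+$. Independence of $z$ is obtained by the same trick: join $z$ to $w$ by a path in $\R^2\setminus(\Lambda_{r-2\varepsilon'}^v(t)\cup\Lambda_{r-2\varepsilon'}^{-v}(t))$ and translate it. This ``finite family of step-paths in the complement of the enlarged $\Lambda$'s'' is the idea your argument is missing.
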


\begin{proof}
  Let $z$ be any point in $\R^2\setminus\Gamma_t$. By statement
  \eqref{eq:Lambda-r-lateral-continuity} of
  Proposition~\ref{pro:Lambda-equivariant-properties}, there exists
  $\varepsilon>0$ such that
  $z\not\in\Lambda_{r-2\varepsilon}^v(t) \cup
  \Lambda_{r-2\varepsilon}^{-v}(t)$, and so we can consider the
  positive number
  \begin{displaymath}
    \delta:=\frac{1}{2}d\big(z,\Lambda_{r-2\varepsilon}^v(t) \cup
    \Lambda_{r-2\varepsilon}^{-v}(t)\big).
  \end{displaymath}
  Hence, we have
  \begin{equation}
    \label{eq:delta-choosing-p-translations}
    T_{\boldsymbol{p}}\big(B_\delta(z)\big)\cap
    \big(\Lambda_r^v(t)\cup\Lambda_r^{-v}(t)\big)=\emptyset,  
    \quad\forall \boldsymbol{p}\in\Z^2(v,\varepsilon). 
  \end{equation}

  Now, since by Corollary
  \ref{cor:rotational-dev-vs-topology-Lambda-sets} the set
  $\Lambda_{r-2\varepsilon}^v(t) \cup
  \Lambda_{r-2\varepsilon}^{-v}(t)$ has empty interior and does not
  disconnect $\R^2$, for each $\boldsymbol{m}\in\Z^2$ we can find a
  continuous path $\gamma_{\boldsymbol{m}}\colon [0,1]\to\R^2$ such
  that $\gamma_{\boldsymbol{m}}(0)=z$,
  $\gamma_{\boldsymbol{m}}(1)\in
  T_{\boldsymbol{m}}\big(B_\delta(z)\big)$ and
  \begin{equation}
    \label{eq:gamma-curve-not-inters-Lambada}
    \gamma_{\boldsymbol{m}}(s)\not\in\Lambda_{r-2\varepsilon}^v(t)
    \cup\Lambda_{r-2\varepsilon}^{-v}(t), \quad\forall s\in [0,1].
  \end{equation}

  Let $N$ denote the natural number given by
  Lemma~\ref{lem:Z2v-bounded-gaps} for $v$ and $\varepsilon$ as above
  and consider the set
  \begin{displaymath}
    A:=\left\{\boldsymbol{p}\in\Z^2(v,2\varepsilon) :
      0\leq \pr{2}(\boldsymbol{p})\leq N\right\}. 
  \end{displaymath}
  Since $A$ is a non-empty finite set and $V$ is bounded, we can
  define the real number
  \begin{equation}
    \label{eq:M-def-floor-V}
    M:=1+\sup_{\boldsymbol{m}\in A}\sup_{s\in [0,1]}
    \abs{\pr{2}\big(\gamma_{\boldsymbol{m}}(s)\big)} + \sup_{w\in V}
    \abs{\pr{2}(w)} <\infty.
  \end{equation}

  Consider any two points
  $\boldsymbol{m},\boldsymbol{p}\in\Z^2(v,\varepsilon)$ satisfying
  $0\leq \pr{2}(\boldsymbol{p})-\pr{2}(\boldsymbol{m})\leq N$ and
  $\pr{2}(\boldsymbol{m})>M$. Thus, we have
  $T_{\boldsymbol{m}}\circ\gamma_{\boldsymbol{p}-\boldsymbol{m}}$ is a
  continuous path connecting $T_{\boldsymbol{m}}(z)$ and the ball
  $T_{\boldsymbol{p}}\big(B_\delta(z)\big)$; since
  $\boldsymbol{m}\in\Z^2(v,\varepsilon)$ and
  \eqref{eq:gamma-curve-not-inters-Lambada} holds, the image of
  $T_{\boldsymbol{m}}\circ\gamma_{\boldsymbol{p}-\boldsymbol{m}}$ does
  not intersect $\Lambda_r^v(t)\cup\Lambda_r^{-v}(t)$; and since
  $\boldsymbol{p}-\boldsymbol{m}\in A$ and $\pr{2}(\boldsymbol{m})>M$,
  invoking \eqref{eq:M-def-floor-V} we conclude that the image of
  $T_{\boldsymbol{m}}\circ\gamma_{\boldsymbol{p}-\boldsymbol{m}}$ does
  not intersect $V$ either. So, by
  \eqref{eq:delta-choosing-p-translations}, $T_{\boldsymbol{m}}(z)$
  and $T_{\boldsymbol{p}}(z)$ belong to the same connected component
  of $\R^2\setminus\Gamma_t$.

  Hence, choosing any $\boldsymbol{m}\in\Z^2(v,\varepsilon)$
  satisfying $\pr{2}(\boldsymbol{m})>M$, we can define
  \begin{displaymath}
    W^+_t:=\cc\big(\R^2\setminus\Gamma_t,T_{\boldsymbol{m}}(z)\big),
  \end{displaymath}
  and combining the last argument with
  Lemma~\ref{lem:Z2v-bounded-gaps}, one can shows that
  $T_{\boldsymbol{p}}(z)\in W^+_t$, for any
  $\boldsymbol{p}\in\Z^2(v,\varepsilon)$ such that
  $\pr{2}(\boldsymbol{p})>M$.

  To prove the uniqueness of $W_t^+$, let $w$ be any other point in
  $\R^2\setminus\Gamma_t$. Let $\varepsilon'\leq\varepsilon$ be any
  positive number such that
  $w\in\R^2\setminus(\Lambda_{r-2\varepsilon'}^v(t) \cup
  \Lambda_{r-2\varepsilon'}^{-v}(t))$. So, since by Corollary
  \ref{cor:rotational-dev-vs-topology-Lambda-sets} the set
  $\Lambda_{r-2\varepsilon'}^v(t)\cup\Lambda_{r-2\varepsilon'}^{-v}(t)$
  does not disconnect $\R^2$, then there exists a continuous path
  $\gamma\colon [0,1]\to\R^2$ such that $\gamma(0)=w$, $\gamma(1)=z$
  and
  \begin{displaymath}
    \gamma(s)\not\in
    \Lambda_{r-2\varepsilon'}^v(t)\cup\Lambda_{r-2\varepsilon'}^{-v}(t),
    \quad\forall s\in[0,1].
  \end{displaymath}
  So, the image of $T_{\boldsymbol{p}}\circ\gamma$ does not intersect
  $\Lambda_r^v(t)\cup\Lambda_r^{-v}(t)$, for any
  $\boldsymbol{p}\in\Z^2(v,\varepsilon')$, and does not intersect $V$
  either, provided $\pr{2}(\boldsymbol{p})$ is sufficiently
  large. Thus, $T_{\boldsymbol{p}}(w)\in W_t^+$ for such a
  $\boldsymbol{p}$ and uniqueness of $W_t^+$ is proven.

  Finally, defining
  $W_t^-:=\cc\big(\R^2\setminus\Gamma_t,T_{\boldsymbol{m}}^{-1}(z)\big)$
  for $\boldsymbol{m}$ as above, one can easily show that analogous
  properties hold.
\end{proof}

In order to finish the proof of
Theorem~\ref{thm:unbounded-dev-implies-spreading}, we fix a non-empty
open set $U\subset\R^2$. Without loss of generality we can assume that
$U$ is bounded, connected and
\begin{equation}
  \label{eq:U-disjoint-Lambda-v-Lambda-minus-v}
  \overline{U}\cap\left(\Lambda_r^v(0)\cup\Lambda_r^{-v}(0)\right)
  =\emptyset,  
\end{equation}
where $r$ is the real number we fixed after
Lemma~\ref{lem:choosing-r-inters-U-V}. Since $\overline U$ is compact
and $\Lambda_r^v\cup\Lambda_r^{-v}$ is contained in
$\R^2\setminus\overline U$, by Proposition~\ref{pro:Lambda-r-semicont}
we know the maps $t\mapsto \Lambda_r^v(t)$ and
$t\mapsto \Lambda_r^{-v}(t)$ are both compactly upper
semi-continuous. Thus, there is $\eta>0$ such that
\begin{equation}
  \label{eq:choosing-eta-disjoint-Lambda}
  B_\eta(0)\times\overline{U}\cap
  \big(\Lambda_r^v\cup\Lambda_r^{-v}\big) =\emptyset,
\end{equation}
where $B_\eta(0)$ denotes the $\eta$-ball centered at $0\in\T$ with
respect to the distance $d_\T$.

Now, by minimality of $f$ and recalling that
$\rho_1=\pi(\tilde\rho_1)$ where $\tilde\rho_1\in\R\setminus\Q$, there
exists $k\geq 1$ such that
\begin{equation}
  \label{eq:f-k-iterates-cover-T2}
  \bigcup_{i=0}^k f^i\big(\pi(U)\big)=\T^2,\quad\text{and}\quad
  \bigcup_{i=0}^k T_{\rho_1}^i\big(B_\eta(0)\big) = \T.
\end{equation}

Let us define
\begin{displaymath}
  \ms{U}:=\bigcup_{i=0}^k F^i(B_\eta(0)\times U)\subset\T\times\R^2,
\end{displaymath}
and for every $t\in\T$, let us write
\begin{displaymath}
  \ms{U}(t):=\pr{2}\left(\ms{U}\cap\{t\}\times\R^2\right)\subset\R^2. 
\end{displaymath}

Notice that, by \eqref{eq:choosing-eta-disjoint-Lambda},
$\overline{\ms{U}}\cap (\Lambda_r^v\cup\Lambda_r^{-v})=\emptyset$. So,
by \eqref{eq:Lambda-r-lateral-continuity} of
Proposition~\ref{pro:Lambda-equivariant-properties}, there exists
$\varepsilon>0$ such that
\begin{displaymath}
  \overline{\ms{U}(t)_\varepsilon}\cap
  \big(\Lambda_{r-2\varepsilon}^v(t)\cup\Lambda_{r-2\varepsilon}^{-v}(t)
  \big) = \emptyset, \quad\forall t\in\T,
\end{displaymath}
where $(\cdot)_\varepsilon$ denotes the $\varepsilon$-neighborhood
given by \eqref{eq:epsilon-nbh-def}.

On the other hand, by our hypothesis \eqref{eq:rot-set-assumptions}
there exists $\tilde\rho^+\in\rho(\tilde f)$ such that
$\pr{2}(\tilde\rho^+)>0$.  So, let $\delta$ be a positive number given
by Theorem~\ref{thm:bounded-rot-deviations-for-minimal-homeos} for
$f$, $\tilde\rho^+$ and $\varepsilon/2$. Without loss of generality we
can assume $\delta<\min\{\eta,\frac{\varepsilon}{4}\}$, where $\eta$
was chosen in \eqref{eq:choosing-eta-disjoint-Lambda}.

Now, consider the translation
$T:=T_{\rho_1,\pi\tilde\rho^+}\colon\T\times\T^2=\T^3\carr$ and the
visiting time set $\tau:=\tau(0,B_\delta(0),T)$ defined in Corollary
\ref{cor:quas-periodicity-torus-translations}.

Then, by Theorem~\ref{thm:bounded-rot-deviations-for-minimal-homeos}
and \eqref{eq:f-k-iterates-cover-T2} we get that, for each $n\in\tau$
there exist $z_n\in U$, $j_n\in\{0,1,\ldots,k\}$,
$\boldsymbol{p}_n\in\Z^2$ and $q_n\in\Z$ such that
$\norm{\boldsymbol{p}_n-n\tilde\rho^+}<\delta$,
$\abs{q_n-n\tilde\rho_1}<\delta$ and
\begin{displaymath}
  \norm{\tilde f^n\big(\tilde f^{j_n}(z_n)\big) - \tilde f^{j_n}(z_n) -
    n\tilde\rho^+}<\frac{\varepsilon}{2},
\end{displaymath}
or equivalently,
\begin{equation}
  \label{eq:z-n-j-n-good-translations}
  F^n\big(F^{j_n}(0,z_n)\big)\in \{(j_n+ n)\rho_1\}\times
  T_{q_n,0}^{-1} \circ
  T_{\boldsymbol{p}_n}(\ms{U}(j_n\rho_1)_\varepsilon).   
\end{equation}
Observe that
\begin{equation}
  \label{eq:p-n-q-n-in-Z2ve}
  \begin{split}
    &\abs{\scprod{v}{\boldsymbol{p}_n-(q_n,0)}}=
    \abs{\scprod{v}{\boldsymbol{p}_n-n\tilde\rho^++
        n(\tilde\rho_1,0)-(q_n,0)}} \\
    &\leq \abs{\scprod{v}{\boldsymbol{p}_n-n\tilde\rho^+}} +
    \abs{\scprod{v}{n(\tilde\rho_1,0)-(q_n,0)}}\leq 2\delta<
    \frac{\varepsilon}{2}.
  \end{split}
\end{equation}
So, in particular, this implies
$\boldsymbol{p}_n-(q_n,0)\in\Z^2(v,\varepsilon)$.

Then observe that since we are assuming $U$ is connected,
$\ms{U}(t)_\varepsilon$ has finitely many connected components for
every $t\in\T$, and then we can apply
Lemma~\ref{lem:suff-large-trans-in-diff-connected-comp} to conclude
there exists $M>0$ such that
\begin{equation}
  \label{eq:Fi-above-in-gaps}
  F^i\Big(\{t\}\times
  T_{\boldsymbol{p}}\big(\ms{U}(t)_\varepsilon\big)\Big)\subset 
  \{t+i\rho_1\}\times W_{t+i\rho_1}^+, 
\end{equation}
for every $\boldsymbol{p}\in\Z^2(v,\varepsilon)$ satisfying
$\pr{2}(\boldsymbol{p})>M$, any $t\in\T$ and every
$0\leq i\leq \max\{k,\ms{G}(\tau)\}$, where $\ms{G}(\tau)$ denotes the
maximum length gap of $\tau$, just defined after
\eqref{eq:bounded-gaps-def}.

Putting together \eqref{eq:z-n-j-n-good-translations},
\eqref{eq:p-n-q-n-in-Z2ve} and \eqref{eq:Fi-above-in-gaps}, and
observing $\mc{U}$ is open, we conclude there is a positive number
$\eta_0^+>0$ such that fixing any $N_0^+\in\tau$ verifying
$\pr{2}(\boldsymbol{p}_{N_0^+})>M$, where
$\boldsymbol{p}_{N_0^+}\in\Z^2$ is chosen as above, it holds
\begin{equation}
  \label{eq:Fm-iter-U-int-+}
  F^m(\{t\}\times U)\subset \{t+m\rho_1\}\times W_{m\rho_1}^+,
  \quad\forall m\geq N_0^+,\ \forall t\in B_{\eta_0}(0).
\end{equation}

Analogously, one may prove a similar statement for some
$\tilde\rho^-\in\rho(\tilde f)\cap\bb{H}_0^-$, showing that there
exist $\eta_0^->0$ and $N_0^-\in\N$ such that
\begin{equation}
  \label{eq:Fm-iter-U-int--}
  F^m(\{t\}\times U)\subset \{t+m\rho_1\}\times W_{m\rho_1}^-,\quad
  \forall m\geq N_0^-,\ \forall t\in B_{\eta_0}(0). 
\end{equation}

Putting together, \eqref{eq:U-disjoint-Lambda-v-Lambda-minus-v},
\eqref{eq:Fm-iter-U-int-+} and \eqref{eq:Fm-iter-U-int--} we can
conclude that
\begin{equation}
  \label{eq:Fm-iter-U-inter-V}
  F^m(\{t\}\times U)\cap \big(\{t+m\rho_1\}\times V\big)
  \neq\emptyset, \quad\forall m\geq N_0,\ \forall t\in B_{\eta_0}(0), 
\end{equation}
where $N_0:=\max\{N_0^+,N_0^-\}$ and
$\eta_0:=\min\{\eta_0^-,\eta_0^+\}$.

Then, invoking property \eqref{eq:iter-F} one can repeat above
argument to show that property \eqref{eq:Fm-iter-U-inter-V} in fact
holds for any $s$ in $\T$, \ie given any $s\in\T$, there exist
$\eta_s>0$ and $N_s\in\N$ such that
\begin{displaymath}
  F^m(\{t\}\times U)\cap \big(\{t+m\rho_1\}\times V\big)
  \neq\emptyset, \quad\forall m\geq N_s,\ \forall t\in B_{\eta_s}(s). 
\end{displaymath}

Finally, by compactness of $\T$ there are points
$s_1,s_2,\ldots s_r\in T$ such that
\begin{displaymath}
  \bigcup_{j=1}^rB_{\eta_{s_j}}(s_j)=\T.
\end{displaymath}
Defining $N:=\max\{N_{s_j}: 1\leq j\leq r\}$, one can easily verify
that the conclusion of
Theorem~\ref{thm:unbounded-dev-implies-spreading} holds for any
$n\geq N$.

\section{Proof of Theorem A}
\label{sec:coex-stable-sets}

In this section we finish the proof of Theorem~A. To do this let us
suppose $f$ does not exhibit uniformly bounded $v$-deviations. By
Proposition~\ref{pro:rot-set-elementary-properties},
Theorem~\ref{thm:Oxtoby-Ulam} and
Proposition~\ref{pro:powers-min-homeos-min-too} there is no loss of
generality if we assume that $f$ is a minimal symplectic homeomorphism
and admits a lift $\tilde f\in\Tsymp$ whose rotation set
$\rho(\tilde f)$ is transversal to the horizontal axis and they
intersect at the rotation vector of Lebesgue, \ie it holds
\begin{displaymath}
  \rho(\tilde f)\cap\bb{H}_0^{(0,1)}\neq\emptyset, \quad\text{and}\quad
  \rho(\tilde f)\cap\bb{H}_0^{(0,-1)}\neq\emptyset, 
\end{displaymath}
and where $\Flux(\tilde f)=(\tilde\rho_1,0)$, for some
$\tilde\rho_1\in\R$. Notice that by
Corollary~\ref{cor:minimal-homeos-rho-cap-Q2-empty}, $\tilde\rho_1$ is
irrational.

So, we can define the fiber-wise Hamiltonian skew-product
$F\colon\T\times\R^2\carr$ as in \S\ref{sec:rotat-devi-spreading}.

By analogy with \eqref{eq:Lambda-ver-definition}, for each $r\in\R$
and $u\in\{(0,1),(0,-1)\}$ we define the stable set at infinity with
respect to horizontal direction (we called it the transversal direction
in \S\ref{sec:stable-sets-infinity-trans-dir}) by
\begin{equation}
  \label{eq:Lambda-r-u-definition}
  \Lambda_{r}^u\big(t\big):=\pr{2}\bigg(\cc\Big(\{t\}\times\R^2\cap
  \I_F\big(\T^2\times \overline{\bb{H}_r^u}\big),
  \infty\Big)\bigg)\subset\R^2.  
\end{equation}

One can easily see that stable sets at infinity defined by
\eqref{eq:Lambda-ver-definition} and \eqref{eq:Lambda-r-u-definition}
are very close related and, in fact,
\begin{displaymath}
  \Lambda_r^u(t)= T_{\tilde t,0}^{-1}(\Lambda_r^u),
  \quad\forall \tilde t\in\pi^{-1}(t),\ \forall r\in\R. 
\end{displaymath}
In particular, this implies that all topological and geometric results
we proved in Theorems~\ref{thm:Lambda-vert-sets-main-properties} and
\ref{thm:Lambda-vert-oscillations}, and
Corollary~\ref{cor:Lambda-vert-oscilations} for the sets $\Lambda_r^u$
continue to hold \emph{mutatis mutandis} for the new ones
$\Lambda_r^u(t)$.

Then we have the following
\begin{proposition}
  \label{pro:Lambda-vert-Lambda-hor-disjoint}
  If $f$ does not exhibit uniformly bounded $v$-deviations, then
  \begin{displaymath}
    \Lambda_r^{(0,1)}(t)\cap\Lambda_s^v(t)=\emptyset, 
  \end{displaymath}
  for every $r,s\in\R$ and every $t\in\T$.
\end{proposition}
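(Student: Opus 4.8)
The plan is to argue by contradiction, supposing there exist $r,s\in\R$, $t\in\T$ and a point
$z_0\in\Lambda_r^{(0,1)}(t)\cap\Lambda_s^v(t)$. The idea is to exploit the two different ``stable at infinity'' directions simultaneously: membership in $\Lambda_r^{(0,1)}(t)$ forces the whole $F$-orbit of $(t,z_0)$ to stay in the half-space $\overline{\bb{H}_r^{(0,1)}}$, while membership in $\Lambda_s^v(t)$ forces it to stay in $\overline{\bb{H}_s^v}$. I would pass to the planar picture via the identification $\Lambda_r^u(t)=T_{\tilde t,0}^{-1}(\Lambda_r^u)$ (and the analogous one for $\Lambda_s^v(t)$), so that the two connected unbounded closed sets $\Gamma^{\mathrm{vert}}:=\cc(\Lambda_r^{(0,1)}(t),z_0)$ and $\Gamma^{\mathrm{par}}:=\cc(\Lambda_s^v(t),z_0)$ both live in $\R^2$, are unbounded, closed, connected, and share the point $z_0$.

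The main step is to extract contradictory geometric information from the oscillation theorems already proved. First I would apply Corollary~\ref{cor:Lambda-vert-unbound-cc-oscilations} (in its $\Lambda_r^u(t)$ incarnation) to the connected unbounded set $\Gamma^{\mathrm{vert}}\subset\Lambda_r^{(0,1)}(t)$, which gives
\begin{displaymath}
  \sup_{z\in\Gamma^{\mathrm{vert}}}\scprod{z}{v}=+\infty
  \quad\text{and}\quad
  \inf_{z\in\Gamma^{\mathrm{vert}}}\scprod{z}{v}=-\infty,
\end{displaymath}
so $\Gamma^{\mathrm{vert}}$ has unbounded oscillation in the $v$-direction. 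On the other hand $\Gamma^{\mathrm{vert}}\subset\Lambda_r^{(0,1)}(t)\subset\overline{\bb{H}_r^{(0,1)}}$, so $\Gamma^{\mathrm{vert}}$ lives in an upper half-plane with respect to the \emph{vertical} coordinate. Now I would use property \eqref{eq:Lambda-v-in-no-strip} of Theorem~\ref{thm:Lambda-vert-sets-main-properties}, translated to the $v$-setting: $\Lambda_s^v(t)$ satisfies the analogous ``escapes every strip'' property, namely any connected unbounded closed subset of $\Lambda_s^v(t)$ must intersect every line $\ell_{s'}^v$ with $s'$ above its infimum in the $v$-direction — but it is confined to $\overline{\bb{H}_s^v}$, i.e.\ bounded below in the $v$-direction. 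The clash I want is between the confinement of $\Gamma^{\mathrm{vert}}$ to $\overline{\bb{H}_r^{(0,1)}}$ and the fact that $z_0\in\Lambda_s^v(t)$ forces $z_0$ (and a suitable unbounded sub-continuum through it) to escape to $-\infty$ in the vertical direction along the $v$-stable set. Concretely, I expect the cleanest route is: the set $\Gamma^{\mathrm{par}}:=\cc(\Lambda_s^v(t),z_0)$ is connected, unbounded, contained in $\overline{\bb{H}_s^v}$; by the $v$-analogue of \eqref{eq:Lambda-v-in-no-strip} it cannot be trapped in any vertical strip $\A^{(0,1)}_{r,s'}$, hence $\Gamma^{\mathrm{par}}$ reaches arbitrarily large vertical heights \emph{and}, by Corollary~\ref{cor:Lambda-vert-unbound-cc-oscilations} applied to $\Lambda_s^v$-type sets in the orthogonal direction, also arbitrarily negative ones; but then $\Gamma^{\mathrm{par}}$ exits $\overline{\bb{H}_r^{(0,1)}}$, yet $z_0$ was supposed to sit in $\Lambda_r^{(0,1)}(t)$, whose connected component through $z_0$ — which contains $\Gamma^{\mathrm{par}}\cap\Lambda_r^{(0,1)}(t)$ near $z_0$ — would then have to meet $\R^2\setminus\overline{\bb{H}_r^{(0,1)}}$, contradicting $\Lambda_r^{(0,1)}(t)\subset\overline{\bb{H}_r^{(0,1)}}$. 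The symmetric statement, using \eqref{eq:Lambda-v-minus-v-no-inter}-type disjointness, rules out the remaining configuration.

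The delicate point — and the main obstacle — is bookkeeping the two confinement conditions against the two ``no trapping in a strip'' conditions so that they genuinely contradict rather than merely coexist: one must check that the connected component of one stable set through $z_0$ really does contain an unbounded sub-continuum forced by the oscillation of the \emph{other} stable set, which requires that the intersection $\Gamma^{\mathrm{vert}}\cap\Gamma^{\mathrm{par}}$, though it contains $z_0$, propagates connectivity correctly. I expect this to be handled by noting that both $\Lambda_r^{(0,1)}(t)$ and $\Lambda_s^v(t)$ are $F$-invariant (hence so is the piece of orbit through $z_0$), that the orbit of $z_0$ stays in $\overline{\bb{H}_r^{(0,1)}}\cap\overline{\bb{H}_s^v}$, and then running the oscillation argument of Lemma~\ref{lem:Lambda-vert-oscilations}/Theorem~\ref{thm:Lambda-vert-oscillations} directly on the orbit rather than on abstract sub-continua: the orbit of $z_0$ under $F$ must have unbounded $v$-deviations in \emph{both} directions (by Proposition~\ref{pro:gott-hedl-applied-to-torus}, since $f$ has no uniformly bounded $v$-deviations and $f$ is minimal, every point has unbounded two-sided $v$-deviations), which is flatly incompatible with the orbit being confined to the strip $\overline{\bb{H}_s^v}\cap T_{\tilde t,0}^{-1}(\{z:\scprod{z}{v}\ge s\})$ — wait, that is a half-plane, not a strip, so the needed contradiction must instead come from combining confinement in $\overline{\bb{H}_r^{(0,1)}}$ with the sign structure of the rotation vectors $\mu^{\pm}$: a point whose forward orbit stays above height $r$ has $\liminf$ vertical displacement $\ge 0$, which by convexity of $\rho(\tilde f)$ and \eqref{eq:rot-set-assumptions} is the situation handled in the proof of Theorem~\ref{thm:Lambda-vert-oscillations}, and there one already derives that such a point's $v$-orbit is one-sidedly bounded, contradicting Proposition~\ref{pro:gott-hedl-applied-to-torus}. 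I would therefore organize the final write-up around that last observation as the cleanest contradiction.
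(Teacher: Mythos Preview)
Your argument has a genuine gap and does not reach a contradiction. The oscillation result, Corollary~\ref{cor:Lambda-vert-unbound-cc-oscilations}, applies to closed connected unbounded subsets of $\Lambda_r^{(0,1)}(t)$; you apply it to $\Gamma^{\mathrm{vert}}=\cc\big(\Lambda_r^{(0,1)}(t),z_0\big)$ and correctly obtain $\inf_{z\in\Gamma^{\mathrm{vert}}}\scprod{z}{v}=-\infty$. But $\Gamma^{\mathrm{vert}}$ is \emph{not} contained in $\Lambda_s^v(t)$ --- only the single point $z_0$ lies in both sets --- so there is no confinement $\scprod{z}{v}\geq s$ on $\Gamma^{\mathrm{vert}}$ to contradict. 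Your attempt to run the symmetric argument on $\Gamma^{\mathrm{par}}$ fails because no analogue of property~\eqref{eq:Lambda-v-in-no-strip} or of Corollary~\ref{cor:Lambda-vert-unbound-cc-oscilations} has been established for the $v$-direction stable sets $\Lambda_s^v(t)$; those results are proved only for the transverse-direction sets $\Lambda_r^u$. Finally, your orbit-based fallback gives only a one-sided bound $\scprod{H_t^{(n)}(z_0)}{v}\geq s$, and Proposition~\ref{pro:gott-hedl-applied-to-torus} asserts only that the \emph{absolute value} of the $v$-deviation is unbounded, which is perfectly consistent with a one-sided lower bound.

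The paper's proof is organized quite differently. It applies Corollary~\ref{cor:Lambda-vert-unbound-cc-oscilations} not to $\Gamma^{\mathrm{vert}}$ but to a hypothetical unbounded connected component of the \emph{intersection} $C=\Lambda_r^{(0,1)}(t)\cap\Lambda_s^v(t)$: such a component would lie in $\Lambda_r^{(0,1)}(t)$ (so the corollary applies and gives $\inf\scprod{\cdot}{v}=-\infty$) and simultaneously in $\overline{\bb{H}_s^v}$ (so $\scprod{\cdot}{v}\geq s$), a genuine contradiction. This only rules out unbounded components of $C$; the substantive case is when all components of $C$ are bounded. There the paper invokes Janiszewski's theorem to conclude that $\Lambda_r^{(0,1)}(t)\cup\Lambda_s^v(t)$ disconnects $\R^2$, and then uses the spreading property, Theorem~\ref{thm:unbounded-dev-implies-spreading}, to produce a connected $F$-iterate of a small open set that is disjoint from both stable sets yet meets two distinct complementary components --- an impossibility. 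You never invoke Theorem~\ref{thm:unbounded-dev-implies-spreading}, and without it the bounded-component case remains completely open.
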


\begin{proof}
  Arguing by contradiction, let us suppose there exist $r,s\in\R$ and
  $t\in\T$ such that
  $C:=\Lambda_r^{(0,1)}(t)\cap\Lambda_s^v(t)\neq\emptyset$.  We claim
  that, in such a case, every connected component of $C$ is bounded in
  $\R^2$. In order to prove our claim, let us suppose there exists an
  unbounded closed connected component $\Gamma\in\pi_0(C)$.

  Since $\Gamma$ is contained in $\Lambda_r^{(0,1)}(t)$, invoking
  \eqref{eq:Lambda-v-in-no-strip} of
  Theorem~\ref{thm:Lambda-vert-sets-main-properties} we know that
  $\Gamma$ is ``vertically unbounded'', \ie it is not contained in any
  horizontal strip. On the other hand, since
  $\Gamma\subset\Lambda_s^v(t)\subset\overline{\bb{H}_s^v}$, we get
  that
  \begin{displaymath}
    \scprod{z}{v} \geq s, \quad\forall z\in\Gamma,
  \end{displaymath}
  which contradicts
  Corollary~\ref{cor:Lambda-vert-unbound-cc-oscilations}.

  So, every connected component of $C$ is bounded in $\R^2$. Invoking
  Theorem~\ref{thm:janiszewski} and taking into account that
  $\Lambda_r^{(0,1)}(t)\cup\Lambda_s^v(t)$ is unbounded, we conclude
  that $\Lambda_r^{(0,1)}(t)\cup\Lambda_s^v(t)$ should disconnect
  $\R^2$. Now let us consider two different connected components $V_1$
  and $V_2$ of
  $\R^2\setminus\Big(\Lambda_r^{(0,1)}(t)\cup\Lambda_s^v(t)\Big)$, and
  let $U\subset\R^2$ be a non-empty connected open set and
  $\varepsilon>0$ such that
  \begin{displaymath}
    U\cap\Big(\Lambda_r^u(t')\cup\Lambda_s^v(t')\Big)=\emptyset,
  \end{displaymath}
  for $t'\in\T$ satisfying $d_{\T}(0,t')<\varepsilon$.

  Then, invoking Theorem~\ref{thm:unbounded-dev-implies-spreading} we
  know that there is a natural number $N$ such that
  \begin{displaymath}
    F^n\big(B_\varepsilon(0)\times U\big)\cap \T\times V_i\neq\emptyset, 
  \end{displaymath}
  for every $i=1,2$ and every $n\geq N$. In particular, there is some
  $n_0\geq N$ and $t'\in B_\varepsilon(0)$ such that
  $R_{\rho_1}^{n_0}(t')=t$ and $F^{n_0}(\{t'\}\times U)$ intersects
  $\{t\}\times V_1$ and $\{t\}\times V_2$, and therefore, intersects
  $\{t\}\times\big(\Lambda_r^{(0,1)}(t)\cup\Lambda_s^{v}(t)\big)$ as
  well, getting a contradiction.
\end{proof}

Now, by Proposition \ref{pro:Lambda-vert-Lambda-hor-disjoint}, given
any $r\in\R$ and any $z\in\Lambda_r^v(0)$, we can define the set
\begin{displaymath}
  U_s:=\cc\Big(\bb{H}_s^{(0,1)}\setminus\Lambda_s^{(0,1)}(t),z\Big),
  \quad\forall s<\pr{2}(z). 
\end{displaymath}
Since $\Lambda_r^v(t)\subset\bb{H}_r^v$ and is connected, combining
Theorem~\ref{thm:Lambda-vert-oscillations} and
Proposition~\ref{pro:Lambda-vert-Lambda-hor-disjoint} we conclude that
\begin{equation}
  \label{eq:Lambda-v-inter-bound-s}
  \Lambda_r^v(t)\cap\partial_s^{(0,1)}(U_s)\neq\emptyset, \quad\forall 
  s<\pr{2}(z), 
\end{equation}
where $\partial_s^{(0,1)}$ denotes the boundary operator as level $s$
given by \eqref{eq:bound-at-level-r}.

However, Theorem~\ref{thm:Lambda-vert-oscillations} also implies that
there is some $s_0<\pr{2}(z)$ such that
\begin{equation}
  \label{eq:bound-at-level-s-smaller-than-r}
  \partial_{s_0}^{(0,1)}(U_{s_0})\subset\bb{H}_{-r}^{-v}.
\end{equation}
Since $\Lambda_r^v(t)\subset\bb{H}_r^v$, we see that
\eqref{eq:Lambda-v-inter-bound-s} and
\eqref{eq:bound-at-level-s-smaller-than-r} cannot simultaneously hold,
and Theorem A is proved.

\section{Proof of Theorem B}
\label{sec:proof-thm-B}

Let us suppose there exists a minimal homeomorphism
$f\in\Homeo0(\T^2)$ such that its rotation set is a non-degenerate
rational slope segment. So, if $\tilde f\colon\R^2\carr$ is a lift of
$f$, then there are $v\in\Ss^1$ and $\alpha\in\R$ such that inclusion
\eqref{eq:rot-set-in-line} holds. We know that $v$ has rational slope
and, by Corollary~\ref{cor:minimal-homeos-rho-cap-Q2-empty}, $\alpha$
is an irrational number. 

Then, by Theorem~A $f$ exhibits uniformly bounded $v$-deviations, \ie
estimate \eqref{eq:bounded-v-dev-Thm-A} holds. As a straightforward
consequence of Theorem~\ref{thm:gott-hedlund} one can show that $f$ is
a topological extension of an irrational circle rotation (see
\cite[Proposition 2.1]{JaegerLinearConsTorus} for details). But this
contradicts the following result due to Koropecki, Passaggi and
Sambarino~\cite[Theorem I]{KoropeckiPasseggiSambarino}:

\begin{theorem}
  \label{thm:Koro-Rata-Samba}
  If $f\in\Homeo0(\T^2)$ is a topological extension of an irrational
  circle rotation, then $f$ is a pseudo-rotation.
\end{theorem}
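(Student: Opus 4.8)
The plan is to reduce the statement to the classical fact that a quasiperiodically forced orientation-preserving circle homeomorphism over a minimal base has a well-defined fibered rotation number, and then to bridge the gap between an abstract topological extension and an honest circle skew-product. \emph{Step 1: normalising the semiconjugacy.} Let $h\colon\T^2\to\T$ be a continuous surjection with $h\circ f=R_\alpha\circ h$, where $R_\alpha$ is the irrational circle rotation. First I would check that the induced homomorphism $h_\ast\colon\Z^2\to\Z$ is nonzero: otherwise $h$ lifts to a bounded $\hat h\colon\T^2\to\R$ satisfying $\hat h\circ f=\hat h+\alpha+k$ for some $k\in\Z$ (the defect being continuous and $\Z$-valued), and iterating contradicts boundedness of $\hat h$ since $\alpha+k\neq0$. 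Writing $d$ for the greatest common divisor of the two components of $h_\ast$, the map $h$ factors through the $d$-fold self-cover of $\T$, which replaces $R_\alpha$ by a rotation $R_{(\alpha+j)/d}$ (still irrational) and makes $h_\ast$ primitive; conjugating $f$ by a linear automorphism of $\T^2$ (which only affects the rotation set linearly, hence does not change whether it is a single point) we may assume $h_\ast=(1,0)$. Then a lift $\tilde h\colon\R^2\to\R$ of $h$ with respect to both covering projections satisfies $\tilde h(z+p)=\tilde h(z)+\pr{1}(p)$, so $g:=\tilde h-\pr{1}$ is $\Z^2$-periodic and bounded, and—after correcting the lift $\tilde f$ by an integer translation—$\tilde h\circ\tilde f=\tilde h+\alpha$. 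Hence $\pr{1}\circ\tilde f^n=\pr{1}+n\alpha+g-g\circ\tilde f^n$, so $\abs{\pr{1}\big(\tilde f^n(z)-z\big)-n\alpha}\leq 2\norm{g}_{C^0}$ for all $n\in\Z$, $z\in\R^2$; by the definition of the rotation set this gives $\rho(\tilde f)\subset\{\alpha\}\times\R$, a segment of vertical (rational) slope.

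\emph{Step 2: a fibered rotation number.} By Theorem~\ref{thm:mis-ziem-convexity}, $\rho(\tilde f)$ is a point precisely when $\int_{\T^2}\pr{2}\circ\Delta_{\tilde f}\dd\nu$ is the same for every ergodic $\nu\in\M(f)$. Since $R_\alpha$ is uniquely ergodic, $h_\star\nu=\Leb$ for every $\nu\in\M(f)$, so all invariant measures sit over the same base measure and the $\pr{2}$-component of the rotation vector is exactly the asymptotic vertical drift along the fibers of $h$. These fibers $h^{-1}(\theta)$ are compact and pairwise disjoint, are permuted by $f$ according to $R_\alpha$, and—because for each fixed $y$ the map $x\mapsto\tilde h(x,y)=x+g(x,y)$ is onto $\R$—each fiber meets every horizontal circle $\T\times\{y_0\}$ while its lift stays inside a vertical strip of width $\leq 2\norm{g}_{C^0}$; thus the fibers are essential in the vertical direction and $x$-bounded. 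Identifying the space of fibers with $\T$ via $h$, the map $f$ acts on it as $R_\alpha$, and I would show the vertical drift is independent of the orbit by the usual Poincaré-type argument for fibered rotation numbers: if $z$ and $z'$ lie in a common lifted fiber—so $\tilde f^n(z)$ and $\tilde f^n(z')$ remain in a common lifted fiber forever, because $\tilde h\circ\tilde f^n=\tilde h+n\alpha$—and their orbits drifted apart vertically at different linear rates, then, exploiting the $(0,1)$-translation symmetry, the bounded horizontal width of the fibers and minimality of $R_\alpha$ on the base, one would produce two orbits that cross, contradicting that $\tilde f$ is orientation-preserving and maps consecutive fibers homeomorphically. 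This makes $\int\pr{2}\circ\Delta_{\tilde f}\dd\nu$ constant in $\nu$, and so $\rho(f)$ is a singleton.

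\emph{Main obstacle.} The genuine difficulty is that $h$ is only a semiconjugacy: the fibers $h^{-1}(\theta)$ need not be circles, need not be connected, and may be fat or wild, so the skew-product picture—for which uniqueness of the fibered rotation number is classical—is not directly available, and there is no a priori cyclic order on a fiber on which to run the Poincaré argument of Step 2. The hard part is therefore to put the fiber decomposition into good position: one should fill each fiber to an essential annular continuum (a ``circloid''), verify that these continua are pairwise unlinked, cyclically ordered, and carried by $f$ compatibly with $R_\alpha$, and use them to build a genuine monotone circle factor refining $h$; only then does the one-dimensional rotation-number argument apply. This is exactly where the annular-continuum / prime-end machinery of Koropecki–Passeggi–Sambarino enters; once the fibers are tamed, the remainder is the elementary fibered-Poincaré reasoning sketched above.
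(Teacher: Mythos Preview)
The paper does not prove this theorem at all: it is quoted verbatim as \cite[Theorem I]{KoropeckiPasseggiSambarino} and used as a black box in the proof of Theorem~B. So there is nothing in the paper to compare your argument against; the ``paper's own proof'' is simply a citation.

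As for your sketch itself, Step~1 is fine and recovers the easy part of the statement, namely that $\rho(\tilde f)$ lies on a vertical line. Step~2, however, is not a proof but a wish: you yourself identify the obstacle correctly. Without knowing that the fibers $h^{-1}(\theta)$ carry a cyclic order compatible with the $T_{(0,1)}$-action and preserved by $f$, the ``crossing'' argument you outline does not get off the ground---two points in the same lifted fiber can have different vertical drifts without any orientation contradiction, because ``between'' has no meaning on an arbitrary compact set. Turning the fibers into circloids, showing they are pairwise unlinked and cyclically ordered, and extracting a monotone circle factor is precisely the substantial content of the Koropecki--Passeggi--Sambarino paper, and it uses prime-end theory and the structure of annular continua in an essential way. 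Your write-up is an accurate diagnosis of where the difficulty lies, but it is not an independent proof; it is a reduction to the cited theorem.
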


\section{Proof of Theorem C}
\label{sec:proof-thm-C}

Let $v\in\Ss^1$ and $\alpha\in\R$ such that
$\rho(\tilde f)\subset\ell_\alpha^v$. By Theorem A we know that $f$
exhibits uniformly bounded $v$-deviations. On the other hand, invoking
Theorem B we conclude $v$ has irrational slope. 


Then, the first step of proof consists in showing the existence of an
$f$-invariant torus pseudo-foliation (see
\S\ref{sec:pseudo-foliations} for definitions). Since $f$ is minimal,
by Theorem~\ref{thm:Oxtoby-Ulam} there is no loss of generality
assuming it is area-preserving, and by
Proposition~\ref{pro:minimal-not-annular}, $f$ is not eventually
annular. So we can invoke Theorem~\ref{thm:bounded-dev-iff-pseudo-fol}
to conclude $f$ leaves invariant a torus pseudo-foliation $\F$. Let
$\widetilde\F$ denote its lift to $\R^2$.

In order to study some topological and geometric properties of
$\widetilde\F$, let us recall some simple steps of its construction
from \cite{KocRotDevPerPFree}. Since $f$ exhibits uniformly bounded
$v$-deviations, by \cite[Corollary 3.2]{KocRotDevPerPFree}, there
exists a constant $C>0$ such that every $(r,v)$-stable set at infinity
given by \eqref{eq:Lambda-r-t-def} satisfies
\begin{displaymath}
  \bb{H}_{r+C}^v\subset\Lambda_r^v(0), \quad\forall r\in\R. 
\end{displaymath}

So, for each $r\in\R$, we define the open set
$U_r:=\cc\Big(\mathrm{int}\big(\Lambda_r^v(0)\big),
\bb{H}_{r+C}^v\Big)$; and then, we consider the function
$H\colon\R^2\to\R$ given by
\begin{equation}
  \label{eq:H-def-func-level-sets-pseudo-leaves}
  H(z):=\sup\{r\in\R : z\in U_r\}, \quad\forall z\in\R^2.
\end{equation}
In the proof of \cite[Theorem 5.5]{KocRotDevPerPFree}, we showed that
\begin{equation}
  \label{eq:H-almost-cocycle-property}
  H\big(\tilde f(z)\big) = H(z) + \alpha, \quad\forall z\in\R^2, 
\end{equation}
and then we defined the \emph{pseudo-leaves} (\ie the atoms of the
partition $\widetilde\F$) by
\begin{displaymath}
  \widetilde\F_z:=H^{-1}\big(H(z)\big), \quad\forall z\in\R^2.
\end{displaymath}

In general the function $H$ is just semi-continuous, but under our
minimality assumption, we will show it is indeed continuous.  In fact,
let $\phi\colon\T^2\to\R$ be given by
\begin{equation}
  \label{eq:phi-function-def}
  \phi(z):=\scprod{\Delta_{\tilde f}(z)}{v}-\alpha,\quad\forall
  z\in\T^2.
\end{equation}
Since $f$ exhibits uniformly bounded $v$-deviations, invoking
Theorem~\ref{thm:gott-hedlund} we know there is $u\in C^0(\T^2,\R)$
satisfying
\begin{equation}
  \label{eq:phi-cohomological-eq}
  \phi=u\circ f - u.
\end{equation}

However, putting together \eqref{eq:Lambda-r-t-def},
\eqref{eq:H-def-func-level-sets-pseudo-leaves} and
\eqref{eq:phi-function-def} one can show that the function
$u'\colon\R^2\to\R$ given by
\begin{displaymath}
  u'(z):=\scprod{z}{v}-H(z),\quad\forall z\in\R^2,
\end{displaymath}
is semi-continuous, $\Z^2$-periodic and satisfies $\phi=u'\circ f-u'$,
as well. By minimality and classical arguments on semi-continuous
functions, we have that $u-u'$ is a constant, and thus, $u'$ is
continuous. Consequently, function $H$ given by
\eqref{eq:H-def-func-level-sets-pseudo-leaves} is continuous as well.

So, for simplicity from now on we can assume that functions $H$ and
$u$ given by \eqref{eq:H-def-func-level-sets-pseudo-leaves} and
\eqref{eq:phi-cohomological-eq}, respectively, satisfy
\begin{equation}
  \label{eq:H-u-relation}
  H(z)=\scprod{z}{v}-u(z), \quad\forall z\in\R^2,
\end{equation}
where $u\in C^0(\T^2,\R)$.

In order to complete the proof of Theorem C, let $U,V\subset\T^2$ be
two non-empty open subsets. We want to show there exists $N\in\N$ such
that $f^n(U)\cap V\neq\emptyset$, for all $n\geq N$. Without loss of
generality we can assume both of them are connected and
inessential. Let $\tilde U,\tilde V\subset\R^2$ be two connected
components of $\pi^{-1}(U)$ and $\pi^{-1}(V)$, respectively.

Since pseudo-leaves of $\widetilde{\F}$ have empty interior, there
exist two points $z_0,z_1\in\tilde V$ such that $H(z_0)<H(z_1)$; let
us write $\delta:= 1/2\big(H(z_1)-H(z_0)\big)$. Notice that for every
$z\in\R^2$ satisfying $H(z_0)<H(z)<H(z_1)$, the corresponding
pseudo-leaves $\widetilde\F_z$ separates the points $z_1$ and $z_2$ in
$\R^2$, \ie the connected components
$\cc\big(\R^2\setminus\widetilde\F_z, z_0\big)$ and
$\cc\big(\R^2\setminus\widetilde\F_z, z_1\big)$ are different. In
particular, the pseudo-leaves $\widetilde\F_z$ must intersect the
connected set $\tilde V$.

On the other hand, by compactness, there exists a finite set
$\{\boldsymbol{p}_1,\boldsymbol{p}_2,\ldots,
\boldsymbol{p}_k\}\subset\Z^2$ satisfying the following property: for
every $z\in [0,1]^2$, there is $1\leq n_z\leq k$ such that the
pseudo-leaf $H^{-1}\big(r\big)$ intersects
$T_{\boldsymbol{p}_{n_z}}(\tilde V)$, for every
$r\in (H(z)-\delta,H(z)+\delta)$, and moreover, it separates the
points $T_{\boldsymbol{p}_{n_r}}(z_0)$ and
$T_{\boldsymbol{p}_{n_r}}(z_1)$ in $\R^2$.

Since every set of the form $H^{-1}(r-\delta,r+\delta)$ is arc-wise
connected, by compactness there exists a real number $M>0$ such that
for every $z\in [0,1]^2$ and any continuous arc
$\gamma\colon [0,1]\to\R^2$ such that
\begin{equation}
  \label{eq:curve-in-H-tube-condition}
  \gamma(t)\in H^{-1}\big(H(z)-\delta,H(z)+\delta\big),
  \quad\forall t\in [0,1],
\end{equation}
and
\begin{equation}
  \label{eq:tall-curve-condition}
  \min_{t\in [0,1]}\pr{2}\circ\gamma(t) < -M < M < \max_{t\in
    [0,1]}\pr{2}\circ\gamma(t),
\end{equation}
there exists $t_z\in [0,1]$ such that $\gamma(t_z)\in
T_{\boldsymbol{p}_{n_z}}(\tilde V)$. 

Now, let $z$ be an arbitrary point of $\tilde U$, and write $r:=H(z)$
and
\begin{equation}
  \label{eq:tilde-U-z-def}
  \tilde U_z:=\cc\big(\tilde U\cap H^{-1}(r-\delta,r+\delta),z\big).  
\end{equation}

Since we are assuming the rotation set $\rho(\tilde f)$ is an
irrational slope segment, there exists two $f$-invariant Borel
probability measures $\mu$ and $\nu$ such that
$\pr{2}\big(\rho_\mu(\tilde f)\big)\neq \pr{2}\big(\rho_\nu(\tilde
f)\big)$. By minimality of $f$, $\mu$ and $\nu$ have total support,
and this means there exist two points $z_\mu,z_\nu\in U_z$ such that
\begin{equation}
  \label{eq:zmu-znu-diff-rot-vect}
  \frac{\tilde f^n(z_\mu)-z_\mu}{n}\to\rho_{\mu}(\tilde f),
  \quad\text{and}\quad \frac{\tilde
    f^n(z_\nu)-z_\nu}{n}\to\rho_{\nu}(\tilde f), 
\end{equation}
as $n\to\infty$.

By \eqref{eq:tilde-U-z-def}, $\tilde U_z$ is arc-wise connected. So,
there is a continuous curve $\eta\in [0,1]\to\tilde U_z$ such that
$\eta(0)=z_\mu$ and $\eta(1)=z_\nu$. Then, by
\eqref{eq:zmu-znu-diff-rot-vect} we conclude there exists a natural
number $N_M$ such that
\begin{equation}
  \label{eq:gamma-f-iterate-event-large}
  \abs{\pr{2}\circ\tilde f^n\big(\gamma(0)\big)-\pr{2}\circ\tilde 
    f^n\big(\gamma(1)\big)} > 2M, \quad\forall n\geq N_M,
\end{equation}
where $M$ is the positive real constant invoked in
\eqref{eq:tall-curve-condition}.

Observing that, by \eqref{eq:H-almost-cocycle-property} and
\eqref{eq:tilde-U-z-def}, one has
\begin{displaymath}
  \tilde f^n\big(\tilde U_z\big)=\cc\Big(\tilde f^n(\tilde U)\cap 
  H^{-1}(r+n\alpha -\delta,r+n\alpha+\delta), \tilde
  f^n(z)\Big),\quad\forall n\in\Z. 
\end{displaymath}
This implies that, putting together
\eqref{eq:curve-in-H-tube-condition}, \eqref{eq:tall-curve-condition}
and \eqref{eq:gamma-f-iterate-event-large} one can conclude that, for
each $n\geq N_M$ there exists $\boldsymbol{q}_n\in\Z^2$ and
$i_n\in\{1,\ldots,k\}$ such that
\begin{displaymath}
  T_{\boldsymbol{q}_n}\circ \tilde f\circ\gamma [0,1]\cap
  T_{\boldsymbol{p}_{i_n}}\big(\tilde V\big)\neq\emptyset,
  \quad\forall n\geq N_M,
\end{displaymath}
and this proves $f$ is topologically mixing, because the image is of
$\gamma$ is completely contained in $\tilde U$.

In order to show that $\ell_\alpha^v\cap\Q^2=\emptyset$, we invoke a
recent result of Beguin, Crovisier and Le Roux
\cite{BegCrovLeRProjectiveTrans} which extends a previous one due to
Kwapisz \cite{KwapiszAPriori}. In fact, if there is any rational point
on $\ell_\alpha^v$, one can show that $f$ is flow equivalent to a
homeomorphism $g\in\Homeo0(\T^2)$ such that $\rho(\tilde g)$ is a
vertical line segment, for any lift $\tilde g\colon\R^2\carr$ of $g$
(see \cite[\S\S 2,3]{KwapiszAPriori} and
\cite{BegCrovLeRProjectiveTrans} for details). Since minimality is
preserved under flow equivalence, we conclude that $g$ is a minimal
homeomorphism and its rotation set is a non-degenerate rational slope
segment, contradicting Theorem B.

\bibliographystyle{amsalpha} \bibliography{references}

\end{document}